\newcommand{\Jac}{Df(x)}
\newcommand{\im}{\mathrm{im}}
\newtheorem{claim}{Claim}
\newtheorem{definition}{Definition}
\newtheorem{lem}{Lemma}
\newtheorem{thm}{Theorem}
\newtheorem{remark}{Remark}
\newtheorem{example}{Example}
\newcommand{\aaa}{\zeta}
\newcommand{\bbb}{\eta}
\newcommand{\Sigg}{\Sigma_{n-1}}
\newcommand{\Ax}{z}
\newcommand{\Bx}{y}
\newcommand{\Cx}{w}
\newcommand{\Dx}{x}
\newcommand{\Exi}{\xi}
\newcommand{\Fxi}{\zeta}
\newcommand{\Ff}{f}
\newcommand{\Sf}{g}
\newcommand{\Nf}{h}
\newcommand{\Nb}{u}
\newcommand{\coeA}{b_1}
\newcommand{\coeB}{b_2}
\newcommand{\coefhat}{l_1}
\newcommand{\coefn}{l_2}
\newcommand{\kappatwo}{\hat{\gamma}_{\mu}(\Ff,\Ax)\|\Exi-\Ax\|}
\newcommand{\coeftemp}{l_3}
\newcommand{\Sig}{D \hat{f}(x)}
\newtheorem{algorithm}{Algorithm}
\newcommand{\CC}{\mathbb{C}}
\newcommand{\xx}{x}
\newcommand{\yy}{y}
\newcommand{\mux}{\mu}
\newcommand{\ran}{\mathrm{span}}
\newcommand{\br}{\kappa}
\newcommand{\dps}{\rho}
\newcommand{\degree}{\mathrm{deg}}
\newcommand{\img}{\mathrm{im}}
\newcommand{\Amu}{\Delta_{\mu}(f_n)}
\newcommand{\Athree}{\Delta_{3}(f_n)}
\newcommand{\bigO}{O}
\newcommand{\app}{z}
\newcommand{\acc}{\xi}
\newcommand{\minus}{z-\xi}
\newcommand{\minusa}{z_1-\xi_1}
\newcommand{\minusb}{\hat{y}-\hat{\xi}}
\newcommand{\Sigp}{D\hat{f}(z)}
\newcommand{\Sigc}{D\hat{f}(\xi)}
\newcommand{\Delp}{\frac{\partial^2f_n(\app)}{\partial X_1^2}}
\newcommand{\Delc}{\frac{\partial^2f_n(\acc)}{\partial X_1^2}}
\newcommand{\xxi}{\xx}
\newcommand{\newd}{\Gamma}
\newcommand{\atwo}{\Sigp^{-1}\frac{1}{2}\frac{\partial^2 \hat{f}(\app)}{\partial X_1^2}}
\newcommand{\simplex}{z}
\newcommand{\azhat}{\hat z}
\newcommand{\azone}{\app_1}
\newcommand{\bzhat}{\hat y}
\newcommand{\newl}{L}
\newcommand{\acoef}{a_2(\itb)}
\newcommand{\deltacoef}{a_3(\itb)}
\newcommand{\btwoone}{b_{2,1}(\itb)}
\newcommand{\btwotwo}{b_{2,2}(\itb)}
\newcommand{\btwothree}{b_{2,3}(\itb)}
\newcommand{\btwofour}{b_{2,4}(\itb)}
\newcommand{\bthreethree}{b_{3,3}(\itb)}
\newcommand{\bthreefour}{b_{3,4}(\itb)}
\newcommand{\itb}{u}
\numberwithin{equation}{section}
\newcommand{\xidag}{\xi}
\begin{document}

 \title{Computing Simple Multiple  Zeros  of Polynomial Systems}



\author{Zhiwei  Hao}
\address{Key Laboratory of Mathematics Mechanization\\
Academy of Mathematics and System Sciences\\
Beijing 100190, China}
\email{haozhiwei@mmrc.iss.ac.cn}
\thanks{}

\author{Wenrong Jiang}
\address{Key Laboratory of Mathematics Mechanization\\
Academy of Mathematics and System Sciences\\
Beijing 100190, China}
\email{jiangwr@amss.ac.cn}
\thanks{}

\author{Nan Li}
\address{ Center for Applied Mathematics\\
      Tianjin University Nankai District \\
       Tianjin 300072, China}
\email{nan@tju.edu.cn}
\thanks{}

\author{Lihong Zhi}
\address{Key Laboratory of Mathematics Mechanization\\
School of Mathematical Sciences\\
University of Chinese Academy of Sciences\\
Academy of Mathematics and System Sciences\\
Beijing 100190, China}
\email{lzhi@mmrc.iss.ac.cn}
\thanks{This research was supported in part by the National Key  Research Project of China 2016YFB0200504 (Zhi), the National Natural Science Foundation of China under Grants 11571350 (Zhi), 11626250 (Li) and 11601378 (Li).}

\subjclass[2010]{Primary 65H10, Secondary 74G35, 68W30, 32-04, 32S99}


\dedicatory{}

\begin{abstract}

Given a  polynomial system $f$ associated with a simple multiple zero $x$ of multiplicity $\mu$, we give a computable lower bound on the minimal distance between the simple multiple zero $x$ and  other zeros of $f$. If $x$ is only given with limited accuracy,  we propose a numerical criterion that $f$ is certified to have $\mu$ zeros (counting multiplicities) in a small  ball   around $x$. Furthermore, for simple double zeros and simple triple zeros whose Jacobian is of normalized form, we define modified Newton iterations and prove the quantified quadratic convergence when the starting point is close to the exact simple multiple zero.  For simple multiple zeros of arbitrary multiplicity whose Jacobian matrix may not have a normalized form, we perform unitary transformations and  modified Newton iterations,  and prove its non-quantified quadratic convergence and  its  quantified convergence for simple triple zeros.
\end{abstract}

\maketitle

\tableofcontents

\section{Introduction}

  Consider an ideal $I_f$ generated by a polynomial system $f=\{f_1,
\ldots, f_n\}$, where $f_i\in\mathbb{C}[X_1, \ldots, X_n]$.   An isolated zero of multiplicity $\mux$ for
 $f$ is a point $\xxi\in \CC^n$ such that
\begin{enumerate}
  \item $f(\xxi)=0$,
  \item there exists a ball $B(\xxi,r)$ of radius $r>0$ such that $B(\xxi,r) \cap f^{-1}(0)=\{\xxi\}$,
  \item $\mux=\dim(\CC[X]/Q_{f,\xxi})$,
\end{enumerate}
where
\[B(\xxi,r):=
  \{\yy\in\mathbb{C}^n:\|\yy-\xxi\|<r\},\]
   and $Q_{f,\xxi}$ is an isolated primary component of the ideal
$I_f$ whose associate prime is
\[m_{\xxi}=(X_1-\xxi_1,\ldots,X_n-\xxi_n).\]
In \cite{Dedieu2001On}, based on Rouch\'e's Theorem \cite{Berenstein1993Residue},
 the condition (3) is replaced by
\begin{enumerate}
\item[(3a)] a generic analytic $g$ sufficiently close to $f$ has $m$ simple zeros in $B(\xxi, r)$.
\end{enumerate}

We recall $\alpha$-theory below  according to \cite{Blum:1997} and refer to \cite{Smale1981The,Shub1985Computational,Shub1986Computational,Smale1986Newton,ShubSmale:1996,Wang1990On,
Hauenstein:2012} for more details.    
 Let $Df(x)$ denote the Jacobian matrix of $f$ at $\xx$. Suppose $Df(x)$ is invertible, $\xx$ is called  a simple zero of $f$.  The  Newton's  iteration is defined by
   \begin{equation}
   N_f(x)=x-Df(x)^{-1} f(x).
   \end{equation}

  Shub and Smale \cite{ShubSmale:1996} defined
   \begin{equation}\label{gamma}
\gamma(f,\xxi)=\sup_{k\geq 2}\left\|Df(\xxi)^{-1}\cdot\frac{D^k f(\xxi)}{k!}\right\|^{\frac{1}{k-1}},
\end{equation}
where  $D^{k} f$ denotes the $k$-th derivative of $f$ which is a symmetric tensor whose components are the partial derivatives of $f$ of order $k$, $\|\cdot\|$ 
denotes the classical operator norm.

    According to \cite[Theorem 1]{Blum:1997}, if
\begin{equation}\label{approxx}
\|z-\xxi\|\leq\frac{3-\sqrt{7}}{2\gamma(f,\xxi)},
\end{equation}
then Newton's iterations
starting at $z$ will converge quadratically to the simple zero $\xxi$.

If $y$ is another zero of $f$, according to  \cite[Corollary 1]{Blum:1997}, we have
\begin{equation}\label{lowerbound}
\|y-\xxi\|\geq\frac{5-\sqrt{17}}{4 \gamma(f,\xxi)}, 
\end{equation}
which  separates the  simple zero $\xxi$ from other zeros of $f$.

Furthermore, according to \cite[Theorem 2]{Blum:1997},  if only a system $f$ and a point $x$ are given such that
\begin{equation}
\alpha(f,x)\leq\frac{13-3\sqrt{17}}{4}\approx0.157671,
\end{equation}
where $\alpha(f,x)=\beta(f,x)  \gamma(f,x)$ and
\[\beta(f,x)=\left\|x-N_f(x)\right\|=\|Df(x)^{-1} f(x)\|,\]
 then Newton's iterations starting at $x$ will converge quadratically to a simple zero $\xi$ of $f$ and \[\|x-\xi\| \leq 2 \beta(f,x).\]

In \cite{Dedieu2001On}, Dedieu and Shub gave quantitative  results for simple double zeros satisfying $f(\xxi)=0$ and
\begin{enumerate}
  \item[(A)]
  $\dim \ker Df(\xxi)=1$,
  \item[(B)]
$D^2 f(\xxi)(v,v) \notin \im Df(\xxi)$,
  \end{enumerate}
where $\ker Df(\xxi)$ is spanned by a unit  vector $v\in \CC^n$.   They generalized the definition of $\gamma$  (\ref{gamma}) to
\begin{equation}\label{gamma2}
\gamma_2(f,\xxi)=\max\left(1, \sup_{k\geq 2}\left\|A(f,\xxi, v)^{-1}\cdot\frac{D^k f(\xxi)}{k!}\right\|^{\frac{1}{k-1}} \right),
\end{equation}
where
\begin{equation}
A(f,x,v)=Df(x).+\frac{1}{2}D^2f(x)(v,\Pi_v),
\end{equation}
 is a linear operator which is invertible at the {simple} double zero $\xxi$, and $\Pi_v$ denotes {the} Hermitian projection onto the subspace $[v] \subset \CC^n$.

In \cite[Theorem 1]{Dedieu2001On}, Dedieu and Shub also presented  a lower bound for separating simple double zeros $\xxi$ from the other zeros $y$ of $f$,
\begin{equation}\label{lowerbounddouble}
\|y-\xxi\|\geq\frac{d}{ 2\gamma_2(f,\xxi)^2}, 
\end{equation}
where $d \approx 0.2976$ is a positive real root of
\begin{equation}\label{rootd2}
\sqrt{1-d^2}-2d\sqrt{1-d^2}-d^2-d=0.
\end{equation}

\begin{remark}
There are two  typos  in the statements of \cite[Theorem 1]{Dedieu2001On} and \cite[Lemma 4]{Dedieu2001On}: 1) the degree of $\gamma_2$ in (\ref{lowerbounddouble}) is $2$ instead of $1$ in \cite[Theorem 1]{Dedieu2001On}; 2)  the coefficient of the second $\sqrt{1-d^2}$ in (\ref{rootd2}) is $-2d$ instead of $-d$ in \cite[Lemma 4]{Dedieu2001On}.
\end{remark}

In \cite[Theorem 4]{Dedieu2001On}, Dedieu and Shub  showed that if the following criterion is satisfied at  a given  point $x$  and a given vector $v$
 \begin{equation}\label{rouche2}
 \|f(x)\|+\|Df(x) v\|\frac{d}{4 {\gamma}_2(f,x,v)^2} <\frac{d^3}{32{\gamma}_2^4\|B(f,x,v)^{-1}\|},
  \end{equation}
   then   $f$ has two zeros in the ball
   of radius
   \begin{equation}
   \frac{d}{4{ \gamma_2(f,x)}^2},
   \end{equation}
    around $x$.  Let us set
 \begin{equation*}
 B(f,x,v)=A(f,x,v)-L,
 \end{equation*}
 {where}  $L(v)=Df(x) v$, $L(w)=0$ for $w \in v^\bot$, {and}
 \begin{equation}
  \gamma_2(f,x)= \max \left(1, \sup_{k\geq 2}\left\|B(f, x,v)^{-1}\cdot\frac{D^k f(x)}{k!}\right\|^{\frac{1}{k-1}}\right).
  \end{equation}

\paragraph*{\bf{Our Contributions}}

We  generalize Dedieu and Shub's quantitative results  about simple double zeros  to simple multiple zeros whose Jacobian matrix has corank one.

Let us recall our previous work on computing simple multiple zeros.   Suppose $\xxi$ is an isolated singular  zero  of $f$  satisfying  {$\dim \ker Df(\xxi)=1$}.  
 Let $\mathcal{D}_{f,\xxi}$ denote the local dual space of an ideal  $I_f=(f_1, \ldots, f_n)$ at $\xx$:
\begin{equation}\label{dualspace}
\mathcal{D}_{f,\xxi}:=\{\Lambda\in
\mathfrak{D}_{\xxi}\ \ |\ \  \Lambda(g)=0, ~\forall g\in I_f\},
\end{equation}
where $\mathfrak{D}_{\xx}=\ran_\mathbb{C}\{\mathbf{d}^{\alpha}_{\xx}\}$ is the $\CC$-vector space generated by differential functionals $\mathbf{d}^{\alpha}_{\xx}$ of order $\alpha\in \mathbb{N}^n$, see (\ref{dualdef}). 
Let  $\mu$ denote  the multiplicity, {then} starting from   $\Lambda_0=1$, and \[\Lambda_1=d_1+a_{1,2}d_2+\cdots+a_{1,n}d_n,\]
 where $d_1, \ldots, d_n$ are the first order differential functionals,
  we can construct
  \[\Lambda_k=\Delta_k+a_{k,2}d_2+\cdots+a_{k,n}d_n,\]
    incrementally  for $k=2, \ldots, \mu-1$   by formulas (\ref{dualbasis}) and (\ref{solveaij}), s.t.
  $\{\Lambda_0,\Lambda_1,\ldots,\Lambda_{\mu-1}\}$ is a closed basis  of the local dual space $\mathcal{D}_{f,\xxi}$ \cite{LiZhi:2009,LZ:2012}. The method is  efficient since the size of matrices involved in the computation  is bounded by $n$. 

 We  generalize the definition of   simple double zeros in \cite{Dedieu2001On}. A  simple multiple zero $\xxi$ of $f$ satisfies   $f(\xxi)=0$  and
\begin{enumerate}
  \item[(A)]
  $\dim\ker Df(\xxi)=1$,

  \item[(B)] $\Delta_{k}(f)\in\img\  Df(\xxi)$, for $k=2,\ldots,\mux-1$,

  \item[(C)]
  $\Delta_{\mux}(f)\notin\img\ Df(\xxi)$.
\end{enumerate}
Without loss of generality, we can  assume  that $\Jac$ has a normalized form: 
\begin{eqnarray}\label{normalizeform1}
       \Jac=\left(
               \begin{array}{cc}
                 0 & D\hat{f}(x) \\
                 0 & 0 \\
               \end{array}
             \right),
\end{eqnarray}
   where $D\hat{f}(x)$ is the nonsingular Jacobian {matrix} of polynomials $\hat{f}=\{f_1, \ldots, f_{n-1}\}$ with respect to variables $\hat{X}=\{X_2, \ldots, X_n\}$.    We will show in Section \ref{subsectionnormalform} that it is always possible to perform unitary transformations  to $f$ and variables $X$ to obtain an equivalent polynomial system whose Jacobian matrix at the singular solution has the  normalized form (\ref{normalizeform1}). This normalization step is similar to the reduction to one variable technique used in \cite{GLSY:2007}.

If $\xxi$ is a simple multiple  zero  of $f$  of  multiplicity $\mux$  and $Df(\xxi)$ has the normalized form (\ref{normalizeform1}), then it is clear that
\begin{align*}
  \Delta_{k}(f)\in\img\ Df(\xxi) & \Leftrightarrow  \Delta_{k}(f_n)=0,
\end{align*}
and the above (B) and (C) conditions can be simplified to

\begin{enumerate}
  \item[(B)]
  $\Delta_{k}(f_n)=0$, for $k=2,\ldots,\mux-1$,
  \item[(C)]
  $\Delta_{\mux}(f_n)\neq0$.
\end{enumerate}

  For a simple multiple zero  $\xxi$ satisfying conditions {(A),(B) and (C)}, we  generalized the definition of $\gamma_2$ in  (\ref{gamma2}) to
\begin{equation}\label{gammamu}
  {\gamma}_{\mu}=\gamma_{\mu}(f, \xxi)=\max(\hat{\gamma}_{\mu},\gamma_{\mu,n}),
  \end{equation}
  where
\begin{equation}\label{gammahat}
\hat{\gamma}_{\mu}=\hat{\gamma}_{\mu}(f,\xxi)=\max\left(1,\ \sup_{k\geq 2}\left\|D\hat{f}(\xxi)^{-1}\frac{D^k \hat{f}(\xxi)}{k!}\right\|^{\frac{1}{k-1}}\right),
\end{equation}
and
\begin{equation}\label{gammamun}
{\gamma}_{\mu,n}={\gamma}_{\mu,n}(f,\xxi)=\left(1,\ \sup_{k\geq 2}\left\|\frac{1}{\Amu}\cdot\frac{D^k f_n(\xxi)}{k!}\right\|^{\frac{1}{k-1}}\right),
\end{equation}
where  $D^{k} \hat f(x)$ for  $k\geq 2$  denote the partial derivatives of $\hat {f}$ of order $k$ with respect to $X_1, X_2, \ldots, X_n$ evaluated at $x$. We generalize main results  in \cite{Dedieu2001On} to simple multiple zeros of higher multiplicities.

Firstly, in Theorem \ref{thm5}, we present  a lower bound for separating simple multiple zeros $\xxi$ of multiplicity $\mu \geq 2$ from {another} zero $y$ of $f$,
\begin{equation}\label{lowerboundmultiple}
\|y-\xxi\|\geq\frac{d}{ 2\gamma_{\mu}(f,\xxi)^{\mu}}, 
\end{equation}
where $d$ is a positive real root of a univariate polynomial $p(d)$ defined in  (\ref{mainunivariatepoly}). The explicit formulas of $p(d)$ for multiplicity $2$ and $3$ are given in
(\ref{formulad2}) and (\ref{formulad3}).  In Section \ref{section3.3}, we also compare our local separation bound for simple double zeros with the one given in \cite{Dedieu2001On}.
 Although  the smallest positive real root $d\approx 0.2865$ of (\ref{formulad3})  is  smaller than  $d \approx 0.2976$ given in \cite{Dedieu2001On}, our value of $\gamma_2$ could be smaller too. Therefore, for some examples (see  Example \ref{ex1}),   our local separation bound still could be larger than the  one given in  \cite{Dedieu2001On}.

Secondly, we
define
\begin{equation}\label{H1}
H_1=\left(\begin{array}{cc}
  \frac{\partial \hat{f}(x)}{\partial X_1} & 0 \\
  \frac{\partial f_n(x)}{\partial X_1} & \frac{\partial f_n(x)}{\partial \hat{X}} \\
\end{array}
\right),
\end{equation}
and tensors
\begin{equation}\label{Hk}
H_k=\left(\left(\begin{array}{cc}
  0 & 0\\
  \Delta_k(f_n) & 0
\end{array}\right) {\mathbf{0}}_{\underbrace{n\times\cdots\times n}_k {\textstyle\times (n-1)}} \right),\ 2\leq k\leq \mux-1,
\end{equation}
and polynomials
 \[g(X)=f(X)-f(x)-\sum_{1\leq k\leq \mux-1}H_k(X-x)^k.\]
Let ${\mathcal{A}}$ be  an invertible matrix
\begin{equation}\label{operatorA}
{\mathcal{A}}=\left(\begin{array}{cc} \sqrt{2}D\hat{f}(x) & 0\\ 0 & \frac{1}{\sqrt{2}}\Amu \end{array}\right).
\end{equation}

In Theorem \ref{thm8}, we show  that if
  \begin{equation}\label{separationradius0}
  \|f(x)\|+\sum_{1\leq k\leq \mux-1}\|H_k\|\left(\frac{d}{4 {\gamma_{\mu}(g,x)}^{\mux}}\right)^k<\frac{d^{\mux+1}}
  {2\left(4{\gamma_{\mu}(g,x)}^{\mux}\right)^{\mux}}\|{\mathcal{A}}^{-1}\|,
  \end{equation}
  then $f$ has $\mux$ zeros (counting multiplicities) in the ball of radius $\frac{d}{4{\gamma_{\mu}(g,x)}^{\mux}}$ around $x$.

Thirdly, we design modified Newton iterations and extend the $\gamma$-theorem   for simple double zeros and simple triple zeros whose Jacobian matrix  has a normalized form (\ref{normalizeform1}).
Given an approximate zero $\Ax$ of $f$ with an associated exact simple double zero  $\Exi$,   we show in Theorem \ref{thmnewtonmul2}
that  when
\[\|\Ax-\Exi\| < \frac{0.0318}{\gamma_2(f,\Exi)^2},\]
  after $k$ times of the modified Newton iteration defined in Algorithm \ref{alg:newtonmult2}, we have:
 \[
 \left \|N_f^k(\Ax)-\Exi \right \| < \left( \frac{1}{2} \right)^{2^k-1}  \left \| \Ax-\Exi \right \|.
 \]
Similarly, for  an approximate zero $\Ax$ of $f$ with an associated exact simple triple  zero  $\Exi$,   we show in Theorem \ref{thmnewtonmul3} that  when
\[\|\Ax-\Exi\| < \frac{0.0154}{\gamma_3(f,\Exi)^3},\]
  after $k$ times of the modified Newton iteration defined in Algorithm \ref{alg:newtonmult3}, we have:
 \[
 \left \|N_f^k(\Ax)-\Exi \right \| < \left( \frac{1}{2} \right)^{2^k-1}  \left \| \Ax-\Exi \right \|.
 \]

 Finally, for simple multiple zeros whose Jacobian matrix is of corank one but it  does not  have a normalized form (\ref{normalizeform1}),   we  apply the unitary transformations   defined  in Section \ref{subsectionnormalform} to obtain an equivalent polynomial system whose Jacobian matrix at   the  approximate simple multiple zeros $\Ax$ has the normalized  form approximately:
   \begin{equation}\label{approxnormal}
   Df(\Ax)=\left(
   \begin{array}{cc}
    0 & \Sigma_{n-1} \\
    \sigma_n & 0 \\
     \end{array}
     \right),
     \end{equation}
where $\sigma_n$ is its smallest singular value and  $\Sigg$ is a nonsingular  diagonal matrix.  Then we perform the Newton iteration to refine the last $n-1$ variables. After the Newton iteration, we need to perform the unitary transformations again to ensure  that the Jacobian matrix at the refined approximate solution satisfies (\ref{approxnormal}). We define the modified  Newton iterations based on our previous work in \cite[Algorithm 1]{LZ:2011} to refine the first variable.  We show in Theorem \ref{thm4.1}, for
 \[\hat{\gamma}_{\mu}(f, \Ax)\|\Ax-\Exi\|<\frac{1}{2},\]
 the refined singular solution $N_f(\Ax)$ returned by     Algorithm \ref{MultiStructure} satisfies
\[
\|N_f(\Ax)- \Exi\|=\bigO(\|\Ax-\Exi\|^2).
\]
Furthermore, we  show in Theorem \ref{newtonmainthm} that for an approximate zero $\Ax$ of a system $f$ associated to a simple triple  zero  $\Exi$, when
\[\|\Ax-\Exi\| < \frac{0.0098}{\gamma_3(f,\Exi)^3},\]
  after $k$ times of the modified Newton iterations defined in    Algorithm \ref{MultiStructure}, we have:
 \[
 \left \|N_f^k(\app)-\acc \right \| < \left( \frac{1}{2} \right)^{2^k-1}  \left \| \app -\acc  \right \|.
 \]
It is clear that the proof of Theorem \ref{newtonmainthm} can be generalized to give a quantitative quadratic convergence of  Algorithm \ref{MultiStructure} for simple multiple zeros of arbitrary higher multiplicities.

\vskip 10pt
\paragraph*{\bf Related Works}

 Yakoubsohn~\cite{Yakoubsohn2000Finding} extended $\alpha$-theory to  clusters  of univariate polynomials and provided an algorithm to compute clusters of univariate polynomials \cite{YAKOUBSOHN2002}. Giusti, Lecerf, Salvy and Yakoubsohn~\cite{GLSY:2005} presented point estimate criteria for cluster location of analytic functions in the univariate case. They  provided  bounds on the diameter of the cluster which contains $\mu$ zeros (counting multiplicities) of $f$. They proposed  an algorithm based on the Schr\"oder iterations for  approximating clusters  and provided a stopping criterion which guarantees  the algorithm converges to the cluster quadratically.    In \cite{GLSY:2007}, they further  generalized   their   results  to locate and approximate   clusters of zeros of analytic maps  of embedding dimension one  in the multivariate case. They   reduced this  particular multivariate case to univariate case via implicit theorem and deflation techniques. We  are inspired by   their technique of reduction to one variable  but we try to avoid the  use of implicitly known univariate analytic function.  Dedieu and Shub~\cite{Dedieu2001On} gave  explicitly a  lower bound for separating simple double zeros $\xxi$ from other zeros of $f$ (\ref{lowerbounddouble}) and  a criterion (\ref{rouche2}) depending only on the approximate solution which guarantees the existence of a cluster of two zeros.  Based on our previous work  \cite{LiZhi:2009,LZ:2012} on computing multiplicity structure of simple multiple zeros, we  generalize Dedieu and Shub's results and deal with  simple multiple zeros with higher multiplicities.  The proof of the  non-quantified quadratic convergence \cite[Theorem 3.16]{LZ:2011}  of Algorithm 1 in  \cite{LZ:2011} has also been simplified.

   There are other approaches to compute isolated multiple zeros or zero clusters, e.g., corrected Newton methods~\cite{Rall66,DeckerKelley:1980I,DeckerKelley:1980II,DeckerKelley:1982,
Griewank:1980,GriewankOsborne:1981,Reddien:1978,Reddien:1980,Morgan1992Computing},  deflation techniques~\cite{OWM:1983,YAMAMOTONORIO:1984,Ojika:1987,Lecerf:2002,DZ:2005,LVZ06,LVZ:2008,WuZhi:2009,DLZ:2009,RuGr09,LZ:2011,MM:2011,
GiustiYakoubsohn:2013,Li2014Verified,Hauenstein2013,Hauenstein:2015}. We refer to \cite{GLSY:2007, Hauenstein:2015} for  excellent introductions of previous works  on approximating  multiple zeros.

\vskip 10pt

\paragraph*{\bf Structure of the Paper}
In Section \ref{sec2}, we recall some notations   and show how to compute incrementally a closed basis of the local dual space of $I_f$ at a given multiple root $\xxi$ of corank $1$ and multiplicity $\mu$.  In Section \ref{mul3}, we  begin with explaining  how to extend main results in \cite{Dedieu2001On} to  simple triple zeros. We  present  a lower bound for separating simple triple zeros  from  other zeros of $f$ and  an explicit  criterion  that guarantees the existence of a cluster of three  zeros of $f$ around  the approximate singular solution $\xx$. Then we generalize these results to simple multiple zeros with arbitrary higher multiplicities. We also compare our local separation bound for simple double zeros with the one given in \cite{Dedieu2001On}.  In Section \ref{sec4}, we design modified Newton iterations and extend the $\gamma$-theory  for simple double zeros and simple triple zeros whose  Jacobian matrix  has a normalized form.
 For a  simple multiple zero of arbitrary large  multiplicity whose Jacobian matrix does not have a normalized form, we perform unitary transformations and  modified Newton iterations,  and show  non-quantified quadratic convergence of new algorithm. Furthermore, we show its  quantified convergence  for simple triple zeros.

\section{Preliminaries}\label{sec2}

\subsection{Local Dual Space}

Let
$\mathbf{d}^{\boldsymbol{\alpha}}_{\xx}: \CC[X] \rightarrow \mathbb{C}$
denote the differential functional defined by
\begin{equation}\label{dualdef}
\mathbf{d}^{\boldsymbol{\alpha}}_{\xx}(g)=\frac{1} {\alpha_1!\cdots
\alpha_n!}\cdot\frac{\partial^{|\boldsymbol{\alpha}|} g}{\partial
x_1^{\alpha_1}\cdots
\partial x_n^{\alpha_n}}(\xx),\quad\forall g\in \CC[X],
\end{equation}
where $\xx\in \mathbb{C}^n$ and $\boldsymbol{\alpha}=[\alpha_1,\ldots,
\alpha_n]\in
\mathbb{N}^n$. We have
 \begin{equation}\label{diffab}
 \mathbf{d}^{\boldsymbol{\alpha}}_{\xx}\left((X-x)^{\boldsymbol{\beta}}\right)=
 \left\{\begin{array}{ll}
 1, & \mbox{if} ~~\boldsymbol{\alpha}=\boldsymbol{\beta}, \\
                                                            0, & \mbox{otherwise.}
                                                          \end{array}
\right.
\end{equation}
Let  $I_f$ be an ideal  generated by a polynomial system $f=\{f_1,
\ldots, f_n\}$, where $f_i\in\mathbb{C}[X_1, \ldots, X_n]$.
The local dual space of $I_f$ at  a given isolated singular solution $\xx$ is a subspace $\mathcal{D}_{f,\xx}$ of
$\mathfrak{D}_{\xx}=\ran_\mathbb{C}\{\mathbf{d}^{\alpha}_{\xx}\}$
such that
\begin{equation}\label{dualspace}
\mathcal{D}_{f,\xx}=\{\Lambda\in
\mathfrak{D}_{\xx}\ \ |\ \  \Lambda(g)=0, ~\forall g\in I_f\}.
\end{equation}
When the evaluation point  $\xx$ is clear from the context,  we  write $d_1^{\alpha_1}\cdots d_n^{\alpha_n}$ instead of $\mathbf{d}^{\boldsymbol{\alpha}}_{\xx}$ for
simplicity.

Let $\mathcal{D}_{f,\xx}^{(k)}$ be the subspace of $\mathcal{D}_{f,\xx}$ with differential functionals of  orders bounded by $k$, we define
\begin{enumerate}
  \item breadth $\br=\dim\left(\mathcal{D}_{f,\xx}^{(1)}\setminus\mathcal{D}_{f,\xx}^{(0)}\right)$,
  \item depth $\dps=\min\left(\left\{k\ \ |\ \dim\left(\mathcal{D}_{f,\xx}^{(k+1)}\setminus\mathcal{D}_{f,\xx}^{(k)}
      \right)=0\right\}\right)$,\\
  \item multiplicity $\mux=\dim\left(\mathcal{D}_{f,\xx}^{(\dps)}\right)$. 
\end{enumerate}
If $\xx$ is an isolated singular solution of $f$, then $1\leq\br\leq n$ and $\dps<\mux<\infty$. 

Let us introduce a morphism $\Phi_{\sigma}: \mathfrak{D}_{\xx}\rightarrow\mathfrak{D}_{\xx}$ which is an anti-differentiation  operator defined by
\[
\Phi_{\sigma}(d_1^{\alpha_1}\cdots d_n^{\alpha_n})=\left\{\begin{array}{ll}
                                                            d_1^{\alpha_1}\cdots d_{\sigma}^{\alpha_{\sigma}-1} \cdots d_n^{\alpha_n}, & \mbox{if }\alpha_{\sigma}>0,\\
                                                            0, & \mbox{otherwise.}
                                                          \end{array}
\right.
\]

Computing a closed basis of the local dual space is done essentially
by matrix-kernel computations based on the stability property  of $\mathcal{D}_{f,\xx}$
\cite{MMM:1995,Mourrain:1996,Stetter:2004,DZ:2005}:
\begin{equation}\label{closed}
\forall \Lambda\in\mathcal{D}_{f,\xx}^{(k)}, \, \,  \Phi_{\sigma}(\Lambda)\in\mathcal{D}_{f,\xx}^{(k-1)}, \  \   \sigma=1,\ldots,n.
\end{equation}



\subsection{Simple Multiple Zeros}

In this paper, we deal  with  simple multiple zeros satisfying  $f(\xxi)=0$, $\dim \ker Df(\xxi)=1$. It  is also  called  breadth one  singular zero in \cite{DZ:2005}  as
\begin{equation}
\dim(\mathcal{D}_{f,\xx}^{(k)}\setminus\mathcal{D}_{f,\xx}^{(k-1)})=1, ~ k=1\ldots,\dps, ~\dps =\mux-1.
\end{equation}
Therefore, the local dual space of  $I_f$ at  a given isolated simple singular solution $\xx$ is
\[\mathcal{D}_{f,\xx}=\ran_\mathbb{C}\{\Lambda_0,\Lambda_1,\ldots,\Lambda_{\mu-1}\},\]
 where $\degree(\Lambda_k)=k$ and $\Lambda_0=1$.  
Suppose $\Lambda_1=a_{1,1}d_1+\cdots+a_{1,n}d_n$, without loss of generality, we assume $a_{1,1}=1$.
Let
  $\Psi_{\sigma}: \mathfrak{D}_{\xx}\rightarrow\mathfrak{D}_{\xx}$ be a differential operator defined by
\[
\Psi_{\sigma}(d_1^{\alpha_1}\cdots d_n^{\alpha_n})=\left\{\begin{array}{ll}
                                                             d_{\sigma}^{\alpha_{\sigma}+1} \cdots d_n^{\alpha_n}, & \mbox{if }\alpha_1=\cdots=\alpha_{\sigma-1}=0,\\
                                                            0, & \mbox{otherwise.}
                                                          \end{array}
\right.
\]
 For $k=2, \ldots, \mu-1$,  by the stability property, we have
\begin{equation}\label{dualbasis}
\left\{\begin{array}{l}
         \Phi_{1}(\Lambda_{k}) = a_{1,1}\Lambda_{k-1}+\cdots+a_{k-1,1}\Lambda_1+a_{k,1}\Lambda_0, \\
         \vdots \\
         \Phi_{n}(\Lambda_{k}) = a_{1,n}\Lambda_{k-1}+\cdots+a_{k-1,n}\Lambda_1+a_{k,n}\Lambda_0.
       \end{array}
\right.
\end{equation}
Let  $a_{1,1}=1$, $a_{k,1}=0$ ($k=2,\ldots,n$), the system (\ref{dualbasis})
 has  a unique solution $\Lambda_{k}=\Delta_{k}+a_{k,2}d_2+\cdots+a_{k,n}d_n$, where
\begin{equation}\label{computedelta}
\Delta_{k}=\sum_{\sigma=1}^{n}\Psi_{\sigma}(a_{1,\sigma}\Lambda_{k-1}+\cdots+a_{k-1,\sigma}\Lambda_1),
\end{equation}
and $a_{k,2},\ldots,a_{k,n}$ are determined by solving the linear system  obtained from  $\Lambda_{k}(f_i)=0, i=1, \ldots, n$:
\begin{equation}\label{solveaij}
     \left(
            \begin{array}{ccc}
              d_2(f_1) & \cdots & d_n(f_1) \\
              \vdots & \ddots & \vdots \\
              d_2(f_{n}) & \cdots & d_n(f_{n}) \\
            \end{array}
          \right)\left(
                   \begin{array}{c}
                     a_{k,2} \\
                     \vdots \\
                     a_{k,n} \\
                   \end{array}
                 \right)= -\left(
    \begin{array}{c}\Delta_{k}(f_1) \\
      \vdots \\
      \Delta_{k}(f_{n}) \\
    \end{array}
  \right).
\end{equation}
We refer to  \cite{LiZhi:2009,LZ:2011, LZ:2012} for the justification of above arguments.

  The following definition generalizes  the simple double zero in \cite{Dedieu2001On}.
  \begin{definition}
   Let
$f:\mathbb{C}^n \rightarrow \mathbb{C}^n$, where $f_i\in\mathbb{C}[X]$ and suppose $f(\xx)=0$. Then $\xx$ is a simple zero of multiplicity $\mux$ for $f$ if
\begin{enumerate}
  \item[(A)]
  $\dim\ker Df(\xxi)=1$,

  \item[(B)] $\Delta_{k}(f)\in\img\  Df(\xxi)$, for $k=2,\ldots,\mux-1$,

  \item[(C)]
  $\Delta_{\mux}(f)\notin\img\ Df(\xxi)$.
\end{enumerate}%
%
\end{definition}

In fact, for $\mux=2$, suppose $\ker\Jac=\ran_\mathbb{C}\{v\}$ with $\|v\|=1$, then $\Lambda_1(f)=\Jac \cdot v =v_1 d_1(f) +\cdots + v_n d_n(f)$  and
\begin{align*}
\Delta_2(f) &=\sum_{\sigma=1}^{n} \Psi_{\sigma} (v_{\sigma} \Lambda_1)(f)\\
&=\sum_{\sigma=1}^{n} \Psi_{\sigma} (v_{\sigma} (v_1 d_1 +\cdots + v_n d_n))(f)\\
               &=\sum_{i > j}v_i v_j d_i d_j(f)+\Sigma v_i^2 d_i^2(f)\\
               &=\frac{1}{2}D^2f(\xx)(v,v).
\end{align*}
 Hence, the condition  $\Delta_{2}(f)\notin\img\ Df(\xxi)$ is equivalent to  $D^2f(\xx)(v,v)\notin\img\ \Jac$, the condition given  for the simple double zero in \cite{Dedieu2001On}.


\subsection{Normalized Form}\label{subsectionnormalform}

   We show below  that it is always possible to perform unitary transformations to obtain an equivalent polynomial system whose Jacobian matrix at the simple multiple zero  has a normalized form. 

\begin{definition}\label{defnormalizedform}
For a polynomial function $f:\mathbb{C}^n \rightarrow \mathbb{C}^n$, where $f_i\in\mathbb{C}[X_1, \ldots, X_n]$,  $\Jac$  has a normalized form if 
\begin{eqnarray}\label{normalizeform}
       \Jac=\left(
               \begin{array}{cc}
                 0 & D\hat{f}(x) \\
                 0 & 0 \\
               \end{array}
             \right),
\end{eqnarray}
   $D\hat{f}(x)$ is the nonsingular Jacobian matrix of polynomials $\hat{f}=\{f_1, \ldots, f_{n-1}\}$ with respect to variables $X_2, \ldots, X_n$.

\end{definition}

 Let $\Jac=U\cdot  \left(
         \begin{array}{cc}
           \Sigg & 0 \\
            0 & 0 \\
         \end{array}
       \right) \cdot V^{\ast}$ be the  singular value decomposition of $\Jac$ of corank $1$, where $U=(u_1,\ldots,u_n)$ and $V=(v_1,\ldots,v_n)$ are unitary matrices, $V^{\ast}$ is the Hermitian transpose of $V$,  and
$\Sigg$ is a nonsingular  diagonal matrix.
We can always assume that $\Jac$ has a normalized form (\ref{normalizeform}). Otherwise,  let $g=U^{\ast}\cdot f(W\cdot X)$, where $W=(v_n,v_1,\ldots,v_{n-1})$ is also a unitary matrix. Suppose $\xx$ is a simple multiple zero of $f$ of multiplicity $\mux$, then $W^{\ast}\xx$ is a simple multiple zero of $g$ of multiplicity $\mux$  and the  Jacobian matrix  of $g$ at $W^*x$  has a normalized form:
\begin{align*}
  Dg(W^{\ast}\xx) & = U^{\ast}\cdot\Jac\cdot W\\
   & = U^{\ast}\cdot U\cdot\Sigma\cdot V^{\ast}\cdot W\\
   & = \left(
         \begin{array}{cc}
           \Sigg & 0 \\
            0 & 0 \\
         \end{array}
       \right)
   \cdot \left(
           \begin{array}{cc}
             0 & I_{n-1} \\
             1 & 0 \\
           \end{array}
         \right)\\
         & = \left(
               \begin{array}{cc}
                 0 & \Sigg \\
                 0 & 0 \\
               \end{array}
             \right).
\end{align*}
 Furthermore, suppose $\yy$ is another zero of $f$, then $W^{\ast}\yy$ is another zero of $g$, and the Euclidean distance between  zeros $\xx$ and $y$ does not change under the unitary  transformation:
\[\|W^{\ast}\xx-W^{\ast}\yy\|=\|W^{\ast}(\xx-\yy)\|=\|\xx-\yy\|.\]

If $\xxi$ is a simple {multiple} zero of multiplicity $\mux$ for $f$ and $Df(\xxi)$ has the normalized form (\ref{normalizeform}). Then we have
\[\img\ \Jac =\img\left(\begin{array}{c}
                                              D\hat{f}(x) \\
                                              0
                                            \end{array}\right),\]
and
\begin{align*}
  \Delta_{k}(f)\in\img\ Df(\xxi) & \Leftrightarrow  \Delta_{k}(f_n)=0.
\end{align*}
The  (B)(C) conditions can be simplified to check  only the last polynomial $f_n$:

\begin{enumerate}
  \item[(B)]
  $\Delta_{k}(f_n)=0$, for $k=2,\ldots,\mux-1$,
  \item[(C)]
  $\Delta_{\mux}(f_n)\neq0$.
\end{enumerate}
 {The linear system  (\ref{solveaij}) for getting the values of $a_{k,2},\ldots,a_{k,n}$ 
 can be simplified to:
\begin{equation}\label{solveaijnormalized}
     \left(
            \begin{array}{ccc}
              d_2(f_1) & \cdots & d_n(f_1) \\
              \vdots & \ddots & \vdots \\
              d_2(f_{n-1}) & \cdots & d_n(f_{n-1}) \\
            \end{array}
          \right)\left(
                   \begin{array}{c}
                     a_{k,2} \\
                     \vdots \\
                     a_{k,n} \\
                   \end{array}
                 \right)= -\left(
    \begin{array}{c}\Delta_{k}(f_1) \\
      \vdots \\
      \Delta_{k}(f_{n-1}) \\
    \end{array}
  \right).
\end{equation}
}

\section{Local Separation Bound and Cluster Location}\label{mul3}

 We  begin with explaining  how to extend main results in \cite{Dedieu2001On} to  simple triple zeros. Then we generalize these results to simple multiple zeros with arbitrary higher multiplicities. We also compare our local separation bound for simple double zeros with the one given in \cite{Dedieu2001On}.

\subsection{Simple Triple Zeros}\label{sec3.1}

  Let $\xx$ be a simple triple    zero of  $f$ and $Df(\xxi)$ has the normalized form (\ref{normalizeform}), i.e.
 \[ \frac{\partial f_i(x)}{\partial X_1}=0, \ \ \frac{\partial f_n(x)}{\partial X_i}=0,  \ \ 1 \leq i \leq n\]
  and 
   \begin{equation*}
    \Delta_{2}(f_n)=0, \  \Delta_{3}(f_n)\neq 0.
\end{equation*}

 Let $\Lambda_0=1$, $\Lambda_1=d_1$. By (\ref{computedelta}),  we have
\begin{equation*}\label{delta2}
\Delta_2=\sum_{\sigma=1}^{n} \Psi(a_{1,\sigma} \Lambda_1) =d_1^2,
\end{equation*}
and
\[
\Lambda_2=d_1^2+a_{2,2}d_2+ \cdots +a_{2,n}d_n,
\]
where $a_{2,2}, \ldots, a_{2,n}$ satisfy
\[
\left(
                   \begin{array}{c}
                     a_{2,2} \\
                     \vdots \\
                     a_{2,n} \\
                   \end{array}
                 \right)=-\Sig^{-1}\left(
    \begin{array}{c}
      \Delta_{2}(f_1) \\
      \vdots \\
      \Delta_{2}(f_{n-1}) \\
    \end{array}
  \right)
  =-\Sig^{-1}\left(
    \begin{array}{c}
      d_1^2(f_1) \\
      \vdots \\
      d_1^{2}(f_{n-1}) \\
    \end{array}
  \right),
\]
since $d_2(f_n)=\cdots=d_n(f_n)=0$,  $\Delta_2(f_n)=0$ and the Jacobian $D\hat{f}(x)$ of   polynomials $\hat{f}=\{f_1, \ldots, f_{n-1}\}$
with respect to variables
$\hat{X}=\{X_2, \ldots, X_n\}$
 is invertible.  Moreover, since $a_{1,1}=1$, $a_{2,1}=0$, we have
\begin{align*}
\Delta_3 & =\sum_{\sigma=1}^{n} \Psi_{\sigma}(a_{1,\sigma} \Lambda_2 + a_{2,\sigma} \Lambda_1)\\
& =\Psi_1(\Lambda_2)+\sum_{\sigma=1}^{n}\Psi_{\sigma}(a_{2,\sigma}d_1)\\
& =d_1^3+a_{2,2}d_1 d_2+\cdots +a_{2,n}d_1 d_n.\\
& =d_1^3+ \left(d_1d_2, \ldots, d_1d_n  \right) \cdot \left(-\Sig^{-1}\right) \cdot \left(
    \begin{array}{c}
      d_1^2(f_1) \\
      \vdots \\
      d_1^{2}(f_{n-1}) \\
    \end{array}
  \right).
\end{align*}
For simplicity, we use  the  following equivalent conditions
  \begin{equation}\label{mul3delta2}
  \Delta_{2}(f_n)=\frac{1}{2}\frac{\partial^2f_n(x)}{\partial X_1^2}=0,
  \end{equation}
 \begin{equation}\label{mul3delta3}
  \Delta_{3}(f_n)=\frac{1}{6}\frac{\partial^3 f_n(x)}{\partial X_1^3}-\frac{\partial^2 f_n(x)}{\partial X_1\partial \hat{X}}\cdot\Sig^{-1}\frac{1}{2}\frac{\partial^2 \hat{f}(x)}{\partial X_1^2}\neq0.
  \end{equation}

The definition of $\gamma_3$ has been given in  (\ref{gammamu}):
\[
  {\gamma}_{3}=\gamma_{3}(f, \xxi)=\max(\hat{\gamma}_3,\gamma_{3,n}),
  \]
  where 
  \[
\hat{\gamma}_3=\hat{\gamma}_3(f,\xxi)=\max\left(1,\ \sup_{k\geq 2}\left\|D\hat{f}(\xxi)^{-1}\frac{D^k \hat{f}(\xxi)}{k!}\right\|^{\frac{1}{k-1}}\right),
\]
and
\[{\gamma}_{3,n}={\gamma}_{3,n}(f,\xxi)=\max\left(1,\ \sup_{k\geq 2}\left\|\frac{1}{\Athree}\cdot\frac{D^k f_n(\xxi)}{k!}\right\|^{\frac{1}{k-1}}\right).\]


For two nonzero vectors $a,\ b\in\mathbb{C}^n$,  we define their angle by
\begin{equation}\label{angle}
d_P(a,b)=\arccos\frac{|\langle a,b\rangle|}{\|a\|\cdot\|b\|}.
\end{equation}
Let  $y$ be another vector in  $\mathbb{C}^n$ and $y\not=x$ and  define  \[w=y-x=\left(\begin{array}{c}\aaa\\\bbb_2\\\vdots\\\bbb_{n}\end{array}\right),\ \bbb=\left(\begin{array}{c}\bbb_2\\\vdots\\\bbb_{n}\end{array}\right).\]
Let  $\varphi=d_P(v,y-x)$, $v=(1,0, \ldots,0)^T$, then we have
\[|\aaa|=\|w\|\cos\varphi,\ \ \|\bbb\|=\|w\|\sin\varphi.\]
  For $k\geq 2$,  we use $D^{k} \hat f(x)$ to denote the partial derivatives of $\hat f$ of order $k$ with respect to $X_1, X_2, \ldots, X_n$. We generalize main results  in \cite{Dedieu2001On} to simple triple zeros.

\begin{lem}\label{oldlemma2}
   If $\hat{\gamma}_3(f,x)\|w\|\leq\frac{1}{2}$, then
  \[\left\|\Sig^{-1}\hat{f}(y)\right\|\geq \|w\|\sin\varphi-2\hat{\gamma}_3(f,x)\|w\|^2.\]
\end{lem}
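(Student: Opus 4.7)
The plan is to Taylor expand $\hat{f}$ around $x$, exploit the normalized form of $Df(x)$ to isolate a clean linear term, and then control the higher-order tail by a geometric series using the definition of $\hat{\gamma}_3$.

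First I would write
\[
\hat{f}(y) = \hat{f}(x) + D\hat{f}(x)(y-x) + \sum_{k\geq 2}\frac{D^k\hat{f}(x)}{k!}\,w^k,
\]
and use two facts: $\hat{f}(x)=0$ (since $x$ is a zero of $f$), and because $Df(x)$ is in the normalized form \eqref{normalizeform}, the column $\partial\hat{f}/\partial X_1(x)$ vanishes. Decomposing $w=(\zeta,\eta^T)^T$, the first-order term therefore collapses to $D\hat{f}(x)\,\eta$, where on the right-hand side $D\hat{f}(x)$ denotes the invertible $(n-1)\times(n-1)$ block appearing in \eqref{normalizeform}. Applying $D\hat{f}(x)^{-1}$ gives
\[
D\hat{f}(x)^{-1}\hat{f}(y) = \eta + \sum_{k\geq 2} D\hat{f}(x)^{-1}\frac{D^k\hat{f}(x)}{k!}\,w^k.
\]

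Next I would apply the reverse triangle inequality and $\|\eta\|=\|w\|\sin\varphi$ to obtain
\[
\bigl\|D\hat{f}(x)^{-1}\hat{f}(y)\bigr\| \;\geq\; \|w\|\sin\varphi \;-\; \sum_{k\geq 2}\Bigl\|D\hat{f}(x)^{-1}\tfrac{D^k\hat{f}(x)}{k!}\Bigr\|\,\|w\|^k.
\]
Using the definition \eqref{gammahat} of $\hat{\gamma}_3$, each factor $\|D\hat{f}(x)^{-1}D^k\hat{f}(x)/k!\|$ is bounded by $\hat{\gamma}_3^{k-1}$, so the tail is dominated by the geometric series $\|w\|\sum_{k\geq 2}(\hat{\gamma}_3\|w\|)^{k-1} = \|w\|\cdot\hat{\gamma}_3\|w\|/(1-\hat{\gamma}_3\|w\|)$.

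Finally, the hypothesis $\hat{\gamma}_3(f,x)\|w\|\leq 1/2$ forces $1-\hat{\gamma}_3\|w\|\geq 1/2$, hence $\hat{\gamma}_3\|w\|/(1-\hat{\gamma}_3\|w\|)\leq 2\hat{\gamma}_3\|w\|$, and the tail is bounded by $2\hat{\gamma}_3\|w\|^2$, yielding the claimed inequality. The only subtle point to justify carefully is that in the normalized form the first column of the full Jacobian of $\hat{f}$ really does vanish (so that the linear term sees only $\eta$, not all of $w$); the rest is a standard Smale-style geometric-series estimate and I do not expect any serious obstacle.
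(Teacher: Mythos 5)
Your proof is correct and follows the paper's argument essentially verbatim: Taylor expand $\hat{f}(y)$ at $x$, use $\hat{f}(x)=0$ and the vanishing of $\partial\hat{f}/\partial X_1(x)$ (from the normalized form) so the linear term is $D\hat{f}(x)\eta$, apply $D\hat{f}(x)^{-1}$, take norms, bound the tail by the geometric series $\hat{\gamma}_3\|w\|^2/(1-\hat{\gamma}_3\|w\|)$, and use $\hat{\gamma}_3\|w\|\leq 1/2$ to conclude with the factor $2$. The paper phrases it as a triangle inequality applied to $\|\eta\|$ rather than a reverse triangle inequality applied to $\|D\hat{f}(x)^{-1}\hat{f}(y)\|$, but that is only a cosmetic rearrangement.
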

\begin{proof}
  By Taylor's expansion of $\hat{f}(y)$ at $x$, and  $\frac{\partial {\hat f}(x)}{\partial X_1}=0$, we have
  \[\hat{f}(y)=\hat{f}(x)+\Sig\bbb+\sum_{k\geq2}\frac{D^k \hat{f}(x)(y-x)^k}{k!}.\]
  Noticing that $\hat{f}(x)=0$ and $\Sig$ is invertible,  we have
  \[\bbb=\Sig^{-1}\hat{f}(y)-\sum_{k\geq2}\Sig^{-1}\frac{D^k \hat{f}(x)(y-x)^k}{k!}.\]
  By the triangle inequality, we have
  \begin{align*}
    \|w\|\sin\varphi=\|\bbb\|
    &\leq\left\|\Sig^{-1}\hat{f}(y)\right\|+\sum_{k\geq2}\left\|\Sig^{-1}\frac{D^k \hat{f}(x)}{k!}\right\|\|y-x\|^k\\
    &\leq\left\|\Sig^{-1}\hat{f}(y)\right\|+\sum_{k\geq2}\hat{\gamma}_3(f,x)^{k-1}\|w\|^k\\
    &\leq\left\|\Sig^{-1}\hat{f}(y)\right\|+2\hat{\gamma}_3(f,x) \|w\|^2,
  \end{align*}
  where the last inequality comes from the assumption that $\hat{\gamma}_3(f,x)\|w\|\leq\frac{1}{2}$.
\end{proof}

Let
\begin{equation}\label{Aoperator}
\mathcal{A}=\left(\begin{array}{cc} \sqrt{2}\Sig & 0\\ 0 & \frac{1}{\sqrt{2}}\Athree\end{array}\right) \in \mathbb{C}^{n \times n},
\end{equation}
since $\Sig$ is invertible and $\Delta_{3}(f_n)\neq 0$, we have 
\begin{equation}\label{InverseAoperator}
\mathcal{A}^{-1}=\left(\begin{array}{cc} \frac{1}{\sqrt{2}}\Sig^{-1} & 0\\ 0 & \frac{\sqrt{2}}{\Athree} \end{array}\right).
\end{equation}

\begin{lem}\label{oldlemma3}
  If 
  $ \gamma_{3}(f,x)\|w\|\leq\frac{1}{2}$,
  then
\begin{align*}
 &\|\mathcal{A}^{-1}f(y)\|\geq\frac{\cos^3\varphi-8\gamma_3^2 \cos^2\varphi\sin\varphi-7\gamma_3^2 \cos\varphi\sin^2\varphi-2\gamma_3^2\sin^3\varphi}{1+2\cos\varphi+\sin\varphi}\|w\|^3-2\gamma_3^3\|w\|^4.
\end{align*}
\end{lem}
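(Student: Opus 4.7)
The plan is to decompose $\mathcal{A}^{-1}f(y) = (u_1, u_2)$ with $u_1 = \tfrac{1}{\sqrt{2}}D\hat{f}(x)^{-1}\hat{f}(y)\in\mathbb{C}^{n-1}$ and $u_2 = \tfrac{\sqrt{2}}{\Delta_3(f_n)}f_n(y)\in\mathbb{C}$, and then to extract a $\Delta_3(f_n)\zeta^3$ contribution from $f_n(y)$. Since $\|\mathcal{A}^{-1}f(y)\|^2 = \|u_1\|^2 + |u_2|^2$, each block is individually dominated by $\|\mathcal{A}^{-1}f(y)\|$; in particular $|f_n(y)| \leq \tfrac{|\Delta_3(f_n)|}{\sqrt{2}}\,\|\mathcal{A}^{-1}f(y)\|$ and $\|D\hat{f}(x)^{-1}\hat{f}(y)\| \leq \sqrt{2}\,\|\mathcal{A}^{-1}f(y)\|$, and these two handover estimates will convert a cubic-in-$\zeta$ upper bound on $|\Delta_3(f_n)||\zeta|^3$ into the desired lower bound on $\|\mathcal{A}^{-1}f(y)\|$.

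For the cubic estimate I would Taylor-expand $f_n(y)$ at $x$, exploiting $f(x)=0$, $\partial f_n/\partial X_i = 0$ for all $i$, and $\partial^2 f_n/\partial X_1^2 = 0$. The second-order part becomes $\zeta\,\frac{\partial^2 f_n}{\partial X_1\partial\hat{X}}\cdot\eta + \tfrac{1}{2}\frac{\partial^2 f_n}{\partial\hat{X}\partial\hat{X}}(\eta,\eta)$, and the third-order part contains $\tfrac{1}{6}\frac{\partial^3 f_n}{\partial X_1^3}\zeta^3$ together with $\zeta^2\eta$, $\zeta\eta^2$ and $\eta^3$ mixed terms. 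Substituting
\[
\eta = D\hat{f}(x)^{-1}\hat{f}(y) - \tfrac{1}{2}D\hat{f}(x)^{-1}D^2\hat{f}(w,w) - \sum_{k\geq 3}\tfrac{D\hat{f}(x)^{-1}D^k\hat{f}(w^k)}{k!}
\]
into the linear-in-$\eta$ summand $\zeta\,\frac{\partial^2 f_n}{\partial X_1\partial\hat{X}}\cdot\eta$ generates a piece $-\tfrac{\zeta^3}{2}\frac{\partial^2 f_n}{\partial X_1\partial\hat{X}}\cdot D\hat{f}(x)^{-1}\frac{\partial^2\hat{f}}{\partial X_1^2}$ that combines with $\tfrac{1}{6}\frac{\partial^3 f_n}{\partial X_1^3}\zeta^3$ to give exactly $\Delta_3(f_n)\zeta^3$ by identity \eqref{mul3delta3}, yielding
\[
f_n(y) = \Delta_3(f_n)\zeta^3 + \zeta\,\frac{\partial^2 f_n}{\partial X_1\partial\hat{X}}\cdot D\hat{f}(x)^{-1}\hat{f}(y) + R,
\]
where $R$ collects the second-order leftover $\tfrac{1}{2}\frac{\partial^2 f_n}{\partial\hat{X}\partial\hat{X}}(\eta,\eta)$, the non-$\zeta^3$ third-order terms, the extra pieces created by the $\eta$-substitution, and the Taylor tails of order $\geq 4$.

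Applying the triangle inequality to $|\Delta_3(f_n)|\,|\zeta|^3 \leq |f_n(y)| + |\zeta|\,\|\frac{\partial^2 f_n}{\partial X_1\partial\hat{X}}\|\,\|D\hat{f}(x)^{-1}\hat{f}(y)\| + |R|$ and using $\|\frac{\partial^2 f_n}{\partial X_1\partial\hat{X}}\| \leq 2|\Delta_3(f_n)|\gamma_{3,n}$ together with $\gamma_3\|w\|\leq 1/2$, the first two summands contribute $\tfrac{1}{\sqrt{2}}|\Delta_3(f_n)|\,\|\mathcal{A}^{-1}f(y)\|$ and $\sqrt{2}\,|\Delta_3(f_n)|\cos\varphi\,\|\mathcal{A}^{-1}f(y)\|$. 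For $\tfrac{1}{2}\frac{\partial^2 f_n}{\partial\hat{X}\partial\hat{X}}(\eta,\eta)$ inside $R$ I would replace one copy of $\eta$ by $D\hat{f}(x)^{-1}\hat{f}(y) + O(\hat{\gamma}_3\|w\|^2)$ via Lemma~\ref{oldlemma2}, producing an extra $\tfrac{\sin\varphi}{\sqrt{2}}|\Delta_3(f_n)|\,\|\mathcal{A}^{-1}f(y)\|$ plus a cubic-order residue. Summed, the $\|\mathcal{A}^{-1}f(y)\|$-coefficient reaches $|\Delta_3(f_n)|(1+2\cos\varphi+\sin\varphi)/\sqrt{2}$; the mixed third-order residuals in $R$ produce the $\gamma_3^2\cos^a\varphi\sin^b\varphi\|w\|^3$ penalties in the numerator; and the fourth-and-higher tails (from both $f_n$ and the $\eta$-substitution) telescope by a geometric series into the single $-2\gamma_3^3\|w\|^4$ error. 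Dividing by $|\Delta_3(f_n)|$ and rearranging then yields the stated lower bound.

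The main obstacle will be the careful bookkeeping needed to land exactly on the coefficients $8$, $7$, $2$ of $\gamma_3^2\cos^a\varphi\sin^b\varphi$ in the numerator. The $\zeta^2\eta$ channel receives contributions from both $\tfrac{1}{2}\zeta^2\frac{\partial^3 f_n}{\partial X_1^2\partial\hat{X}}\cdot\eta$ and the substitution-generated $-\zeta^2\frac{\partial^2 f_n}{\partial X_1\partial\hat{X}}\cdot D\hat{f}(x)^{-1}\frac{\partial^2\hat{f}}{\partial X_1\partial\hat{X}}\cdot\eta$; the $\zeta\eta^2$ channel from $\tfrac{1}{2}\zeta\frac{\partial^3 f_n}{\partial X_1\partial\hat{X}^2}(\eta,\eta)$ and $-\tfrac{1}{2}\zeta\frac{\partial^2 f_n}{\partial X_1\partial\hat{X}}\cdot D\hat{f}(x)^{-1}\frac{\partial^2\hat{f}}{\partial\hat{X}^2}(\eta,\eta)$; the $\eta^3$ channel from $\tfrac{1}{6}\frac{\partial^3 f_n}{\partial\hat{X}^3}(\eta,\eta,\eta)$; and the $\gamma_3^2\|w\|^3\sin\varphi$ residue from the Lemma~\ref{oldlemma2}-based split of $\tfrac{1}{2}\frac{\partial^2 f_n}{\partial\hat{X}^2}(\eta,\eta)$, all to be redistributed among the three trigonometric monomials of total degree three so as to reproduce the theorem's exact numerator.
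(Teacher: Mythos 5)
Your outline follows the same route as the paper's proof: Taylor-expand $f_n$ at $x$, solve the expansion of $\hat f(y)$ for $\eta$ and substitute it once into the $\zeta\eta$ and $\eta^2$ terms so that the $\zeta^3$ coefficient assembles into $\Delta_3(f_n)$ via (\ref{mul3delta3}), then push everything else through the triangle inequality using $|\zeta|=\|w\|\cos\varphi$, $\|\eta\|=\|w\|\sin\varphi$, $\gamma_3\|w\|\leq 1/2$. That is exactly the mechanism, and the constants $8$, $7$, $2$ are the $\gamma_3^2$-bounds on the coefficients $C_{2,1}$, $C_{1,2}$, $C_{0,3}$ of the cubic monomials $\zeta^2\eta$, $\zeta\eta^2$, $\eta^3$, just as you say. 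Two points deserve attention, neither fatal.

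First, the bookkeeping you sketch in the last paragraph is incomplete: the $\zeta^2\eta$ channel also receives $-\tfrac{1}{2}\tfrac{\partial^2 f_n}{\partial\hat X^2}D\hat f(x)^{-1}\tfrac{1}{2}\tfrac{\partial^2\hat f}{\partial X_1^2}\zeta^2\eta$, and the $\zeta\eta^2$ channel also receives $-\tfrac{1}{2}\tfrac{\partial^2 f_n}{\partial\hat X^2}D\hat f(x)^{-1}\tfrac{\partial^2\hat f}{\partial X_1\partial\hat X}\zeta\eta^2$, both of which are created by the $\eta$-substitution inside $\tfrac{1}{2}\tfrac{\partial^2 f_n}{\partial\hat X^2}\eta^2$. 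Without these you would bound $\|\Delta_3^{-1}C_{2,1}\|$ and $\|\Delta_3^{-1}C_{1,2}\|$ by $7\gamma_3^2$ and $5\gamma_3^2$ rather than $8\gamma_3^2$ and $7\gamma_3^2$. Since you explicitly flag this bookkeeping as the main obstacle, this is a sign to finish it rather than a gap in the approach.

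Second, the final handover is genuinely different from the paper's and does not produce exactly the stated constants, although it still proves the lemma. You bound the two blocks separately: $|f_n(y)|/|\Delta_3(f_n)|\leq\tfrac{1}{\sqrt2}\|\mathcal A^{-1}f(y)\|$ and $\|D\hat f(x)^{-1}\hat f(y)\|\leq\sqrt2\|\mathcal A^{-1}f(y)\|$. That is correct, but after collecting the coefficient $(1+2\cos\varphi+\sin\varphi)/\sqrt2$ and dividing, you obtain
\[
\|\mathcal A^{-1}f(y)\|\;\geq\;\frac{\sqrt2\bigl(\cos^3\varphi-8\gamma_3^2\cos^2\varphi\sin\varphi-7\gamma_3^2\cos\varphi\sin^2\varphi-2\gamma_3^2\sin^3\varphi\bigr)}{1+2\cos\varphi+\sin\varphi}\|w\|^3-2\sqrt2\,\gamma_3^3\|w\|^4,
\]
i.e.\ the lemma's right-hand side multiplied by $\sqrt2$, not the stated form. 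This still implies the lemma — when the lemma's right-hand side $L$ is negative the claim is vacuous, and when $L\geq 0$ you have $\sqrt2\,L\geq L$ — but you should say so, rather than asserting that dividing by $|\Delta_3(f_n)|$ "yields the stated lower bound." The paper avoids this issue by first dividing by $1+2\cos\varphi+\sin\varphi$ and then bounding $\bigl|\tfrac{1}{\Delta_3}f_n(y)\bigr|+\tfrac{\sqrt5}{2+2\sqrt5}\|\Sig^{-1}\hat f(y)\|\leq\sqrt2\bigl\|\bigl(\tfrac{\sqrt5}{2+2\sqrt5}\Sig^{-1}\hat f(y),\tfrac{1}{\Delta_3}f_n(y)\bigr)\bigr\|\leq\|\mathcal A^{-1}f(y)\|$, which reassembles the $\mathcal A^{-1}$ norm with coefficient $1$; if you want to hit the stated constants on the nose, that is the variant to adopt.
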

\begin{proof}
  By Taylor's expansion of $\hat{f}(y)$ at $x$,
     and 
    $\frac{\partial {\hat f(x)}}{\partial X_1}=0$,  we have
\begin{align*}
        \bbb=&\Sig^{-1}\left(\hat{f}(y)-\frac{1}{2}\frac{\partial^2 \hat{f}(x)}{\partial X_1^2}\aaa^2-\frac{\partial^2 \hat{f}(x)}{\partial X_1\partial \hat{X}}\aaa\bbb-\frac{1}{2}\frac{\partial^2 \hat{f}(x)}{\partial \hat{X}^2}\bbb^2-\sum_{k\geq3}\frac{D^k \hat{f}(x)(y-x)^k}{k!}\right).
  \end{align*}
   By expanding $f_n(y)$ at $x$ and $\frac{\partial f_n(x)}{\partial X_1}=\cdots=\frac{\partial f_n(x)}{\partial X_n}=\frac{\partial^2 f_n(x)}{\partial X_1^2}=0$, we have:
  \begin{align*}
  \hspace{-20pt}
    &f_n(y)=\left(\frac{\partial^2 f_n(x)}{\partial X_1\partial \hat{X}}\aaa \bbb+\frac{1}{2}\frac{\partial^2 f_n(x)}{\partial \hat{X}^2}\bbb^2\right)+\frac{1}{6}\frac{\partial^3 f_n(x)}{\partial X_1^3}\aaa^3+\frac{1}{2}\frac{\partial^3 f_n(x)}{\partial X_1^2\partial \hat{X}}\aaa^2\bbb\\
    \notag&\ \ +\frac{1}{2}\frac{\partial^3 f_n(x)}{\partial X_1\partial \hat{X}^2}\aaa\bbb^2+\frac{1}{6}\frac{\partial^3 f_n(x)}{\partial \hat{X}^3}\bbb^3+\sum_{k\geq4}\frac{D^k f_n(x)(y-x)^k}{k!}.
  \end{align*}
  Substituting one $\bbb$ in $\frac{\partial^2 f_n(x)}{\partial X_1\partial \hat{X}}\aaa \bbb$ and  $\frac{1}{2}\frac{\partial^2 f_n(x)}{\partial \hat{X}^2}\bbb^2$  by the  expansion of $\bbb$, as $\Athree \neq 0$, we have
  \begin{align*}
  \hspace{-30pt}
    &\frac{1}{\Athree}f_n(y)=\frac{1}{\Athree}\frac{\partial^2 f_n(x)}{\partial X_1\partial \hat{X}}\Sig^{-1}\hat{f}(y)\aaa+\frac{1}{\Athree}\frac{1}{2}\frac{\partial^2 f_n(x)}{\partial \hat{X}^2}\Sig^{-1}\hat{f}(y)\bbb+\aaa^3\\
   \notag &+\frac{1}{\Athree}C_{2,1}\aaa^2\bbb+\frac{1}{\Athree}C_{1,2}\aaa\bbb^2+\frac{1}{\Athree}C_{0,3}\bbb^3+\sum_{k\geq4}\frac{1}{\Athree}\frac{D^k f_n(x)(y-x)^k}{k!}\\
   \notag &+\frac{1}{\Athree}T_{1,0}\sum_{k\geq3}\Sig^{-1}\frac{D^k \hat{f}(x)(y-x)^k}{k!}\aaa+\frac{1}{\Athree}T_{0,1}\sum_{k\geq3}\Sig^{-1}\frac{D^k \hat{f}(x)(y-x)^k}{k!}\bbb.
  \end{align*}
where
  \begin{align*}
  C_{2,1}&=\frac{1}{2}\frac{\partial^3 f_n(x)}{\partial X_1^2\partial \hat{X}}-\frac{\partial^2 f_n(x)}{\partial X_1\partial \hat{X}}\cdot\Sig^{-1}\frac{\partial^2 \hat{f}(x)}{\partial X_1\partial \hat{X}}-\frac{1}{2}\frac{\partial^2 f_n(x)}{\partial \hat{X}^2}\cdot\Sig^{-1}\frac{1}{2}\frac{\partial^2 \hat{f}(x)}{\partial X_1^2},\\
  C_{1,2}&=\frac{1}{2}\frac{\partial^3 f_n(x)}{\partial X_1\partial \hat{X}^2}-\frac{\partial^2 f_n(x)}{\partial X_1\partial \hat{X}}\cdot\Sig^{-1}\frac{1}{2}\frac{\partial^2 \hat{f}(x)}{\partial \hat{X}^2}-\frac{1}{2}\frac{\partial^2 f_n(x)}{\partial \hat{X}^2}\cdot\Sig^{-1}\frac{\partial^2 \hat{f}(x)}{\partial X_1\partial \hat{X}},\\
  C_{0,3}&=\frac{1}{6}\frac{\partial^3 f_n(x)}{\partial \hat{X}^3}-\frac{1}{2}\frac{\partial^2 f_n(x)}{\partial \hat{X}^2}\cdot\Sig^{-1}\frac{1}{2}\frac{\partial^2 \hat{f}(x)}{\partial \hat{X}^2},\\
  T_{1,0}&= -\frac{\partial^2 f_n(x)}{\partial X_1\partial \hat{X}},\\
  T_{0,1}&=-\frac{1}{2}\frac{\partial^2 f_n(x)}{\partial \hat{X}^2}.
  \end{align*}

For the classical operator norm, we have  the following inequalities for $i+j=k$:
  \[
   \left\|\frac{\partial^k \hat{f}(x)}{\partial X_1^i\partial \hat{X}^j} \right\| \leq \|D^k \hat{f}(x)\|, \ \
  \left\|\frac{\partial^k f_n(x)}{\partial X_1^i\partial \hat{X}^j} \right\| \leq \|D^k f_n(x)\|.
  \]
Therefore,  by moving $\aaa^3$ to the left side and $\frac{1}{\Athree}f_n(y)$ to the right side of the equation and applying the triangle inequalities, we have
 \begin{align*}
  \hspace{-20pt}
   |\aaa|^3 &\leq \left|\frac{1}{ \Athree }f_n(y)\right|+2\gamma_{3,n}\left\|\Sig^{-1}\hat{f}(y)\right\||\aaa|+\gamma_{3,n}\left\|\Sig^{-1}\hat{f}(y)\right\|\|\bbb\|\\
    &\ \ +(3\gamma_{3,n}^2+2\gamma_{3,n}\cdot 2\hat{\gamma}_3+\gamma_{3,n}\hat{\gamma}_3)|\aaa|^2\|\bbb\|\\
    &\ \ +(3\gamma_{3,n}^2+2\gamma_{3,n}\hat{\gamma}_3+\gamma_{3,n}\cdot 2\hat{\gamma}_3)|\aaa|\|\bbb\|^2+(\gamma_{3,n}^2+\gamma_{3,n}\hat{\gamma}_3)\|\bbb\|^3\\
    &\ \ +\sum_{k\geq 4}\gamma_{3,n}^{k-1}\|w\|^k+2\gamma_{3,n}\sum_{k\geq 3}\hat{\gamma}_3^{k-1}\|w\|^k|\aaa|+\gamma_{3,n}\sum_{k\geq 3}\hat{\gamma}_3^{k-1}\|w\|^k\|\bbb\|\\
    &\leq\left|\frac{1}{\Athree}f_n(y)\right|+\left\|\Sig^{-1}\hat{f}(y)\right\|(2\gamma_{3,n}|\aaa|+\gamma_{3,n}\|\bbb\|)\\
    &\ \ +(3\gamma_{3,n}^2+5\gamma_{3,n}\hat{\gamma}_3)|\aaa|^2\|\bbb\|+(3\gamma_{3,n}^2+4\gamma_{3,n}\hat{\gamma}_3)|\aaa|\|\bbb\|^2\\
    &\ \ +(\gamma_{3,n}^2+\gamma_{3,n}\hat{\gamma}_3)\|\bbb\|^3+2\gamma_{3,n}^3\|w\|^4+4\gamma_{3,n}\hat{\gamma}_3^2\|w\|^3|\aaa|+2\gamma_{3,n}\hat{\gamma}_3^2\|w\|^3\|\bbb\|\\
    &\leq \left|\frac{1}{\Athree}f_n(y)\right|+\left\|\Sig^{-1}\hat{f}(y)\right\|(2\gamma_{3,n}|\aaa|+\gamma_{3,n}\|\bbb\|)+8\gamma_3^2|\aaa|^2\|\bbb\|\\
    &\ \ +7\gamma_3^2|\aaa|\|\bbb\|^2+2\gamma_3^2\|\bbb\|^3+2\gamma_3^3\|w\|^4+4\gamma_3^3\|w\|^3|\aaa|+2\gamma_3^3\|w\|^3\|\bbb\|,
  \end{align*}
  where the second inequality follows from the assumption that $\hat{\gamma}_3(f,x)\|w\|\leq\frac{1}{2}$ and $\gamma_{3,n}(f,x)\|w\|\leq\frac{1}{2}$, while the last inequality attributes to the fact that
   $\gamma_3=\max(\hat{\gamma}_3, \gamma_{3,n})$. 

  As $|\aaa|=\|w\|\cos\varphi,\ \|\bbb\|=\|w\|\sin\varphi$, we have
  \begin{align*}
    \|w\|^3\cos^3\varphi&\leq \left|\frac{1}{\Athree}f_n(y)\right|+\left\|\Sig^{-1}\hat{f}(y)\right\|\gamma_{3,n} \|w\|(2\cos\varphi+\sin\varphi) \\
    &\  +2\gamma_3^2\|w\|^3\sin^3\varphi+7\gamma_3^2 \|w\|^3\cos\varphi\sin^2\varphi +8\gamma_3^2 \|w\|^3\cos^2\varphi\sin\varphi \\
    &\ +2\gamma_3^3\|w\|^4(1+2\cos\varphi+\sin\varphi)
  \end{align*}
  For  $\varphi\in\left[0,\ \frac{\pi}{2}\right]$, $1\leq2\cos\varphi+\sin\varphi\leq\sqrt{5}$, we have
  \begin{align*}
  \hspace{-20pt}
    &\frac{\cos^3\varphi-8\gamma_3^2 \cos^2\varphi\sin\varphi-7\gamma_3^2 \cos\varphi\sin^2\varphi-2\gamma_3^2\sin^3\varphi}{1+2\cos\varphi+\sin\varphi}\|w\|^3-2\gamma_3^3\|w\|^4\\
    \leq&\frac{1}{1+2\cos\varphi+\sin\varphi}\left|\frac{1}{\Athree}f_n(y)\right|+\frac{\gamma_{3,n} w(2\cos\varphi+\sin\varphi)}{1+2\cos\varphi+\sin\varphi}\left\|\Sig^{-1}\hat{f}(y)\right\| \\
    \leq&\left|\frac{1}{\Athree}f_n(y)\right|+\frac{\sqrt{5}}{2+2\sqrt{5}} \left\|\Sig^{-1}\hat{f}(y)\right\|\\
    \leq&\sqrt{2}\left\|\left(\begin{array}{c}\frac{\sqrt{5}}{2+2\sqrt{5}}\Sig^{-1}\hat{f}(y)\\
    \frac{1}{\Athree}f_n(y)\end{array}\right)\right\|\\
    \leq &\left\|\left(\begin{array}{cc} \frac{1}{\sqrt{2}}\Sig^{-1} & 0\\ 0 & \frac{\sqrt{2}}{\Athree} \end{array}\right)
  \left(\begin{array}{c} \hat{f}(y) \\ f_n(y) \end{array}\right)\right\|\\
  =&\|\mathcal{A}^{-1}f(y)\|.
  \end{align*}

\end{proof}

\begin{lem}\label{oldlemma4}
  Let $d \approx 0.08507$ be the  positive root of the equation
  \begin{equation}\label{formulad3}
  (1-2d-8d^2)\sqrt{1-d^2}-9d-d^2+6d^3=0.
  \end{equation}
  Let  $\theta$ be defined  by
  \begin{equation}\label{theta}
  \sin\theta=\frac{d}{\gamma_3^2}.
  \end{equation}
  Then,  for  $\gamma_{3}(f,x)\|w\|\leq\frac{1}{2}$ and $\forall y\in\mathbb{C}^n$, either
  \[
  \theta \leq  \varphi \leq \frac{\pi}{2}
  \mbox{ and }\|\mathcal{A}^{-1}f(y)\|\geq \sqrt{2}\gamma_3 \|w\|\left(\frac{\sin\theta}{2\gamma_3}-\|w\|\right),\]
  or \[0 \leq \varphi\leq\theta \mbox{ and }\|\mathcal{A}^{-1}f(y)\|\geq 2\gamma_3^3 \|w\|^3\left(\frac{\sin\theta}{2\gamma_3}-\|w\|\right).\]
\end{lem}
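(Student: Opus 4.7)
The plan is to split into two cases according to whether $\varphi \geq \theta$ or $\varphi \leq \theta$, and to exploit the block-diagonal form of $\mathcal{A}^{-1}$ from \eqref{InverseAoperator} so that in each case the appropriate coordinate block is used to obtain the bound. When $\varphi$ is large, the transverse component $\eta$ of $w$ is comparable to $\|w\|$ and the top block of $\mathcal{A}^{-1} f(y)$ is controlled through Lemma \ref{oldlemma2}; when $\varphi$ is small, the direction $w = y - x$ is nearly aligned with $v = (1, 0, \ldots, 0)^T$, so the cubic leading term in $f_n(y)$ must be estimated and Lemma \ref{oldlemma3} is the natural tool. The polynomial \eqref{formulad3} will appear precisely at the boundary $\varphi = \arcsin d$ in the worst case $\gamma_3 = 1$.

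For the first case, $\theta \leq \varphi \leq \frac{\pi}{2}$, I would use that the block structure of $\mathcal{A}^{-1}$ gives $\|\mathcal{A}^{-1} f(y)\| \geq \frac{1}{\sqrt{2}} \|\Sig^{-1}\hat{f}(y)\|$. Combined with Lemma \ref{oldlemma2}, the inequalities $\hat{\gamma}_3 \leq \gamma_3$ and $\sin\varphi \geq \sin\theta$ immediately yield
\[
\|\mathcal{A}^{-1} f(y)\| \geq \frac{1}{\sqrt{2}}\bigl(\|w\|\sin\theta - 2\gamma_3 \|w\|^2\bigr) = \sqrt{2}\,\gamma_3\|w\|\Bigl(\frac{\sin\theta}{2\gamma_3} - \|w\|\Bigr),
\]
which is the claimed bound.

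For the second case, $0 \leq \varphi \leq \theta$, I would apply Lemma \ref{oldlemma3}, writing the bound as $\|\mathcal{A}^{-1} f(y)\| \geq \frac{N(\varphi)}{D(\varphi)}\|w\|^3 - 2\gamma_3^3 \|w\|^4$ with $N, D$ the numerator and denominator appearing there. Using $\sin\theta = d/\gamma_3^2$, the target reduces to $\|\mathcal{A}^{-1} f(y)\| \geq d\|w\|^3 - 2\gamma_3^3 \|w\|^4$, so the task becomes showing $N(\varphi)/D(\varphi) \geq d$ on $[0,\theta]$. The hypothesis $\sin\varphi \leq \sin\theta = d/\gamma_3^2$ gives $\gamma_3^2 \sin\varphi \leq d$, and replacing each factor $\gamma_3^2 \sin\varphi$ in $N(\varphi)$ by its upper bound $d$ yields $N(\varphi) - dD(\varphi) \geq G(\varphi)$, where
\[
G(\varphi) = \cos^3\varphi - 8d\cos^2\varphi - 7d\cos\varphi\sin\varphi - 2d\sin^2\varphi - d - 2d\cos\varphi - d\sin\varphi.
\]
The identification that drives the choice of $d$ is that $G(\arcsin d) = (1-2d-8d^2)\sqrt{1-d^2} - 9d - d^2 + 6d^3$, which vanishes exactly by the defining equation \eqref{formulad3}; this pins down the value of $d$ as the critical threshold.

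The main obstacle will be verifying $G(\varphi) \geq 0$ on the full interval $[0, \arcsin d] \supseteq [0, \theta]$, not merely at the endpoints. My plan is to establish strict monotonicity of $G$: one has $G(0) = 1 - 11d > 0$ since $d \approx 0.0851$, and $G(\arcsin d) = 0$, while a direct differentiation gives
\[
G'(\varphi) = -3\cos^2\varphi\sin\varphi + 12d\cos\varphi\sin\varphi - 7d\cos^2\varphi + 7d\sin^2\varphi + 2d\sin\varphi - d\cos\varphi,
\]
whose dominant negative terms $-7d\cos^2\varphi$ and $-d\cos\varphi$ (with $\cos\varphi$ close to $1$ throughout the small interval) dominate the positive contributions, all of which are $O(d\sin\varphi) = O(d^2)$ because $\sin\varphi \leq d$. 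Once $G \geq 0$ on $[0,\arcsin d]$ is established, combining both cases yields the stated dichotomy.
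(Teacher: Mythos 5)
Your first case coincides with the paper's. The second case is where the two arguments genuinely diverge. The paper sets $h(\varphi)=N(\varphi)/(2\gamma_3^3 D(\varphi))$, with $N,D$ the numerator and denominator appearing in Lemma~\ref{oldlemma3}, and proceeds by two separate monotonicity arguments: substituting $\varphi=\theta$ gives an expression in $d$ and $\gamma_3$, which the paper argues is increasing in $\gamma_3$ so that it suffices to check $\gamma_3=1$, yielding precisely \eqref{formulad3}; then the paper argues in addition that $h$ decreases on $[0,\theta]$ (numerator decreasing, denominator increasing), giving $h(\varphi)\ge h(\theta)\ge\sin\theta/(2\gamma_3)$. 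You instead reformulate the target as $N(\varphi)-dD(\varphi)\ge0$ and eliminate $\gamma_3$ at the outset via $\gamma_3^2\sin\varphi\le d$ (valid exactly on $[0,\theta]$), bounding $N-dD$ from below by the $\gamma_3$-free function $G(\varphi)$, and then perform a single monotonicity check on the fixed interval $[0,\arcsin d]\supseteq[0,\theta]$. Both routes hinge on the same polynomial equation \eqref{formulad3}, but yours collapses the paper's double monotonicity into one and isolates the role of $\gamma_3\ge1$ (it serves only to ensure $\theta\le\arcsin d$), which is a genuine simplification. The derivative estimate you sketch is correct but should be written out: on $[0,\arcsin d]$ one has $\sin\varphi\le d$ and $\cos\varphi\ge\sqrt{1-d^2}$, so the positive terms of $G'$ are bounded by $12d^2+7d^3+2d^2=14d^2+7d^3\approx 0.106$, while $-7d\cos^2\varphi-d\cos\varphi\le-7d(1-d^2)-d\sqrt{1-d^2}\approx-0.676$ (and $-3\cos^2\varphi\sin\varphi\le0$ only helps), so $G'<0$ throughout and $G$ decreases from $G(0)=1-11d>0$ to $G(\arcsin d)=0$.
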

\begin{proof}
  For $\theta \leq  \varphi \leq \frac{\pi}{2}$, by Lemma \ref{oldlemma2}, we have
  \begin{eqnarray*}
    \sqrt{2}\|\mathcal{A}^{-1}f(y)\|&=&\left\|\left(\begin{array}{c}\Sig^{-1}\hat{f}(y)\\ \frac{2}{\Athree}f_n(y)\end{array}\right)\right\| \geq\left\|\Sig^{-1}\hat{f}(y)\right\|\\
    &\geq& \|w\|\sin\theta-2\hat{\gamma}_3(f,x)\|w\|^2\geq2\gamma_3 \|w\|\left(\frac{\sin\theta}{2\gamma_3}-\|w\|\right).
  \end{eqnarray*}
  For $0 \leq \varphi\leq\theta$, by Lemma \ref{oldlemma3}, we have
  \begin{align*}
  &\|\mathcal{A}^{-1}f(y)\|\geq 2\gamma_3^3\|w\|^3
 \left(\frac{\cos^3\varphi-8\gamma_3^2 \cos^2\varphi\sin\varphi-7\gamma_3^2 \cos\varphi\sin^2\varphi-2\gamma_3^2\sin^3\varphi}{2\gamma_3^3(1+2\cos\varphi+\sin\varphi)}-\|w\|\right).
  \end{align*}
  Let
  \[
  h(\varphi)=\frac{\cos^3\varphi-8\gamma_3^2 \cos^2\varphi\sin\varphi-7\gamma_3^2 \cos\varphi\sin^2\varphi-2\gamma_3^2\sin^3\varphi}{2\gamma_3^3(1+2\cos\varphi+\sin\varphi)}.
  \]
  We claim that
  \[
  h(\theta) \geq \frac{\sin\theta}{2 \gamma_3}.
  \]
  To prove this claim, as $\sin\theta=\frac{d}{\gamma_3^2}$, it is sufficient to show that
  \begin{equation*}
  \left(1-\frac{d^2}{\gamma_3^4}\right)^{\frac{3}{2}}-8d\left(1-\frac{d^2}{\gamma_3^4}\right)-\frac{7d^2}{\gamma_3^2}\sqrt{1-\frac{d^2}{\gamma_3^4}}-\frac{2d^3}{\gamma_3^4}-d-2d\sqrt{1-\frac{d^2}{\gamma_3^4}}-\frac{d^2}{\gamma_3^2}\geq 0.
  \end{equation*}
Since this function  for $\gamma_3\geq 1$, is increasing for any $d\in\left[0,\ \frac{1}{6}\right]$, similar to the proof of \cite[Lemma 4]{Dedieu2001On},  it is sufficient to check this inequality for $\gamma_3=1$,
  \[
  (1-2d-8d^2)\sqrt{1-d^2}-9d-d^2+6d^3\geq 0.
  \]
The smallest positive root of  the equation (\ref{formulad3}) obtained by setting  the above inequality to $0$ is
 \[d \approx 0.08507,\]
  which lies in  $ \left[0,\ \frac{1}{6}\right]$.  The claim  $h(\theta) \geq \frac{\sin\theta}{2 \gamma_3}$ follows.

Furthermore, the polynomial
  $h(\varphi)$ is non-negative and decreasing  for
  \begin{equation}\label{condiphi}
  \varphi\in[0,\ \theta],  \ \  \theta\in \left[0,\ \arcsin\frac{2}{\sqrt{5}}\right],
  \end{equation}
 as its numerator %
is decreasing  and its numerator is increasing, and both are nonnegative  for  $\varphi$ satisfying (\ref{condiphi}).
 Hence, we have
 \[h(\varphi) \geq h(\theta) \geq  \frac{\sin\theta}{2 \gamma_3},\]
 for $\varphi$ satisfying (\ref{condiphi}).
  Together with
Lemma \ref{oldlemma3}, we have
\[
\|\mathcal{A}^{-1}f(y)\|  \geq 2\gamma_3^3 \|w\|^3\left(h(\varphi)-\|w\|\right)  \geq 2\gamma_3^3 \|w\|^3\left(\frac{\sin\theta}{2\gamma_3}-\|w\|\right).
\]
\end{proof}

     Let $d \approx 0.08507$ be the smallest  positive root of the equation (\ref{formulad3}). The following four theorems generalize the results in \cite{Dedieu2001On} to simple triple zeros.

\begin{thm}\label{thm1}
   Let $x$ be an isolated simple triple  zero of the polynomial system $f$, and $y$ is another zero of $f$,  then
  \begin{equation}
  \|y-x\|\geq\frac{d}{2\gamma_3^3}.
  \end{equation}
\end{thm}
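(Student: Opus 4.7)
The plan is to combine the two alternatives in Lemma~\ref{oldlemma4} (which I may freely invoke) with the fact that $f(y)=0$, so that $\|\mathcal{A}^{-1}f(y)\|=0$ forces the right-hand side of each inequality to be nonpositive, yielding the desired separation.

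First I would dispose of the easy range. If $\gamma_3(f,x)\|w\|>\tfrac12$, then since $\gamma_3\geq 1$ and $d\approx 0.08507<1$ we have
\[
\|w\|>\frac{1}{2\gamma_3}\;\geq\;\frac{1}{2\gamma_3^{3}}\;>\;\frac{d}{2\gamma_3^{3}},
\]
so the bound holds trivially. This reduces matters to the regime $\gamma_3\|w\|\leq\tfrac12$, which is precisely the hypothesis needed to apply Lemma~\ref{oldlemma4}.

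Next, set $w=y-x$ and let $\varphi=d_P(v,y-x)$ with $v=(1,0,\dots,0)^{T}$, and let $\theta$ be defined by $\sin\theta=d/\gamma_3^{2}$ as in \eqref{theta}. Since $y$ is a zero of $f$, $f(y)=0$ and hence $\|\mathcal{A}^{-1}f(y)\|=0$. Lemma~\ref{oldlemma4} gives two alternatives:
\begin{itemize}
\item if $\theta\leq\varphi\leq\pi/2$, then $0=\|\mathcal{A}^{-1}f(y)\|\geq\sqrt{2}\,\gamma_3\|w\|\bigl(\tfrac{\sin\theta}{2\gamma_3}-\|w\|\bigr)$;
\item if $0\leq\varphi\leq\theta$, then $0=\|\mathcal{A}^{-1}f(y)\|\geq 2\gamma_3^{3}\|w\|^{3}\bigl(\tfrac{\sin\theta}{2\gamma_3}-\|w\|\bigr)$.
\end{itemize}
In each alternative the prefactor ($\sqrt{2}\gamma_3\|w\|$ or $2\gamma_3^{3}\|w\|^{3}$) is strictly positive because $y\neq x$ implies $\|w\|>0$. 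Dividing through by this positive prefactor forces
\[
\frac{\sin\theta}{2\gamma_3}-\|w\|\;\leq\;0,
\qquad\text{i.e.}\qquad
\|w\|\;\geq\;\frac{\sin\theta}{2\gamma_3}\;=\;\frac{d}{2\gamma_3^{3}},
\]
which is the claimed inequality.

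I do not expect any genuinely hard step here: Lemma~\ref{oldlemma4} has already packaged all of the analytic difficulty (the Taylor expansion manipulations and the verification that $h(\theta)\geq\sin\theta/(2\gamma_3)$ via the polynomial \eqref{formulad3}). The only thing to watch is the trivial case $\gamma_3\|w\|>\tfrac12$ mentioned above, so that Lemma~\ref{oldlemma4} is always applicable in the nontrivial range; this is what the normalization $\gamma_3\geq 1$ in the definition of $\gamma_3$ is designed for.
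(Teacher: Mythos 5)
Your proof is correct and follows essentially the same route as the paper: handle $\gamma_3\|w\|>\tfrac12$ by the trivial bound using $\gamma_3\ge 1$ and $d<1$, and in the remaining range apply Lemma~\ref{oldlemma4} with $f(y)=0$ to force $\tfrac{\sin\theta}{2\gamma_3}-\|w\|\le 0$, i.e. $\|w\|\ge \tfrac{d}{2\gamma_3^3}$. The paper states this more tersely but the underlying argument is identical, and your spelling-out of the division by the positive prefactor (justified by $y\neq x$) is exactly the step the paper leaves implicit.
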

\begin{proof}
 Since $f(y)=0$, when $\gamma_3 \|w\|\leq\frac{1}{2}$,  by Lemma \ref{oldlemma4} and (\ref{theta}), we have
  \[\|y-x\|=\|w\|\geq \frac{\sin\theta}{2\gamma_3}=\frac{d}{2\gamma_3^3}.\]
  For $\gamma_3 \|w\|\geq\frac{1}{2}$,  the same conclusion holds as $\gamma_3\geq 1$ and $d<1$.
\end{proof}

\begin{thm}\label{thm2}
  Let $x$ be an isolated simple triple  zero of the polynomial system $f$,  and $\|y-x\|\leq\frac{d}{4\gamma_3^3}$, then
  \[\|f(y)\|\geq \frac{d\|y-x\|^3}{2\|\mathcal{A}^{-1}\|}.\]
\end{thm}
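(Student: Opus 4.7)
The plan is to reduce the statement to Lemma~\ref{oldlemma4} by first verifying that its hypothesis is in force, and then to separately lower-bound $\|\mathcal{A}^{-1}f(y)\|$ in the two cases it provides, finally translating back to $\|f(y)\|$ via the submultiplicativity of the operator norm.

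First, since $\gamma_3 \geq 1$ and $d < 1$, the hypothesis $\|y-x\| \leq \frac{d}{4\gamma_3^3}$ implies in particular $\gamma_3\|w\| \leq \frac{d}{4\gamma_3^2} \leq \frac{1}{2}$, where $w = y-x$. Hence Lemma~\ref{oldlemma4} applies. Recalling from (\ref{theta}) that $\sin\theta = d/\gamma_3^2$, the quantity appearing in both cases of that lemma is
\[
\frac{\sin\theta}{2\gamma_3} - \|w\| \;=\; \frac{d}{2\gamma_3^3} - \|w\| \;\geq\; \frac{d}{2\gamma_3^3} - \frac{d}{4\gamma_3^3} \;=\; \frac{d}{4\gamma_3^3}.
\]

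Second, I consider the two angular regimes. In the regime $0 \leq \varphi \leq \theta$, Lemma~\ref{oldlemma4} gives directly
\[
\|\mathcal{A}^{-1}f(y)\| \;\geq\; 2\gamma_3^3\|w\|^3 \cdot \frac{d}{4\gamma_3^3} \;=\; \frac{d\|w\|^3}{2},
\]
which is exactly the bound I want (modulo one more step). In the regime $\theta \leq \varphi \leq \frac{\pi}{2}$, the lemma only yields
\[
\|\mathcal{A}^{-1}f(y)\| \;\geq\; \sqrt{2}\gamma_3\|w\|\cdot\frac{d}{4\gamma_3^3} \;=\; \frac{\sqrt{2}\,d\,\|w\|}{4\gamma_3^2},
\]
a bound \emph{linear} in $\|w\|$ rather than cubic; this case is the only real obstacle. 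However, because $\|w\|$ is small, I can afford to trade two factors of $\|w\|$ for the cost $\|w\|^2$: using $\|w\| \leq \frac{d}{4\gamma_3^3} \leq \frac{1}{4\gamma_3}$ (since $\gamma_3 \geq 1$, $d < 1$), one has $\|w\|^2 \leq \frac{1}{16\gamma_3^2} \leq \frac{\sqrt{2}}{2\gamma_3^2}$, equivalently $\frac{\sqrt{2}\,d\,\|w\|}{4\gamma_3^2} \geq \frac{d\|w\|^3}{2}$. So the cubic bound holds in this regime as well.

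Third, to pass from $\|\mathcal{A}^{-1}f(y)\|$ to $\|f(y)\|$, I use $\|\mathcal{A}^{-1}f(y)\| \leq \|\mathcal{A}^{-1}\|\cdot\|f(y)\|$, which gives
\[
\|f(y)\| \;\geq\; \frac{\|\mathcal{A}^{-1}f(y)\|}{\|\mathcal{A}^{-1}\|} \;\geq\; \frac{d\|y-x\|^3}{2\|\mathcal{A}^{-1}\|},
\]
as desired. The only non-routine step is the Case~1 trade-off above; everything else is direct from Lemma~\ref{oldlemma4} and the hypothesis on $\|y-x\|$.
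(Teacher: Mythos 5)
Your proof is correct and takes the same approach as the paper: reduce to Lemma~\ref{oldlemma4}, evaluate $\frac{\sin\theta}{2\gamma_3}-\|w\|\geq\frac{d}{4\gamma_3^3}$, and then divide by $\|\mathcal{A}^{-1}\|$. It is in fact slightly more careful than the paper's own write-up, which only records the cubic lower bound from the regime $0\leq\varphi\leq\theta$ and silently relies on the (true) fact that the linear bound from the regime $\theta\leq\varphi\leq\frac{\pi}{2}$ dominates the cubic one when $\|w\|\leq\frac{d}{4\gamma_3^3}$; you make that comparison explicit.
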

\begin{proof}
  When \[\|w\|=\|y-x\|\leq\frac{d}{4\gamma_3^3}=\frac{\sin\theta}{4\gamma_3},\]
  by Lemma \ref{oldlemma4}, 
   we have
  \[\|\mathcal{A}^{-1}f(y)\|\geq   2\gamma_3^3 \|w\|^3\left(\frac{\sin\theta}{2\gamma_3}-\|w\|\right)
  \geq 2\gamma_3^3 \|w\|^3\frac{\sin\theta}{4\gamma_3}=2\gamma_3^3\|w\|^3\frac{d}{4\gamma_3^3}=\frac{d}{2}\|w\|^3.\]
\end{proof}

For $R>0$, let us  define
\begin{equation}\label{dR}
d_R(f,g)=\max_{\|y-x\|\leq R}\|f(y)-g(y)\|.
\end{equation}

\begin{thm}\label{thm3}
   Let $x$ be an isolated simple triple  zero of the polynomial system $f$ and
  \[0<R\leq\frac{d}{4\gamma_3^3}.\]
  If \[d_R(f,g)<\frac{dR^3}{2\|\mathcal{A}^{-1}\|},\]
  then the sum of the multiplicities of the zeros of $g$ in $B(x,R)$ is three.
\end{thm}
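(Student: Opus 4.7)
The plan is to invoke a multivariate Rouch\'e-type theorem (see \cite{Berenstein1993Residue}) on the ball $B(x,R)$, comparing $f$ and $g$ on the boundary sphere $\partial B(x,R)$. The strategy has three steps, each of which is essentially immediate once Theorems \ref{thm1} and \ref{thm2} are in hand, so the argument is short but the role of the hypothesis $R\le d/(4\gamma_3^3)$ will be used twice.

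First, I would verify that $x$ is the only zero of $f$ in the closed ball $\overline{B(x,R)}$, so that the total multiplicity of zeros of $f$ in $B(x,R)$ equals exactly $3$. Indeed, if $y\ne x$ were another zero of $f$, Theorem \ref{thm1} would give $\|y-x\|\ge d/(2\gamma_3^3)$, while the assumption $R\le d/(4\gamma_3^3)$ forces $\|y-x\|\ge 2R>R$. Thus the contribution of $f$ to the zero count inside $B(x,R)$ comes entirely from the simple triple zero $x$, with multiplicity $3$.

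Second, I would establish the strict Rouch\'e inequality $\|f(y)-g(y)\|<\|f(y)\|$ on $\partial B(x,R)$. For any $y$ with $\|y-x\|=R\le d/(4\gamma_3^3)$, Theorem \ref{thm2} provides the lower bound
\[
\|f(y)\|\ge \frac{d\,\|y-x\|^3}{2\|\mathcal{A}^{-1}\|}=\frac{dR^3}{2\|\mathcal{A}^{-1}\|},
\]
while the definition (\ref{dR}) of $d_R(f,g)$ together with the hypothesis yields
\[
\|f(y)-g(y)\|\le d_R(f,g)<\frac{dR^3}{2\|\mathcal{A}^{-1}\|}.
\]
Combining these two estimates gives the required strict inequality uniformly on $\partial B(x,R)$.

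Third, with the Rouch\'e inequality and analyticity of $f$ and $g$ in hand, I would apply the multivariate Rouch\'e theorem of \cite{Berenstein1993Residue} to conclude that the total number of zeros of $g$ in $B(x,R)$, counted with multiplicities, coincides with that of $f$, which by step one is $3$. The main subtlety, if any, lies in step three: one must check that the hypotheses of the multivariate Rouch\'e statement from \cite{Berenstein1993Residue} are met, namely that the strict bound holds on the whole boundary sphere (not merely almost everywhere). This is exactly what the uniform bound from Theorem \ref{thm2} delivers, so no further analytic work is needed and all the heavy lifting has already been carried out in the preceding lemmas.
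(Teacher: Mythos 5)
Your proposal is correct and follows essentially the same route as the paper: use Theorem~\ref{thm2} to obtain the boundary estimate $\|f(y)-g(y)\|\le d_R(f,g)<\frac{dR^3}{2\|\mathcal{A}^{-1}\|}\le\|f(y)\|$ on $\partial B(x,R)$, apply the multivariate Rouch\'e theorem to equate the zero counts of $f$ and $g$ in the ball, and invoke Theorem~\ref{thm1} together with $R\le d/(4\gamma_3^3)$ to conclude that $x$ is the only zero of $f$ there and hence that the count equals the multiplicity $3$. The only difference is expository: you spell out the three steps more explicitly than the paper's one-line argument, but the content is the same.
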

\begin{proof}
  By Theorem \ref{thm2}, for any $y$ such that $\|y-x\|=R$, we have
  \[\|f(y)-g(y)\|\leq d_R(f,g)<\frac{dR^3}{2\|\mathcal{A}^{-1}\|}=\frac{d\|y-x\|^3}{2\|\mathcal{A}^{-1}\|}\leq\|f(y)\|,\]
  by Rouch\'e's   Theorem, $f$ and $g$ have the same number of zeros inside $B(x,R)$. By Theorem \ref{thm1}, when $R\leq\frac{d}{4\gamma_3^3}$, the only zero of $f$ in $B(x,R)$ is $x$. Therefore, $g$ has three zeros in $B(x,R)$.
\end{proof}

Given $f:\mathbb{C}^n\rightarrow\mathbb{C}^n,\ x\in\mathbb{C}^n$, such that $D\hat{f}(x)$ is invertible, and
\[\Athree=\frac{1}{6}\frac{\partial^3 f_n(x)}{\partial X_1^3}-\frac{\partial^2 f_n(x)}{\partial X_1 \partial \hat{X}}D\hat{f}(x)^{-1}\cdot\frac{1}{2}\frac{\partial^2\hat{f}(x)}{\partial X_1^2}\not=0.\]
We define tensors \[H_1=\left(\begin{array}{cc}
  \frac{\partial \hat{f}(x)}{\partial X_1} & 0 \\
  \frac{\partial f_n(x)}{\partial X_1} & \frac{\partial f_n(x)}{\partial \hat{X}} \\
\end{array}
\right),\]
\[H_2=\left(\left(\begin{array}{cc}
  0 & 0\\
  \frac{1}{2}\frac{\partial^2 f_n(x)}{\partial X_1^2} & 0
\end{array}\right) \mathbf{0}_{n\times n \times (n-1)} \right),\]
and polynomials
 \[g(X)=f(X)-f(x)-H_1(X-x)-H_2(X-x)^2.\]

\begin{thm}\label{thm4}
  Let $\gamma_3=\gamma_3(g,x)$, if \[\|f(x)\|+\|H_1\|\frac{d}{4\gamma_3^3}+\|H_2\|\frac{d^2}{16\gamma_3^6}<\frac{d^4}{128\gamma_3^9\|{\mathcal{A}}^{-1}\|},\]
   then $f$ has three zeros (counting multiplicities) in the ball of radius $\frac{d}{4{\gamma_3}^3}$ around $x$.
\end{thm}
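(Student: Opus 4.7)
\medskip

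\noindent\textbf{Proof proposal for Theorem \ref{thm4}.} The plan is to apply Theorem \ref{thm3} with the roles of $f$ and $g$ interchanged. For this, I first need to check that the auxiliary system $g(X)=f(X)-f(x)-H_1(X-x)-H_2(X-x)^2$ has a simple triple zero at $x$ whose Jacobian is already in the normalized form (\ref{normalizeform}). By construction $g(x)=0$. A direct computation of $Dg(x)=Df(x)-H_1$ shows that all first partials $\partial g_i/\partial X_1$ vanish at $x$ and, in row $n$, both $\partial g_n/\partial X_1$ and $\partial g_n/\partial \hat X$ vanish, leaving only the block $D\hat f(x)=D\hat g(x)$ in the upper-right corner. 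Thus $Dg(x)$ has the normalized form with the same invertible block $D\hat f(x)$. Next, $H_2(X-x)^2$ is designed so that $\partial^2 g_n(x)/\partial X_1^2=0$, which forces $\Delta_2(g_n)=0$. Since $H_1$ and $H_2$ are of order at most two in $(X-x)$, all third-order partials of $g_n$ coincide with those of $f_n$, so $\Delta_3(g_n)=\Delta_3(f_n)\neq 0$. Hence $x$ is a simple triple zero of $g$, the operator $\mathcal{A}$ attached to $g$ agrees with the one attached to $f$, and $\gamma_3(g,x)=\gamma_3$ is indeed the quantity entering the statement.

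With $g$ playing the role of the reference system in Theorem \ref{thm3}, set
\[
R=\frac{d}{4\gamma_3^3}.
\]
Theorem \ref{thm3}, applied to the pair $(g,f)$ instead of $(f,g)$, guarantees that $f$ has three zeros (counting multiplicities) in $B(x,R)$ provided
\[
d_R(g,f)=\max_{\|y-x\|\leq R}\|g(y)-f(y)\| < \frac{dR^3}{2\|\mathcal{A}^{-1}\|}.
\]
Since $g(y)-f(y)=-f(x)-H_1(y-x)-H_2(y-x)^2$, a straightforward triangle-inequality estimate combined with the operator-norm bounds on $H_1$ and $H_2$ yields
\[
d_R(g,f)\leq \|f(x)\|+\|H_1\|R+\|H_2\|R^2
= \|f(x)\|+\|H_1\|\frac{d}{4\gamma_3^3}+\|H_2\|\frac{d^2}{16\gamma_3^6}.
\]

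Finally, a short arithmetic check shows that
\[
\frac{dR^3}{2\|\mathcal{A}^{-1}\|}=\frac{d}{2\|\mathcal{A}^{-1}\|}\cdot\frac{d^3}{64\gamma_3^9}=\frac{d^4}{128\gamma_3^9\|\mathcal{A}^{-1}\|},
\]
so the hypothesis of Theorem \ref{thm4} is exactly the hypothesis of Theorem \ref{thm3} for the pair $(g,f)$. Therefore $f$ has three zeros, counted with multiplicity, in $B(x,\frac{d}{4\gamma_3^3})$.

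The only real obstacle is the bookkeeping in the first paragraph: verifying that $g$ inherits conditions (A), (B), (C) from the structure of $H_1$ and $H_2$, and, crucially, that $\gamma_3(g,x)$ and the matrix $\mathcal{A}$ computed for $g$ coincide with the $\gamma_3$ and $\mathcal{A}$ appearing in the statement. Once this is in place, Theorem \ref{thm3} and the triangle inequality finish the proof with no further estimation.
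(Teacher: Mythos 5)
Your proposal is correct and follows essentially the same route as the paper's own proof: verify that $g$ has a simple triple zero at $x$ with normalized Jacobian (and that the associated $\mathcal{A}$ coincides with the one built from $f$), bound $d_R(g,f)$ by the triangle inequality, and invoke Theorem \ref{thm3} with $g$ as the reference system. Your explicit remark about needing $\mathcal{A}$ and $\gamma_3(g,x)$ to be the quantities actually appearing in the statement is a sound bookkeeping check, already implicit in the paper's argument.
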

\begin{proof}
  We have $g(x)=0$,
  \[\ Dg(x)=Df(x)-H_1=\left(\begin{array}{cc}
  0 & D\hat{f}(x) \\
  0 & 0 \\
\end{array}
\right),\]
  Moreover, we have
  \[\Delta_2(g_n)=\frac{1}{2}\frac{\partial^2 g_n(x)}{\partial X_1^2}=\frac{1}{2}\frac{\partial^2 f_n(x)}{\partial X_1^2}-\frac{1}{2}\frac{\partial^2 f_n(x)}{\partial X_1^2}=0,\]
  \begin{align*}
  \Delta_3(g_n)&=\frac{1}{6}\frac{\partial^3 g_n(x)}{\partial X_1^3}-\frac{\partial^2 g_n(x)}{\partial X_1 \partial \hat{X}}\cdot D\hat{g}(x)^{-1}\cdot\frac{1}{2}\frac{\partial^2\hat{g}(x)}{\partial X_1^2}\\
  &=\frac{1}{6}\frac{\partial^3 f_n(x)}{\partial X_1^3}-\frac{\partial^2 f_n(x)}{\partial X_1 \partial \hat{X}}\cdot D\hat{f}(x)^{-1}\cdot\frac{1}{2}\frac{\partial^2\hat{f}(x)}{\partial X_1^2}\not=0.
  \end{align*}
   Hence $Dg(x)$ satisfies the normalized form, and $x$ is a simple singular root of $g$ with multiplicity three.
  Let $R=\frac{d}{4{\gamma_3}^3}$, we have
  \begin{align*}
     d_R(g,f)&=\max_{\|y-x\|\leq R}\|g(y)-f(y)\|\\
     &=\max_{\|y-x\|\leq R}\|f(x)+H_1(y-x)+H_2(y-x)^2\|\\
     &\leq \|f(x)\|+\|H_1\|R+\|H_2\|R^2\\
     &=\|f(x)\|+\|H_1\|\frac{d}{4\gamma_3^3}+\|H_2\|\frac{d^2}{16\gamma_3^6}.
  \end{align*}
  If
  \[\|f(x)\|+\|H_1\|\frac{d}{4\gamma_3^3}+\|H_2\|\frac{d^2}{16\gamma_3^6}<\frac{d^4}{128\gamma_3^9\|{\mathcal{A}}^{-1}\|},\]
  then
  \[d_R(g,f)<\frac{d^4}{128\gamma_3^9\|{\mathcal{A}}^{-1}\|}=\frac{dR^3}{2\|{\mathcal{A}}^{-1}\|}.\]
  By Theorem \ref{thm3}, the sum of the multiplicities of the zeros of $f$ in $B(x,R)$ is three.
\end{proof}

\begin{remark}
The equality of  $\gamma_{\mu}(g,x)=\gamma_{\mu} (f,x)$ is true for $\mu=2$ \cite[Theorem 4]{Dedieu2001On}.  In   Example \ref{ex2}, we show that $\left\|\frac{1}{\Delta_3(f_2)}\cdot\frac{D^2 f_2(x)}{2}\right\| \neq \left\|\frac{1}{\Delta_3(g_2)}\cdot\frac{D^2 g_2(x)}{2}\right\|$. Hence,   $\gamma_{3,n}(g,x)$ might be not equal to $\gamma_{3,n}(f,x)$ if they are not  equal to $1$.
\end{remark}

\subsection{Simple Multiple Zeros}\label{sec3.2}
 We  generalize  results in Section \ref{sec3.1} to the simple multiple zeros of higher multiplicities.

 Let
$f:\mathbb{C}^n \rightarrow \mathbb{C}^n$,  and $\xx$ be a simple zero of $f$ of multiplicity $\mux$, where $\Jac$ has the normalized form
 $\Jac=\left(
               \begin{array}{cc}
                 0 & D\hat{f}(x) \\
                 0 & 0 \\
               \end{array}
             \right)$, $D\hat{f}(x)$ is invertible and
\begin{equation}\label{simplem}
\Delta_{k}(f_n)=0, ~{\text{for}}~ k=2,\ldots,\mux-1, \  \  \Delta_{\mux}(f_n)\neq 0.
\end{equation}

{Let $y$ be another vector in $\mathbb{C}^n$ and $y\not=x$.}
Recall that $\varphi=d_P(v,y-x)$, $v=(1,0, \ldots,0)^T$ and $w=x-y=(\aaa, \bbb_2, \ldots, \bbb_n)^T$, $\bbb=(\bbb_2, \ldots, \bbb_n)^T$,  then we have $|\aaa|=\|w\|\sin\varphi,\  \|\bbb\|=\|w\|\cos\varphi$. 
Let \[{\mathcal{A}}=\left(\begin{array}{cc} \sqrt{2}D\hat{f}(x) & 0\\ 0 & \frac{1}{\sqrt{2}}\Amu \end{array}\right),\]
and ${\gamma}_{\mu}=\max(\hat{\gamma}_{\mu},\ \gamma_{\mu,n})$, where
  \[\gamma_{\mu,n}=\gamma_{\mu,n}(f,x)=\max\left(1,\ \sup_{k\geq 2}\left\|\frac{1}{\Amu}\cdot\frac{D^k f_n(x)}{k!}\right\|^{\frac{1}{k-1}}\right).\]

{\large\bf Case 1}: For $\theta\leq\varphi\leq\frac{\pi}{2}$, assume that $\gamma_{\mu}\|w\|\leq\frac{1}{2}$. The Taylor's expansion of  $\hat{f}(y)$ at $x$ is:
\[\hat{f}(y)=\Sig\bbb+\frac{1}{2}\frac{\partial^2 \hat{f}(x)}{\partial X_1^2}\aaa^2+\frac{\partial^2 \hat{f}(x)}{\partial X_1\partial \hat{X}}\aaa\bbb+\frac{1}{2}\frac{\partial^2 \hat{f}(x)}{\partial \hat{X}^2}\bbb^2+\sum_{k\geq3}\frac{D^k \hat{f}(x)(y-x)^k}{k!}.\]
%
By the triangle inequality, we have
  \begin{align*}
    \|w\|\sin\varphi=\|\bbb\|
    &\leq\left\|\Sig^{-1}\hat{f}(y)\right\|+\sum_{k\geq2}\hat{\gamma}_{\mu}(f,x)^{k-1}\|w\|^k\\
    &\leq\left\|\Sig^{-1}\hat{f}(y)\right\|+2\hat{\gamma}_{\mu}(f,x) \|w\|^2.\\
  \end{align*}
%
%
Therefore, we have the following claim.
\begin{claim}\label{claim0}
For $\theta\leq\varphi\leq\frac{\pi}{2}$, assume that $\gamma_{\mu}\|w\|\leq\frac{1}{2}$, we have
 \[\left\|\Sig^{-1}\hat{f}(y)\right\|\geq {2}\gamma_{\mu} \|w\|\left(\frac{\sin\theta}{2\gamma_{\mu}}-\|w\|\right),\]
 and
\[\|w\|\geq\frac{\sin\varphi}{2\hat{\gamma}_{\mu}}\geq\frac{\sin\theta}{2\hat{\gamma}_{\mu}}
\geq\frac{\sin\theta}{2\gamma_{\mu}}.\]
\end{claim}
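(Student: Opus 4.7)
The plan is to read off both parts of the claim directly from the Taylor-expansion estimate
\[
\|w\|\sin\varphi = \|\bbb\|\leq \left\|\Sig^{-1}\hat{f}(y)\right\| + 2\hat{\gamma}_{\mu}(f,x)\|w\|^{2}
\]
that has just been established in the paragraph preceding the claim, combined with two elementary monotonicity observations: $\sin\varphi\geq\sin\theta$ (because $\theta\leq\varphi\leq\pi/2$ and $\sin$ is monotone on that interval), and $\hat{\gamma}_{\mu}\leq\gamma_{\mu}$ (immediate from the definition $\gamma_{\mu}=\max(\hat{\gamma}_{\mu},\gamma_{\mu,n})$).

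For the first displayed inequality I would just rearrange the Taylor estimate to $\|\Sig^{-1}\hat{f}(y)\|\geq \|w\|\sin\varphi - 2\hat{\gamma}_{\mu}\|w\|^{2}$, replace $\sin\varphi$ by the smaller quantity $\sin\theta$, replace $\hat{\gamma}_{\mu}$ by the larger quantity $\gamma_{\mu}$, and then factor $2\gamma_{\mu}\|w\|$ to land on the stated form $2\gamma_{\mu}\|w\|\bigl(\sin\theta/(2\gamma_{\mu})-\|w\|\bigr)$. The hypothesis $\gamma_{\mu}\|w\|\leq 1/2$ is not consumed here directly; it has already been used inside the Taylor estimate to dominate the tail of the series by the geometric sum $\sum_{k\geq 2}\hat{\gamma}_{\mu}^{k-1}\|w\|^{k}\leq 2\hat{\gamma}_{\mu}\|w\|^{2}$.

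For the second chain, the last two inequalities $\sin\varphi/(2\hat{\gamma}_{\mu})\geq \sin\theta/(2\hat{\gamma}_{\mu}) \geq \sin\theta/(2\gamma_{\mu})$ are again just the two monotonicity observations above. The initial inequality $\|w\|\geq \sin\varphi/(2\hat{\gamma}_{\mu})$ is not a consequence of $\gamma_{\mu}\|w\|\leq 1/2$ in isolation, so I would invoke the extra information implicit in the Case~1 context of the separation argument, namely that $y$ is another zero of $f$ and hence $\hat{f}(y)=0$. With $\|\Sig^{-1}\hat{f}(y)\|=0$ the Taylor estimate collapses to $\|w\|\sin\varphi\leq 2\hat{\gamma}_{\mu}\|w\|^{2}$; cancelling one factor of $\|w\|$ gives $\sin\varphi\leq 2\hat{\gamma}_{\mu}\|w\|$, which is the desired lower bound on $\|w\|$.

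The mathematics here is entirely routine rearrangement, so the only real obstacle is careful bookkeeping of which hypothesis drives which inequality. In particular, the two chains in the claim must be kept separate: the first is a consequence of the Taylor estimate plus monotonicity and needs only $\gamma_{\mu}\|w\|\leq 1/2$, whereas the second additionally needs $\hat{f}(y)=0$ from the surrounding Case~1 setup. Once this distinction is made explicit, assembling the proof reduces to a handful of inequality substitutions and a single factorisation.
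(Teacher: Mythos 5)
Your proposal is correct and follows the same route the paper takes implicitly: rearrange the Taylor estimate $\|w\|\sin\varphi\leq\|\Sig^{-1}\hat f(y)\|+2\hat\gamma_\mu\|w\|^2$, then use $\sin\varphi\geq\sin\theta$ and $\hat\gamma_\mu\leq\gamma_\mu$ and factor. You also correctly flag that the chain $\|w\|\geq\sin\varphi/(2\hat\gamma_\mu)$ does not follow from the stated hypotheses alone but needs $\hat f(y)=0$ (i.e., $y$ a zero of $f$), a hypothesis the paper leaves tacit in the claim but uses only in that setting (Theorem~\ref{thm5}); with $\|w\|>0$ from $y\neq x$, cancelling a factor of $\|w\|$ is legitimate, so your argument is complete.
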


{\large\bf Case 2}: For $0\leq\varphi<\theta \leq \frac{\pi}{2}$, assume that $\gamma_{\mu}\|w\|\leq\frac{1}{2}$. The Taylor expansion of $f_n$ at $y$ is:
\begin{align*}
f_n(y)&=\frac{1}{2}\frac{\partial^2f_n(x)}{\partial X_1^2}\aaa^2+\frac{\partial^2 f_n(x)}{\partial X_1\partial \hat{X}}\aaa\bbb+\frac{1}{2}\frac{\partial^2 f_n(x)}{\partial \hat{X}^2}\bbb^2+\cdots+\frac{1}{\mux!}\frac{\partial^{\mux}f_n(x)}{\partial X_1^{\mux}}\aaa^{\mux}\\
&+\frac{1}{(\mux-1)!}\frac{\partial^{\mux}f_n(x)}{\partial X_1^{\mux-1}\partial\hat{X}}\aaa^{\mux-1}\bbb+\cdots+\frac{1}{\mux!}\frac{\partial^{\mux}f_n(x)}{\partial \hat{X}^{\mux}}\bbb^{\mux}+\sum_{k\geq\mux+1}\frac{D^k f_n(x)(y-x)^k}{k!}.
\end{align*}
The coefficient of the term $\aaa^i\bbb^j$  in the Taylor expansion of $f_n$ is
$\frac{1}{i!j!} \frac{\partial^{i+j} f_n(x)}{\partial X_1^i\partial \hat{X}^j}$, whose norm divided by $\Delta_{\mux}(f_n)$
  is  bounded by
  \begin{equation}\label{coeff1}
\frac{(i+j)!}{i!j!}\gamma_{\mu}^{i+j-1}.
\end{equation}
For the monomial $\aaa^i\bbb^j,\ i+j<\mux \mbox{ and }j>0$, after substituting the first $\eta$ in $\aaa^i\bbb^j$ by
\begin{align*}
  \bbb=&-\Sig^{-1}\left(\frac{1}{2}\frac{\partial^2 \hat{f}(x)}{\partial X_1^2}\aaa^2+\frac{\partial^2 \hat{f}(x)}{\partial X_1\partial \hat{X}}\aaa\bbb+\frac{1}{2}\frac{\partial^2 \hat{f}(x)}{\partial \hat{X}^2}\bbb^2+\cdots\right.\\
 \notag& +\sum_{0\leq k\leq \mux+1-i-j}\frac{1}{(\mux+1-i-j-k)!k!}\frac{\partial^{\mux+1-i-j}\hat{f}(x)}{\partial X_1^{\mux+1-i-j-k}\partial \hat{X}^k}\aaa^{\mux+1-i-j-k}\bbb^k\\
 \notag& \left.+\sum_{k\geq\mux+2-i-j}\frac{D^k \hat{f}(x)(y-x)^k}{k!} {{-\hat{f}(y)}}\right),
\end{align*}
solved from the  Taylor's expansion formula for $\hat{f}(y)$ at $x$,  we have
\begin{align*}
  \aaa^i\bbb^j=&-\Sig^{-1}\left(\frac{1}{2}\frac{\partial^2 \hat{f}(x)}{\partial X_1^2}\aaa^{i+2}\bbb^{j-1}+\frac{\partial^2 \hat{f}(x)}{\partial X_1\partial \hat{X}}\aaa^{i+1}\bbb^{j}+\frac{1}{2}\frac{\partial^2 \hat{f}(x)}{\partial \hat{X}^2}\aaa^i\bbb^{j+1}+\cdots\right.\\
   \notag& +\sum_{0\leq k\leq \mux+1-i-j}\frac{1}{(\mux+1-i-j-k)!k!}\frac{\partial^{\mux+1-i-j}\hat{f}(x)}{\partial X_1^{\mux+1-i-j-k}\partial \hat{X}^k}\aaa^{\mux+1-j-k}\bbb^{k+j-1}\\
   \notag& \left.+\sum_{k+i+j-1\geq\mux+1}\frac{D^k \hat{f}(x)(y-x)^k}{k!}\aaa^i\bbb^{j-1} {{-\hat{f}(y)\aaa^i\bbb^{j-1}}}\right),
\end{align*}
 the total degree of each term in  the above expression  is at least $i+j+1$.
Moreover, the norm of the coefficient  of  the new  term $\aaa^{i+k}\bbb^{j-1+l}$, $i+k+j-1+l\leq \mux$ obtained after the substitution and  divided by $\Delta_{\mux}(f_n)$ is bounded by
\begin{equation}\label{coeff2}
\left(\frac{(i+j)!}{i!j!}\gamma_{\mu}^{i+j-1}\right) \left\|\Sig^{-1}
\frac{1}{k!l!}\frac{\partial^{k+l}\hat{f}(x)}{\partial X_1^k\partial \hat{X}^l}\right\|
\leq \frac{(i+j)!}{i!j!}\frac{(k+l)!}{k!l!}  \gamma_{\mu}^{i+k+j+l-2}.
\end{equation}

Starting from $i+j=2, j \geq 1$,  after at most $\mu-2$ substitutions,    we can write $f_n$ in the following form:
\begin{align}\label{keyexpansion}
f_n(y)&=C_{2}\aaa^2+\cdots+C_{\mux}\aaa^{\mux}+\sum_{i+j=\mux,j>0}C_{i,j}\aaa^i \bbb^j+\sum_{k\geq\mux+1}\frac{D^k f_n(x)(y-x)^k}{k!}\\
\notag&\ \ +\sum_{1\leq i+j-1\leq\mux-2}T_{i,j-1}\cdot \left( \sum_{k+i+j-1\geq \mux+1}\Sig^{-1}\frac{D^k\hat{f}(x)(y-x)^k}{k!}\aaa^i \bbb^{j-1} \right)\\
\notag&\ \ {{-\sum_{1\leq i+j-1\leq\mux-2}T_{i,j-1}\Sig^{-1}\hat{f}(y)\aaa^i\bbb^{j-1}}},
\end{align}
where $C_2,\ldots,C_{\mux}$ are constants, and  the coefficients $C_{i,j}$ and $T_{i, j-1}$ divided by $\Delta_{\mux}(f_n)$ are bounded:
\begin{equation}\label{cijtijbound}
\left\|\frac{1}{\Delta_{\mux}(f_n)}C_{i,j}\right\|\leq c_{i,j}\gamma_{\mu}^{i+j-1},\ \left\|\frac{1}{\Delta_{\mux}(f_n)}T_{i,j-1}\right\|\leq t_{i,j-1}\gamma_{\mu}^{i+j-1},
\end{equation}
where $c_{i,j},t_{i,j-1}\in\mathbb{R}$ are constants. This can be deduced  by using (\ref{coeff1}) and (\ref{coeff2}).
%
%

\begin{claim}\label{claimmulzhi}
We have $C_t=\Delta_{t}(f_n)$, for $t=2,\ldots,\mux$.
\end{claim}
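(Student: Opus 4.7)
The plan is to prove the identity $C_t=\Delta_{t}(f_n)$ by induction on $t$ from $2$ up to $\mux$, by matching term-by-term the substitution procedure that produces $C_t$ with the recursive construction of $\Delta_t$ given by (\ref{computedelta})--(\ref{solveaijnormalized}). Note first that, thanks to the normalized form (\ref{normalizeform}), the recursion (\ref{computedelta}) is driven only by the first-order datum $\Lambda_1=d_1$, and the coefficients $a_{k,\sigma}$ for $\sigma\geq 2$ are obtained by applying $-\Sig^{-1}$ to the lower-order differential evaluations on $\hat{f}$.

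For the base case $t=2$, the Taylor expansion of $f_n(y)$ at $x$ already contains $\tfrac{1}{2}\frac{\partial^2 f_n(x)}{\partial X_1^2}\aaa^2$, and every substitution $\bbb\leftarrow -\Sig^{-1}(\ldots)$ strictly increases the total degree of the affected monomial, so no new $\aaa^2$-contribution is produced. Since $\Delta_2=d_1^2$ in the normalized setting, we get $C_2=\tfrac{1}{2}\frac{\partial^2 f_n(x)}{\partial X_1^2}=\Delta_2(f_n)$, which also recovers the case already computed explicitly for simple triple zeros in (\ref{mul3delta3}) when $t=3$ is done by one more substitution.

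For the inductive step, I would argue that the substitution rule applied to the first $\bbb$ in each monomial $\aaa^i\bbb^j$ with $1\leq i+j\leq t-1$ and $j\geq 1$ is structurally identical to the operation defining $a_{k,\sigma}$ in (\ref{solveaijnormalized}): both amount to inverting $\Sig$ against a Taylor coefficient of $\hat{f}$. Concretely, I would show that the $\aaa^t$-contribution coming from substituting inside a monomial of initial total degree $i+j=t-k+1$ (for $1\leq k\leq t-1$) matches exactly the summand $\Psi_\sigma(a_{k-1,\sigma}\Lambda_{t-k+1})(f_n)$ in (\ref{computedelta}), with $\sigma$ indexing which component of $\bbb$ is being substituted, while the direct term $\tfrac{1}{t!}\frac{\partial^t f_n(x)}{\partial X_1^t}\aaa^t$ matches the contribution of $\Psi_1^t$. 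A canonical substitution ordering---always eliminate the leftmost $\bbb$ of the monomial of smallest $i+j$ first---prevents double counting and reproduces the single unfolding rule implicit in (\ref{computedelta}).

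The main obstacle will be the combinatorial bookkeeping: one must verify that the multinomial factors $\tfrac{1}{i!j!}$ coming from Taylor's formula combine with the coefficients produced by $\Psi_\sigma$ (which only strips one derivative index) to reproduce precisely the numerical weights appearing in $\Delta_t(f_n)$, using the inductive hypothesis $C_s=\Delta_s(f_n)$ for all $s<t$. A subsidiary point is confirming that the tail terms of $\hat{f}$ and $f_n$ of order $\geq \mux+1$ appearing in (\ref{keyexpansion}), as well as the $\aaa^i\bbb^j$ terms with $i+j=\mux$ and $j>0$, do not leak into $C_t$ for $t\leq\mux$: the former is immediate from degree counting (each substitution raises total degree by at least one), and the latter because the substitution process stops at total degree $\mux$. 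Once indexing and coefficients are aligned, the identity $C_t=\Delta_t(f_n)$ follows by induction.
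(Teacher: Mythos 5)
Your plan takes a genuinely different route from the paper, but it leaves a real gap exactly where you flag one. You aim to show $C_t=\Delta_t(f_n)$ by \emph{constructively} matching the substitution process against the recursion (\ref{computedelta})--(\ref{solveaijnormalized}) that builds $\Delta_t$ via $\Psi_\sigma$, arguing by induction on $t$ with a canonical elimination order to avoid double counting. The paper avoids all of this bookkeeping with a one-shot argument: once the identity (\ref{keyexpansion}) is in hand, simply \emph{apply the already-defined differential functional $\Delta_t$ to both sides and evaluate at $x$}. Because (i) $d_1^t$ is the unique top-order differential monomial of $\Delta_{t}$ and no $d_1^s$ with $s<t$ appears, so $\Delta_t(\aaa^s)=\delta_{st}$ and $\Delta_t(\aaa^i\bbb^j)=0$ when $i+j\geq t$ with $j>0$; (ii) $\Delta_t$ has order at most $t\leq\mux$, so every term of total degree $>\mux$ is annihilated; and (iii) the stability/closedness property (\ref{closed}) together with $\hat f(x)=0$ kills the $T_{i,j}\Sig^{-1}\hat f(y)\aaa^i\bbb^{j}$ terms, one gets directly $\Delta_t(f_n)=C_t$ with no combinatorics to align.

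The gap in your proposal is concrete: the inductive step that ``the $\aaa^t$-contribution coming from substituting inside a monomial of initial total degree $i+j=t-k+1$ \ldots matches exactly the summand $\Psi_\sigma(a_{k-1,\sigma}\Lambda_{t-k+1})(f_n)$'' is asserted but not shown, and it is precisely here that the multinomial weights, the dependence of the $a_{k,\sigma}$ on the linear system (\ref{solveaijnormalized}), and the action of $\Psi_\sigma$ all have to be reconciled. You acknowledge this as ``the main obstacle,'' but your proposal defers it rather than closing it. In addition, your argument relies on a chosen canonical substitution order producing a well-defined result agreeing with $\Delta_t(f_n)$; this order-independence is itself a claim that would need proof under your approach, whereas the paper's argument is indifferent to how (\ref{keyexpansion}) was produced, since it only evaluates $\Delta_t$ on the final identity. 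Your base cases $t=2,3$ are fine and do check out against (\ref{mul3delta2}) and (\ref{mul3delta3}), but the general inductive step is not yet a proof.
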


 For simplicity, we
 replace $j-1$ by $j$ in the last two terms of (\ref{keyexpansion}).
\begin{proof}
 Let us apply the differential functional $\Delta_t$ to both sides of  (\ref{keyexpansion}):
\begin{align*}
\Delta_{t}(f_n)=&C_{2}\Delta_{t}(\aaa^2)+\cdots+C_{\mux}\Delta_{t}(\aaa^{\mux})+\sum_{i+j=\mux,j>0}C_{i,j}\Delta_{t}(\aaa^i \bbb^j)\\
&\ \ +\sum_{1\leq i+j\leq\mux-2}T_{i,j}\cdot \left( \sum_{k\geq \mux+1-i-j}\Sig^{-1}\Delta_{t}\left(\frac{D^k\hat{f}(x)(y-x)^k}{k!}\aaa^i \bbb^j\right)\right)\\
&\ \ {{-\sum_{1\leq i+j\leq\mux-2}T_{i,j}\Sig^{-1}\Delta_{t}\left(\hat{f}(y)\aaa^i\bbb^j\right)}}+\sum_{k\geq\mux+1}\Delta_{t}\left(\frac{D^k f_n(x)(y-x)^k}{k!}\right).
\end{align*}
Based on (\ref{diffab}), {(\ref{closed})} and the {fact} that    $d_1^t$ is the only differential monomial of the highest order $t$  in  $\Delta_{t}$ and no other $d_1^s$ with $s<t$  in  $\Delta_{t}$,
  we derive for $2 \leq t \leq \mux$ that:
\begin{enumerate}
  \item $\Delta_{t}(\aaa^{s})=1$ if $s=t$ and $0$ otherwise;
  \item $\Delta_{t}(\aaa^i \bbb^j)=0$  for   $t \leq i+j=\mu $ and $j>0$;
  \item $\Delta_{t}\left(\frac{D^k\hat{f}(x)(y-x)^k}{k!}\aaa^i \bbb^j\right)=0$  for  $ t \leq {\mux < i+j+k}$;
  \item $\Delta_{t}\left(\hat{f}(y)\aaa^i\bbb^j\right)=0$  for $1 \leq i+j\leq \mu-2$;
  \item $\Delta_{t}\left(\frac{D^k f_n(x)(y-x)^k}{k!}\right)=0$ for  $ t \leq \mux <k$.
\end{enumerate}
Hence, we have $C_t=\Delta_{t}(f_n)$ for $t=2,\ldots,\mux$.
\end{proof}

By Claim \ref{claimmulzhi} and (\ref{simplem}), we have
\begin{equation}\label{deltacon}
C_t=\Delta_{t}(f_n)=0,\ \ t=2,\cdots,\mux-1, \  \  C_{\mux}=\Delta_{\mux}(f_n)\not=0.
\end{equation}

From (\ref{keyexpansion}) and (\ref{deltacon}), we obtain
\begin{align*}
\aaa^{\mux}=&-\frac{1}{\Delta_{\mux}(f_n)}\sum_{i+j=\mux,j>0}C_{i,j}\aaa^i \bbb^j-\frac{1}{\Delta_{\mux}(f_n)}\sum_{k\geq\mux+1}\frac{D^k f_n(x)(y-x)^k}{k!}\\
&-\frac{1}{\Delta_{\mux}(f_n)}\sum_{1\leq i+j\leq\mux-2}T_{i,j}\cdot \left(\sum_{k\geq \mux+1-i-j}\Sig^{-1}\frac{D^k\hat{f}(x)(y-x)^k}{k!}\aaa^i \bbb^j \right)\\
&{{+\frac{1}{\Delta_{\mux}(f_n)}\sum_{1\leq i+j\leq\mux-2}T_{i,j}\Sig^{-1}\hat{f}(y)\aaa^i\bbb^j+\frac{1}{\Delta_{\mux}(f_n)}f_n(y)}}.
\end{align*}
By the triangle inequality, we have
\begin{align*}
|\aaa|^{\mux}\leq&\sum_{i+j=\mux,j>0}\left\|\frac{1}{\Delta_{\mux}(f_n)}C_{i,j}\right\||\aaa|^i \|\bbb\|^j+\sum_{k\geq\mux+1}\left\|\frac{1}{\Delta_{\mux}(f_n)}\frac{D^k f_n(x)}{k!}\right\|\|w\|^k\\
&+\sum_{1\leq i+j\leq\mux-2}\left\|\frac{1}{\Delta_{\mux}(f_n)}T_{i,j}\right\| \cdot \left( \sum_{k\geq \mux+1-i-j}\left\|\Sig^{-1}\frac{D^k\hat{f}(x)}{k!}\right\|\|w\|^k|\aaa|^i \|\bbb\|^j\right)\\
&{{+\sum_{1\leq i+j\leq\mux-2}\left\|\frac{1}{\Delta_{\mux}(f_n)}T_{i,j}\right\| \cdot\left\|\Sig^{-1}\hat{f}(y)\right\||\aaa|^i \|\bbb\|^j+\left|\frac{1}{\Delta_{\mux}(f_n)}f_n(y)\right|}}\\
\leq&\sum_{i+j=\mux,j>0}c_{i,j}\gamma_{\mu}^{i+j-1}|\aaa|^i \|\bbb\|^j+\sum_{k\geq\mux+1}\gamma_{\mu,n}^{k-1}\|w\|^k\\
&+\sum_{1\leq i+j\leq\mux-2}t_{i,j}\gamma_{\mu}^{i+j}\cdot 2\hat{\gamma}_{\mu}^{\mux-i-j}\|w\|^{\mux-i-j+1}|\aaa|^i \|\bbb\|^j\\
&{{+\sum_{1\leq i+j\leq\mux-2}t_{i,j}\gamma_{\mu}^{i+j} \cdot\left\|\Sig^{-1}\hat{f}(y)\right\||\aaa|^i \|\bbb\|^j+\left|\frac{1}{\Delta_{\mux}(f_n)}f_n(y)\right|}}\\
\leq&\sum_{i+j=\mux,j>0}c_{i,j}\gamma_{\mu}^{\mux-1}|\aaa|^i \|\bbb\|^j+\sum_{1\leq i+j\leq\mux-2}2t_{i,j}\gamma_{\mu}^{\mux}\|w\|^{\mux-i-j+1}|\aaa|^i \|\bbb\|^j+2\gamma_{\mu}^{\mux}\|w\|^{\mux+1}\\
&{{+\sum_{1\leq i+j\leq\mux-2}t_{i,j}\gamma_{\mu}^{i+j} \cdot\left\|\Sig^{-1}\hat{f}(y)\right\||\aaa|^i \|\bbb\|^j+\left|\frac{1}{\Delta_{\mux}(f_n)}f_n(y)\right|}}.\\
\end{align*}
By $|\aaa|=\|w\|\sin\varphi,\  \|\bbb\|=\|w\|\cos\varphi$, we have:
\begin{align*}
   & \|w\|^{\mux}\cos^{\mux}\varphi \leq \sum_{i+j=\mux,j>0}c_{i,j}\gamma_{\mu}^{\mux-1}\|w\|^{\mux}\cos^i\varphi \sin^j\varphi\\
	&\ \ +\sum_{1\leq i+j\leq\mux-2}2t_{i,j}\gamma_{\mu}^{\mux}\|w\|^{\mux+1}\cos^i\varphi \sin^j\varphi+2\gamma_{\mu}^{\mux}\|w\|^{\mux+1}\\
    &\ \ {{+\sum_{1\leq i+j\leq\mux-2}t_{i,j}\gamma_{\mu}^{i+j}\|w\|^{i+j}\cos^i\varphi\sin^j\varphi \cdot\left\|\Sig^{-1}\hat{f}(y)\right\|+\left|\frac{1}{\Delta_{\mux}(f_n)}f_n(y)\right|}}.\\
\end{align*}
Therefore, we have
{{
\begin{align*}
  &\frac{\cos^{\mux}\varphi-\sum_{i+j=\mux,j>0}c_{i,j}\gamma_{\mu}^{\mux-1}\cos^i\varphi \sin^j\varphi}
  {1+\sum_{1\leq i+j\leq\mux-2}t_{i,j}\cos^i\varphi \sin^j\varphi}\cdot \|w\|^{\mux}- 2\gamma_{\mu}^{\mux}\|w\|^{\mux+1}\\
  \leq&\frac{1}{1+\sum_{1\leq i+j\leq\mux-2}t_{i,j}\cos^i\varphi \sin^j\varphi}\left|\frac{1}{\Amu}f_n(y)\right|\\
  &\ \ +\frac{\sum_{1\leq i+j\leq\mux-2}t_{i,j}\gamma_{\mu}^{i+j}\|w\|^{i+j}\cos^i\varphi\sin^j\varphi}{1+\sum_{1\leq i+j\leq\mux-2}t_{i,j}\cos^i\varphi \sin^j\varphi}\left\|\Sig^{-1}\hat{f}(y)\right\|\\
  \leq &\left|\frac{1}{\Amu}f_n(y)\right|+\frac{1}{2}\left\|\Sig^{-1}\hat{f}(y)\right\|\\
  \leq &\|\mathcal{A}^{-1}f(y)\|.\\
\end{align*}
}}
 We have the following inequality:  
\begin{equation}\label{wboundf}
\|\mathcal{A}^{-1}f(y)\| \geq h(\varphi)\cdot 2\gamma_{\mu}^{\mux}\|w\|^{\mux}- 2\gamma_{\mu}^{\mux}\|w\|^{\mux+1}.
\end{equation}
where
\begin{equation}\label{hpolynomial}
  h(\varphi)=\frac{\cos^{\mux}\varphi-\sum_{i+j=\mux,j>0}c_{i,j}\gamma_{\mu}^{\mux-1}\cos^i\varphi \sin^j\varphi}
  {\sum_{1\leq i\leq\mux-2}2t_{i,0}\gamma_{\mu}^{\mux}+\sum_{1\leq i+j\leq\mux-2,j>0}2t_{i,j}\gamma_{\mu}^{\mux}\cos^i\varphi \sin^j\varphi+2\gamma_{\mu}^{\mux}}.
  \end{equation}

\begin{definition}\label{allds}
 We define $d=\min (d_1, d_2, d_3)$, where
 \[d_1=\sqrt{\frac{1}{c_{\mux-1,1}^2+1}},\  d_2=\sqrt{\frac{1}{\mux-1}},\] 
  and $d_3$ is the smallest positive real root of the polynomial
\begin{align}\label{mainunivariatepoly}
&p(d)=(1-d^2)^{\frac{\mux}{2}}-\sum_{i+j=\mux,j>0}c_{i,j}d(1-d^2)^{\frac{i}{2}}d^{j-1}\\
\notag&-d\left(\sum_{1\leq i\leq\mux-2}t_{i,0}+\sum_{1\leq i+j\leq\mux-2,j>0}t_{i,j}(1-d^2)^{\frac{i}{2}}d^j+1\right).
\end{align}
\end{definition}

In the sequel, we always assume that $d$  has the above definition.

\begin{claim}\label{generaltheta} We have
$
h(\theta) \geq \frac{\sin (\theta)}{2 \gamma_{\mu}},
$  where
$\sin \theta=\frac{d}{\gamma_{\mu}^{\mu-1}}.
$
\end{claim}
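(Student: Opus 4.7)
The plan is to reduce the claimed inequality to a one-variable polynomial inequality in $d$ via the substitution $\sin\theta = d/\gamma_\mu^{\mu-1}$, and then, following the strategy of Lemma~\ref{oldlemma4}, argue that the worst case is $\gamma_\mu = 1$. Since the denominator of $h(\varphi)$ in (\ref{hpolynomial}) is a sum of non-negative terms plus $2\gamma_\mu^\mu > 0$, it is strictly positive; multiplying through and dividing by $2\gamma_\mu^\mu$ shows that $h(\theta) \geq \sin\theta/(2\gamma_\mu) = d/(2\gamma_\mu^\mu)$ is equivalent to
$$
\cos^\mu\theta - \sum_{i+j=\mu,\, j>0} c_{i,j}\gamma_\mu^{\mu-1}\cos^i\theta\sin^j\theta \,\geq\, d\!\left(\sum_{1\leq i\leq \mu-2} t_{i,0} + \sum_{\substack{1\leq i+j\leq \mu-2\\ j>0}} t_{i,j}\cos^i\theta\sin^j\theta + 1\right).
$$
Setting $u := \gamma_\mu^{-2(\mu-1)} \in (0, 1]$, so that $\cos^i\theta = (1-d^2 u)^{i/2}$ and $\gamma_\mu^{\mu-1}\sin^j\theta = d^j u^{(j-1)/2}$, this becomes $F(u, d) \geq 0$ for an explicit function $F$ depending only on $u$ and $d$.

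Next I would show that for any fixed $d$ satisfying $d \leq d_1$ and $d \leq d_2$, the function $F(\,\cdot\,, d)$ is non-increasing on $(0, 1]$, so its minimum is attained at $u = 1$. Differentiating term by term, the contribution $-(\mu d^2/2)(1-d^2 u)^{\mu/2-1}$ from $(1-d^2 u)^{\mu/2}$ is negative, and one must check that every other $\partial_u$-contribution has the favorable sign after the overall minus sign, or else can be absorbed. The constraint $d \leq d_2 = 1/\sqrt{\mu-1}$ kills the sign ambiguity in the mixed terms: for the $c_{i,j}$ summands with $i+j = \mu$ and $j \geq 2$ one needs $(j-1) - (\mu-1) d^2 u \geq 0$, and for the $t_{i,j}$ summands with $1 \leq i+j \leq \mu-2$ and $j \geq 1$ one needs $j - (i+j) d^2 u \geq 0$, both following from $d^2 \leq 1/(\mu-1)$ and $u \leq 1$. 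The only genuinely problematic term is the $j=1$, $i=\mu-1$ summand in the $c_{i,j}$ sum; here the bound $d \leq d_1 = 1/\sqrt{c_{\mu-1,1}^2+1}$ yields $\sqrt{1-d^2} \geq c_{\mu-1,1}\, d$, which is exactly enough for a fraction of the negative derivative of $(1-d^2 u)^{\mu/2}$ to absorb the positive contribution coming from $-\partial_u[c_{\mu-1,1}(1-d^2 u)^{(\mu-1)/2}d]$.

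Finally, a direct substitution of $u = 1$ shows that $F(1, d)$ coincides exactly with the polynomial $p(d)$ of (\ref{mainunivariatepoly}). Since $p(0) = 1 > 0$ and $d_3$ is by definition the smallest positive real root of $p$, we have $p(d) \geq 0$ for every $d \in [0, d_3]$. Because $d = \min(d_1, d_2, d_3) \leq d_3$, this gives $F(1, d) = p(d) \geq 0$, and combined with the monotonicity of the previous paragraph, $F(u, d) \geq 0$ for every admissible $u$, which is the claim.

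The main obstacle is the monotonicity step. The triple case of Lemma~\ref{oldlemma4} involved only a handful of terms and could be expanded explicitly, but here the sign analysis must be carried out uniformly in $(i, j)$, and one sees precisely why Definition~\ref{allds} is forced to be a minimum of three quantities: $d_1$ is needed to tame the distinguished $j=1$ summand, $d_2$ handles all remaining mixed terms at once, and $d_3$ is invoked only at the very end to close the reduced one-variable inequality at $\gamma_\mu = 1$.
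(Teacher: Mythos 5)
Your proposal is correct and takes essentially the same route as the paper: substitute $\sin\theta=d/\gamma_\mu^{\mu-1}$, show the resulting expression is monotone in $\gamma_\mu$ (your $u=\gamma_\mu^{-2(\mu-1)}$ just re-parametrizes this), and reduce to $p(d)\ge0$ at $\gamma_\mu=1$. The only cosmetic difference is that the paper treats the first two summands via the factorization $(1-d^2u)^{(\mu-1)/2}\bigl(\sqrt{1-d^2u}-c_{\mu-1,1}d\bigr)$ rather than by explicit term-wise differentiation, but the sign conditions you derive from $d\le d_1$ and $d\le d_2$ match the paper's exactly.
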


To prove this claim, substituting $\sin\varphi$ and $\cos\varphi$ in (\ref{hpolynomial}) by  $\sin\theta=\frac{d}{\gamma_{\mu}^{\mu-1}}$,
 $\cos\theta=\left(1-\frac{d^2}{\gamma_{\mu}^{2(\mux-1)}}\right)^{1/2}$,
we need to show
%
%
\begin{align}\label{functiongamma}
&\left(1-\frac{d^2}{\gamma_{\mu}^{2(\mux-1)}}\right)^{\frac{\mux}{2}}-c_{\mux-1,1}d\left(1-\frac{d^2}{\gamma_{\mu}^{2(\mux-1)}}\right)^{\frac{\mux-1}{2}}\\
\notag&-\sum_{i+j=\mux-1,j>0}c_{i,j+1}d\left(1-\frac{d^2}{\gamma_{\mu}^{2(\mux-1)}}\right)^{\frac{i}{2}}\frac{d^j}{\gamma_{\mu}^{j(\mux-1)}}\\
\notag&-d\left(\sum_{1\leq i\leq\mux-2}t_{i,0}+\sum_{1\leq i+j\leq\mux-2,j>0}t_{i,j}\left(1-\frac{d^2}{\gamma_{\mu}^{2(\mux-1)}}\right)^{\frac{i}{2}}\frac{d^j}{\gamma_{\mu}^{j(\mux-1)}}+1\right)\geq 0.
\end{align}

\begin{itemize}
\item  The sum of the  first two terms in (\ref{functiongamma}) is non-negative and  increasing  in $\gamma_{\mu}$ for  $\gamma_{\mu} \geq 1$ as it equals to
%
\[\left(1-\frac{d^2}{\gamma_{\mu}^{2(\mux-1)}}\right)^{\frac{\mux-1}{2}}\left(\sqrt{1-\frac{d^2}
{\gamma_{\mu}^{2(\mux-1)}}}-c_{\mux-1,1}d\right),
\]%
%
and  
$d \leq d_1= \sqrt{\frac{1}{c_{\mux-1,1}^2+1}}$.
\item The terms $\cos^i\varphi\sin^j\varphi,\ j>0$ are increasing with respect to $\varphi$ for $\varphi\in\left[0,\ \arctan\sqrt{\frac{1}{i}}\right]$ since
\[(\cos^i\varphi\sin\varphi)'
=\cos^{i-1}\varphi(\cos^2\varphi-i\sin^2\varphi)\geq 0.\]
 Hence, for  $1\leq i+j \leq \mux-1,\ j>0$,  $\left(1-\frac{d^2}{\gamma_{\mu}^{2(\mux-1)}}\right)^{\frac{i}{2}}\frac{d^j}{\gamma_{\mu}^{j(\mux-1)}}$,
  is decreasing in $\gamma_{\mu}$ for $d\in\left[0,\ \sqrt{\frac{1}{\mux-1}} \right]$. 
 \item The left side of (\ref{functiongamma}) is a function which  is increasing in $\gamma_{\mu}$ and it is sufficient to prove that the inequality is true when $\gamma_{\mu}=1$:
\begin{align*}
&p(d)=(1-d^2)^{\frac{\mux}{2}}-\sum_{i+j=\mux,j>0}c_{i,j}d(1-d^2)^{\frac{i}{2}}d^{j-1}\\
\notag&-d\left(\sum_{1\leq i\leq\mux-2}t_{i,0}+\sum_{1\leq i+j\leq\mux-2,j>0}t_{i,j}(1-d^2)^{\frac{i}{2}}d^j+1\right)\geq 0,
\end{align*}
which is obvious as $p(d)$ is decreasing in $d$  for $0 \leq d \leq d_3$, $p(0)>0$ and $d_3$ is the smallest real zero of $p(d)=0$.  
\end{itemize}

\begin{claim}\label{claimphidecrease} For $0 \leq \varphi \leq \theta$,
$h(\varphi)$ is non-negative and decreasing for $0 \leq \varphi \leq \theta$ and
\begin{equation}\label{phifun}
h(\varphi) \geq h(\theta) \geq \frac{ \sin \theta}{2 \gamma_{\mu}},
\end{equation}
and
\begin{equation}\label{phifunfy}
\|\mathcal{A}^{-1}f(y)\|\geq 2\gamma_{\mu}^{\mux} \|w\|^{\mux}\left(\frac{\sin\theta}{2\gamma_{\mu}}-\|w\|\right).
\end{equation}
Moreover, if $y$ is another zero of $f$, then
\begin{equation*}
\|w\|=\|y-x\|  \geq \frac{ \sin \theta}{2 \gamma_{\mu}}.
\end{equation*}
\end{claim}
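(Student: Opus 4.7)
The plan is to first establish monotonicity of $h(\varphi)$ on $[0,\theta]$ by a term-by-term analysis of its numerator and denominator, then combine this with Claim \ref{generaltheta} and inequality (\ref{wboundf}) to obtain the displayed lower bound on $\|\mathcal{A}^{-1}f(y)\|$, and finally deduce the separation bound by splitting on whether $\varphi \in [0,\theta]$ or $\varphi \in [\theta,\pi/2]$ and setting $f(y)=0$.

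First, I would show that the numerator
\[
N(\varphi)=\cos^{\mu}\varphi-\sum_{i+j=\mu,\,j>0}c_{i,j}\gamma_{\mu}^{\mu-1}\cos^{i}\varphi\sin^{j}\varphi
\]
is non-negative and decreasing on $[0,\theta]$. The leading term $\cos^{\mu}\varphi$ is obviously decreasing, while each subtracted monomial $\cos^{i}\varphi\sin^{j}\varphi$ with $j>0$ satisfies
\[
(\cos^{i}\varphi\sin^{j}\varphi)'=\cos^{i-1}\varphi\sin^{j-1}\varphi\bigl(j\cos^{2}\varphi-i\sin^{2}\varphi\bigr)\geq 0
\]
whenever $\tan^{2}\varphi\leq j/i$. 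The assumption $d\leq d_{2}=\sqrt{1/(\mu-1)}$ from Definition \ref{allds}, together with $\sin\theta=d/\gamma_{\mu}^{\mu-1}$ and $\gamma_{\mu}\geq 1$, confines $\theta$ to the increasing regime of each relevant monomial, so subtracting these from $\cos^{\mu}\varphi$ keeps $N$ decreasing. The denominator is a sum of $2\gamma_{\mu}^{\mu}$ plus non-negative multiples of the same kinds of monomials, hence positive and non-decreasing on $[0,\theta]$ by the same argument. Consequently $h(\varphi)$ is non-negative and decreasing on $[0,\theta]$, and by Claim \ref{generaltheta} we get
\[
h(\varphi)\geq h(\theta)\geq \frac{\sin\theta}{2\gamma_{\mu}}.
\]

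Second, I substitute this bound into (\ref{wboundf}) to derive
\[
\|\mathcal{A}^{-1}f(y)\|\geq h(\varphi)\cdot 2\gamma_{\mu}^{\mu}\|w\|^{\mu}-2\gamma_{\mu}^{\mu}\|w\|^{\mu+1}\geq 2\gamma_{\mu}^{\mu}\|w\|^{\mu}\left(\frac{\sin\theta}{2\gamma_{\mu}}-\|w\|\right),
\]
which is (\ref{phifunfy}). Finally, for $y\neq x$ a zero of $f$, I split into two cases. If $\varphi\in[\theta,\pi/2]$, Claim \ref{claim0} already yields $\|w\|\geq \sin\theta/(2\gamma_{\mu})$. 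If $\varphi\in[0,\theta]$ and $\gamma_{\mu}\|w\|\leq 1/2$, then $f(y)=0$ forces $\sin\theta/(2\gamma_{\mu})-\|w\|\leq 0$ in the displayed inequality, giving the same bound. If instead $\gamma_{\mu}\|w\|>1/2$, then $\|w\|>1/(2\gamma_{\mu})\geq \sin\theta/(2\gamma_{\mu})$ trivially since $\sin\theta\leq 1$.

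The main obstacle is the verification that the trigonometric monomials $\cos^{i}\varphi\sin^{j}\varphi$ with varying $(i,j)$ all behave monotonically in the required direction throughout $[0,\theta]$; this is where the specific structure of Definition \ref{allds} (in particular the bound $d\leq d_{2}$, and $d\leq d_{1}$ used implicitly through Claim \ref{generaltheta}) is essential, since otherwise some monomial could peak inside $(0,\theta)$ and break monotonicity of $N$ or the denominator. Once that monotonicity is secured, the remaining derivation of (\ref{phifunfy}) and the separation bound is a clean combination of the estimates already established.
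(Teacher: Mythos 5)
Your proof follows essentially the same route as the paper's: you establish that $h(\varphi)$ is non-negative and decreasing on $[0,\theta]$ by analyzing the sign of $(\cos^i\varphi\sin^j\varphi)'$ term-by-term for the numerator and denominator of (\ref{hpolynomial}), then chain this with Claim \ref{generaltheta} to obtain $h(\varphi)\geq h(\theta)\geq\sin\theta/(2\gamma_\mu)$, and finally substitute into (\ref{wboundf}) to get (\ref{phifunfy}); the separation bound follows by setting $f(y)=0$. The only difference is that you spell out the final case split more explicitly (including $\gamma_\mu\|w\|>1/2$ and the $\varphi\in[\theta,\pi/2]$ branch via Claim \ref{claim0}), which the paper defers to the proofs of Theorem \ref{thm1} and Theorem \ref{thm5} rather than including inside this claim.
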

 For  $\varphi\in\left[0,\ \arctan\sqrt{\frac{1}{\mu -1}}\ \right]$, since
$\cos^i\varphi \sin^j\varphi, ~i+j=\mu, ~j>0$ is increasing,  the numerator  of $h(\varphi)$  is  non-negative and decreasing,    and
 the denominator of $h(\varphi)$ is positive and increasing. Hence,  $h(\varphi)$ is non-negative and decreasing for $0 \leq \varphi \leq \theta$,
 we have (\ref{phifun}). Moreover, by (\ref{wboundf}), we have
\[ \|\mathcal{A}^{-1}f(y)\| \geq h(\varphi)\cdot 2\gamma_{\mu}^{\mux}\|w\|^{\mux}- 2\gamma_{\mu}^{\mux}\|w\|^{\mux+1} \geq  2\gamma_{\mu}^{\mux} \|w\|^{\mux}\left(\frac{\sin\theta}{2\gamma_{\mu}}-\|w\|\right).
\]

%
%

\begin{thm}\label{thm5}
   Let  $\xx$ be a simple multiple zero of $f$ of multiplicity $\mux$, and $y$ be another zero of $f$, then
  \[\|y-x\|\geq\frac{d}{2\gamma_{\mu}^{\mux}}.\]
\end{thm}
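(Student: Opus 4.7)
The plan is to mirror the argument already given for simple triple zeros in Theorem \ref{thm1}, now invoking the multiplicity-$\mu$ versions of the key lemmas, namely Claim \ref{claim0} (the Case 1 estimate for $\theta\le\varphi\le\pi/2$) and Claim \ref{claimphidecrease} (the Case 2 estimate \eqref{phifunfy} for $0\le\varphi\le\theta$). Set $w=y-x$ and $\varphi=d_P(v,y-x)$ as in the preceding development, and recall that $\sin\theta=d/\gamma_\mu^{\mu-1}$.

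First I would split on the size of $\gamma_\mu\|w\|$. If $\gamma_\mu\|w\|>\tfrac12$, then since $\gamma_\mu\ge 1$ and $d<1$ (the latter follows from Definition \ref{allds}, where $d\le d_2=1/\sqrt{\mu-1}\le 1$), one has
\[
\|y-x\|=\|w\|>\frac{1}{2\gamma_\mu}\ge\frac{d}{2\gamma_\mu^\mu},
\]
and there is nothing more to do.

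The substantive case is $\gamma_\mu\|w\|\le\tfrac12$, where Claims \ref{claim0} and \ref{claimphidecrease} apply. Since $y$ is a zero of $f$, both $\hat f(y)=0$ and $f_n(y)=0$, so $\mathcal{A}^{-1}f(y)=0$ and in particular $\|\Sigma_{n-1}^{-1}\hat f(y)\|=0$. If $\theta\le\varphi\le\pi/2$, Claim \ref{claim0} gives
\[
0=\|\Sigma_{n-1}^{-1}\hat f(y)\|\ge 2\gamma_\mu\|w\|\!\left(\frac{\sin\theta}{2\gamma_\mu}-\|w\|\right),
\]
forcing $\|w\|\ge\sin\theta/(2\gamma_\mu)=d/(2\gamma_\mu^\mu)$. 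If instead $0\le\varphi\le\theta$, then the bound \eqref{phifunfy} from Claim \ref{claimphidecrease} gives
\[
0=\|\mathcal{A}^{-1}f(y)\|\ge 2\gamma_\mu^\mu\|w\|^\mu\!\left(\frac{\sin\theta}{2\gamma_\mu}-\|w\|\right),
\]
and the same conclusion $\|w\|\ge d/(2\gamma_\mu^\mu)$ follows (the factor $\|w\|^\mu$ can be divided out since $w\ne 0$, as $y\ne x$).

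There is essentially no main obstacle here beyond bookkeeping, since all the analytic work has been front-loaded into Claims \ref{claim0} and \ref{claimphidecrease}; the only subtlety to double-check is that the constant $d$ from Definition \ref{allds} satisfies $d<1$ so that the large-$\|w\|$ case is handled automatically, and that the inequality $\sin\theta/(2\gamma_\mu)=d/(2\gamma_\mu^\mu)$ is indeed the angle-independent bound that drops out of both cases. Thus the proof reduces to the two-line case split outlined above.
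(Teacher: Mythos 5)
Your proof is correct and follows essentially the same route as the paper: the paper's proof of Theorem~\ref{thm5} simply cites Claims~\ref{claim0} and~\ref{claimphidecrease}, which already contain the separation inequality $\|w\|\geq\sin\theta/(2\gamma_\mu)$ under the hypothesis $\gamma_\mu\|w\|\leq 1/2$, and leaves the complementary case implicit (it is spelled out in the analogous Theorem~\ref{thm1} for triple zeros). You have merely made explicit the case split on $\gamma_\mu\|w\|$ and the step of setting $\hat f(y)=0$, $f_n(y)=0$ before invoking the two claims, which is exactly what the paper intends.
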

\begin{proof} 
By Claim \ref{claim0} and Claim  \ref{claimphidecrease}, we have
\[\|w\|=\|y-x\|  \geq \frac{ \sin \theta}{2 \gamma_{\mu}}=\frac{d}{2\gamma_{\mu}^{\mux}},\]
since $\sin \theta=\frac{d}{\gamma_{\mu}^{\mu-1}}$.
\end{proof}

\begin{thm}\label{thm6}  Let  $\xx$ be a simple multiple zero of $f$ of multiplicity $\mux$,
  and $\|y-x\|\leq\frac{d}{4\gamma_{\mu}^{\mux}}$, then we have
  \[\|f(y)\|\geq \frac{d\|y-x\|^{\mux}}{2\|\mathcal{A}^{-1}\|}.\]
\end{thm}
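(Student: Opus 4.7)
The plan is to mirror the proof structure of Theorem \ref{thm2} (the triple-zero case), now invoking the angular lower bounds already established in Section \ref{sec3.2} for general multiplicity. The setup is to set $R=\frac{d}{4\gamma_{\mu}^{\mux}}$ and, using the identity $\sin\theta=d/\gamma_{\mu}^{\mux-1}$ from (\ref{theta}) (extended to multiplicity $\mux$ as in Claim \ref{generaltheta}), rewrite $R=\frac{\sin\theta}{4\gamma_{\mu}}$. Under the hypothesis $\|w\|:=\|y-x\|\leq R$, this immediately yields the geometric inequality $\frac{\sin\theta}{2\gamma_{\mu}}-\|w\|\geq\frac{\sin\theta}{4\gamma_{\mu}}$, which is the common factor appearing in both Claim \ref{claim0} and Claim \ref{claimphidecrease}. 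This factor is the uniform bridge from those claims to the target bound $\|f(y)\|\geq\frac{d\|w\|^{\mux}}{2\|\mathcal{A}^{-1}\|}$.

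Next I would split on the angle $\varphi=d_P(v,y-x)$ with $v=(1,0,\ldots,0)^T$. In the small-angle case $0\leq\varphi\leq\theta$, inequality (\ref{phifunfy}) of Claim \ref{claimphidecrease} gives directly
\[
\|\mathcal{A}^{-1}f(y)\|\geq 2\gamma_{\mu}^{\mux}\|w\|^{\mux}\left(\tfrac{\sin\theta}{2\gamma_{\mu}}-\|w\|\right)\geq 2\gamma_{\mu}^{\mux}\|w\|^{\mux}\cdot\tfrac{\sin\theta}{4\gamma_{\mu}}=\tfrac{\gamma_{\mu}^{\mux-1}\sin\theta}{2}\|w\|^{\mux}=\tfrac{d}{2}\|w\|^{\mux},
\]
so dividing by $\|\mathcal{A}^{-1}\|$ recovers the theorem's conclusion in this case. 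In the large-angle case $\theta\leq\varphi\leq\pi/2$, I would exploit the block form of $\mathcal{A}$ to write $\|\mathcal{A}^{-1}f(y)\|\geq\frac{1}{\sqrt{2}}\|D\hat{f}(x)^{-1}\hat{f}(y)\|$ and then feed in the bound of Claim \ref{claim0}. This produces a lower bound linear in $\|w\|$, of the order $\frac{\sqrt{2}\sin\theta}{4}\|w\|$, rather than of order $\|w\|^{\mux}$.

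The reconciliation of the large-angle bound with the target $\frac{d}{2}\|w\|^{\mux}$ is the step I expect to be the main obstacle. It reduces to showing $\sqrt{2}\,\gamma_{\mu}^{\mux-1}\|w\|^{\mux-1}\leq 1$ on the admissible range $\|w\|\leq R$; equivalently, that the Case 2 expression $2\gamma_{\mu}^{\mux}\|w\|^{\mux}\bigl(\tfrac{\sin\theta}{2\gamma_{\mu}}-\|w\|\bigr)$ is dominated by the Case 1 expression $\sqrt{2}\gamma_{\mu}\|w\|\bigl(\tfrac{\sin\theta}{2\gamma_{\mu}}-\|w\|\bigr)$ whenever $\|w\|\leq R$. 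Substituting $\|w\|\leq\frac{d}{4\gamma_{\mu}^{\mux}}$ and using only the crude bounds $d\leq 1$ (which follows from Definition \ref{allds}) and $\gamma_{\mu}\geq 1$, one obtains $\sqrt{2}\gamma_{\mu}^{\mux-1}\|w\|^{\mux-1}\leq \sqrt{2}/4^{\mux-1}<1$ for every $\mux\geq 2$. This verification is routine but is the only inequality in the argument that is not immediate from the angular claims; once it is in place, both angular regimes yield $\|\mathcal{A}^{-1}f(y)\|\geq\frac{d}{2}\|w\|^{\mux}$, and the passage to $\|f(y)\|$ via $\|f(y)\|\geq\|\mathcal{A}^{-1}f(y)\|/\|\mathcal{A}^{-1}\|$ concludes the proof.
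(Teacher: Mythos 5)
Your proof is correct and takes essentially the same route as the paper: split on the angle $\varphi$, apply Claim \ref{claim0} in the regime $\theta\le\varphi\le\pi/2$ and Claim \ref{claimphidecrease} (via (\ref{phifunfy})) in the regime $0\le\varphi\le\theta$, and use $\|w\|\le\frac{\sin\theta}{4\gamma_\mu}$ to bound the common factor $\frac{\sin\theta}{2\gamma_\mu}-\|w\|$ from below. The paper's own proof is terser and only plugs the radius into the small-angle bound, leaving the dominance of the large-angle (linear-in-$\|w\|$) bound over the $\|w\|^{\mux}$ target implicit, so your explicit check that $\sqrt{2}\,\gamma_\mu^{\mux-1}\|w\|^{\mux-1}\le\sqrt{2}/4^{\mux-1}<1$ on the admissible ball is exactly the omitted verification.
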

\begin{proof}
For  $ \theta \leq  \varphi \leq \frac{\pi}{2}$, by Claim \ref{claim0}, we can show that
\begin{align*}
\|\mathcal{A}^{-1}f(y)\|=&\left\|\left(\begin{array}{c}\frac{1}{\sqrt{2}}\Sig^{-1}\hat{f}(y)\\
\frac{\sqrt{2}}{\Amu}f_n(y)\end{array}\right)\right\| \geq \frac{1}{\sqrt{2}}\|\Sig^{-1}\hat{f}(y)\|\\
&\geq \sqrt{2}\gamma_{\mu} \|w\|\left(\frac{\sin\theta}{2\gamma_{\mu}}-\|w\|\right).
\end{align*}
For $0 \leq \varphi\leq\theta$, by Claim \ref{claimphidecrease}, we have
\[
\|\mathcal{A}^{-1}f(y)\|\geq 2\gamma_{\mu}^{\mux} \|w\|^{\mux}\left(\frac{\sin\theta}{2\gamma_{\mu}}-\|w\|\right).
\]
When $\|w\|=\|y-x\|\leq\frac{d}{4\gamma_{\mu}^{\mux}}=\frac{\sin\theta}{4\gamma_{\mu}}$, we have
  \[\|\mathcal{A}^{-1}f(y)\|\geq 2\gamma_{\mu}^{\mux} \|w\|^{\mux}\frac{\sin\theta}{4\gamma_{\mu}}=\frac{d\|y-x\|^{\mux}}{2}.\]
\end{proof}

\begin{thm}\label{thm7}
    Let  $\xx$ be a simple multiple zero of $f$ of multiplicity $\mux$  and
  \[0<R\leq\frac{d}{4\gamma_{\mu}^{\mux}}.\]
  If \[d_R(f,g)<\frac{dR^{\mux}}{2\|\mathcal{A}^{-1}\|},\]
  then the sum of the multiplicities of the zeros of $g$ in $B_R(x)$ is $\mux$.
\end{thm}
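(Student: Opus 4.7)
The plan is to mimic the proof of Theorem \ref{thm3} (the simple triple case), which is based on Rouch\'e's Theorem, using Theorem \ref{thm6} in place of Theorem \ref{thm2} and Theorem \ref{thm5} in place of Theorem \ref{thm1}. The statement and setup are perfectly parallel: we now have a general multiplicity $\mu$, but all the quantitative ingredients needed for the boundary argument are already available.

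First, I would pick an arbitrary $y$ on the sphere $\|y-x\|=R$. Since $R\leq\frac{d}{4\gamma_{\mu}^{\mu}}$, the hypothesis of Theorem \ref{thm6} is satisfied, so
\[
\|f(y)\|\;\geq\;\frac{d\|y-x\|^{\mu}}{2\|\mathcal{A}^{-1}\|}\;=\;\frac{dR^{\mu}}{2\|\mathcal{A}^{-1}\|}.
\]
Combining this lower bound with the hypothesis $d_R(f,g)<\frac{dR^{\mu}}{2\|\mathcal{A}^{-1}\|}$ and the definition \eqref{dR} of $d_R(f,g)$, I obtain the strict inequality
\[
\|f(y)-g(y)\|\;\leq\;d_R(f,g)\;<\;\|f(y)\|
\]
uniformly for every $y$ with $\|y-x\|=R$.

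Next I would invoke the multivariate Rouch\'e theorem (Berenstein--Gay, the same version cited in \cite{Berenstein1993Residue} and used in Theorem \ref{thm3}) on the ball $B(x,R)$. The inequality above says $f$ and $g$ are nowhere equal and do not vanish on the boundary sphere, so Rouch\'e's theorem implies that $f$ and $g$ have the same number of zeros in $B(x,R)$ counted with multiplicities. To identify this number, I apply Theorem \ref{thm5}: any zero $y\neq x$ of $f$ satisfies $\|y-x\|\geq \frac{d}{2\gamma_{\mu}^{\mu}}>R$, so $x$ is the unique zero of $f$ in $B(x,R)$. Because $x$ has multiplicity $\mu$ for $f$, the total multiplicity of zeros of $f$ in $B(x,R)$ equals $\mu$, and therefore the same is true for $g$.

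I do not expect any real obstacle here beyond citing the correct form of Rouch\'e's theorem for holomorphic maps $\CC^n\to\CC^n$; the quantitative content is fully carried by the lower bound of Theorem \ref{thm6} and the separation bound of Theorem \ref{thm5}, both of which have already been established. The only mild subtlety is making sure the strict inequality on the boundary is used, which is why the hypothesis is stated with a strict $<$, matching exactly the argument in Theorem \ref{thm3}.
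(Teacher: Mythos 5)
Your proof is correct and follows essentially the same route as the paper: apply Theorem \ref{thm6} to bound $\|f(y)\|$ from below on the sphere $\|y-x\|=R$, compare with $d_R(f,g)$ to get the strict Rouch\'e inequality, conclude that $f$ and $g$ have the same number of zeros in $B(x,R)$, and then use Theorem \ref{thm5} to identify that number as $\mu$. The paper's proof is exactly this argument (it is the direct generalization of Theorem \ref{thm3} that you anticipated), so there is nothing to add.
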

\begin{proof}
  By Theorem \ref{thm6}, for any $y$ such that $\|y-x\|=R < \frac{d}{4\gamma_{\mu}^{\mux}}$, we have
  \[\|f(y)-g(y)\|\leq d_R(f,g)<\frac{dR^\mu}{2\|\mathcal{A}^{-1}\|}=\frac{d\|y-x\|^\mu}{2\|\mathcal{A}^{-1}\|}\leq\|f(y)\|,\]
  by Rouch\'e's  Theorem, $f$ and $g$ have the same number of zeros inside $B_R(x)$. By Theorem \ref{thm5}, when $R\leq\frac{d}{4\gamma_{\mu}^3}$, the only root of $f$ in $B_R(x)$ is $x$. Therefore, $g$ has $\mu$ zeros in $B_R(x)$.
\end{proof}
Given $f:\mathbb{C}^n\rightarrow\mathbb{C}^n,\ x\in\mathbb{C}^n$, such that
$D\hat{f}(x)$ is invertible, and $\Delta_{\mux}(f_n)\neq 0$,
%
%
we define tensors \[H_1=\left(\begin{array}{cc}
  \frac{\partial \hat{f}(x)}{\partial X_1} & 0 \\
  \frac{\partial f_n(x)}{\partial X_1} & \frac{\partial f_n(x)}{\partial \hat{X}} \\
\end{array}
\right)\]
\[H_k=\left(\left(\begin{array}{cc}
  0 & 0\\
  \Delta_k(f_n) & 0
\end{array}\right) \mathbf{0}_{\underbrace{n\times\cdots\times n}_k {\textstyle\times (n-1)}} \right),\ 2\leq k\leq \mux-1,\]
and polynomials
 \[g(X)=f(X)-f(x)-\sum_{1\leq k\leq \mux-1}H_k(X-x)^k.\]

\begin{thm}\label{thm8}  Let $\gamma_{\mu}=\gamma_{\mu}(g,x)$, if
  \begin{equation}\label{separationradius}
  \|f(x)\|+\sum_{1\leq k\leq \mux-1}\|H_k\|\left(\frac{d}{4\gamma_{\mu}^{\mux}}\right)^k<\frac{d^{\mux+1}}
  {2\left(4\gamma_{\mu}^{\mux}\right)^{\mux}\|{\mathcal{A}}^{-1}\|},
  \end{equation}
  then $f$ has $\mux$ zeros (counting multiplicities) in the ball of radius $\frac{d}{4\gamma_{\mu}^{\mux}}$ around $x$.
\end{thm}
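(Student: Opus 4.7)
The plan is to reduce the statement to a direct application of Theorem \ref{thm7} applied to the auxiliary system $g$, by verifying that (i) $x$ is a simple multiple zero of $g$ of multiplicity $\mux$ with $Dg(x)$ in normalized form, and (ii) the hypothesis of the theorem is precisely what is required to bound $d_R(f,g)$ on the ball of radius $R=\frac{d}{4\gamma_{\mu}^{\mux}}$.

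First, I would verify that $g$ satisfies the hypotheses of Theorem \ref{thm7} at $x$. By construction $g(x)=f(x)-f(x)-0=0$, and $Dg(x)=Df(x)-H_1$, which by the shape of $H_1$ kills the first column of $Df(x)$ and the last row outside the last column, producing exactly the normalized form with the invertible block $D\hat f(x)$. For $2\le k\le \mux-1$, the tensor $H_k$ is chosen so that its only nonzero component equals $\Delta_k(f_n)$ in the $n$-th coordinate of its $(1,\ldots,1)$ entry; consequently $\Delta_k(g_n)=\Delta_k(f_n)-\Delta_k(f_n)=0$, while the $H_k$ for $k<\mux$ do not contribute to $\Delta_{\mux}(g_n)$, so $\Delta_{\mux}(g_n)=\Delta_{\mux}(f_n)\ne 0$. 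Hence $x$ is a simple multiple zero of $g$ of multiplicity $\mux$ and normalized Jacobian, and Theorem \ref{thm7} applies with $\gamma_{\mu}=\gamma_{\mu}(g,x)$.

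Next, I would estimate $d_R(f,g)$ with $R=\frac{d}{4\gamma_{\mu}^{\mux}}$. From the definition of $g$,
\[
f(y)-g(y)=f(x)+\sum_{1\le k\le \mux-1}H_k(y-x)^k,
\]
so the triangle inequality gives, for any $y$ with $\|y-x\|\le R$,
\[
\|f(y)-g(y)\|\le \|f(x)\|+\sum_{1\le k\le \mux-1}\|H_k\|\,R^k.
\]
Taking the maximum over the closed ball yields
\[
d_R(f,g)\le \|f(x)\|+\sum_{1\le k\le \mux-1}\|H_k\|\left(\frac{d}{4\gamma_{\mu}^{\mux}}\right)^k.
\]

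Now I would check that the hypothesis (\ref{separationradius}) is exactly the cluster-location criterion of Theorem \ref{thm7}. A short algebraic manipulation using $R=\frac{d}{4\gamma_{\mu}^{\mux}}$ shows
\[
\frac{dR^{\mux}}{2\|{\mathcal{A}}^{-1}\|}=\frac{d^{\mux+1}}{2\left(4\gamma_{\mu}^{\mux}\right)^{\mux}\|{\mathcal{A}}^{-1}\|},
\]
so the assumption (\ref{separationradius}) is precisely $d_R(f,g)<\frac{dR^{\mux}}{2\|{\mathcal{A}}^{-1}\|}$. Applying Theorem \ref{thm7} to $g$ then yields that $f=g+(f-g)$ has $\mux$ zeros (counted with multiplicity) in $B(x,R)$, which is the desired conclusion. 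The only subtle step is the verification that the chosen tensors $H_k$ make $g$ satisfy the normalized simple multiple zero conditions with the \emph{same} $\Delta_{\mux}(f_n)$; once that is in hand, the rest of the argument is a direct bookkeeping match between (\ref{separationradius}) and the hypothesis of Theorem \ref{thm7}.
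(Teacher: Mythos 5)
Your proof takes exactly the same route as the paper: you verify that $g(x)=0$, $Dg(x)$ has normalized form, $\Delta_k(g_n)=0$ for $2\le k\le\mux-1$ with $\Delta_{\mux}(g_n)=\Delta_{\mux}(f_n)\neq 0$, bound $d_R(f,g)$ on the ball of radius $R=\frac{d}{4\gamma_{\mu}^{\mux}}$ by the triangle inequality, and observe that the hypothesis is exactly the cluster-location condition of Theorem~\ref{thm7}. This matches the paper's proof step for step, so the argument is correct.
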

\begin{proof}

  We have $g(x)=0,\ Dg(x)=Df(x)-H_1=\left(\begin{array}{cc}
  0 & D\hat{f}(x) \\
  0 & 0 \\
\end{array}
\right)$. Moreover,
  \[\Delta_k(g_n)=\Delta_k(f_n)-\Delta_k(f_n)=0,\ 2\leq k\leq \mux-1, \  \Delta_{\mux}(g_n)=\Delta_{\mux}(f_n)\not=0.\]
   Therefore, $Dg(x)$ satisfies the normalized form, and $x$ is a simple multiple root of $g$ with multiplicity $\mux$. 

  Let $R=\frac{d}{4\gamma_{\mu}^{\mux}(g,x)}$, we have
  \begin{align*}
     d_R(g,f)&=\max_{\|y-x\|\leq R}\|g(y)-f(y)\|\\
     &=\max_{\|y-x\|\leq R}\|f(x)+\sum_{1\leq k\leq \mux-1}H_k(X-x)^k\|\\
     &\leq \|f(x)\|+\sum_{1\leq k\leq \mux-1}\|H_k\|R^k\\
     &=\|f(x)\|+\sum_{1\leq k\leq \mux-1}\|H_k\|\left(\frac{d}{4\gamma_{\mu}^{\mux}}\right)^k.
  \end{align*}
  If
  \[\|f(x)\|+\sum_{1\leq k\leq \mux-1}\|H_k\|\left(\frac{d}{4\gamma_{\mu}^{\mux}}\right)^k<\frac{d^{\mux+1}}{2\left(4{\gamma_{\mu}}^{\mux}\right)^{\mux}
  \|{\mathcal{A}}^{-1}\|},\]
  then
  \[d_R(g,f)<\frac{d^{\mux+1}}{2\left(4{\gamma_{\mu}}^{\mux}\right)^{\mux}
  \|{\mathcal{A}}^{-1}\|}=\frac{dR^{\mux}}
  {2\|{\mathcal{A}}^{-1}\|}.\]
  By Theorem \ref{thm7}, we know that $f$ has $\mux$ zeros (counting multiplicities) in the ball of radius $\frac{d}{4\gamma_{\mu}^{\mux}}$ around $x$.
\end{proof}

\subsection{Re-examining Double Simple Zeros}\label{section3.3}
In what follows, we assume  $x$ is a simple double zero of $f$, $Df(x)$ satisfies the normalized form, and $y$ is another zero of $f$.  By expanding $f_n(y)$ at $x$ and $\frac{\partial f_n(x)}{\partial X_1}=\cdots=\frac{\partial f_n(x)}{\partial X_n}=\frac{\partial^2 f_n(x)}{\partial X_1^2}=0$, we have:
\[f_n(y)=\frac{\partial^2 f_n(x)}{\partial X_1\partial \hat{X}}\aaa\bbb+\frac{1}{2}\frac{\partial^2 f_n(x)}{\partial \hat{X}^2}\bbb^2+\sum_{k\geq 3}\frac{D^k f_n(x)(y-x)^k}{k!}.\]
The nonzero terms are
\[C_{1,1}=\frac{\partial^2 f_n(x)}{\partial X_1\partial \hat{X}},\ C_{0,2}=\frac{1}{2}\frac{\partial^2 f_n(x)}{\partial \hat{X}^2}, \  c_{1,1}=2,\  c_{0,2}=1.\]
Hence, $p(d)$ has the following form:
\begin{equation}\label{formulad2}
p(d)=1-2d^2-2d\sqrt{1-d^2}-d.
\end{equation}
The smallest positive real root of $p(d)$  is
\[d \approx 0.2865.\]
Let  $y$ be another root of $f$, by Theorem \ref{thm5}, we  have
 \[\|y-x\|\geq\frac{d}{2\gamma_{2}^2}.\]

\begin{example}\label{ex1}
Suppose we are given polynomials:
\begin{equation*}
  \left\{ \begin{aligned}
    &f_1=X_1^2-\frac{1}{4}X_1-\frac{1}{2}X_2,\\
    &f_2=\frac{1}{2}X_1X_2.\\
  \end{aligned} \right.
\end{equation*}
It is easy to check that  $\xx=(0,0)$ is a simple double zero of $f=\{f_1,f_2\}$, and $(1/4,0)$ is another zero of $f$, and we have
\end{example}
\begin{equation*}
  Df(\xx)=\left(\begin{array}{cc}
    -\frac{1}{4} & -\frac{1}{2}\\
    0 & 0\\
  \end{array}\right).
\end{equation*}
Let $g(X)=f(W\cdot X)$, where $W=\left(\begin{array}{cc}
    \frac{2}{\sqrt{5}} & -\frac{1}{\sqrt{5}}\\
    -\frac{1}{\sqrt{5}} & -\frac{2}{\sqrt{5}}\\
  \end{array}\right)$,
then
\begin{equation}
  \left\{ \begin{aligned}
    &g_1=\frac{4}{5} X_1^2-\frac{4}{5}X_1X_2+\frac{1}{5} X_2^2+\frac{\sqrt{5}}{4}X_2,\\
    &g_2=-\frac{1}{5}X_1^2-\frac{3}{10}X_1X_2+\frac{1}{5}X_2^2,\\
  \end{aligned} \right.
\end{equation}
and $\xx=(0,0)$ 
is a simple double zero of $g$, $y=\left(\frac{1}{2\sqrt{5}},-\frac{1}{4\sqrt{5}}\right)$ 
is another zero. We have
\begin{equation*}
  Dg(\xx)=\left(\begin{array}{cc}
    0 & \frac{\sqrt{5}}{4}\\
    0 & 0\\
  \end{array}\right),
\end{equation*}
and $\ker Dg(x)= \ran\{v\}$, where $v=\left(\begin{array}{c}
  1\\0\\
\end{array}\right)$,
\begin{equation*}
  D^2g(\xx)=\left(\begin{array}{cc}
     \left(\begin{array}{cc}
    \frac{8}{5} & -\frac{4}{5}\\
    -\frac{2}{5} & -\frac{3}{10}\\
  \end{array}\right)
  & \left(\begin{array}{cc}
    -\frac{4}{5} & \frac{2}{5}\\
    -\frac{3}{10} & \frac{2}{5}\\
  \end{array}\right)\\
  \end{array}\right).
\end{equation*}
Since
\[\frac{\partial g_1(x)}{\partial X_2}=\frac{\sqrt{5}}{4},\  \  \frac{1}{2}\frac{\partial^2 g_2}{\partial X_1^2}=\Delta_2=-\frac{1}{5},\]
we have
 \[\hat{\gamma}_{2}=\max\left(1,\ \left\|\left(\frac{\partial g_1(x)}{\partial X_2}\right)^{-1}\cdot\frac{D^2 g_1(x)}{2}\right\|\right)=\frac{4}{\sqrt{5}},\]
 and
  \[\gamma_{2,2}=\max\left(1,\ \left\|\frac{1}{\Delta_2(g_2)}\cdot\frac{D^2 g_2(x)}{2}\right\|\right)=1.\]
 Therefore,  \[\gamma_2=\frac{4}{\sqrt{5}}\approx 1.7888,\]
and  the local  separation bound we obtained according to Theorem \ref{thm5} for $\mu=2$  is
\[\|y-x\|\geq\frac{d}{2\gamma_2^2}\geq 0.0447.\]

Now let us estimate  the local  separation bound by the method in \cite{Dedieu2001On}.
The invertible linear operator defined in \cite{Dedieu2001On} is
\begin{equation*}
A(f,x,v)=Df(x).+\frac{1}{2}D^2f(x)(v,\Pi_v),
\end{equation*}
where $v=\left(\begin{array}{c}
  \frac{2}{\sqrt{5}}\\ -\frac{1}{\sqrt{5}}\\
\end{array}\right)$, and
\begin{equation*}
  D^2f(\xx)=\left(\begin{array}{cc}
     \left(\begin{array}{cc}
    2 & 0\\
    0 & \frac{1}{2}\\
  \end{array}\right)
  & \left(\begin{array}{cc}
    0 & 0\\
    \frac{1}{2} & 0\\
  \end{array}\right)\\
  \end{array}\right).
\end{equation*}
Let $\omega=\left(\begin{array}{c}
  \omega_1\\\omega_2\\
\end{array}\right)\in\mathbb{C}^2$, denote $A=A(f,x,v)$, we have
\begin{align*}
 A\omega&=Df(x)\omega+\frac{1}{2}D^2f(x)(v,\Pi_v \omega)=\left(\begin{array}{cc}
    -\frac{1}{4} & -\frac{1}{2}\\
    0 & 0
  \end{array}\right)\left(\begin{array}{c} \omega_1\\\omega_2 \end{array}\right)\\
 & +\frac{1}{2}\left(\begin{array}{cc}
     \left(\begin{array}{cc}
    2 & 0\\
    0 & \frac{1}{2}
  \end{array}\right)
  & \left(\begin{array}{cc}
    0 & 0\\
    \frac{1}{2} & 0
  \end{array}\right)
  \end{array}\right)\cdot\left(\begin{array}{c}
  \frac{2}{\sqrt{5}}\\ -\frac{1}{\sqrt{5}}
\end{array}\right)\cdot\left(\begin{array}{c} \frac{4}{5}\omega_1-\frac{2}{5}\omega_2 \\ -\frac{2}{5}\omega_1+\frac{1}{5}\omega_2 \end{array}\right)\\
 &=\left(\begin{array}{cc}
    \left(-\frac{1}{4}+\frac{8}{5\sqrt{5}}\right)\omega_1+\left(-\frac{1}{2}-\frac{4}{5\sqrt{5}}\right)\omega_2 \\
    -\frac{2}{5\sqrt{5}}\omega_1+\frac{1}{5\sqrt{5}} \omega_2
  \end{array}\right).
\end{align*}
We have
\[A^{-1}\frac{D^2f(x)}{2}=\left(\begin{array}{cc}
     \left(\begin{array}{cc}
    -\frac{4}{5} & -\frac{2(25+8\sqrt{5})}{10\sqrt{5}}\\
    -\frac{8}{5} & -\frac{-25+32\sqrt{5}}{20\sqrt{5}}
  \end{array}\right)
  & \left(\begin{array}{cc}
    -\frac{2(25+8\sqrt{5})}{10\sqrt{5}} & 0\\
    -\frac{-25+32\sqrt{5}}{20\sqrt{5}} & 0
  \end{array}\right)
  \end{array}\right).\]

The computation of the  norm of the tensor $A^{-1}\frac{D^2f(x)}{2}$ is quite challenge. However, using our SOS certificates for global optima of polynomials and rational
functions \cite{KLYZ09}, we can  verify  that
\[\gamma_{2}(f,x)=\max\left(1,\ \left\|A^{-1}\frac{D^2 f(x)}{2}\right\|\right)\geq 3.1121.\]
Therefore, the local separation bound computed by the method in \cite{Dedieu2001On} satisfies
\[\frac{d}{2\gamma_2(f,x)^2}\leq 0.01546,\]
for $d \approx 0.2976$.
\begin{remark}
Although our $d$ is  smaller than the one obtained in \cite{Dedieu2001On},  the value of $\gamma_2$ computed by our method could be smaller too. Therefore, as shown by Example \ref{ex1}, we might get better local separation bound.
\end{remark}

\begin{example}\label{ex2}
Suppose we are given polynomials:
\begin{equation*}
  \left\{ \begin{aligned}
    &f_1=\frac{64}{73} X_1^2-\frac{48}{73}X_1X_2+\frac{9}{73} X_2^2+\frac{\sqrt{73}}{12}X_2,\\
    &f_2=(8X_1-3X_2)^2 (3X_1+8X_2).
    \end{aligned}\right.
\end{equation*}
Let $\xx=(0,0)$ be a simple triple  zero of $f=\{f_1, f_2\}$, and $y=(\frac{2}{\sqrt{73}}, -\frac{3}{4\sqrt{73}})$ be another zero  of $f$, $\|y-x\| =0.25$.
\end{example}

We have
\begin{equation*}
  Df(\xx)=\left(\begin{array}{cc}
    0 & \frac{\sqrt{73}}{12}\\
    0 & 0\\
  \end{array}\right),
\end{equation*}
\[\frac{\partial f_1(x)}{\partial X_2}=\frac{\sqrt{73}}{12},\]

\[\Delta_3(f_2)=\frac{1}{6}\frac{\partial^3 f_2(x)}{\partial X_1^3}-\frac{\partial^2 f_2(x)}{\partial X_1\partial X_2}\cdot\left(\frac{\partial f_1(x)}{\partial X_2}\right)^{-1}\frac{1}{2}\frac{\partial^2 f_1(x)}{\partial X_1^2}=192,
\]
 \[\hat{\gamma}_{3}(f,x)=\max\left(1,\ \left\|\left(\frac{\partial f_1}{\partial X_2}\right)^{-1}\cdot\frac{D^2 f_1(x)}{2}\right\|\right)=\frac{12}{\sqrt{73}},\]
  \[\gamma_{3,2}(f,x)=\max_{2\leq k\leq 3}\left(1,\ \left\|\frac{1}{\Delta_3(f_2)}\cdot\frac{D^k f_2(x)}{k!}\right\|^{\frac{1}{k-1}}\right)=\frac{\sqrt{73}\cdot 146^{\frac{1}{4}} }{24},\]
  \[\gamma_3(f,x)=\max(\hat \gamma_3(f,x), \gamma_{3,2}(f,x))= \frac{12}{\sqrt{73}}  \approx1.4045.\]
By Theorem \ref{thm1}, the local separation bound we obtain is
 \[\|y-x\|\geq\frac{d}{2\gamma_3^3} \approx 0.01545.\]
For an approximate solution {{$x=(-4.1291 \cdot 10^{-8}, -2.9505 \cdot 10^{-8})$}} obtained after applying twicely  the modified Newton iterations defined by  Algorithm \ref{alg:newtonmult3} to an approximate zero $x=(-0.01, 0.01)$, we have
\[ \gamma_3(g,x)=\max(\hat\gamma_3(g,x), \gamma_{3,2}(g,x))=\frac{12}{\sqrt{73}}\approx1.4045, \]
and
 \[\|f(x)\|+\|H_1\|\frac{d}{4\gamma_3^3}+\|H_2\|\frac{d^2}{16\gamma_3^6}\approx 1.937364 \cdot 10^{-8} <\frac{d^4}{128\gamma_3^9\|{\mathcal{A}}^{-1}\|}\approx 1.937370\cdot 10^{-8}.\]
  By Theorem \ref{thm4}, we can guarantee that $f$ has three zeros (counting multiplicities) in the ball of radius $\frac{d}{4{\gamma_3}^3}\approx 0.0076$ around $x$.

We  notice   that
 \[\left\|\frac{1}{\Delta_3(f_2)}\cdot\frac{D^2 f_2(x)}{2}\right\|=0.00004935  \neq \left\|\frac{1}{\Delta_3(g_2)}\cdot\frac{D^2 g_2(x)}{2}\right\|=0.00001467. \]
%

%

\section{Modified Newton Iterations}\label{sec4}

 For simple double zeros and simple triple  zeros whose Jacobian matrix  has a normalized form (\ref{normalizeform}), we define modified Newton iterations and show the quantified quadratic convergence if the approximate zeros are near the exact singular zeros.  For a  simple multiple zero of arbitrary large  multiplicity whose Jacobian matrix may not have a normalized form, we perform unitary transformations and  modified Newton iterations based on our previous work in \cite{LZ:2011},  and show  its non-quantified quadratic convergence for  simple multiple zeros and the quantified convergence for simple triple zeros.

\subsection{$\gamma$-theorem  for  Simple Double Zeros}

 Given an approximate zero $\app$ of $f$ with associated simple double zero  $\acc$ such that
  $D\hat{f}(\acc)$ is invertible and
 \[ \frac{\partial f_i (\acc)}{\partial X_1}=0, \ \ \frac{\partial f_n (\acc)}{\partial X_i}=0,  \ \ 1 \leq i \leq n, \ \  \Delc \neq 0, \]
   we aim to approximate $\acc$ by applying modified Newton's method to $\app$ and iterating $k$ times such that $\|N^k_f(\app)-\acc\| < \epsilon$ for a given accuracy $\epsilon$.

\begin{algorithm}[H]
\label{alg:newtonmult2}
\caption{ Modified Newton Iteration for   Simple Double Zero}
\begin{algorithmic}[1]
\REQUIRE~~\\
$f$: a polynomial system; \\
$\app =(\azone, \azhat)$:  an approximate simple double zero of $f$; \\
\ENSURE~~\\
$N_f(\app)=(N_2(\azone),N_1(\azhat) )$:  a refined solution after one  iteration;\\
\STATE $N_1(\azhat) \leftarrow \azhat-\Sigp^{-1}\hat{f}(\app)$;
\STATE $\bzhat \leftarrow N_1(\azhat)$;
\STATE $z \leftarrow (\azone, \bzhat)$;
\STATE $N_2(\azone) \leftarrow \app_1-\left(\Delp\right)^{-1}\frac{\partial f_n(\app)}{\partial X_1}$;

\end{algorithmic}
\end{algorithm}


\begin{definition}\label{defumult2}
For an approximate zero  $\app$ of $f$ with associated simple double zero  $\acc$, let $\gamma_2=\gamma_2(f, \xi)$, $u=\gamma_2^2\|\minus\|$, 
we define the following rational functions:
\begin{eqnarray*}
\btwoone&=&\frac{(1-2u)^2u}{\left[2(1-2u)^2-1\right](1-u)},\\
\btwotwo&=&\frac{u}{\left[2(1-2u)^2-1\right](1-u)},\\
\btwothree&=&\frac{u(32u^6-144u^5+272u^4-288u^3+174u^2-52u+5)}{(24u^3-36u^2+18u-1)(-1+u)^3(8u^2-8u+1)},\\
\btwofour&=&\frac{(-1+2u)^3(-2+u)u}{(24u^3-36u^2+18u-1)(-1+u)^3(8u^2-8u+1)}. 
\end{eqnarray*}

\end{definition}

\begin{thm}\label{thmnewtonmul2}
Let $\acc$ be a simple double zero of  $f$. 
\begin{enumerate}[(1)]
\item If $\itb < \itb_2 \approx 0.0418$, where $\itb_2$ is the smallest positive  solution  of the equation:
 \[
 2\btwoone^2+2 \btwothree^2=1,
  \]
 then the output of Algorithm \ref{alg:newtonmult2} satisfies:
 \[
 \left \|N_f(\app)-\acc \right \| < \left \| \app -\acc  \right \|.
 \]
 \item If $\itb < \itb_2^{\prime} \approx 0.0318$, where $\itb_2^{\prime}$ is the smallest positive solution  of the equation:
 \[
 2\btwoone^2+2\btwothree^2=\frac{1}{4},
  \]
 then after applying $k$ times of the iteration defined in Algorithm \ref{alg:newtonmult2},  we have:
 \[
 \left \|N_f^k(\app)-\acc \right \| < \left( \frac{1}{2} \right)^{2^k-1}  \left \| \app -\acc  \right \|.
 \]

\end{enumerate}
\end{thm}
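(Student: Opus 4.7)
The plan is to control the two coordinates of the update independently and then recombine. Writing $N_f(\app)-\acc=(N_2(\azone)-\acc_1,\,N_1(\azhat)-\hat\acc)$ we have
\[
\|N_f(\app)-\acc\|^2=|N_2(\azone)-\acc_1|^2+\|N_1(\azhat)-\hat\acc\|^2,
\]
so I would seek component-wise estimates of the form $\|N_1(\azhat)-\hat\acc\|\leq \btwoone\,|\azone-\acc_1|+\btwotwo\,\|\azhat-\hat\acc\|$ and $|N_2(\azone)-\acc_1|\leq \btwothree\,|\azone-\acc_1|+\btwofour\,\|\azhat-\hat\acc\|$. A Cauchy--Schwarz-type step then produces a bound of the shape $\|N_f(\app)-\acc\|^2\leq (2\btwoone^2+2\btwothree^2)\|\minus\|^2$ (the $\btwotwo,\btwofour$ contributions being absorbed once one compares the natural ordering of the rational functions near $\itb=0$).

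For the $\hat X$-coordinate, the identity
\[
N_1(\azhat)-\hat\acc=\Sigp^{-1}\bigl(\Sigp(\azhat-\hat\acc)-\hat f(\app)\bigr)
\]
combined with the Taylor expansions of $\hat f(\app)$ and of $\Sigp$ centred at $\acc$ eliminates the leading first-order term, because the normalized-form identity $\partial\hat f(\acc)/\partial X_1=0$ forces the cancellation. What remains is a remainder of degree at least two in $\minus$, and I would bound it using the classical $\gamma$-theoretic estimates $\|\Sigc^{-1}D^k\hat f(\acc)/k!\|\leq \hat\gamma_2^{k-1}$ together with the shift-of-centre estimate $\|\Sigp^{-1}\Sigc\|\leq 1/[2(1-\hat\gamma_2\|\minus\|)^2-1]$ from \cite{Blum:1997}. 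Summing the resulting geometric series in $\itb=\gamma_2^2\|\minus\|$ yields the bounds for $\btwoone$ and $\btwotwo$ whose denominators $2(1-2\itb)^2-1$ and $(1-\itb)$ are precisely those written in Definition \ref{defumult2}.

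For the $X_1$-coordinate, the final step of Algorithm \ref{alg:newtonmult2} is one Newton iteration for the univariate map $t\mapsto \partial f_n(t,\bzhat)/\partial X_1$ with $\bzhat=N_1(\azhat)$. Since $\Delta_{2}(f_n)(\acc)=\frac{1}{2}\partial^2 f_n(\acc)/\partial X_1^2\neq 0$, the point $\acc_1$ is a \emph{simple} zero of this map when $\bzhat=\hat\acc$; I would Taylor-expand $\partial f_n(\azone,\bzhat)/\partial X_1$ and $\Delp$ around $(\acc_1,\hat\acc)$, control the higher-order terms via the $\gamma_{2,n}$-bound $\|\Delc^{-1} D^k f_n(\acc)/k!\|\leq \gamma_{2,n}^{k-1}$, and feed in the bound on $\|\bzhat-\hat\acc\|$ obtained in the previous step. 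This produces the more intricate rational expressions $\btwothree$ and $\btwofour$, which reflect both the intrinsic second-order Newton error and the propagation of the $\hat X$-perturbation.

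Combining everything proves (1) under $2\btwoone^2+2\btwothree^2<1$. For (2), set $\varphi(\itb):=\sqrt{2\btwoone(\itb)^2+2\btwothree(\itb)^2}$; the hypothesis reads $\varphi(\itb_0)\leq 1/2$, so $\|N_f(\app_0)-\acc\|\leq \|\app_0-\acc\|/2$ and therefore $\itb_1\leq \itb_0/2$. A direct check that $\varphi$ is increasing and satisfies $\varphi(\itb/2)\leq \varphi(\itb)/2$ on $[0,\itb_2']$ (routine because $\varphi(\itb)/\itb$ is bounded and slowly varying there) propagates by induction to $\varphi(\itb_k)\leq(1/2)^{2^k}$, yielding the stated $(1/2)^{2^k-1}$ rate. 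The main technical obstacle is the explicit derivation of $\btwoone,\btwotwo,\btwothree,\btwofour$ with the exact rational-function coefficients stated in Definition \ref{defumult2}: each requires two layers of Taylor expansion (numerator and inverted Jacobian), careful combination of the bounds at the exact point $\acc$ and at the current point $\app$, and cancellations specific to the normalized-form identities for double zeros. The underlying argument is standard in $\gamma$-theory, but matching the precise denominators $2(1-2\itb)^2-1$, $(1-\itb)$, $8\itb^2-8\itb+1$ and $24\itb^3-36\itb^2+18\itb-1$ will demand meticulous bookkeeping.
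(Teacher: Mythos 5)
Your outline follows the paper's proof: bound the two coordinates of the update separately (this is Lemmas~\ref{newtonz2mul2} and \ref{newtonz1mul2}), combine them with $(a+b)^2\leq 2a^2+2b^2$, and carry out the induction for part~(2) via a monotonicity statement about $\phi(\itb):=\sqrt{2\btwoone^2+2\btwothree^2}$.  Two details in your sketch, however, would lead you away from the paper's constants if taken literally.  First, the shift-of-centre estimate you quote from Blum et al., $\|\Sigp^{-1}\Sigc\|\leq 1/[2(1-\hat\gamma_2\|\minus\|)^2-1]$, is \emph{not} the bound used here: the paper's Lemma~\ref{normDinverse} derives $\|\Sigp^{-1}\Sigc\|\leq (1-2\itb)^2/[2(1-2\itb)^2-1]$, and the extra factor of $2$ inside $(1-2\itb)$ comes from the combinatorial weight $\sum_{i=0}^{k-1}k!/(i!(k-i-1)!)=k\cdot2^{k-1}$ produced by the mixed partials in the multivariate Taylor expansion of $\Sigp$ about $\acc$.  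Plugging in the classical univariate-flavoured estimate instead would give the wrong denominators in $\btwoone,\btwotwo$, so this lemma has to be reproved in the multivariate setting rather than cited.  Second, the induction step needs $\phi(\itb)/\itb$ \emph{increasing} on $(0,\itb_2']$, not merely $\phi(\itb/2)\leq\phi(\itb)/2$ (a strictly weaker dyadic condition), because $\itb^{(k-1)}$ is not a dyadic multiple of $\itb$; and to ``absorb'' the $\btwotwo,\btwofour$ contributions into $\|\minus\|^2$ as you describe, one must also verify the ordering $2\btwotwo^2+2\btwofour^2\leq 2\btwoone^2+2\btwothree^2$ on the interval.  Neither is hard once you have the explicit rational functions, but both are genuine checks on the specific expressions of Definition~\ref{defumult2}, not automatic consequences of ``slow variation''.
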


\begin{lem}\label{normDinverse}
For $u \leq u_2$ we have:
\begin{equation}
\left\|\Sigp^{-1}\Sigc\right\| \leq \frac{(1-2u)^2}{2(1-2u)^2-1}.
\end{equation}
\end{lem}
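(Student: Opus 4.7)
This is a Smale-style perturbation estimate adapted to the reduced Jacobian $D\hat{f}$, so the plan is to write $\Sigp^{-1}\Sigc$ as the inverse of $I + \Sigc^{-1}(\Sigp - \Sigc)$, control the perturbation term by Taylor expansion and the definition of $\hat\gamma_2$, and close with the Banach/Neumann perturbation lemma.

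\textbf{Step 1 (Taylor expansion of the Jacobian).} Expanding $D\hat{f}(z)$ around $\xi$ gives
\[
\Sigp - \Sigc \;=\; \sum_{k\geq 2} \frac{1}{(k-1)!}\, D^k \hat{f}(\xi)\,(z-\xi)^{k-1}.
\]
Left-multiplying by $\Sigc^{-1}$ and using
$\bigl\|\Sigc^{-1}\tfrac{D^k\hat{f}(\xi)}{k!}\bigr\|\leq \hat\gamma_2^{k-1}$
(from the definition of $\hat\gamma_\mu$ with $\mu=2$), the triangle inequality yields
\[
\bigl\|\Sigc^{-1}(\Sigp-\Sigc)\bigr\|
\;\leq\; \sum_{k\geq 2} k\,\hat\gamma_2^{k-1}\|z-\xi\|^{k-1}
\;=\; \frac{1}{(1-t)^2}-1,
\qquad t := \hat\gamma_2\,\|z-\xi\|.
\]

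\textbf{Step 2 (Comparison to $u$).} Since $\gamma_2=\max(\hat\gamma_2,\gamma_{2,n})\geq 1$ and $\gamma_2\geq\hat\gamma_2$, we have $\hat\gamma_2\leq\gamma_2\leq\gamma_2^2$, so $t\leq \gamma_2^2\|z-\xi\|=u$. A fortiori $t\leq 2u$, and substituting this looser bound (chosen to match the common factor $2(1-2u)^2-1$ that will appear in the companion estimates $b_{2,1},\dots,b_{2,4}$) gives
\[
\bigl\|\Sigc^{-1}(\Sigp-\Sigc)\bigr\|\;\leq\;\frac{1}{(1-2u)^2}-1\;=\;\frac{1-(1-2u)^2}{(1-2u)^2}.
\]
Monotonicity is justified by noting that $g(x):=(1-x)^2/(2(1-x)^2-1)$ is increasing in $x$ on $[0,\,1-1/\sqrt 2)$, so relaxing $t\leq 2u$ only weakens the final bound in the correct direction.

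\textbf{Step 3 (Neumann series).} Provided $u\leq u_2$ is small enough that $2(1-2u)^2>1$, the right-hand side above is strictly less than $1$, so $I+\Sigc^{-1}(\Sigp-\Sigc)$ is invertible. Writing $\Sigp^{-1}\Sigc=(\Sigc^{-1}\Sigp)^{-1}=\bigl(I+\Sigc^{-1}(\Sigp-\Sigc)\bigr)^{-1}$ and applying the Banach perturbation bound,
\[
\|\Sigp^{-1}\Sigc\|\;\leq\;\frac{1}{1-\frac{1-(1-2u)^2}{(1-2u)^2}}\;=\;\frac{(1-2u)^2}{2(1-2u)^2-1},
\]
which is the claimed inequality.

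\textbf{Main obstacle.} The only delicate part is bookkeeping for the inequality $2(1-2u)^2>1$ (equivalently $u<\tfrac{1}{2}(1-\tfrac{1}{\sqrt 2})\approx 0.1464$); since $u_2\approx 0.0418$ is well inside this range, the Neumann series converges and the perturbation lemma applies without issue. The rest is a routine power-series computation (notably $\sum_{k\geq 2}kt^{k-1}=1/(1-t)^2-1$), and the switch from $t$ to $2u$ via the monotonicity of $g$ is what lets the bound be packaged in the exact form the later rational expressions in Definition~\ref{defumult2} require.
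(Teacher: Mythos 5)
Your proof is correct, and it takes a mildly different (and in fact sharper) route than the paper's own argument. The paper expands $\Sigp$ at $\acc$ in mixed partials $\frac{\partial^k\hat{f}(\acc)}{\partial X_1^i\partial\hat{X}^{k-i}}$ and bounds each of the $k$ binomial terms separately by $\frac{k!}{i!(k-i-1)!}\hat\gamma_2^{k-1}\|\minus\|^{k-1}$; summing over $i$ introduces a factor $k\cdot 2^{k-1}$, so the intermediate estimate is $\|B\|\leq\frac{1}{(1-2\hat\gamma_2\|\minus\|)^2}-1$, and the $2u$ then appears naturally via $\hat\gamma_2\|\minus\|\leq u$. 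You instead bound the whole total derivative $D^k\hat{f}(\acc)(\minus)^{k-1}$ at once, giving the tighter $\|B\|\leq\frac{1}{(1-\hat\gamma_2\|\minus\|)^2}-1$; the extraneous $2^{k-1}$ is the cost of the paper's term-by-term triangle inequality over the multi-index decomposition. To land on the stated rational function you then deliberately weaken $t=\hat\gamma_2\|\minus\|\leq u$ to $t\leq 2u$ and invoke monotonicity, which is valid and recovers exactly the paper's bound. Both arguments close identically with the Neumann-series estimate $\|(I+B)^{-1}\|\leq (1-\|B\|)^{-1}$. One stylistic remark: in Step 1 your displayed equality $\Sigp-\Sigc=\sum_{k\geq 2}\frac{D^k\hat{f}(\acc)(\minus)^{k-1}}{(k-1)!}$ is an abuse of notation, since the left side is an $(n-1)\times(n-1)$ map while the right side is naturally a map on $\CC^n$; it should be read with the final argument restricted to the $\hat{X}$-subspace, but since restriction does not increase the operator norm this does not affect your estimate. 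The cleanest way to state the monotonicity step is that $s\mapsto\frac{1}{(1-s)^2}-1$ is increasing (used to replace $t$ by $2u$ inside the perturbation bound) and $x\mapsto\frac{1}{1-x}$ is increasing (for the Neumann step), rather than referring to the composite $g$.
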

\begin{proof}
The Talyor's expansions of $\hat{f}(\app)$ and $\Sigp$  at $\acc$ are
\begin{align*}
  \hat{f}(\app)
  &=\Sigc(\azhat-\hat{\acc})+\sum_{k\geq2}\sum_{i=0}^{k}\frac{1}{i!(k-i)!}\frac{\partial^k \hat{f}(\acc)}{\partial X_1^i\partial \hat{X}^{k-i}}(\minusa)^i(\azhat-\hat{\acc})^{k-i},
\end{align*}
and
\begin{align*}
\Sigp=\Sigc+\sum_{k\geq2}\sum_{i=0}^{k-1}\frac{1}{i!(k-i-1)!}\frac{\partial^k \hat{f}(\acc)}{\partial X_1^i\partial \hat{X}^{k-i}}(\minusa)^i(\azhat-\hat{\acc})^{k-i-1}.
\end{align*}
Since $\Sigc^{-1}$ exists, we have
\begin{align*}
  &\Sigc^{-1}\Sigp\\
  =& I_{n-1}+\Sigc^{-1}\sum_{k\geq2}\sum_{i=0}^{k-1}\frac{1}{i!(k-i-1)!}\frac{\partial^k \hat{f}(\acc)}{\partial X_1^i\partial \hat{X}^{k-i}}(\minusa)^i(\azhat-\hat{\acc})^{k-i-1}\\
  =&I_{n-1}+B.\\
\end{align*}
Hence, we have
\begin{align*}
  \|B\|=\left\|\Sigc^{-1}\Sigp-I_{n-1}\right\|&\leq\sum_{k\geq2}\sum_{i=0}^{k-1}\frac{k!}{i!(k-i-1)!}\hat{\gamma}_{2}^{k-1}\|\minus\|^{k-1}\\
  &=\sum_{k\geq2}k\cdot 2^{k-1}(\hat{\gamma}_{2}\|\minus\|)^{k-1}\\
  &\leq \frac{1}{(1-2\hat{\gamma}_{2}\|\minus\|)^2}-1\\
  &\leq \frac{1}{(1-2u)^2}-1.\\
\end{align*}
When $u<u_2$, $\|B\| < 1$, we have
\begin{align*}
  \left\|\Sigp^{-1}\Sigc\right\|=\left\|(I_{n-1}+B)^{-1}\right\|\leq\sum_{k=0}^{\infty}\|B\|^k &\leq 
  \frac{(1-2u)^2}{2(1-2u)^2-1}\\
\end{align*}
\end{proof}

\begin{lem}\label{newtonz2mul2}
For $u \leq u_2$, we have:
\begin{align*}
\left\|N_1(\azhat)-\hat{\acc}\right\| & \leq \frac{\hat{\gamma}_2 \|\azhat-\hat{\acc}\|^2}{\left[2(1-2u)^2-1\right](1-u)}+\frac{(1-2u)^2\hat{\gamma}_2 |\minusa|^2}{\left[2(1-2u)^2-1\right](1-u)}\\
&  \leq \btwoone |\minusa| + \btwotwo \|\azhat-\hat{\acc}\|.
\end{align*}
\end{lem}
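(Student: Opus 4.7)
The plan is to start from the identity
\[
N_1(\hat{z}) - \hat{\xi} \;=\; -D\hat{f}(z)^{-1}\bigl[\hat{f}(z) - D\hat{f}(z)(\hat{z}-\hat{\xi})\bigr],
\]
factor out $\|D\hat{f}(z)^{-1}D\hat{f}(\xi)\|\leq \tfrac{(1-2u)^2}{2(1-2u)^2-1}$ using Lemma~\ref{normDinverse}, and then bound $\|D\hat{f}(\xi)^{-1}[\hat{f}(z)-D\hat{f}(z)(\hat{z}-\hat{\xi})]\|$ by splitting it cleanly into a purely $|a|$-contribution and a purely $\|b\|$-contribution, with $a = z_1-\xi_1$ and $b = \hat{z}-\hat{\xi}$.

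The key step is a Taylor-with-integral-remainder identity in the $\hat{X}$-direction only, obtained by fixing the first coordinate at $z_1$ and differentiating along the segment $\hat{\xi}+tb$, $t\in[0,1]$. Combining the second-order Taylor expansion
\[
\hat{f}(z) = \hat{f}(z_1,\hat{\xi}) + D\hat{f}(z_1,\hat{\xi})b + \int_0^1(1-t)\,D^2_{\hat{X}}\hat{f}(z_1,\hat{\xi}+tb)(b,b)\,dt
\]
with the first-order expansion of $D\hat{f}(z)b$ in the same direction yields the clean separation
\[
\hat{f}(z) - D\hat{f}(z)b \;=\; \hat{f}(z_1,\hat{\xi}) - \int_0^1 t\,D^2_{\hat{X}}\hat{f}(z_1,\hat{\xi}+tb)(b,b)\,dt.
\]

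Each piece is then estimated separately. For the first piece the normalized form gives $\hat{f}(\xi)=0$ and $\partial\hat{f}(\xi)/\partial X_1=0$, so the Taylor expansion of $\hat{f}(z_1,\hat{\xi})$ in $a$ begins at order two and the definition of $\hat{\gamma}_2$ yields $\|D\hat{f}(\xi)^{-1}\hat{f}(z_1,\hat{\xi})\|\leq \hat{\gamma}_2|a|^2/(1-u)$. For the second piece I would Taylor-expand $D^2_{\hat{X}}\hat{f}$ around $\xi$, use the standard $\gamma$-theory bound $\|D\hat{f}(\xi)^{-1}D^{k+2}\hat{f}(\xi)/(k+2)!\|\leq\hat{\gamma}_2^{k+1}$, and sum the series $\sum_{j\geq 0}(j+1)(j+2)u^j = 2/(1-u)^3$ to obtain
\[
\int_0^1 t\,\bigl\|D\hat{f}(\xi)^{-1}D^2_{\hat{X}}\hat{f}(z_1,\hat{\xi}+tb)(b,b)\bigr\|\,dt \;\leq\; \frac{\hat{\gamma}_2\|b\|^2}{(1-u)^3}.
\]
Multiplying by the Lemma~\ref{normDinverse} factor and applying the elementary bound $(1-2u)^2\leq(1-u)^2$ (valid whenever $u\leq 2/3$, hence in particular for $u\leq u_2$) converts the $(1-u)^{-3}$ factor into $(1-u)^{-1}$ and eliminates the stray $(1-2u)^2$ on the $\|b\|^2$ term, reproducing the first displayed bound of the lemma. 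The second inequality is then immediate: since $|a|\leq\|z-\xi\|$, $\|b\|\leq\|z-\xi\|$ and $\hat{\gamma}_2\leq\gamma_2$, we have $\hat{\gamma}_2|a|^2\leq u|a|$ and $\hat{\gamma}_2\|b\|^2\leq u\|b\|$, which collapses the two terms into $\btwoone|a|+\btwotwo\|\hat{z}-\hat{\xi}\|$ as in Definition~\ref{defumult2}.

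The main technical obstacle will be careful tensor-norm bookkeeping when $D^2_{\hat{X}}\hat{f}$ is evaluated off the base point $\xi$: the combinatorial weight $(k+1)(k+2)$ arises from the two fixed $\hat{X}$-slots in $D^{k+2}\hat{f}$ and must be tracked to get the geometric series $2/(1-u)^3$. All remaining steps are routine geometric-series estimates of exactly the type already used in the proof of Lemma~\ref{normDinverse}.
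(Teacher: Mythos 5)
Your proof is correct and arrives at exactly the first displayed bound of the lemma, but by a route that differs genuinely from the paper's. The paper expands both $\hat f(z)$ and $D\hat f(z)$ fully around $\xi$ in all $n$ variables simultaneously, cancels the linear term, and then does the combinatorial bookkeeping on the residual double sum explicitly: after reindexing, the coefficient weight $k\cdot 2^{k-1}-2^k+1$ appears, and the generating function $\sum_{j\ge 0}\bigl((j+2)2^{j+1}-2^{j+2}+1\bigr)x^j=\tfrac{1}{(1-2x)^2(1-x)}$ produces the factor $\tfrac{\hat\gamma_2}{(1-u)(1-2u)^2}$ on the $\|b\|^2$ term, which then cancels exactly against the $(1-2u)^2$ in Lemma~\ref{normDinverse}. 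Your approach instead splits the error via a two-stage Taylor decomposition (first the $\hat X$-direction with integral remainder, isolating $\hat f(z_1,\hat\xi)$, then the $X_1$-direction), which cleanly separates the pure-$|a|$ contribution from the pure-$\|b\|$ contribution from the outset and replaces the awkward binomial weight by the smooth series $\sum(k+1)(k+2)u^k=2/(1-u)^3$. Your intermediate bound $\hat\gamma_2\|b\|^2/(1-u)^3$ is slightly weaker than the paper's $\hat\gamma_2\|b\|^2/\bigl[(1-u)(1-2u)^2\bigr]$, but you compensate by spending the slack $(1-2u)^2\le(1-u)^2$ (which indeed holds for all $u\le 2/3$, hence for $u\le u_2$) against the $(1-2u)^2$ in the $\|\Sigp^{-1}\Sigc\|$ bound, so the two approaches land on the same final constant. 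What your route buys is a structurally cleaner split and simpler combinatorics; what it costs is one extra elementary inequality and the need to track tensor-norm restrictions along the $\hat X$-subspace, which you correctly flag as the main bookkeeping hazard. Both are valid; the final two-term bound, and hence the second inequality $\btwoone|\zeta|+\btwotwo\|\hat z-\hat\xi\|$, follow identically from $\hat\gamma_2|\zeta|\le u$ and $\hat\gamma_2\|\hat z-\hat\xi\|\le u$.
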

\begin{proof}
\begin{align*}
   &\left\|N_1(\azhat)-\hat{\acc}\right\|\\
  =&\left\|\azhat-\hat{\acc}-\Sigp^{-1}\hat{f}(\app)\right\|\\
  =&\left\|\Sigp^{-1}\left[\Sigp(\azhat-\hat{\acc})-\hat{f}(\app)\right]\right\|\\
  =&\left\|\Sigp^{-1}\left[\Sigc(\azhat-\hat{\acc})+\sum_{k\geq2}\sum_{i=0}^{k-1}\frac{k-i}{i!(k-i)!}\frac{\partial^k \hat{f}(\acc)}{\partial X_1^i\partial \hat{X}^{k-i}}(\minusa)^i(\azhat-\hat{\acc})^{k-i}\right.\right.\\
  &\qquad\left.\left.-\Sigc(\azhat-\hat{\acc})-\sum_{k\geq2}\sum_{i=0}^{k}\frac{1}{i!(k-i)!}\frac{\partial^k \hat{f}(\acc)}{\partial X_1^i\partial \hat{X}^{k-i}}(\minusa)^i(\azhat-\hat{\acc})^{k-i}\right]\right\|\\
  \leq&\left\|\Sigp^{-1}\Sigc\right\|\cdot \left\|\Sigc^{-1}\sum_{k\geq2}\sum_{i=0}^{k-2}\frac{k-i-1}{i!(k-i)!}\frac{\partial^k \hat{f}(\acc)}{\partial X_1^i\partial \hat{X}^{k-i}}(\minusa)^i(\azhat-\hat{\acc})^{k-i}\right.\\
  &\qquad\left.-\Sigc^{-1}\sum_{k\geq2}\frac{1}{k!}\frac{\partial^k \hat{f}(\acc)}{\partial X_1^k}(\minusa)^k\right\|\\
  \leq& \left\|\Sigp^{-1}\Sigc\right\| \cdot \left(\sum_{k\geq2}\sum_{i=0}^{k-2}\frac{(k-i-1)k!}{i!(k-i)!}\hat{\gamma}_2^{k-1}\|\minus\|^{k-2}\|\azhat-\hat{\acc}\|^2 \right. \\
 & \qquad \left. +\sum_{k\geq2}\hat{\gamma}_2^{k-1}|\minusa|^k\right)\\
  \leq& \left\|\Sigp^{-1}\Sigc\right\|\cdot\left(\sum_{k\geq2}(k\cdot 2^{k-1}-2^k+1)\hat{\gamma}_2^{k-1}\|\minus\|^{k-2}\|\azhat-\hat{\acc}\|^2 \right.\\
  & \qquad \left.+\sum_{k\geq2}\hat{\gamma}_2^{k-1}\|\minus\|^{k-2}|\minusa|^2\right)\\
  \leq& \frac{(1-2u)^2}{2(1-2u)^2-1}\cdot\left(\frac{\hat{\gamma}_2}{(1-u)(1-2u)^2} \|\azhat-\hat{\acc}\|^2+\frac{\hat{\gamma}_2}{1-u}|\minusa|^2\right)\\
  \leq & \btwoone |\minusa| + \btwotwo \|\azhat-\hat{\acc}\|.
\end{align*}
\end{proof}

\begin{remark}\label{lemma45hold}
The Newton iteration defined by   $N_1$ operator works  for any simple multiple zero of multiplicity $\mu\geq 2$ whose Jacobian matrix has a normalized form. It is clear from the proofs of Lemma \ref{normDinverse} and Lemma \ref{newtonz2mul2}, if we set
$u= {\gamma}_{\mu}^{\mu}\|\minus\|$, then  the conclusions of both lemmas  still hold.
\end{remark}

 Let  $\app=(\azone, \bzhat)$, where $\bzhat=N_1(\azhat)$,  then we have the following Talyor's expansion of $f_n(\app)$ at $\acc$:
\begin{align*}
  f_n(\app)
  &=\sum_{k\geq2}\sum_{i=0}^{k}\frac{1}{i!(k-i)!}\frac{\partial^k f_n(\acc)}{\partial X_1^i\partial \hat{X}^{k-i}}(\minusa)^i(\bzhat-\hat{\acc})^{k-i}.
\end{align*}

\begin{lem}\label{normDelta2inverse}
When $u<u_2$, we have
\[
  \left|\left(\Delp\right)^{-1} \Delc\right| \leq \frac{(2u-1)^3}{24u^3-36u^2+18u-1}. 
\]
\end{lem}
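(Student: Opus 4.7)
The plan is to adapt the strategy of Lemma~\ref{normDinverse} to the scalar quantity $\Delp$. First I Taylor-expand $\Delp$ around $\acc$, separating $X_1$- and $\hat X$-derivatives, to obtain
\[
\Delp - \Delc = \sum_{k\ge 1}\sum_{i+j=k}\frac{1}{i!\,j!}\,\frac{\partial^{k+2} f_n(\acc)}{\partial X_1^{i+2}\partial \hat X^{j}}(\minusa)^{i}(\azhat-\hat{\acc})^{j},
\]
so that $\Delp = \Delc\,(1+B)$ with $B$ equal to this sum divided by $\Delc$. The goal is then to estimate $|B|$ and apply the Neumann-type bound $|(\Delp)^{-1}\Delc|=|1/(1+B)|\le 1/(1-|B|)$.

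To bound $|B|$, I majorize each partial by
\[
\left|\frac{\partial^{k+2}f_n(\acc)}{\partial X_1^{i+2}\partial\hat X^{j}}(\azhat-\hat\acc)^{j}\right|\le\|D^{k+2}f_n(\acc)\|\,\|\azhat-\hat\acc\|^{j},
\]
use the identity $\Amu=\tfrac12\Delc$ valid for $\mu=2$, and apply the definition of $\gamma_{2,n}$ in the form $\|D^{k+2}f_n(\acc)/(k+2)!\|\le\gamma_{2,n}^{k+1}|\Amu|$. Regrouping with the multinomial identity $\sum_{i+j=k}|\minusa|^{i}\|\azhat-\hat\acc\|^{j}/(i!j!)=(|\minusa|+\|\azhat-\hat\acc\|)^{k}/k!$ and the elementary estimates $|\minusa|+\|\azhat-\hat\acc\|\le 2\|\minus\|$ and $\gamma_{2,n}^{k+1}\|\minus\|^{k}\le u^{k}$ (which follow from $\gamma_{2,n}\le\gamma_{2}$, $\gamma_{2}\ge 1$, and $u=\gamma_{2}^{2}\|\minus\|$), I reduce $|B|$ to a power series of the form $\sum_{k\ge 1}(k+1)(k+2)\,c\,(2u)^{k}$ that is summable in closed form by differentiating the geometric series twice: $\sum_{k\ge 0}(k+1)(k+2)x^{k}=2/(1-x)^{3}$. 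The resulting bound has the shape $C\!\left[(1-2u)^{-3}-1\right]$.

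Finally, provided $u\le u_{2}$ so that $|B|<1$, substituting this bound into $|(\Delp)^{-1}\Delc|\le 1/(1-|B|)$ and simplifying via the algebraic identities $(2u-1)^{3}=-(1-2u)^{3}$ and $24u^{3}-36u^{2}+18u-1=2-3(1-2u)^{3}$ rewrites the bound in the stated rational form. The main obstacle is the book-keeping of constants---the factor $\tfrac12$ coming from $\Amu=\tfrac12\Delc$, the $2^{k}$ from $(|\minusa|+\|\azhat-\hat\acc\|)^{k}\le 2^{k}\|\minus\|^{k}$, and the combinatorial weight $(k+1)(k+2)$---so that the closed-form sum lands exactly on the denominator $24u^{3}-36u^{2}+18u-1$; one must also verify the sign conventions so that the numerator $(2u-1)^{3}$ and the denominator are simultaneously negative on the interval $u\in(0,u_{2}]$, making the stated right-hand side positive and the bound meaningful.
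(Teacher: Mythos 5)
Your proposal follows the same route as the paper: Taylor-expand $\Delp$ about $\acc$, write $\Delp=\Delc(1+B)$, bound $|B|$ by a geometric-type series using the definition of $\gamma_{2,n}$, and apply the Neumann bound $|(1+B)^{-1}|\le 1/(1-|B|)$. The inner-sum collapse $\sum_{i+j=k}|\minusa|^i\|\cdot\|^j/(i!j!)=(|\minusa|+\|\cdot\|)^k/k!\le (2\|\minus\|)^k/k!$, the weight $(k+1)(k+2)=(k+2)!/k!$, and the double derivative of the geometric series are exactly the ingredients in the paper's computation.

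Two small points of bookkeeping are worth noting. First, the variable in $\Delp$ is the \emph{updated} point: just before the lemma the paper sets $\app\leftarrow(\azone,\bzhat)$ with $\bzhat=N_1(\azhat)$, so the correct increment in the Taylor expansion is $\minusb=\bzhat-\hat\acc$, not $\azhat-\hat\acc$; the norm $\|\minus\|$ that feeds into $u$ then refers to $\|(\azone,\bzhat)-\acc\|$, which is (by the preceding Lemma~\ref{newtonz2mul2}) dominated by $\|\app-\acc\|$ for the original $\app$. Second, if you keep the factor $\tfrac12$ from $\Amu=\tfrac12\Delc$, your power series evaluates to $|B|\le\tfrac12\bigl(2/(1-2u)^3-2\bigr)=(1-2u)^{-3}-1$, and then $1/(1-|B|)\le(1-2u)^3/\bigl(2(1-2u)^3-1\bigr)$, which is a strictly \emph{tighter} bound than the stated $\frac{(2u-1)^3}{24u^3-36u^2+18u-1}$; the identity $24u^3-36u^2+18u-1=2-3(1-2u)^3$ only ``lands exactly'' if the $\tfrac12$ is dropped, i.e.\ if one uses $|B|\le 2/(1-2u)^3-2$. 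That is what the paper's intermediate estimate implicitly does, so either version yields the lemma's inequality, but your own description mixes the two conventions. Neither of these is a conceptual gap---the plan is sound and is essentially the paper's proof.
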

\begin{proof} We have
\begin{align*}
  &\left(\Delc\right)^{-1} \Delp\\
  =& 1+\left(\Delc\right)^{-1}\sum_{k\geq3}\sum_{i=2}^{k}\frac{1}{(i-2)!(k-i)!}\frac{\partial^k f_n(\acc)}{\partial X_1^i\partial \hat{X}^{k-i}}(\minusa)^{i-2}(\minusb)^{k-i}\\
  =& 1+B,\\
\end{align*}
where
\begin{align*}
  |B|&=\left|\left(\Delc\right)^{-1} \Delp-1\right|\\
  &\leq\sum_{k\geq3}\sum_{i=2}^{k}\frac{k!}{(i-2)!(k-i)!}\gamma_{2,n}^{k-1}\|\minus\|^{k-2}\\
  &=\sum_{k\geq3}k(k-1)\cdot 2^{k-2}(\gamma_{2,n}\|\minus\|)^{k-3}\gamma_{2,n}^2\|\minus\|\\
  &\leq \frac{-16u^3+24u^2-12u}{(2u-1)^3}. 
\end{align*}
When $u<u_2$, $|B| < 1$,   we have
\begin{align*}
  \left|\left(\Delp\right)^{-1} \Delc\right|=\left|(1+B)^{-1}\right|\leq\sum_{k=0}^{\infty}|B|^k
  \leq 
  \frac{(2u-1)^3}{24u^3-36u^2+18u-1}. 
\end{align*}
\end{proof}

\begin{lem}\label{newtonz1mul2} When $u<u_2$, we have
\[
\left|N_2(\app_1)-\acc_1\right| \leq \btwothree |\minusa| + \btwofour \|\azhat-\hat{\acc}\|.
\]
\end{lem}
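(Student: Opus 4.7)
The plan is to follow the strategy of Lemma~\ref{newtonz2mul2}, specialized to the second-derivative Newton step in the first coordinate. By the definition of $N_2$,
\[
N_2(\app_1)-\acc_1 \;=\; \left(\Delp\right)^{-1}\!\left[\Delp\cdot(\minusa) \;-\; \frac{\partial f_n(\app)}{\partial X_1}\right],
\]
so the entire task reduces to bounding the bracket and multiplying by $\left|\left(\Delp\right)^{-1}\right|$, for which Lemma~\ref{normDelta2inverse} already yields $\left|\left(\Delp\right)^{-1}\Delc\right|\le \frac{(2u-1)^3}{24u^3-36u^2+18u-1}$.

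First, I would Taylor-expand both $\Delp$ and $\frac{\partial f_n(\app)}{\partial X_1}$ about $\acc$, evaluated at $\app=(\app_1,\bzhat)$, using the vanishing conditions $\frac{\partial f_n(\acc)}{\partial X_i}=0$ for $1\le i\le n$. A coefficient-by-coefficient comparison shows that the leading contribution $\Delc(\minusa)$ appears identically in both expansions and cancels in the bracket; more generally, the coefficient of $(\minusa)^i(\bzhat-\hat{\acc})^{k-i}$ equals $\frac{i-1}{i!(k-i)!}\frac{\partial^{k+1} f_n(\acc)}{\partial X_1^{i+1}\partial \hat{X}^{k-i}}$ when $i\ge 1$ (which vanishes for $i=1$), while for $i=0$ it equals $-\frac{1}{k!}\frac{\partial^{k+1} f_n(\acc)}{\partial X_1\partial \hat{X}^{k}}$. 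Consequently the bracket decomposes as $E_1+E_2$, where $E_1=-\sum_{k\ge 1}\frac{1}{k!}\frac{\partial^{k+1} f_n(\acc)}{\partial X_1\partial \hat{X}^{k}}(\bzhat-\hat{\acc})^{k}$ depends only on $\bzhat-\hat{\acc}$, while every monomial in $E_2$ carries a factor $(\minusa)^i$ with $i\ge 2$.

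Next, I would bound $|E_1|$ and $|E_2|$ using $\left\|\frac{D^k f_n(\acc)}{k!}\right\|\le \gamma_{2,n}^{k-1}\,|\Delta_2(f_n)|$ together with $|\Delta_2(f_n)|=|\Delc|/2$, and collapse the resulting series via elementary geometric-series identities such as $\sum_{k\ge 1}(k+1)s^k=\frac{1}{(1-s)^2}-1$ applied with $s=\gamma_{2,n}\|\bzhat-\hat{\acc}\|$, $s=\gamma_{2,n}\|\minus\|$ and $s=\gamma_{2,n}|\minusa|$. This produces an explicit bound for $|E_1|+|E_2|$ in terms of $|\Delc|$ and a rational function of $u=\gamma_2^2\|\minus\|$. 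I would then invoke Lemma~\ref{newtonz2mul2} to substitute $\|\bzhat-\hat{\acc}\|\le \btwoone|\minusa|+\btwotwo\|\azhat-\hat{\acc}\|$ throughout, reducing the bound to a linear combination of $|\minusa|$ and $\|\azhat-\hat{\acc}\|$ with coefficients depending only on $u$.

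Finally, multiplying through by Lemma~\ref{normDelta2inverse}'s estimate and regrouping by whether each contribution multiplies $|\minusa|$ or $\|\azhat-\hat{\acc}\|$ should yield exactly the rational coefficients $\btwothree$ and $\btwofour$ of Definition~\ref{defumult2}. The main obstacle will be the algebraic bookkeeping: verifying that after the cancellation in the Taylor expansion, the Lemma~\ref{newtonz2mul2} substitution, and the closed-form summation, the aggregate rational function of $u$ collapses precisely to the stated closed forms. In particular, showing that the coefficient of $\|\azhat-\hat{\acc}\|$ simplifies to $\frac{(2u-1)^3(u-2)u}{(24u^3-36u^2+18u-1)(1-u)^3(8u^2-8u+1)}$ will demand careful tracking of how the contribution $\btwotwo\|\azhat-\hat{\acc}\|$, fed through both $E_1$ and the higher-order tail of $E_2$, combines with the pure $\|\minus\|$ terms to produce the denominator factor $(8u^2-8u+1)$.
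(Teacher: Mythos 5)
Your proposal is correct and follows essentially the same route as the paper: rewrite $N_2(\app_1)-\acc_1$ as $(\Delp)^{-1}$ times a bracket, Taylor-expand the bracket at $\acc$ and split it into an $i\geq 3$ sum (your $E_2$) and an $i=1$ sum (your $E_1$), bound each with the $\gamma_{2,n}$ tensor estimates and geometric-series identities, then feed in Lemma~\ref{newtonz2mul2} to replace $\|\bzhat-\hat{\acc}\|$ and Lemma~\ref{normDelta2inverse} to absorb $|(\Delp)^{-1}\Delc|$. The only slip is a sign typo in your displayed target for $\btwofour$: the denominator should contain $(-1+u)^3$, not $(1-u)^3$, to match Definition~\ref{defumult2} and keep the coefficient positive.
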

\begin{proof} For  $u<u_2$, we have
\begin{align*}
  &\left|N_2(\app_1)-\acc_1\right|\\
  =&\left|\app_1-\acc_1-\left(\Delp\right)^{-1}\frac{\partial f_n(\app)}{\partial X_1}\right|\\
  =&\left|\left(\Delp\right)^{-1}\left[\Delp(\minusa)-\frac{\partial f_n(\app)}{\partial X_1}\right]\right|\\
  =&\left|\left(\Delp\right)^{-1}\left[\sum_{k\geq2}\sum_{i=2}^{k}\frac{1}{(i-2)!(k-i)!}\frac{\partial^k f_n(\acc)}{\partial X_1^i\partial \hat{X}^{k-i}}(\minusa)^{i-1}(\minusb)^{k-i}\right.\right.\\
  &\qquad\left.\left.-\sum_{k\geq2}\sum_{i=1}^{k}\frac{1}{(i-1)!(k-i)!}\frac{\partial^k f_n(\acc)}{\partial X_1^i\partial \hat{X}^{k-i}}(\minusa)^{i-1}(\minusb)^{k-i}\right]\right|\\
  \leq& \left|\left(\Delp\right)^{-1} \Delc\right|\cdot \left| \left(\Delc\right)^{-1}\sum_{k\geq2}\frac{1}{(k-1)!}\frac{\partial^k \hat{f}(\acc)}{\partial X_1\partial \hat{X}^{k-1}}(\minusb)^{k-1} \right.\\
  &\qquad\left.- \left(\Delc\right)^{-1}\sum_{k\geq3}\sum_{i=3}^{k}\frac{i-2}{(i-1)!(k-i)!}\frac{\partial^k f_n(\acc)}{\partial X_1^i\partial \hat{X}^{k-i}}(\minusa)^{i-1}(\minusb)^{k-i}\right| \\
  \leq& \left|\left(\Delp\right)^{-1} \Delc\right| \cdot\left(\sum_{k\geq3}\sum_{i=3}^{k}\frac{(i-2)k!}{(i-1)!(k-i)!}\gamma_{2,n}^{k-1}\|\minus\|^{k-2}|\minusa| \right. \\
  & \left. \qquad+\sum_{k\geq2}k\gamma_{2,n}^{k-1} \|\minus\|^{k-2} \|\minusb\|\right)\\
  \leq& \left|\left(\Delp\right)^{-1} \Delc\right| \cdot\left(\sum_{k\geq3}\sum_{i=3}^{k}\frac{(i-2)k!}{(i-1)!(k-i)!}
  ({\gamma_{2,n}^2\|\minus\|})^{k-2}|\minusa| \right.\\
  & \left. \qquad+\sum_{k\geq2}k({{\gamma_{2,n}^2\|\minus\|}})^{k-2} \gamma_{2,n} \|\minusb\|\right)\\
    \leq& \left|\left(\Delp\right)^{-1} \Delc\right| \\
   &\qquad\cdot\left(\sum_{k\geq3}(k(k-3)2^{k-2}+k)u^{k-2}|\minusa|+\sum_{k\geq2}ku^{k-2} \gamma_{2,n} \|\minusb\|\right)\\
   \leq& \left|\left(\Delp\right)^{-1} \Delc\right| \cdot\left(\frac{u(4u-3)}{(u-1)^2(2u-1)^3}|\minusa|+\frac{(2-u)\gamma_{2,n}}{(u-1)^2} \|\minusb\|\right).
\end{align*}
Then by Lemma \ref{newtonz2mul2} and Lemma \ref{normDelta2inverse}, we have:
\begin{align*}
  &\left|N_2(\app_1)-\acc_1\right|\\
  \leq&\frac{u(32u^6-144u^5+272u^4-288u^3+174u^2-52u+5)}{(24u^3-36u^2+18u-1)(-1+u)^3(8u^2-8u+1)} |\minusa| \\ 
  & +\frac{(-1+2u)^3(-2+u)u}{(24u^3-36u^2+18u-1)(-1+u)^3(8u^2-8u+1)} \|\azhat-\hat{\acc}\|. \\
  = &\btwothree |\minusa| + \btwofour \|\azhat-\hat{\acc}\|. 
\end{align*}
\end{proof}

\begin{proof}
Now we can complete  the proof of Theorem \ref{thmnewtonmul2}: 

 \begin{enumerate}
 \item For $0 <\itb<\itb_2\approx 0.0418$, we have
 \[2\btwoone^2+2\btwothree^2 <1, \ \ 2\btwotwo^2+2\btwofour^2 <1.\]
 Hence, we have
 \begin{align*}
 &\left \| N_f(\app)-\acc \right \|^2
 \leq  \left\|N_1(\azhat)-\hat{\acc}\right\|^2 +\left|N_2(\app_1)-\acc_1\right| ^2 \\
  & \leq (\btwoone |\minusa| + \btwotwo \|\azhat-\hat{\acc}\|)^2 +(\btwothree |\minusa| + \btwofour \|\azhat-\hat{\acc}\|)^2 \\
 &\leq  (2\btwoone^2+2\btwothree^2) |\minusa|^2 +(2\btwotwo^2+2\btwofour^2) \|\azhat-\hat{\acc}\| ^2\\
 & <\left \| \app -\acc  \right \|^2.
\end{align*}

\item For $0<\itb<\itb_2^{\prime} \approx 0.0318$,  we have
 \[2\btwotwo^2+2\btwofour^2< 2\btwoone^2+2\btwothree^2 <\frac{1}{4}. \]
 Hence, we have
 \begin{align*}
 &\left \| N_f(\app)-\acc \right \|^2
 \leq  \left\|N_1(\azhat)-\hat{\acc}\right\|^2 +\left|N_2(\app_1)-\acc_1\right| ^2 \\
  & \leq (\btwoone |\minusa| + \btwotwo \|\azhat-\hat{\acc}\|)^2 +(\btwothree |\minusa| + \btwofour \|\azhat-\hat{\acc}\|)^2 \\
 &\leq  (2\btwoone^2+2\btwothree^2) |\minusa|^2 +(2\btwotwo^2+2\btwofour^2) \|\azhat-\hat{\acc}\| ^2\\
 & \leq (2\btwoone^2+2\btwothree^2) \left \| \app -\acc  \right \|^2\\
 & \leq \frac{1}{4} \left \| \app -\acc  \right \|^2.
\end{align*}
The following inequality is true for $k=1$:
  \[
 \left \|N_f^k(\app)-\acc \right \| < \left( \frac{1}{2} \right)^{2^k-1}  \left \| \app -\acc  \right \|.
 \]
 %
 For $k \geq 2$, assume by induction that
 \begin{align*}\label{quadmult2ind}
  \left \|N_f^{k-1}(\app)-\acc \right \| < \left( \frac{1}{2} \right)^{2^{k-1}-1}  \left \| \app -\acc  \right \|.
 \end{align*}
   Let $\itb^{(k-1)}=\gamma_2^2\left \|N_f^{k-1}(\app)-\acc \right \|$.  For $0<\itb<\itb_2^{\prime}$,  $k \geq 2$,  we have
    $\itb^{(k-1)} < \itb=\gamma_2^2\|\minus\|$ and  $\frac{\sqrt{2\btwoone^2+2\btwothree^2}\gamma_2^2}{\itb}$ is increasing.  Therefore, we have
 \begin{align*}
 & \left \|N_f^{k}(\app)-\acc \right \|\\
  =& \left \|N_f\left(N_f^{k-1}(\app)\right)-\acc \right \|\\
  <&  \frac{\sqrt{2b_{2,1}(\itb^{(k-1)})^2+2b_{2,3}(\itb^{(k-1)})^2}\gamma_2^2}{\itb^{(k-1)}} \left \| N_f^{k-1}(\app)-\acc  \right \|^2\\
  <&  \frac{\sqrt{2\btwoone^2+2\btwothree^2}\gamma_2^2}{\itb} \left \| N_f^{k-1}(\app)-\acc  \right \|^2\\
  <& \frac{\sqrt{2\btwoone^2+2\btwothree^2}\gamma_2^2}{\itb} \left( \frac{1}{2} \right)^{2^{k}-2}  \left \| \app-\acc  \right \|^2 \\
 = &\left( \frac{1}{2} \right)^{2^k-1}  \left \| \app -\acc  \right \|.
 \end{align*}
\end{enumerate}

\end{proof}

\subsection{$\gamma$-theorem  for  Simple Triple Zeros}

 Given an approximate zero $\app$ of $f$ with associated simple triple zero  $\acc$ such that
  $D\hat{f}(\acc)$ is invertible and
 \[ \frac{\partial f_i (\acc)}{\partial X_1}=0, \ \ \frac{\partial f_n (\acc)}{\partial X_i}=0,  \ \ 1 \leq i \leq n, \ \  \Delc = 0, \]
but
 \[
  \Delta_{3}(f_n)=\frac{1}{6}\frac{\partial^3 f_n(\acc)}{\partial X_1^3}-\frac{\partial^2 f_n(\acc)}{\partial X_1\partial \hat{X}}\cdot \Sigc^{-1}\frac{1}{2}\frac{\partial^2 \hat{f}(\acc)}{\partial X_1^2}\neq 0. 
  \]
  We aim to approximate $\acc$ by applying modified Newton's method to $\app$ and iterating $k$ times such that $\|N^k_f(\app)-\acc\| < \epsilon$ for a given accuracy $\epsilon$.

Let us define the differential  operator $\newl_3$,
\begin{eqnarray*}
  \newl_3(f_n)(\app)&=&\frac{1}{6}\frac{\partial^3 f_n(\app)}{\partial X_1^3}-\frac{\partial^2 f_n(\app)}{\partial X_1\partial \hat{X}}\cdot \atwo,
\end{eqnarray*}
 then we have $\newl_3(f_n)(\acc)=\Delta_{3}(f_n)$.
 As  $\Delta_3(f_n) \neq 0$, and $\app$ is near to $\acc$, we can assume  that $\newl_3(f_n)(\app) \neq 0$.
 Moreover, we define the differential operator $\newd_1$ such that
\begin{eqnarray*}
  \newd_1(f_n)(\app)&=&\frac{1}{6}\frac{\partial^2 f_n(\app)}{\partial X_1^2}- \frac{\partial f_n(\app)}{\partial \hat{X}} \cdot \atwo.
\end{eqnarray*}

\begin{algorithm}[H]
\label{alg:newtonmult3}
\caption{ Modified Newton Iteration for Simple Triple  Zero}
\begin{algorithmic}[1]
\REQUIRE~~\\
$f$: a polynomial system; \\
$\app =(\azone, \azhat)$:  an approximate simple triple zero of $f$; \\
\ENSURE~~\\
$N_f(\app)=(N_2(\azone),N_1(\azhat) )$:  a refined solution after one  iteration;\\
\STATE $N_1(\azhat) \leftarrow \azhat-\Sigp^{-1}\hat{f}(\app)$;
\STATE $\bzhat \leftarrow N_1(\azhat)$;
\STATE $z \leftarrow (\azone, \bzhat)$;
\STATE $N_2(\azone) \leftarrow \azone-\left(\newl_3(f_n)(\app)\right)^{-1} \cdot \newd_1(f_n)(\app)$;
\end{algorithmic}
\end{algorithm}

\begin{definition}\label{defumult3}
For an approximate root $\app$ of  $\acc$, let $\itb=\gamma_3^3\|\minus\|$.  We define the following rational functions:
\begin{eqnarray*}
\acoef&=&\frac{1}{\left[2(1-2u)^2-1\right](1-2u)},\\
 \deltacoef &=&\frac{(2\itb -1)^4 (8\itb^2 -8\itb +1)}{128 \itb^6-384 \itb^5+464 \itb^4 -320 \itb^3 +136 \itb^2 -30 \itb +1} ,  \\ 
 \bthreethree&=& \frac{-\deltacoef}{3(2\itb-1)^4(8\itb^2-8\itb+1)^2(\itb-1)^4} \cdot \left( 3072\itb^{12}-25088\itb^{11}\right.\\ 
 &&+92480\itb^{10}-202336\itb^9+289640\itb^8-282020\itb^7+188614\itb^6\\
 &&\left. -85997\itb^5+26342\itb^4-5368\itb^3+702\itb^2-42\itb\right),\\
\bthreefour&=&\frac{\deltacoef \left( 16\itb^6-72\itb^5+130\itb^4-106\itb^3+42\itb^2-9\itb \right)}{3(8\itb^2-8\itb+1)^2(\itb-1)^4(2\itb-1)}.
\end{eqnarray*}

\end{definition}

\begin{thm}\label{thmnewtonmul3}
Let $\acc$ be a simple triple zero of  $f$. 
\begin{enumerate}[(1)]
\item  If $\itb < \itb_3 \approx 0.0222$, where $\itb_3$ is the smallest positive solution  of the equation:
 \[
 2\btwoone^2+2\bthreethree^2=1,
  \]
 then the output of Algorithm \ref{alg:newtonmult3} satisfies:
 \[
 \left \|N_f(\app)-\acc \right \| < \left \| \app -\acc  \right \|.
 \]
 \item If $\itb < \itb_3^{\prime} \approx 0.0154$, where $\itb_3^{\prime}$ is the smallest positive solution  of the equation:
 \[
 2\btwoone^2+2\bthreethree^2=\frac{1}{4},
  \]
 then after $k$ times of iteration we have
 \[
 \left \|N_f^k(\app)-\acc \right \| < \left( \frac{1}{2} \right)^{2^k-1}  \left \| \app -\acc  \right \|.
 \]
\end{enumerate}
\end{thm}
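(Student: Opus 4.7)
The plan is to mirror the structure of the proof of Theorem~\ref{thmnewtonmul2}, replacing the scalar quadratic Newton correction in the first coordinate by the cubic one defined via $\newl_3(f_n)(\app)$ and $\newd_1(f_n)(\app)$, while carefully tracking powers of $\gamma_3$. The update of the last $n-1$ coordinates in Algorithm~\ref{alg:newtonmult3} is identical to that of Algorithm~\ref{alg:newtonmult2}, so by Remark~\ref{lemma45hold} applied with $\mu=3$ and $\itb=\gamma_3^3\|\minus\|$ we obtain immediately
\[
\|N_1(\azhat)-\hat\acc\|\le\btwoone\,|\minusa|+\btwotwo\,\|\azhat-\hat\acc\|.
\]
It then remains to bound the first coordinate
\[
N_2(\azone)-\acc_1=\newl_3(f_n)(\app)^{-1}\bigl(\newl_3(f_n)(\app)\cdot(\minusa)-\newd_1(f_n)(\app)\bigr).
\]

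For this I will establish two auxiliary lemmas playing the roles of Lemmas~\ref{normDelta2inverse} and~\ref{newtonz1mul2}. The first shows that $|\newl_3(f_n)(\app)^{-1}\cdot\Delta_3(f_n)|\le\deltacoef$: since $\newl_3(f_n)(\acc)=\Delta_3(f_n)\neq0$, one writes $\newl_3(f_n)(\app)=\Delta_3(f_n)(1+B)$ and Taylor-expands every ingredient of $\newl_3$ around $\acc$. The $D\hat f(z)^{-1}$ factor requires a nested Neumann argument in the spirit of Lemma~\ref{normDinverse}, after which $|B|$ is dominated by an explicit rational function of $\itb$ via geometric-type series. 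The second lemma gives
\[
|N_2(\azone)-\acc_1|\le\bthreethree\,|\minusa|+\bthreefour\,\|\azhat-\hat\acc\|,
\]
and will be proved by a careful Taylor expansion of $\newd_1(f_n)(\app)-\newl_3(f_n)(\app)(\minusa)$ about $\acc$. The crucial observation is that for a simple triple zero one has $\Delta_2(f_n)=0$ and $\partial f_n/\partial\hat X(\acc)=0$, which forces $\newd_1(f_n)(\acc)=0$, and a direct differentiation shows that the $\minusa$-linear part of $\newd_1(f_n)(\app)$ equals $\Delta_3(f_n)(\minusa)=\newl_3(f_n)(\acc)(\minusa)$. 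These two cancellations make $\newd_1(f_n)(\app)-\newl_3(f_n)(\app)(\minusa)$ begin at total degree two in the combined error, which is precisely what drives quadratic convergence; applying Lemma~\ref{newtonz2mul2} to convert the $\|\bzhat-\hat\acc\|$ factors back into $|\minusa|$ and $\|\azhat-\hat\acc\|$ then produces the explicit coefficients $\bthreethree$ and $\bthreefour$.

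Combining the two bounds via $\|N_f(\app)-\acc\|^2=|N_2(\azone)-\acc_1|^2+\|N_1(\azhat)-\hat\acc\|^2$ and Cauchy--Schwarz yields
\[
\|N_f(\app)-\acc\|^2\le\bigl(2\btwoone^2+2\bthreethree^2\bigr)|\minusa|^2+\bigl(2\btwotwo^2+2\bthreefour^2\bigr)\|\azhat-\hat\acc\|^2.
\]
A routine monotonicity check on the explicit rational functions shows that, on the interval of interest, $2\btwotwo^2+2\bthreefour^2\le 2\btwoone^2+2\bthreethree^2$, so part~(1) follows as soon as $\itb<\itb_3$ and part~(2) as soon as $\itb<\itb_3^{\prime}$. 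For part~(2) the induction on $k$ is then literally the one at the end of the proof of Theorem~\ref{thmnewtonmul2}: one uses that $\itb^{(k-1)}=\gamma_3^3\|N_f^{k-1}(\app)-\acc\|$ is strictly decreasing in $k$ and that $\sqrt{2\btwoone^2+2\bthreethree^2}/\itb$ is an increasing function of $\itb$. The main technical obstacle is the combinatorial bookkeeping inside the two auxiliary lemmas, where each occurrence of $D\hat f(z)^{-1}$ inside $\newl_3$ or $\newd_1$ triggers its own Neumann series in $\hat\gamma_3\|\minus\|$ and every factor $\frac{1}{\Delta_3(f_n)}\cdot\frac{D^k f_n(\acc)}{k!}$ must be controlled by a power of $\gamma_{3,n}$; assembling all these contributions into the compact closed forms $\deltacoef$, $\bthreethree$ and $\bthreefour$ of Definition~\ref{defumult3} is where the bulk of the calculation lies.
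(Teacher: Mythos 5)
Your proposal mirrors the paper's proof almost exactly: Remark~\ref{lemma45hold} to reuse the Lemma~\ref{normDinverse}/\ref{newtonz2mul2} bounds for the $\hat X$-update with $\itb=\gamma_3^3\|\minus\|$, a bound on $\|\newl_3(f_n)(\app)^{-1}\Delta_3(f_n)\|$ by a Neumann-series argument (Lemma~\ref{normDelta3inverse}, which itself needs the auxiliary bound on $\|\atwo\|$ in Lemma~\ref{atwomul3} — you fold this into your ``nested Neumann'' remark), a detailed Taylor expansion of $\newl_3(f_n)(\app)(\minusa)-\newd_1(f_n)(\app)$ about $\acc$ (Lemma~\ref{newtonz1mult3}) using the cancellations $\newd_1(f_n)(\acc)=0$ and $\frac{\partial}{\partial z_1}\newd_1(f_n)(\acc)=\Delta_3(f_n)$, a reinsertion of the Lemma~\ref{newtonz2mul2} bound to express $\|\bzhat-\hat\acc\|$ in terms of $|\minusa|$ and $\|\azhat-\hat\acc\|$, and finally the Pythagorean split of $\|N_f(\app)-\acc\|$ with the elementary inequality $(a+b)^2\le 2a^2+2b^2$ (you call this Cauchy--Schwarz, but it is just Young's inequality) followed by the same induction as in Theorem~\ref{thmnewtonmul2}. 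This is the same route as the paper, correctly identified.
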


  According to Remark \ref{lemma45hold}, similar to   Lemma \ref{normDinverse} and Lemma \ref{newtonz2mul2}, for  $\itb \leq \itb_3 \approx 0.0222$,  we  have:
\begin{equation*}
\left\|\Sigp^{-1}\Sigc\right\| \leq \frac{(1-2u)^2}{2(1-2u)^2-1},
\end{equation*}
and
\begin{align*}
\left\|N_1(\azhat)-\hat{\acc}\right\|
&  \leq \btwoone |\minusa| + \btwotwo \|\azhat-\hat{\acc}\|.
\end{align*}

Let  $\app=(\azone, \bzhat)$, where $\bzhat=N_1(\azhat)$,   we have the following Talyor's expansion of $f_n(\app)$ at $\acc$:
\begin{align*}
  f_n(\app)
  &=\sum_{k\geq2}\sum_{i=0}^{k}\frac{1}{i!(k-i)!}\frac{\partial^k f_n(\acc)}{\partial X_1^i\partial \hat{X}^{k-i}}(\minusa)^i(\bzhat-\hat{\acc})^{k-i}.
\end{align*}

\begin{lem}\label{atwomul3}
When $\itb<\itb_3$, we have
\[
\left \|\Sigp^{-1}\cdot\frac{1}{2}\frac{\partial^2 \hat{f}(\app)}{\partial X_1^2} \right \| \leq \acoef \gamma_3.
\]
\end{lem}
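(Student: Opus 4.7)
The plan is to factor $\Sigp^{-1}\cdot\frac{1}{2}\frac{\partial^2 \hat{f}(\app)}{\partial X_1^2}$ through $\Sigc$, apply Lemma \ref{normDinverse} to the first factor, and then bound the second factor by a Taylor expansion at $\acc$. Explicitly, I would write
\[
\Sigp^{-1}\cdot\frac{1}{2}\frac{\partial^2 \hat{f}(\app)}{\partial X_1^2}
 \;=\; (\Sigp^{-1}\Sigc)\cdot\Bigl(\Sigc^{-1}\cdot\frac{1}{2}\frac{\partial^2 \hat{f}(\app)}{\partial X_1^2}\Bigr),
\]
so that $\|\Sigp^{-1}\Sigc\| \leq \frac{(1-2\itb)^2}{2(1-2\itb)^2-1}$ by Lemma \ref{normDinverse} (which applies since we are assuming $\itb<\itb_3<\itb_2$).

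Next I would expand $\frac{1}{2}\frac{\partial^2 \hat{f}(\app)}{\partial X_1^2}$ about $\acc$. Differentiating the Taylor expansion of $\hat f$ twice with respect to $X_1$ and evaluating at $\app$ gives
\[
\frac{1}{2}\frac{\partial^2 \hat{f}(\app)}{\partial X_1^2}
 \;=\; \sum_{k\geq 2}\sum_{i=0}^{k-2}\frac{k(k-1)}{2\,i!\,(k-i-2)!}\cdot\frac{1}{k!}\,
 \frac{\partial^{k}\hat{f}(\acc)}{\partial X_1^{i+2}\partial\hat X^{k-i-2}}
 (\minusa)^{i}(\hat z-\hat\xi)^{k-i-2}.
\]
Applying $\Sigc^{-1}$ and the triangle inequality, then bounding each partial derivative via the definition of $\hat\gamma_3$, the inner sum collapses to $\binom{k}{2}2^{k-2}$ by the binomial theorem, yielding
\[
\Bigl\|\Sigc^{-1}\cdot\tfrac{1}{2}\tfrac{\partial^2 \hat f(\app)}{\partial X_1^2}\Bigr\|
 \;\leq\; \hat\gamma_3\sum_{k\geq 2}\binom{k}{2}2^{k-2}(\hat\gamma_3\|\minus\|)^{k-2}
 \;=\; \frac{\hat\gamma_3}{(1-2\hat\gamma_3\|\minus\|)^{3}},
\]
using the generating function identity $\sum_{m\geq0}\binom{m+2}{2}t^m=(1-t)^{-3}$, valid when $2\hat\gamma_3\|\minus\|<1$.

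Finally I would combine the two bounds. Because $\gamma_3\geq 1$ we have $\hat\gamma_3\|\minus\|\leq \gamma_3^{3}\|\minus\|=\itb$, so $(1-2\hat\gamma_3\|\minus\|)^{3}\geq(1-2\itb)^{3}$; combined with $\hat\gamma_3\leq\gamma_3$ this produces
\[
\Bigl\|\Sigp^{-1}\cdot\tfrac{1}{2}\tfrac{\partial^2\hat f(\app)}{\partial X_1^2}\Bigr\|
 \;\leq\; \frac{(1-2\itb)^2}{2(1-2\itb)^2-1}\cdot\frac{\gamma_3}{(1-2\itb)^3}
 \;=\; \frac{\gamma_3}{[2(1-2\itb)^2-1](1-2\itb)}
 \;=\; \acoef\,\gamma_3,
\]
which is the claimed inequality.

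The steps are all essentially routine manipulations; the only place that requires some care is verifying the combinatorial identity that reduces the double sum to a single generating series, and making sure that the two different rate parameters $\hat\gamma_3\|\minus\|$ (arising from the Taylor expansion) and $\itb=\gamma_3^{3}\|\minus\|$ (arising from the statement) are correctly related using $\gamma_3\geq 1$. The convergence condition for the geometric-type series is automatically satisfied since $\itb<\itb_3<\tfrac{1}{2}$.
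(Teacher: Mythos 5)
Your proof follows essentially the same route as the paper's: factor through $\Sigc$, apply Lemma \ref{normDinverse} to $\|\Sigp^{-1}\Sigc\|$, Taylor-expand $\frac{1}{2}\frac{\partial^2 \hat f}{\partial X_1^2}$ at $\xi$, use the $\hat\gamma_3$ bound to collapse the inner binomial sum, and conclude via $\hat\gamma_3\leq\gamma_3$ and $\hat\gamma_3\|\minus\|\leq u$. One transcription slip: the displayed Taylor coefficient $\frac{k(k-1)}{2\,i!\,(k-i-2)!}\cdot\frac{1}{k!}$ carries a spurious factor of $\frac{1}{(k-2)!}$ — the correct coefficient is $\frac{1}{2\,i!\,(k-i-2)!}$ — but your subsequent bound $\hat\gamma_3\sum_{k\geq 2}\binom{k}{2}2^{k-2}(\hat\gamma_3\|\minus\|)^{k-2}$ is derived from the correct coefficient and matches the paper, so this is a typo rather than a gap.
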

\begin{proof}
When $\itb<\itb_3$,  we have
\begin{align*}
  \left\|\Sigp^{-1}\Sigc\right\|\leq \frac{(1-2u)^2}{2(1-2u)^2-1}.\\
\end{align*}
Then, it is clear that
\begin{align*}
  &\left \| \Sigp^{-1}\cdot\frac{1}{2}\frac{\partial^2 \hat{f}(\app)}{\partial X_1^2} \right \|\\
  =&\left \| \Sigp^{-1}\cdot\frac{1}{2}\sum_{k\geq2}\sum_{i=2}^{k}\frac{1}{(i-2)!(k-i)!}\frac{\partial^k \hat{f}(\acc)}{\partial X_1^i\partial \hat{X}^{k-i}}(\minusa)^{i-2}(\bzhat-\hat{\acc})^{k-i} \right \|\\
  =& \left \| \Sigp^{-1}\Sigc\Sigc^{-1} \right. \\
  & \quad \left.\cdot\frac{1}{2}\sum_{k\geq2}\sum_{i=2}^{k}\frac{1}{(i-2)!(k-i)!}\frac{\partial^k \hat{f}(\acc)}{\partial X_1^i\partial \hat{X}^{k-i}}(\minusa)^{i-2}(\bzhat-\hat{\acc})^{k-i} \right \|\\
 \leq & \frac{1}{2} \left\|\Sigp^{-1}\Sigc\right\|\cdot \left(\sum_{k\geq2}\sum_{i=2}^{k}\frac{k!}{(i-2)!(k-i)!}\hat{\gamma}_3^{k-1}\|\minus\|^{k-2}\right)\\
 = &\frac{1}{2} \left\|\Sigp^{-1}\Sigc\right\|\cdot \left(\sum_{k\geq2} k(k-1)(2\hat{\gamma}_3\|\minus\|)^{k-2}\hat{\gamma}_3\right)\\
 \leq & \frac{(1-2u)^2}{2(1-2u)^2-1}\cdot \frac{1}{(1-2u)^3}\gamma_3\\
 = &\acoef \gamma_3.
\end{align*}
\end{proof}

\begin{lem}\label{normDelta3inverse}
When $\itb < \itb_3$, we have
\[
\left \|\newl_3(f_n)(\app)^{-1}\Delta_3(f_n) \right \| \leq \deltacoef.
\]
\end{lem}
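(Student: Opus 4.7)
The plan is to follow the template of Lemma \ref{normDelta2inverse}: write $\newl_3(f_n)(\app)=\Delta_3(f_n)\cdot(1+B)$ for a correction term $B$, show $|B|<1$, and then use the Neumann series $\sum_{k\ge0}(-B)^k$ to get $|\newl_3(f_n)(\app)^{-1}\Delta_3(f_n)|\le 1/(1-|B|)$; the final step is to identify $1/(1-|B|)$ with the rational function $\deltacoef$.

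First I would split the difference $\newl_3(f_n)(\app)-\Delta_3(f_n)$ into two natural pieces, one coming from the third-order partial and one from the mixed-with-$\atwo$ term:
\[
\Bigl(\tfrac{1}{6}\tfrac{\partial^3 f_n(\app)}{\partial X_1^3}-\tfrac{1}{6}\tfrac{\partial^3 f_n(\acc)}{\partial X_1^3}\Bigr)
-\Bigl(\tfrac{\partial^2 f_n(\app)}{\partial X_1\partial\hat X}\cdot\atwo-\tfrac{\partial^2 f_n(\acc)}{\partial X_1\partial\hat X}\cdot\Sigc^{-1}\tfrac{1}{2}\tfrac{\partial^2 \hat f(\acc)}{\partial X_1^2}\Bigr).
\]
The second piece I would split further by interpolating: add and subtract $\tfrac{\partial^2 f_n(\acc)}{\partial X_1\partial\hat X}\cdot\atwo$, so that the problem reduces to (i) a Taylor tail in $\tfrac{\partial^3 f_n}{\partial X_1^3}$, (ii) a Taylor tail in $\tfrac{\partial^2 f_n}{\partial X_1\partial\hat X}$, and (iii) the difference $\Sigp^{-1}-\Sigc^{-1}=\Sigp^{-1}(\Sigc-\Sigp)\Sigc^{-1}$ multiplied by the frozen tensors at $\acc$. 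Each Taylor tail, after dividing by $\Delta_3(f_n)$, yields a double sum in which every tensor norm is bounded by $\gamma_{3,n}^{k-1}$ or $\hat\gamma_3^{k-1}$ (hence by $\gamma_3^{k-1}$), and summing the geometric series produces explicit rational functions of $u=\gamma_3^3\|\minus\|$ of the form $u^s/((1-u)^a(1-2u)^b)$.

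For the factor $\|\atwo\|$ I would invoke Lemma \ref{atwomul3}, which already contributes $\acoef\gamma_3$ and is what produces the denominator $8u^2-8u+1=2(1-2u)^2-1$ through the bound $\|\Sigp^{-1}\Sigc\|\le (1-2u)^2/(8u^2-8u+1)$ from Lemma \ref{normDinverse}, applied at $\mu=3$ by Remark \ref{lemma45hold}. The piece coming from $\Sigp^{-1}-\Sigc^{-1}$ uses the same estimate, contributing an additional power of $1/(8u^2-8u+1)$ and an extra Taylor tail $\|\Sigp-\Sigc\|$. Collecting all five contributions and clearing denominators produces a single rational inequality
\[
|B|\;\le\;1-\frac{1}{\deltacoef},
\]
valid whenever $u<u_3$, which is exactly the range in which $(1-u)^4(1-2u)^4(8u^2-8u+1)^2$ and the degree-$6$ denominator of $\deltacoef$ are positive and $|B|<1$.

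The main obstacle is not any single estimate but the bookkeeping: four Taylor tails and one $\Sigp^{-1}-\Sigc^{-1}$ interpolation term must be summed and simplified so that the combined bound collapses to the specific fraction $(2\itb-1)^4(8\itb^2-8\itb+1)/(128\itb^6-384\itb^5+464\itb^4-320\itb^3+136\itb^2-30\itb+1)$. Once this simplification is done, the Neumann series finishes the proof: $\|\newl_3(f_n)(\app)^{-1}\Delta_3(f_n)\|=\|(1+B)^{-1}\|\le 1/(1-|B|)\le \deltacoef$.
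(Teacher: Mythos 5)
Your global strategy is the paper's: write $\Delta_3(f_n)^{-1}\newl_3(f_n)(\app)=1+B$, bound $\|B\|$ by Taylor expansion at $\acc$, invoke Lemma~\ref{atwomul3} for $\|\atwo\|$ and the $\mu=3$ version of Lemma~\ref{normDinverse} for $\|\Sigp^{-1}\Sigc\|$, and finish with the Neumann series. The tension is in the decomposition of $B$. After your proposed add--and--subtract of $\frac{\partial^2 f_n(\acc)}{\partial X_1\partial\hat X}\cdot\atwo$, the exact identity is
\begin{align*}
\newl_3(f_n)(\app)-\Delta_3(f_n)
=\;&\tfrac16\Bigl(\tfrac{\partial^3 f_n(\app)}{\partial X_1^3}-\tfrac{\partial^3 f_n(\acc)}{\partial X_1^3}\Bigr)
-\Bigl(\tfrac{\partial^2 f_n(\app)}{\partial X_1\partial\hat X}-\tfrac{\partial^2 f_n(\acc)}{\partial X_1\partial\hat X}\Bigr)\cdot\atwo\\
&\;-\tfrac{\partial^2 f_n(\acc)}{\partial X_1\partial\hat X}\cdot\Bigl(\atwo-\Sigc^{-1}\tfrac{1}{2}\tfrac{\partial^2\hat f(\acc)}{\partial X_1^2}\Bigr),
\end{align*}
and you rightly plan to control the third piece by interpolating through $\Sigp^{-1}-\Sigc^{-1}$ and a Taylor tail of $\frac{\partial^2\hat f}{\partial X_1^2}$. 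But the paper's displayed expression for $\|B\|$, and the chain of inequalities that yields $\|B\|\le\frac{2\itb(16\itb^2-20\itb+7)}{(2\itb-1)^4(8\itb^2-8\itb+1)}$, keeps \emph{only the first two pieces}; the contribution of $\Delta_3(f_n)^{-1}\frac{\partial^2 f_n(\acc)}{\partial X_1\partial\hat X}\cdot\bigl(\atwo-\Sigc^{-1}\frac{1}{2}\frac{\partial^2\hat f(\acc)}{\partial X_1^2}\bigr)$ never enters. That piece is not higher order: $\|\Delta_3(f_n)^{-1}\frac{\partial^2 f_n(\acc)}{\partial X_1\partial\hat X}\|\le 2\gamma_{3,n}$, and the bracket has a term linear in $\|\minus\|$, so its product is $O(\itb)$, the same order as what is kept.

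This is where your plan does not close. The $\deltacoef$ of Definition~\ref{defumult3} is exactly $1/(1-\|B\|_{\mathrm{paper}})$ for the paper's two-piece bound; your five-contribution sum is strictly larger and will not simplify to $\deltacoef$. You must either discard the third piece (reproducing the paper's computation, in which case the interpolation step you added is superfluous) or keep it and derive a different rational function in place of $\deltacoef$, which would also shift the thresholds $\itb_3,\itb_3'$ in Theorem~\ref{thmnewtonmul3}. As written, the proposal implicitly does both at once, and that is the gap you need to resolve before the proof can be completed.
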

\begin{proof}
By the Taylor's expansion of $f_n$ at $\acc$, we have:
\begin{align*}
  \frac{1}{6}\frac{\partial^3 f_n(\app)}{\partial X_1^3}&=
  \frac{1}{6}\frac{\partial^3 f_n(\acc)}{\partial X_1^3}+\frac{1}{6}\sum_{k\geq4}\sum_{i=3}^{k}\frac{1}{(i-3)!(k-i)!}\frac{\partial^k f_n(\acc)}{\partial X_1^i\partial \hat{X}^{k-i}}(\minusa)^{i-3}(\minusb)^{k-i},\\
  \frac{\partial^2 f_n(\app)}{\partial X_1 \partial \hat{X}}&=\frac{\partial^2 f_n(\acc)}{\partial X_1\partial \hat{X}}+\frac{\partial^3 f_n(\acc)}{\partial X_1^2\partial \hat{X}}(\azone-\acc_1)+\frac{\partial^3 f_n(\acc)}{\partial X_1\partial \hat{X}^2}(\bzhat-\hat{\acc})\\
  &+\sum_{k\geq4}\sum_{i=1}^{k-1}\frac{1}{(i-1)!(k-i-1)!}\frac{\partial^k f_n(\acc)}{\partial X_1^i\partial \hat{X}^{k-i}}(\minusa)^{i-1}(\minusb)^{k-i-1}.
  \end{align*}
  Then we have:
\begin{align*}
&\Delta_3(f_n)^{-1}\newl_3(f_n)(\app)\\
=& 1+ \Delta_3(f_n)^{-1} \left [\newl_3(f_n)(\app)-\Delta_3(f_n) \right]\\
=& 1+B,
\end{align*}
where
\begin{align*}
\|B\| =& \left \|\newl_3(f_n)(\acc)^{-1} \cdot \atwo \cdot \left[ \frac{\partial^3 f_n(\acc)}{\partial X_1^2\partial \hat{X}}(\azone-\acc_1)+\frac{\partial^3 f_n(\acc)}{\partial X_1\partial \hat{X}^2}(\bzhat-\hat{\acc}) \right. \right.\\
& \left. +\sum_{k\geq4}\sum_{i=1}^{k-1}\frac{1}{(i-1)!(k-i-1)!}\frac{\partial^k f_n(\acc)}{\partial X_1^i\partial \hat{X}^{k-i}}(\minusa)^{i-1}(\minusb)^{k-i-1}\right]  \\
&\left. -\frac{1}{6}\sum_{k\geq4}\sum_{i=3}^{k}\frac{1}{(i-3)!(k-i)!}\frac{\partial^k f_n(\acc)}{\partial X_1^i\partial \hat{X}^{k-i}}(\minusa)^{i-3}(\minusb)^{k-i} \right\|.
\end{align*}
By Lemma \ref{atwomul3}, we have
\begin{align*}
\|B\|\leq & 12 \acoef \cdot  \gamma_3^3  \cdot \| \app-\acc \| + \acoef \sum_{k\geq4}\sum_{i=1}^{k-1} \binom{k-2}{i-1} k (k-1) \cdot \gamma_3^k \| \app-\acc \|^{k-2} \\
&+\frac{1}{6}\sum_{k\geq4}\sum_{i=3}^{k-1}\binom{k-3}{i-3}(k-1)(k-2)(k-3) \cdot \gamma_3^{k-1} \| \app-\acc \|^{k-3}\\
\leq&12 \acoef \cdot  \gamma_3^3  \cdot \| \app-\acc \| + \acoef \sum_{k\geq4} 2^{k-2} \cdot k \cdot (k-1) \cdot (\gamma_3^2)^{k-2} \| \app-\acc \|^{k-2}\\
&+\frac{1}{6}\sum_{k\geq4} 2^{k-3}\cdot (k-1) \cdot(k-2) \cdot(k-3) \cdot (\gamma_3^3)^{k-3} \| \app-\acc \|^{k-3}\\
\leq &\acoef \left ( 12 \itb + \sum_{k\geq4} k (k-1)2^{k-2}  \itb^{k-2}\right) +\frac{1}{6}\sum_{k\geq4}  (k-1) (k-2) (k-3)2^{k-3}  \itb^{k-3} \\
= &\frac{2\itb (16 \itb^2 -20\itb +7)}{(2\itb-1)^4 (8\itb^2-8\itb +1)}.
\end{align*}

When $\itb <\itb_3$, $\| B \| <1$,  we have
\begin{align*}
\left \| \newl_3(f_n)(\app)^{-1}\Delta_3(f_n)\right \| \leq \|(1+B)^{-1} \| 
 \leq \deltacoef.
\end{align*}

\end{proof}

\begin{lem}\label{newtonz1mult3}When $\itb < \itb_3$, we have
\[
\left|N_2(\azone)-\acc_1\right|  \leq \bthreethree(\itb) |\minusa| + \bthreefour(\itb) \|\azhat-\hat{\acc}\|.
\]
\end{lem}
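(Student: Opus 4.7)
I would begin by writing
\[
N_2(\azone)-\acc_1 \;=\; \newl_3(f_n)(\app)^{-1}\bigl[\newl_3(f_n)(\app)\cdot(\minusa) \,-\, \newd_1(f_n)(\app)\bigr],
\]
and applying Lemma \ref{normDelta3inverse} to replace the outer inverse by the factor $\deltacoef\cdot|\Delta_3(f_n)|^{-1}$. The task then reduces to bounding the bracketed quantity by $|\Delta_3(f_n)|$ times an expression of the form $\alpha(\itb)|\minusa|+\beta(\itb)\|\azhat-\hat\acc\|$.

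The operators $\newl_3$ and $\newd_1$ are designed precisely so that at $\acc$ the identities $\newd_1(f_n)(\acc)=0$ and $\newl_3(f_n)(\acc)=\Delta_3(f_n)\neq 0$ both hold (using $\partial f_n(\acc)/\partial X_i=0$ and $\partial^2 f_n(\acc)/\partial X_1^2=0$), and $\partial_{X_1}\newd_1(f_n)=\newl_3(f_n)$ when $\atwo$ is frozen. Consequently, if I Taylor-expand $\partial^2 f_n(\app)/\partial X_1^2$, $\partial f_n(\app)/\partial \hat X$, $\partial^3 f_n(\app)/\partial X_1^3$, and $\partial^2 f_n(\app)/\partial X_1\partial \hat X$ at $\acc$, the leading $(\minusa)$-component of $\newd_1(f_n)(\app)$ coincides with $\Delta_3(f_n)\cdot(\minusa)$ and therefore cancels the $\newl_3(f_n)(\acc)\cdot(\minusa)$ piece of the bracket; the remaining discrepancy $[\newl_3(f_n)(\app)-\newl_3(f_n)(\acc)]\cdot(\minusa)$ is of order $\|\minus\|\,|\minusa|$. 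Every other surviving contribution falls into one of two families: (i) terms of total degree $\geq 3$ in $(\minusa,\bzhat-\hat\acc)$ from the direct $f_n$-expansions, and (ii) terms coming from the variation of $\atwo$ between $\app$ and $\acc$, which are controlled by Lemma \ref{atwomul3}, giving $\|\atwo\|\leq \acoef\gamma_3$.

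Bounding each partial $\|\partial^k f_n(\acc)/\partial X_1^i\partial\hat X^{k-i}\|\leq k!\,\gamma_{3,n}^{k-1}|\Delta_3(f_n)|$ from the definition of $\gamma_{3,n}$, applying the triangle inequality and summing the resulting double geometric series in $\itb=\gamma_3^3\|\minus\|$ yields a bound of the shape
\[
\bigl|\newl_3(f_n)(\app)(\minusa)-\newd_1(f_n)(\app)\bigr|\leq |\Delta_3(f_n)|\bigl(\alpha(\itb)\,|\minusa|+\beta(\itb)\,\|\bzhat-\hat\acc\|\bigr),
\]
with explicit rational-function coefficients $\alpha,\beta$. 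I would then substitute $\|\bzhat-\hat\acc\|\leq \btwoone\,|\minusa|+\btwotwo\,\|\azhat-\hat\acc\|$ from Lemma \ref{newtonz2mul2} (valid here by Remark \ref{lemma45hold}), separating the bound into the $|\minusa|$ and $\|\azhat-\hat\acc\|$ contributions. Multiplying through by $\deltacoef$ from the first step and simplifying should produce exactly the rational functions $\bthreethree(\itb)$ and $\bthreefour(\itb)$ given in Definition \ref{defumult3}.

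The main obstacle is purely algebraic bookkeeping: tracking every term through the cancellations, separating the two families of higher-order contributions (direct $f_n$ expansion versus variation of $\atwo$), and reducing the nested geometric sums in $\itb$ to the precise closed-form rational functions stated. Conceptually the argument is no different from the double-zero case in Lemma \ref{newtonz1mul2}: one extra order of contact at $\acc$ shifts every estimate up by one power of $\|\minus\|$ and by one extra factor of $\gamma_3$, while the design of $\newl_3$ and $\newd_1$ ensures that the Newton linearization is exact to the required order.
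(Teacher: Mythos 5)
Your proposal matches the paper's proof essentially step for step: the same factoring out of $\newl_3(f_n)(\app)^{-1}\Delta_3(f_n)$ controlled by Lemma \ref{normDelta3inverse}, the same Taylor expansions of the mixed partial derivatives at $\acc$ producing the cancellation of the leading $\Delta_3(f_n)(\minusa)$ term, the same use of Lemma \ref{atwomul3} to bound $\|\atwo\|$, the same bound $\left\|\frac{\partial^k f_n}{\partial X_1^i\partial\hat X^{k-i}}\right\|\le k!\,\gamma_{3,n}^{k-1}|\Delta_3(f_n)|$ feeding into geometric series in $\itb$, and the same final substitution of $\|\bzhat-\hat\acc\|\le\btwoone|\minusa|+\btwotwo\|\azhat-\hat\acc\|$ via Lemma \ref{newtonz2mul2}. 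The only thing you have not done is carry out the explicit term-by-term bookkeeping that the paper performs to recover the closed-form rational functions $\bthreethree$ and $\bthreefour$, which you yourself flag as the remaining (purely computational) obstacle; conceptually there is no gap.
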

\begin{proof} We have
  \begin{align*}
  \left|N_2(\azone)-\acc_1\right|=&\left|\minusa-\left[\newl_3(f_n)(\app)\right]^{-1}\newd_1(f_n)(\app)\right|\\
  =&\left|\newl_3(f_n)(\app)^{-1}\left[\newl_3(f_n)(\app)(\minusa)-\newd_1(f_n)(\app)\right]\right|\\
  =&\left|\newl_3(f_n)(\app)^{-1}\Delta_3(f_n)\right|\cdot\left|\Delta_3(f_n)^{-1}
  \left[\newl_3(f_n)(\app)(\minusa)-\newd_1(f_n)(\app)\right]\right|.
\end{align*}
From the Taylor expansions of  $\frac{\partial^3 f_n(\app)}{\partial X_1^3},\frac{\partial^2 f_n(\app)}{\partial X_1^2},  \frac{\partial^2 f_n(\app)}{\partial X_1 \partial \hat{X}}, \frac{\partial f_n(\app)}{\partial \hat{X}}$ at $\acc$, we have
\begin{align*}
&\newl_3(f_n)(\app)(\minusa)-\newd_1(f_n)(\app) \\
=&-\frac{1}{6}\frac{\partial^3 f_n(\acc)}{\partial X_1^2 \partial \hat{X}}(\bzhat-\hat{\acc})-\frac{1}{6}\sum_{k\geq4}\frac{1}{(k-2)!}\frac{\partial^k f_n(\acc)}{\partial X_1^2\partial \hat{X}^{k-2}}(\minusb)^{k-2}\\
&+\frac{1}{6}\sum_{k\geq4}\sum_{i=3}^{k}\frac{i-3}{(i-2)!(k-i)!}\frac{\partial^k f_n(\acc)}{\partial X_1^i\partial \hat{X}^{k-i}}(\minusa)^{i-2}(\minusb)^{k-i}\\
&-\atwo \left[ \frac{1}{2}\frac{\partial^3 f_n(\acc)}{\partial X_1^2\partial \hat{X}}(\azone-\acc_1)^2-\frac{\partial^2 f_n(\acc)}{\partial \hat{X}^2}(\bzhat-\hat{\acc})-\frac{1}{2}\frac{\partial^3 f_n(\acc)}{\partial \hat{X}^3}(\bzhat-\hat{\acc})^2 \right. \\
& \left.+\sum_{k\geq4}\sum_{i=1}^{k-1}\frac{i-1}{i!(k-i-1)!}\frac{\partial^k f_n(\acc)}{\partial X_1^i\partial \hat{X}^{k-i}}(\minusa)^{i}(\minusb)^{k-i-1} \right. \\
& \left.-\sum_{k\geq4}\frac{1}{(k-1)!}\frac{\partial^k f_n(\acc)}{\partial \hat{X}^{k}}(\minusb)^{k-1} \right].
\end{align*}

Then we have
   \begin{align*}
   &  \left \|\frac{1}{|\Delta_3(f_n)|} \cdot \left( -\frac{1}{6}\frac{\partial^3 f_n(\acc)}{\partial X_1^2 \partial \hat{X}} \right)(\bzhat-\hat{\acc}) \right \| +\left \| \atwo \right \| \cdot \left \|\frac{1}{|\Delta_3(f_n)|} \cdot \frac{\partial^2 f_n(\acc)}{\partial \hat{X}^2}(\bzhat-\hat{\acc}) \right \|  \\
   \leq & \gamma_3^2 \cdot \left \| (\bzhat-\hat{\acc}) \right \|  +2 \acoef \gamma_3^2 \cdot \left \| (\bzhat-\hat{\acc}) \right \|\\
   \leq &(1+2\acoef) \left\|\Sigp^{-1}\Sigc\right\| \cdot \left(\sum_{k\geq2}\gamma_3^{k+1}\|\minus\|^{k-1}|\minusa| \right. \\
& \left.+ \sum_{k\geq2}(k\cdot 2^{k-1}-2^k+1)\gamma_3^{k+1}\|\minus\|^{k-1}\|\azhat-\hat{\acc}\|\right) \\
\leq &(1+2\acoef) \left\|\Sigp^{-1}\Sigc\right\| \cdot \left(\sum_{k\geq2}(\gamma_3^3 \|\minus\|)^{k-1}|\minusa| \right. \\
& \left.+ \sum_{k\geq2}(k\cdot 2^{k-1}-2^k+1)(\gamma_3^3 \|\minus\|)^{k-1}\|\azhat-\hat{\acc}\|\right) \\
\leq & \frac{\itb(2\itb-1)^2(1+2\acoef)}{(8\itb^2-8\itb+1)(1-\itb)}|\minusa| +\frac{\itb(1+2\acoef)}{(8\itb^2-8\itb+1)(1-\itb)} \|\azhat- \hat{\acc}\|. 
   \end{align*}
Furthermore, we have
\begin{align*}
&\frac{1}{|\Delta_3(f_n)|} \cdot \left\|\frac{1}{6}\sum_{k\geq4}\sum_{i=3}^{k}\frac{i-3}{(i-2)!(k-i)!}\frac{\partial^k f_n(\acc)}{\partial X_1^i\partial \hat{X}^{k-i}}(\minusa)^{i-2}(\minusb)^{k-i}\right \|\\
\leq & \frac{1}{6} \sum_{k\geq4}\sum_{i=3}^{k}\frac{k! \cdot (i-3)}{(i-2)!(k-i)!} \gamma_3^{k-1} \| \app-\acc  \|^{k-3} \cdot |\azone-\acc_1|\\
\leq  & \frac{1}{6} \sum_{k\geq4}\sum_{i=3}^{k} \binom{k-3}{i-3} \cdot k \cdot (k-1) \cdot (k-2) \cdot \gamma_3^{3k-9} \cdot  \| \app-\acc  \|^{k-3} \cdot |\azone-\acc_1|\\
\leq &\frac{1}{6} \sum_{k\geq4} k \cdot (k-1) \cdot (k-2) \cdot 2^{k-3} \cdot ( \gamma_3^3 \| \app-\acc  \| )^{k-3} |\azone-\acc_1| \\
\leq &\frac{1}{6} \sum_{k\geq4} k \cdot (k-1) \cdot (k-2)  \cdot 2^{k-3} \cdot \itb^{k-3}  |\azone-\acc_1| \\
= & \frac{-8\itb(2\itb^3-4\itb^2+3\itb-1)}{(2\itb-1)^4} |\azone-\acc_1|.
\end{align*}
\item
We have
\begin{align*}
&\frac{1}{|\Delta_3(f_n)|} \cdot \left\| \frac{1}{6}\sum_{k\geq4}\frac{1}{(k-2)!}\frac{\partial^k f_n(\acc)}{\partial X_1^2\partial \hat{X}^{k-2}}(\minusb)^{k-2} \right\| \\
\leq & \frac{1}{6} \sum_{k\geq4} k (k-1) \gamma_3^{k-1} \left\| \app-\acc \right\| ^{k-3} \|\bzhat-\hat{\acc}\|   \\
\leq & \frac{1}{6} \sum_{k\geq4} k (k-1) (\gamma_3^3 \left\| \app-\acc \right\|) ^{k-3} \|\bzhat-\hat{\acc}\| \\
\leq &  \frac{-\itb(3\itb^2-8\itb+6)}{3(\itb-1)^3} \|\bzhat-\hat{\acc}\| \\
\leq &\frac{(2\itb-1)^2\itb^2(3\itb^2-8\itb+6)}{3(8\itb^2-8\itb+1)(\itb-1)^4} |\azone-\acc_1| +\frac{\itb^2(3\itb^2-8\itb+6)}{3(8\itb^2-8\itb+1)(\itb-1)^4}|\azhat-\hat{\acc}\|
\end{align*}
By Lemma \ref{atwomul3}, we have
\begin{align*}
 \frac{\left\|\atwo\right\|}{|\Delta_3(f_n)|}\left \| \frac{1}{2}\frac{\partial^3 f_n(\acc)}{\partial X_1^2\partial \hat{X}}(\azone-\acc_1)^2 \right \| & \leq 3 \acoef \gamma_3^3 \left\| \app-\acc \right\| |\azone-\acc_1|\\
& \leq 3 \acoef \itb |\azone-\acc_1|.
\end{align*}
By Lemma \ref{atwomul3}, we have
\begin{align*}
&\frac{\left \| \atwo \right \|}
 {|\Delta_3(f_n)|} \left \| \frac{1}{2}\frac{\partial^3 f_n(\acc)}{\partial \hat{X}^3}(\bzhat-\hat{\acc})^2 \right \| \\
\leq & 3 \acoef \gamma_3^3 \left\| \app-\acc \right\| \|\bzhat-\hat{\acc}\| \\
 \leq & \frac{ 3 \acoef \itb^2}{\left[2(1-2\itb)^2-1\right](1-\itb)}\|\azhat-\hat{\acc}\|+\frac{3 \acoef \itb^2(1-2\itb)^2}
 {\left[2(1-2\itb)^2-1\right](1-\itb)}|\minusa|.
\end{align*}
We have
\begin{align*}
&\frac{\left\|\atwo\right\|}{|\Delta_3(f_n)|} \cdot \left \| \sum_{k\geq4}\sum_{i=1}^{k-1}\frac{i-1}{i!(k-i-1)!}\frac{\partial^k f_n(\acc)}{\partial X_1^i\partial \hat{X}^{k-i}}(\minusa)^{i}(\minusb)^{k-i-1} \right \|\\
\leq &\acoef \sum_{k\geq4}\sum_{i=1}^{k-1}\frac{k! \cdot (i-1)}{i!(k-i-1)!} \gamma_3^{k}  \| \app-\acc  \|^{k-2} \cdot |\azone-\acc_1|\\
\leq &\acoef \sum_{k\geq4} (k-1) \cdot(k-3) \cdot 2^k \cdot  \itb^{k-2} |\azone-\acc_1| \\
\leq &\frac{16\itb^2(2\itb-3)\acoef}{(\itb-1)^3}|\azone-\acc_1|.
\end{align*}
 By Lemma \ref{atwomul3}, we have
\begin{align*}
& \frac{\left\|\atwo\right\|}{|\Delta_3(f_n)|} \cdot \left \| \sum_{k\geq4}\frac{1}{(k-1)!}\frac{\partial^k f_n(\acc)}{\partial \hat{X}^{k}}(\minusb)^{k-1} \right \| \\
\leq &\acoef  \sum_{k\geq4} k  \gamma_3^{k} \left\| \app-\acc \right\| ^{k-2} \|\bzhat-\hat{\acc}\| \\
\leq &\frac{-\itb^2(3\itb-4)\acoef}{(\itb-1)^2} \|\bzhat-\hat{\acc}\| \\
\leq & \frac{(2\itb-1)^2\itb^3(3\itb-4)\acoef}{(8\itb^2-8\itb+1)(\itb-1)^3} |\azone-\acc_1|+ \frac{\itb^3(3\itb-4)\acoef}{(8\itb^2-8\itb+1)(\itb-1)^3}\|\azhat-\hat{\acc}\|.
\end{align*}
Finally,  by Lemma \ref{normDelta3inverse} and the above estimations, we have
\begin{align*}
&\left|N_2(\azone)-\acc_1\right| \\
\leq &\deltacoef \cdot \left( \left(\frac{\itb(2\itb-1)^2(1+2 \acoef)}{(8\itb^2-8\itb+1)(1-\itb)}+\frac{16\itb^2(2\itb-3)\acoef}{(\itb-1)^3} \right. \right. \\
&\left. \left.+\frac{-8\itb(2\itb^3-4\itb^2+3\itb-1)}{(2\itb-1)^4}+\frac{3\itb^2(1-2\itb)^2 \acoef}{\left[2(1-2\itb)^2-1\right](1-\itb)} + 3\itb \acoef + \right. \right. \\
 &\left. \left. \frac{(2\itb-1)^2\itb^2(3\itb^2-8\itb+6)}{3(8\itb^2-8\itb+1)(\itb-1)^4} +\frac{(2\itb-1)^2\itb^3(3\itb-4) \acoef}{(8\itb^2-8\itb+1)(\itb-1)^3}  \right) \cdot |\azone-\acc_1|   \right.\\
& \left. + \left( \frac{\itb(1+2\acoef)}{(8\itb^2-8\itb+1)(1-\itb)}+\frac{3\itb^2 \acoef}{\left[2(1-2\itb)^2-1\right](1-\itb)}+ \right. \right. \\
& \left. \left. \frac{\itb^2(3\itb^2-8\itb+6)}{3(8\itb^2-8\itb+1)(\itb-1)^4} + \frac{\itb^3(3\itb-4)\acoef}{(8\itb^2-8\itb+1)(\itb-1)^3} \right) \cdot \|\azhat-\hat{\acc}\| \right)\\
=&\bthreethree |\minusa| + \bthreefour \|\azhat-\hat{\acc}\|.
\end{align*}
\end{proof}

\begin{proof}
 Now we can complete  the proof of  Theorem \ref{thmnewtonmul3}. 

 \begin{enumerate}[(1)]
 \item For $\itb<\itb_3 \approx 0.0222$,  it is true that
 \[2\btwoone^2+2\bthreethree^2 <1, \ 2\btwotwo^2+2\bthreefour^2 <1.\]
 Therefore, we have
 \begin{align*}
 &\left \| N_f(\app)-\acc \right \|^2 \\
 \leq & \left\|N_1(\azhat)-\hat{\acc}\right\|^2 +\left|N_2(\app_1)-\acc_1\right| ^2 \\
 \leq & (\btwoone |\minusa| + \btwotwo \|\azhat-\hat{\acc}\|)^2 +(\bthreethree |\minusa| + \bthreefour \|\azhat-\hat{\acc}\|)^2 \\
 \leq & (2\btwoone^2+2\bthreethree^2) |\minusa|^2 +(2\btwotwo^2+2\bthreefour^2) \|\azhat-\hat{\acc}\| ^2\\
 < & \left \| \app -\acc  \right \|^2.
\end{align*}

 \item For $\itb<\itb_3^{\prime}\approx 0.0154$, it is true that
   \[ 2\btwotwo^2+2\bthreefour^2 < 2\btwoone^2+2\bthreethree^2  < \frac{1}{4}.\]
    Hence,  we have
 \begin{align*}
 &\left \| N_f(\app)-\acc \right \|^2
 \leq  \left\|N_1(\azhat)-\hat{\acc}\right\|^2 +\left|N_2(\app_1)-\acc_1\right| ^2 \\
  & \leq (\btwoone |\minusa| + \btwotwo \|\azhat-\hat{\acc}\|)^2 +(\bthreethree |\minusa| + \bthreefour \|\azhat-\hat{\acc}\|)^2 \\
 &\leq  (2\btwoone^2+2\bthreethree^2) |\minusa|^2 +(2\btwotwo^2+2\bthreefour^2) \|\azhat-\hat{\acc}\| ^2\\
 & \leq (2\btwoone^2+2\bthreethree^2) \left \| \app -\acc  \right \|^2\\
& \leq \frac{1}{4} \left \| \app -\acc  \right \|^2.
\end{align*}
Hence, the following inequality is true for $k=1$:
  \[
 \left \|N_f^k(\app)-\acc \right \| < \left( \frac{1}{2} \right)^{2^k-1}  \left \| \app -\acc  \right \|.
 \]
 For $k \geq 2$, assume by induction that
 \begin{align*}\label{quadmult2ind}
  \left \|N_f^{k-1}(\app)-\acc \right \| < \left( \frac{1}{2} \right)^{2^{k-1}-1}  \left \| \app -\acc  \right \|.
 \end{align*}
   Let $\itb^{(k-1)}=\gamma_3^3\left \|N_f^{k-1}(\app)-\acc \right \|$.  For $0<\itb<\itb_3^{\prime}$,  we have
    $\itb^{(k-1)} < \itb=\gamma_3^3\|\minus\|$ and  $\frac{\sqrt{2\btwoone^2+2\bthreethree^2}\gamma_3^3}{\itb}$  is increasing.  Therefore, we have

 \begin{align*}
 & \left \|N_f^{k}(\app)-\acc \right \|\\
  =& \left \|N_f\left(N_f^{k-1}(\app)\right)-\acc \right \|\\
  <&  \frac{\sqrt{2b_{2,1}(\itb^{(k-1)})^2+2b_{3,3}(\itb^{(k-1)})^2}\gamma_3^3}{\itb^{(k-1)}} \left \| N_f^{k-1}(\app)-\acc  \right \|^2\\
  <&  \frac{\sqrt{2\btwoone^2+2\bthreethree^2}\gamma_3^3}{\itb} \left \| N_f^{k-1}(\app)-\acc  \right \|^2\\
  <& \frac{\sqrt{2\btwoone^2+2\bthreethree^2}\gamma_3^3}{\itb} \left( \frac{1}{2} \right)^{2^k-2}  \left \| \app-\acc  \right \|^2 \\
 = &\left( \frac{1}{2} \right)^{2^k-1}  \left \| \app -\acc  \right \|.
 \end{align*}
\end{enumerate}
\end{proof}

\subsection{Simple Multiple Zeros}

 For simple double zeros and simple triple zeros of $f$, we have  defined modified Newton iterations  based on the  first, second and third order differential operators computed at the approximate solutions, and provided quantified criterions to guarantee its quadratic convergence. Although it is possible to extend the modified Newton iterations defined in Algorithm \ref{alg:newtonmult2}, \ref{alg:newtonmult3} to  simple multiple zeros of higher multiplicities,   the  iterations are only defined for systems whose  Jacobian matrix at the exact multiple zero has a normalized form (\ref{normalizeform}), they might be of    limited applications.

 In order to refining an approximate simple singular zero whose Jacobian matrix has corank one but it does not have a normalized form (\ref{normalizeform}), in  Algorithm  \ref{MultiStructure},  we  perform  the unitary transformations to  both variables and equations    defined   at the  approximate simple singular solutions, then  we define the modified  Newton iterations based on our previous work in \cite{LZ:2011}.   We  show firstly its non-quantified quadratic convergence  for simple multiple zeros of higher multiplicities, and then  its quantified convergence for simple triple zero.

\begin{algorithm}[H]
\label{MultiStructure}
\caption{ Modified  Newton Iteration  for Simple Multiple Zeros}
\begin{algorithmic}[1]
\REQUIRE~~\\
$\Ff$: a polynomial system; \\
$\simplex$:  an approximate simple multiple zero; \\
$\mu$: the multiplicity;\\
\ENSURE~~\\
$N_f(\simplex)$:  a refined solution after one  iteration;\\

\STATE
     $D\Ff(\simplex)=U\cdot\left(
      \begin{array}{cc}
      \Sigma_{n-1} & 0 \\
      0 & \sigma_n \\
       \end{array}
       \right)\cdot V^{\ast}$, $W_{\dag}=(v_n,v_1,\ldots,v_{n-1})$;

\STATE  $\Ff(X)\leftarrow U^{\ast}\cdot \Ff(W_{\dag}\cdot X)$, \quad $\Ax  \leftarrow W_{\dag}^{\ast}\simplex$;

\STATE   $N_1(\hat{\Ff},\hat{\Ax}) \leftarrow \hat{\simplex }-D\hat{\Ff}(\simplex)^{-1}\hat{\Ff}(\simplex)$, \quad
 $ \Bx=(\Bx_1, \hat{\Bx}) \leftarrow (\simplex_1, N_1(\hat{\Ff},\hat{\Ax}))$;

 \STATE $D\Ff(\simplex)=U\cdot\left(
      \begin{array}{cc}
      \Sigma_{n-1} & 0 \\
      0 & \sigma_n \\
       \end{array}
       \right)\cdot V^{\ast}$, $W_{\ddag}=(v_n,v_1,\ldots,v_{n-1})$;
\STATE
     $\Sf(X)\leftarrow U^{\ast}\cdot \Ff(W_{\ddag}\cdot X)$, \quad $\Cx=(\Cx_1,\hat{\Cx})  \leftarrow W_{\ddag}^{\ast} \Bx$;

\STATE
$N_2(\Sf_n,\Cx) \leftarrow \Cx_1-\frac{1}{\mu}\Delta_{\mu}(\Sf_n)^{-1}\Delta_{\mu-1}(\Sf_n)$,  \quad $ \Dx=(\Dx_1,\hat{\Dx}) \leftarrow (N_2(\Sf_n,\Cx), \hat{\Cx})$;

\STATE $N_f(\simplex) \leftarrow W_{\dag}\cdot W_{\ddag}\cdot \Dx $.

\end{algorithmic}

\end{algorithm}

\begin{thm}\label{thm4.1}
 Given an approximate zero $\simplex$ of a system $f$ associated to a simple multiple  zero  $\xi$  of multiplicity $\mu$ and satisfying
 $f(\xi)=0$, $\dim \ker Df(\xi)=1$.
 Suppose
 \[\hat{\gamma}_{\mu}(\Ff,\Ax)\|\Ax-\Exi\|<\frac{1}{2},\]
 where
 \[\hat{\gamma}_{\mu}(\Ff,\Ax)=\max \left\{1, \sup_{k\geq2}\left\|D\hat{f}(\Ax)^{-1}\frac{D^k \hat{f}(\Ax)}{k!}\right\|^{\frac{1}{k-1}} \right\},\] 
%
then the refined singular solution $N_f(\simplex)$ returned by     Algorithm \ref{MultiStructure} satisfies
\begin{equation}\label{quadconv}
\|N_f(\simplex)- \Exi\|=\bigO(\|\simplex-\xi\|^2).
\end{equation}
\end{thm}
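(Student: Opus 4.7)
The plan is to decompose Algorithm~\ref{MultiStructure} into a ``tangent'' step (the Newton correction $N_1$ on the nonsingular block) and a ``normal'' step (the higher-order correction $N_2$ on the singular direction), and to show that each stage contributes at most $O(\|z-\xi\|^2)$ to the total error. Because $U$, $W_{\dag}$ and $W_{\ddag}$ are unitary, they are isometries and the pullback performed in Step~7 preserves norms; consequently the final error $\|N_f(z)-\xi\|$ equals the error measured in the transformed coordinates after Step~6.

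First, I would verify that Steps~1--2 put the Jacobian at the approximation into the form
\[
Df(z)=\begin{pmatrix}0 & \Sigma_{n-1}\\ \sigma_n & 0\end{pmatrix},
\]
where the smallest singular value $\sigma_n$ is of order $\|z-\xi\|^{\mu-1}$ while the singular values in $\Sigma_{n-1}$ stay bounded below, so $\|D\hat f(z)^{-1}\|$ is uniformly bounded. Writing $\tilde\xi=W_{\dag}^{\ast}\xi$, the Newton estimate underlying Lemma~\ref{newtonz2mul2} -- whose proof carries over to multiplicity $\mu$ without change by Remark~\ref{lemma45hold} -- then gives
\[
\|\hat y-\hat{\tilde\xi}\|\;\le\;C\,\hat\gamma_{\mu}(f,z)\,\|z-\xi\|^{2}
\]
under the hypothesis $\hat\gamma_{\mu}(f,z)\|z-\xi\|<\tfrac12$, so the last $n-1$ transformed coordinates are already refined quadratically after Step~3, while the first coordinate is untouched ($y_1=z_1$) and so still satisfies $|y_1-\tilde\xi_1|\le\|z-\xi\|$.

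Second, I would analyse the $X_1$-update in Step~6. The smallest singular value of $Df(y)$ at the refined point is again of order $\|y-\xi\|^{\mu-1}$, so the second SVD produces a Jacobian of the same approximately-normalized shape at $y$, and the coefficients $a_{k,j}$ computed from (\ref{computedelta})--(\ref{solveaij}) depend smoothly on the approximation, ensuring that $\Delta_{\mu}(g_n)(y)=\Delta_{\mu}(g_n)(\xi)+O(\|y-\xi\|)$ remains bounded away from zero. The key structural identity I would establish -- by differentiating the recursion (\ref{dualbasis}) at $\xi$ and using conditions~(B)--(C) -- is
\[
\Delta_{\mu-1}(g_n)(\xi)=0,\qquad \frac{\partial\,\Delta_{\mu-1}(g_n)}{\partial X_1}(\xi)=\mu\,\Delta_{\mu}(g_n)(\xi).
\]
Taylor expansion around $\xi$ then yields
\[
\frac{1}{\mu}\,\Delta_{\mu}(g_n)(y)^{-1}\,\Delta_{\mu-1}(g_n)(y)\;=\;(y_1-\xi_1)+O(\|\hat y-\hat{\tilde\xi}\|)+O(\|y-\xi\|^{2}),
\]
and subtracting this quantity from $y_1$ gives $|N_2(g_n,y)-\xi_1|=O(\|z-\xi\|^{2})$, once the quadratic bound on $\|\hat y-\hat{\tilde\xi}\|$ from the first stage is inserted. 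Combining the two coordinate blocks then produces (\ref{quadconv}).

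The main obstacle is the smooth dependence of the operators $\Delta_{\mu-1}(g_n)$ and $\Delta_{\mu}(g_n)$ on the approximation: their coefficients $a_{k,j}$ are obtained by solving the linear system (\ref{solveaij}) at the current (rather than the exact) point, and the unitary matrix $W_{\ddag}$ used in Step~5 is itself derived from the SVD of $Df(y)$, which is only approximate. One must therefore verify Lipschitz continuity of $a_{k,j}$ in the approximation, stability of the SVD basis under perturbation, and bound the Taylor remainder above by $\hat\gamma_{\mu}$-type quantities so that the expansion is uniform for all sufficiently accurate starting points. This bookkeeping follows the template of \cite[Theorem~3.16]{LZ:2011}, simplified here by the fact that after the unitary reduction one works directly with the normalized form (\ref{normalizeform1}), so conditions (B)--(C) reduce to the scalar conditions $\Delta_k(g_n)(\xi)=0$ for $k<\mu$ and $\Delta_{\mu}(g_n)(\xi)\neq0$.
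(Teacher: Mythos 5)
The overall decomposition you propose (tangent correction via $N_1$, normal correction via $N_2$, unitary isometries taking care of the normalization at each stage) is the same as the paper's, and your handling of the $\hat X$-block via Lemma~\ref{newtonz2mul2} and Remark~\ref{lemma45hold} is on track. However, the key step — the quadratic convergence of the $X_1$-update — contains a genuine gap. The identity you propose,
\[
\frac{\partial\,\Delta_{\mu-1}(g_n)}{\partial X_1}(\xi)=\mu\,\Delta_{\mu}(g_n)(\xi),
\]
is false for $\mu\ge 3$. Already at $\mu=3$ one has $\Delta_2=d_1^2$ (no $a_{k,j}$-dependence), so the left side equals $\tfrac12\frac{\partial^3 g_n}{\partial X_1^3}(\xi)$, while by \eqref{mul3delta3} the right side equals $\tfrac12\frac{\partial^3 g_n}{\partial X_1^3}(\xi)+3\sum_{j\ge2}a_{2,j}(\xi)\frac{\partial^2 g_n}{\partial X_1\partial X_j}(\xi)$; the cross-term $\sum_j a_{2,j}\frac{\partial^2 g_n}{\partial X_1\partial X_j}$ does not vanish at $\xi$ in general (conditions (B)--(C) only give $\frac{\partial g_n}{\partial X_j}(\xi)=0$ and $\frac{\partial^2 g_n}{\partial X_1^2}(\xi)=0$, not this mixed second derivative). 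The identity happens to hold for $\mu=2$, which is presumably what suggested it, but it does not persist.

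What makes the paper's argument work is not a local Taylor expansion of $\Delta_{\mu-1}(g_n)$ around the exact root, but a duality argument that cannot be reproduced by such an expansion. The paper constructs the auxiliary system $h$ for which $w$ (your $y$) is an exact simple $\mu$-fold zero, invokes the key expansion \eqref{keyexpansion} for $h_n$, writes $g_n=h_n+g_n(w)+\sum_{k\le\mu-1}\Delta_k(g_n)(X_1-w_1)^k$, and applies the exact dual functional $\bar\Lambda_{\mu-1}\in\mathcal{D}_{g,\zeta}$ to both sides. The two crucial mechanisms you would lose are: (i) the annihilation $\bar\Lambda_{\mu-1}(g_n)=0$ together with $\bar\Lambda_{\mu-1}\bigl(\hat g(X)\cdot P(X)\bigr)=0$ for any polynomial $P$, which kills the $T_{i,j}$-coupling terms up to the small residual $\hat g(w)=O(\|z-\xi\|^2)$; and (ii) the structural fact that $d_1^{\mu-1}$ is the unique top-order monomial in $\bar\Lambda_{\mu-1}$, which isolates the leading term $\mu\Delta_\mu(h_n)(\zeta_1-w_1)$. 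In your framework the same cross-terms that invalidate the claimed identity would survive at first order and leave an $O(\|z-\xi\|)$ error in $N_2$. Also, a smaller inaccuracy: Claim~\ref{normalisation_1} establishes only $\sigma_n\le L\|z-\xi\|$ via Lipschitz continuity of $Df$; your claim that $\sigma_n=O(\|z-\xi\|^{\mu-1})$ is stronger and does not hold when $z-\xi$ is not aligned with the kernel direction, though this overstatement is harmless since the proof only uses $\sigma_n\to0$ while $\Sigma_{n-1}$ stays bounded below.
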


In what follows, we give quantitative analysis of the convergency of the first five steps in   Algorithm \ref{MultiStructure}. For Step 6, we  show its  non-quantified quadratic convergence first, and then show its quantified convergency for simple triple zeros,  which can be generalized naturally to  simple multiple zeros of higher multiplicities.

In the second step of   Algorithm \ref{MultiStructure}, we  perform  the  unitary transformations to  both variables and equations  according to the singular value decomposition of the Jacobian matrix $D\Ff(\simplex)$.
Since $\hat{\gamma}_{\mu}(\Ff,\Ax)$ and the Euclidean distance between zeros $\xi$ and $\simplex$  do not change under the unitary
transformation, in what follows,
for simplicity, after the first two steps, we use the same notations for $\Ff,\xi, \simplex$, i.e.,
 \begin{equation}\label{ksizf}
  \xidag \leftarrow W_{\dag}^{\ast} \xi, \quad \Ax  \leftarrow W_{\dag}^{\ast}\simplex, \quad \Ff(X)\leftarrow U^{\ast}\cdot \Ff(W_{\dag}\cdot X).
  \end{equation}

 %
%



    \begin{claim}\label{normalisation_1}
    After the first two steps in  Algorithm \ref{MultiStructure}, we have
     \begin{equation}\label{approxnormal4}
     Df(\simplex)=\left(
     \begin{array}{cc}
      0 & \Sigma_{n-1} \\
      \sigma_n & 0 \\
      \end{array}
       \right),
     \end{equation}
     where $\Sigg$ is a nonsingular  diagonal matrix.  Moreover, we have $\sigma_n\leq L\|\simplex-\xi\|$,  where  $L$ is the Lipschitz constant of the function $Df(X)$.
    \end{claim}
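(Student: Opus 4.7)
The plan is to unwind the definitions in Steps~1--2 and show that (i) is a direct algebraic identity coming from the SVD, while (ii) reduces to the $1$-Lipschitz continuity of singular values combined with the fact that $Df(\xi)$ has corank one.

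For (i), I would compute the new Jacobian directly. After Step~2, the updated system is $\tilde f(X)=U^{*}f(W_{\dag}X)$ and the updated approximation is $\tilde \simplex=W_{\dag}^{*}\simplex$, so by the chain rule
\[
D\tilde f(\tilde \simplex)=U^{*}\,Df(W_{\dag}W_{\dag}^{*}\simplex)\,W_{\dag}=U^{*}\,Df(\simplex)\,W_{\dag}
=\Sigma\,V^{*}W_{\dag},
\]
where $\Sigma=\mathrm{diag}(\Sigma_{n-1},\sigma_n)$. Since $W_{\dag}=(v_n,v_1,\ldots,v_{n-1})$ and $V=(v_1,\ldots,v_n)$ is unitary, the product $V^{*}W_{\dag}$ is the cyclic permutation matrix $\bigl(\begin{smallmatrix}0 & I_{n-1}\\ 1 & 0\end{smallmatrix}\bigr)$. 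Multiplying the block-diagonal $\Sigma$ on the right by this permutation yields exactly the block anti-diagonal form $\bigl(\begin{smallmatrix}0 & \Sigma_{n-1}\\ \sigma_n & 0\end{smallmatrix}\bigr)$ asserted in~(\ref{approxnormal4}). Using the same abuse of notation as in~(\ref{ksizf}) (dropping the tildes), this is the claim.

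For (ii), the crucial observation is that $\sigma_n$ is unitarily invariant: the smallest singular value of the transformed Jacobian equals the smallest singular value of the original $Df(\simplex)$, and similarly $\|\simplex-\xi\|$ is preserved under the unitary change of variables $W_{\dag}^{*}$. Since $\dim\ker Df(\xi)=1$ and $Df(\xi)$ is an $n\times n$ matrix, its smallest singular value is $0$. By Weyl's (Mirsky's) inequality, singular values are $1$-Lipschitz in the operator norm, so
\[
\sigma_n = \bigl|\sigma_n\bigl(Df(\simplex)\bigr)-\sigma_n\bigl(Df(\xi)\bigr)\bigr| \leq \|Df(\simplex)-Df(\xi)\|.
\]
Because $f$ is polynomial, $Df$ is Lipschitz on any bounded neighborhood of $\xi$ with some constant $L$ (for instance a bound on $\|D^{2}f\|$ on such a neighborhood), giving $\|Df(\simplex)-Df(\xi)\|\leq L\|\simplex-\xi\|$ and hence $\sigma_n\leq L\|\simplex-\xi\|$.

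I do not expect any real obstacle here: part~(i) is pure bookkeeping with the SVD and the block-permutation structure of $W_{\dag}$, and part~(ii) is a one-line application of the Lipschitz property of singular values. The only point worth being explicit about is that all quantities involved (the singular value $\sigma_n$, the Euclidean norm $\|\simplex-\xi\|$, and the Lipschitz constant $L$ of $Df$) are invariant, or at worst preserved in magnitude, under the unitary substitutions performed in Steps~1--2, so the inequality stated in the original coordinates transfers without change to the normalized coordinates used in the rest of Algorithm~\ref{MultiStructure}.
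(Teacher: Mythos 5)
Your proof is correct and matches the paper's argument essentially step for step: part (i) is the same chain-rule computation $U^{*}Df(\simplex)W_{\dag}=\Sigma V^{*}W_{\dag}$ with $V^{*}W_{\dag}$ the cyclic permutation, and part (ii) invokes the same singular-value perturbation bound (the paper cites Golub--Van Loan for what you call Weyl/Mirsky) together with the Lipschitz continuity of $Df$. Your added remarks on unitary invariance of $\sigma_n$ and $\|\simplex-\xi\|$ are just a more explicit account of the notational abuse the paper performs silently.
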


    \begin{proof}
    According to the chain rule, we have
    \begin{align*}
  Df(\simplex) & = U^{\ast}\cdot U\cdot\left(
                                                         \begin{array}{cc}
                                                           \Sigma_{n-1} & 0 \\
                                                           0 & \sigma_n \\
                                                         \end{array}
                                                       \right)\cdot V^{\ast}\cdot W_{\dag}\\
   & = \left(
         \begin{array}{cc}
           \Sigg & 0 \\
            0 & \sigma_n \\
         \end{array}
       \right)
   \cdot \left(
           \begin{array}{cc}
             0 & I_{n-1} \\
             1 & 0 \\
           \end{array}
         \right)\\
         & = \left(
               \begin{array}{cc}
                 0 & \Sigg \\
                 \sigma_n & 0 \\
               \end{array}
             \right).
\end{align*}
Furthermore, since $\dim \ker Df(\xi)=1$, the following perturbation theorem about the singular values can be found in  \cite{GK1969, Golub:1996},
\[\sigma_n\leq\|Df(\simplex)-Df(\xi)\|\leq L\|\simplex-\xi\|.\]
\end{proof}


%

    \begin{claim}\label{N1quadratic}
    After running  the first three  steps in  Algorithm \ref{MultiStructure}, suppose
 $\hat{\gamma}_{\mu}(\Ff,\Ax)\|\Ax-\Exi\|<\frac{1}{2}$,  we have
     \begin{equation}\label{step31}
     \|\hat{\Bx}-\hat{\xidag}\|\leq\frac{1}{1-\kappatwo}\hat{\gamma}_{\mu}(\Ff,\Ax)\|\Exi-\Ax\|^2,
     \end{equation}
     and
     \begin{equation}\label{step32}
     \|\hat{f}(\Bx)\|\leq\frac{4\|D\hat{\Ff}(\Ax)\|}{1-2\kappatwo}\hat{\gamma}_{\mu}(\Ff,\Ax)\|\Exi-\Ax\|^2,
     \end{equation}
    where
    \begin{equation}
    \Bx=(\Bx_1, \hat{\Bx})  \leftarrow (\simplex_1, N_1(\hat{\Ff},\hat{\Ax})).
    \end{equation}
    \end{claim}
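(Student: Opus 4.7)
The plan is to exploit the exact normalization achieved by steps~1--2 of Algorithm~\ref{MultiStructure}: by Claim~\ref{normalisation_1}, after those steps $\partial \hat{\Ff}(\Ax)/\partial X_1=0$ holds exactly, and the unitary change of coordinates and equations preserves both $\hat{\gamma}_\mu(\Ff,\Ax)$ and the Euclidean distance $\|\Exi-\Ax\|$. Consequently, step~3 reduces to a classical Smale-style Newton iteration for $\hat{\Ff}$ in the last $n-1$ variables with $X_1=\Ax_1$ held fixed, and both bounds will follow from Taylor expansions combined with geometric-series summation.

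For (\ref{step31}), I would first Taylor-expand the identity $\hat{\Ff}(\Exi)=0$ around $\Ax$. Because the partial derivative with respect to $X_1$ vanishes, the linear term collapses to $D\hat{\Ff}(\Ax)(\hat{\Exi}-\hat{\Ax})$, so solving for $D\hat{\Ff}(\Ax)^{-1}\hat{\Ff}(\Ax)$ and substituting into the definition of $\hat{\Bx}$ produces the cancellation identity
\[
\hat{\Bx}-\hat{\Exi}=\sum_{k\geq 2} D\hat{\Ff}(\Ax)^{-1}\,\frac{D^k\hat{\Ff}(\Ax)(\Exi-\Ax)^k}{k!}.
\]
Taking operator norms, invoking the defining bound $\|D\hat{\Ff}(\Ax)^{-1}D^k\hat{\Ff}(\Ax)/k!\|\leq \hat{\gamma}_\mu(\Ff,\Ax)^{k-1}$, and summing the resulting geometric series (convergent since $\kappatwo<\tfrac{1}{2}<1$) yields~(\ref{step31}).

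For (\ref{step32}), a parallel Taylor expansion of $\hat{\Ff}(\Bx)$ around $\Ax$ has its constant and linear terms cancel, because $\Bx-\Ax=(0,\hat{\Bx}-\hat{\Ax})$ and $D\hat{\Ff}(\Ax)(\hat{\Bx}-\hat{\Ax})=-\hat{\Ff}(\Ax)$ by construction of the Newton step. Factoring $D\hat{\Ff}(\Ax)$ out of each higher-order term yields the estimate $\|\hat{\Ff}(\Bx)\|\leq \|D\hat{\Ff}(\Ax)\|\sum_{k\geq 2}\hat{\gamma}_\mu(\Ff,\Ax)^{k-1}\|\Bx-\Ax\|^k$. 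I would then use (\ref{step31}) together with the triangle inequality to obtain
\[
\|\Bx-\Ax\|=\|\hat{\Bx}-\hat{\Ax}\|\leq \|\hat{\Exi}-\hat{\Ax}\|+\|\hat{\Bx}-\hat{\Exi}\| \leq \frac{\|\Exi-\Ax\|}{1-\kappatwo}\leq 2\|\Exi-\Ax\|,
\]
which is valid under the hypothesis $\kappatwo<\tfrac{1}{2}$; substituting this bound and resumming the geometric series (still convergent because $2\kappatwo<1$) lands exactly on~(\ref{step32}).

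The argument is essentially a transcription of classical $\gamma$-theory (cf.~\cite{Blum:1997,Dedieu2001On}) to a partial Newton iteration that updates only the last $n-1$ coordinates, so no conceptual obstacle is expected. The only point requiring mild care is the bookkeeping of constants in the second geometric series: using the crude bound $\|\Bx-\Ax\|\leq 2\|\Exi-\Ax\|$ is precisely what produces the prefactor $4/(1-2\kappatwo)$ claimed in~(\ref{step32}); a tighter estimate would give a smaller constant but is unnecessary for the subsequent quadratic-convergence argument in Theorem~\ref{thm4.1}.
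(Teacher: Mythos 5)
Your proposal is correct and is essentially identical to the paper's own proof: Taylor expand $\hat{\Ff}(\Exi)=0$ around $\Ax$, use the normalized form $\partial\hat{\Ff}(\Ax)/\partial X_1=0$ to collapse the linear term and derive the cancellation identity $\hat{\Bx}-\hat{\Exi}=\sum_{k\geq 2}D\hat{\Ff}(\Ax)^{-1}\tfrac{D^k\hat{\Ff}(\Ax)(\Exi-\Ax)^k}{k!}$, bound by a geometric series, and then repeat for $\hat{\Ff}(\Bx)$ using $\|\Bx-\Ax\|\leq 2\|\Exi-\Ax\|$ and the convergence condition $2\kappatwo<1$ to produce the $4/(1-2\kappatwo)$ prefactor. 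No gaps; this matches the paper step for step.
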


    \begin{proof}
    According to  Claim \ref{normalisation_1}, we have
     $\frac{\partial\hat{f}(\simplex)}{\partial X_1}=0$ and $D\hat{f}(\simplex)=\Sigg$ is an invertible diagonal matrix.
Therefore, by the Taylor expansion of $\hat{f}$  at $z$, we have
    \[0=\hat{\Ff}(\Exi)=\hat{\Ff}(\Ax)+D\hat{\Ff}(\Ax)(\hat{\Exi}-\hat{\Ax})+
    \sum_{k\geq2}\frac{D^k\hat{\Ff}(\Ax)}{k!}(\Exi-\Ax)^k,\]
    \[0=D\hat{\Ff}(\Ax)^{-1} \hat{\Ff}(\Ax)+\hat{\Exi}-\hat{\Ax}+\sum_{k\geq2}D\hat{\Ff}(\Ax)^{-1}
    \frac{D^k\hat{\Ff}(\Ax)}{k!}(\Exi-\Ax)^k.\]
  Then we have
    \begin{align*}
      \|N_1(\hat{\Ff},\Ax)-\hat{\Exi}\| & \leq\sum_{k\geq2}\left\|D\hat{\Ff}(\Ax)^{-1}\frac{D^k\hat{\Ff}(\Ax)}{k!}\right\|\|\Exi-\Ax\|^k\\
      &\leq \sum_{k\geq2}\hat{\gamma}_{\mu}(\Ff,\Ax)^{k-1}\|\Exi-\Ax\|^k\\
      &\leq \hat{\gamma}_{\mu}(\Ff,\Ax)\|\Exi-\Ax\|^2\sum_{k\geq2}\hat{\gamma}_{\mu}(\Ff,\Ax)^{k-2}\|\Exi-\Ax\|^{k-2}\\
      &\leq \frac{1}{1-\kappatwo}\hat{\gamma}_{\mu}(\Ff,\Ax)\|\Exi-\Ax\|^2.
    \end{align*}

Furthermore, 
 we have
    \begin{align*}
      \|\hat{\Ff}(\Bx)\| & =\left\|\hat{\Ff}(\Ax)+D\hat{\Ff}(\Ax)(\hat{\Bx}-\hat{\Ax})+\sum_{k\geq2}\frac{D^k\hat{\Ff}(\Ax)}{k!}(\Bx-\Ax)^k\right\| \\
       & \leq \|D\hat{\Ff}(\Ax)\|\sum_{k\geq2}\left\|D\hat{\Ff}(\Ax)^{-1}\frac{D^k\hat{\Ff}(\Ax)}{k!}\right\|\|\Bx-\Ax\|^k \\
       & \leq \|D\hat{\Ff}(\Ax)\|\sum_{k\geq2}\hat{\gamma}_{\mu}(\Ff,\Ax)^{k-1}\|\Bx-\Ax\|^k\\
       & \leq \|D\hat{\Ff}(\Ax)\| \hat{\gamma}_{\mu}(\Ff,\Ax) \|\Bx-\Ax\|^2\sum_{k\geq2}\hat{\gamma}_{\mu}(\Ff,\Ax)^{k-2}\|\Bx-\Ax\|^{k-2}\\
       & \leq \frac{4\|D\hat{\Ff}(\Ax)\|}{1-2\kappatwo}\hat{\gamma}_{\mu}(\Ff,\Ax)\|\Exi-\Ax\|^2,
    \end{align*}
    where
    \[\|\Bx-\Ax\|=\|\hat{\Bx}-\hat{\Ax}\|\leq\|\hat{\Bx}-\hat{\Exi}\|+\|\hat{\Ax}-\hat{\Exi}\|
    \leq\frac{\kappatwo}{1-\kappatwo}\|\Exi-\Ax\|+\|\Exi-\Ax\|\leq2\|\Exi-\Ax\|,\]
    and
    \[\hat{\gamma}_{\mu}(\Ff,\Ax)\|\Bx-\Ax\| \leq 2\kappatwo<1.\]
    \end{proof}


Let $(\ran_\mathbb{C}\{v_n(\Ax)\}, \ran_\mathbb{C}\{u_n(\Ax)\})$  be a pair of singular subspaces of $D\Ff(\Ax)$ corresponding to its smallest singular value $\sigma_n$, 
 and  $\delta=\sigma_{n-1}-\sigma_n=\bigO(1)$.  If
    \begin{equation}\label{delta5}
    \|D\Ff(\Bx)-D\Ff(\Ax)\|_F\leq\frac{\delta}{5},
    \end{equation}
   which could be  satisfied in general since $y$ is close to $z$ and $\delta=\bigO(1)$, then according to \cite[Theorem 8.6.5]{Golub:1996} or \cite[Theorem 6.4]{Stewart:1973}, we have
    \begin{equation}\label{vnyz}
    \|v_n(\Bx)-v_n(\Ax)\|_F\leq4\frac{\|D\Ff(\Bx)-D\Ff(\Ax)\|_F}
    {\delta}\leq4\frac{L\|\Bx-\Ax\|}{\delta}\leq8\frac{L\|\Exi-\Ax\|}{\delta},
    \end{equation}
    and  \begin{equation}\label{unyz}
    \|u_n(\Bx)-u_n(\Ax)\|_F\leq4\frac{\|D\Ff(\Bx)-D\Ff(\Ax)\|_F}
    {\delta}\leq4\frac{L\|\Bx-\Ax\|}{\delta}\leq8\frac{L\|\Exi-\Ax\|}{\delta},
    \end{equation}
  where $(\ran_\mathbb{C}\{v_n(\Bx)\},\ran_\mathbb{C}\{u_n(\Bx)\})$ is a pair of singular subspaces of $D\Ff(\Bx)$ corresponding to its smallest singular value.

   In what follows,  for the sake of simplicity, we always assume (\ref{delta5}) is satisfied and  set
   \[L \leftarrow \frac{8L}{\delta}.\]

According to  (\ref{approxnormal4}), we know that
   $v_n(\Ax)=(1,0,\ldots,0)^T$ and $u_n(\Ax)=(0,\ldots,0,1)$ generate  a pair of singular subspaces of  $D\Ff(\Ax)$ corresponding to its smallest singular value $\sigma_n$.

 Let
 \[W_{\ddag}=(v_n(\Bx),v_1(\Bx),\ldots,v_{n-1}(\Bx))=
    \left(
      \begin{array}{cc}
      W_1 & W_2 \\
       W_3 & W_4 \\
       \end{array}
       \right),
    \]
     and $v_n(\Bx)=(W_1,W_3)^T$,
       by (\ref{vnyz}), we have
    \begin{equation}\label{w13}
    |W_1|\geq1-L\|\Exi-\Ax\|,~\|W_3\|\leq L\|\Exi-\Ax\|.
    \end{equation}
    Since $W_{\ddag}$ is a unitary matrix, we have
    \begin{equation}\label{w24}
    \|W_2\|\leq L\|\Exi-\Ax\|,~\|W_4\|\leq\|W_{\ddag}\|=1.
    \end{equation}
 Let
        $U=\left(
        \begin{array}{cc}
        U_1 & U_2 \\
        U_3 & U_4 \\
        \end{array}
        \right)
    $ and $u_n(\Bx)=(U_3,U_4)$,  by (\ref{vnyz}), we have
    \begin{equation}\label{U13}
   \|U_3^*\|\leq L\|\Exi-\Ax\|.
    \end{equation}

 It is clear that  Step 4 and Step 5 in  Algorithm \ref{MultiStructure} are used to normalize the Jacobian matrix at the approximate solution $\Bx$ again after  running  the Newton iteration  for $\hat \simplex$, i.e.,  we  have
 \[Df(\Cx)=\left(
  \begin{array}{cc}
  0 & \Sigma_{n-1} \\
  \sigma_n & 0 \\
  \end{array}
   \right). \]

    \begin{claim}\label{normalisation_2}
    After running  Step 4 and Step 5 in  Algorithm \ref{MultiStructure},  we have
      \begin{equation}\label{wz45}
      \|\hat{\Cx}-\hat{\Fxi}\|\leq L\|\Exi-\Ax\|^2+\frac{1}{1-\kappatwo}\hat{\gamma}_{\mu}(\Ff,\Ax)\|\Exi-\Ax\|^2,
      \end{equation}
      and
      \begin{equation}\label{gw45}
       \|\Sf(\Cx)\| \leq\frac{4\|D\hat{\Ff}(\Ax)\|}{1-2\kappatwo}\hat{\gamma}_{\mu}(\Ff,\Ax)\|\Exi-\Ax\|^2+lL\|\Exi-\Ax\|^2,
      \end{equation}
      where
      \begin{equation}
      \Fxi \leftarrow W_{\ddag}^{\ast} \cdot \Exi,   \,  \Sf(X)\leftarrow U^{\ast}\cdot \Ff(W_{\ddag}\cdot X), \, \Cx=(\Cx_1,\hat{\Cx})  \leftarrow W_{\ddag}^{\ast} \Bx,
      \end{equation}
%
%
%
%
%
 and $l$ is the Lipschitz constant of the function $f_n(X)$.
    \end{claim}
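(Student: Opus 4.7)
The goal is to establish the two asserted bounds. By construction of Steps 4--5 we have $\Cx=W_{\ddag}^{\ast}\Bx$ and
\[
\Sf(\Cx)=U^{\ast}\Ff(W_{\ddag}\Cx)=U^{\ast}\Ff(W_{\ddag}W_{\ddag}^{\ast}\Bx)=U^{\ast}\Ff(\Bx),
\]
so both quantities are controlled by combining the Newton estimates of Claim \ref{N1quadratic} with the singular-vector perturbation bounds (\ref{vnyz})--(\ref{w24}).

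For the inequality (\ref{wz45}), I would exploit the block partition $W_{\ddag}=\left(\begin{array}{cc}W_1 & W_2\\ W_3 & W_4\end{array}\right)$ supplied in the text. Taking the conjugate transpose and reading off the last $n-1$ coordinates of $\Bx-\Exi$ yields
\[
\hat{\Cx}-\hat{\Fxi}=W_2^{\ast}(\Bx_1-\Exi_1)+W_4^{\ast}(\hat{\Bx}-\hat{\Exi}).
\]
Because Step 3 leaves the first coordinate untouched, $\Bx_1=\Ax_1$ and therefore $|\Bx_1-\Exi_1|\le\|\Exi-\Ax\|$. Combining this linear error with the singular-vector bound $\|W_2\|\le L\|\Exi-\Ax\|$ from (\ref{w24}) produces the first summand $L\|\Exi-\Ax\|^2$, while $\|W_4\|\le 1$ together with the quadratic bound (\ref{step31}) on $\|\hat{\Bx}-\hat{\Exi}\|$ yields the second summand.

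For the inequality (\ref{gw45}), unitarity of $U$ gives $\|\Sf(\Cx)\|=\|\Ff(\Bx)\|\le\|\hat{\Ff}(\Bx)\|+|\Ff_n(\Bx)|$. The first summand is directly bounded by (\ref{step32}). The genuinely delicate contribution is $|\Ff_n(\Bx)|$, since Step 3 performs no Newton correction for the last equation and a naive Lipschitz estimate would only deliver a first-order error. My plan is to exploit the alignment produced by Steps 1--2: the normalized form (\ref{approxnormal4}) gives $D\Ff_n(\Ax)=(\sigma_n,0,\ldots,0)$, while $\Bx-\Ax=(0,\hat{\Bx}-\hat{\Ax})$ has zero first component, so the linear term of a Taylor expansion of $\Ff_n$ about $\Ax$ vanishes identically:
\[
D\Ff_n(\Ax)(\Bx-\Ax)=0.
\]
Hence by Lipschitz control of $Df_n$ one obtains $|\Ff_n(\Bx)-\Ff_n(\Ax)|\le\tfrac{L}{2}\|\Bx-\Ax\|^2$, and a second Taylor expansion of $\Ff_n$ about $\Exi$ together with $\|D\Ff_n(\Exi)\|\le\sigma_n+L\|\Exi-\Ax\|\le 2L\|\Exi-\Ax\|$ (via $\sigma_n\le L\|\Exi-\Ax\|$ from Claim \ref{normalisation_1}) gives $|\Ff_n(\Ax)|=O(L\|\Exi-\Ax\|^2)$. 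Using $\|\Bx-\Ax\|\le 2\|\Exi-\Ax\|$ from the proof of Claim \ref{N1quadratic} and collecting constants via the Lipschitz data $l$ of $\Ff_n$ produces the stated bound $lL\|\Exi-\Ax\|^2$.

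The principal obstacle is precisely the bound on $|\Ff_n(\Bx)|$: because the algorithm never applies a Newton step to the last equation, quadratic convergence can only survive there through the orthogonality $D\Ff_n(\Ax)(\Bx-\Ax)=0$ engineered by the unitary normalization in Steps 1--2 (aligning $D\Ff_n(\Ax)$ with the $X_1$ direction) combined with the fact that Step 3 updates only the remaining coordinates. Once this cancellation is identified, the remaining work is routine Taylor expansion and accounting for the Lipschitz constants $L$ and $l$.
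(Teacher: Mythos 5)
Your derivation of (\ref{wz45}) coincides with the paper's: same block decomposition of $W_{\ddag}$, same use of (\ref{step31}), (\ref{w24}), and the identity $\Bx_1=\Ax_1$.

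For (\ref{gw45}) you take a genuinely different route, and it does not quite reproduce the stated bound. The paper estimates $\hat{\Sf}(\Cx)=U_1^{\ast}\hat{\Ff}(\Bx)+U_3^{\ast}\Ff_n(\Bx)$ and invokes (\ref{U13}), $\|U_3^{\ast}\|\le L\|\Exi-\Ax\|$: the scalar $|\Ff_n(\Bx)|$ is then controlled only to \emph{first} order by the Lipschitz constant $l$ of $\Ff_n$, and the second power of $\|\Exi-\Ax\|$ is contributed entirely by the smallness of $U_3^{\ast}$ — this is precisely what produces the constant $lL$. You never use (\ref{U13}). Instead you observe that $|\Ff_n(\Bx)|$ is itself already quadratically small, using the cancellation $D\Ff_n(\Ax)(\Bx-\Ax)=0$ (valid because $D\Ff_n(\Ax)=(\sigma_n,0,\ldots,0)$ by (\ref{approxnormal4}) and Step 3 leaves $X_1$ untouched) together with $\sigma_n\le L\|\Exi-\Ax\|$ from Claim \ref{normalisation_1}, which makes $\|D\Ff_n(\Exi)\|$ of order $L\|\Exi-\Ax\|$ as well. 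This cancellation argument is correct as an order estimate and even has the advantage of controlling the full vector $\|\Sf(\Cx)\|$, whereas the paper's computation actually only controls $\|\hat{\Sf}(\Cx)\|$, the first $n-1$ entries. However, the constant your route accumulates is governed by $L$ and second-derivative data of $\Ff_n$ (roughly a numerical multiple of $L$), and the first-order Lipschitz constant $l$ never enters your estimates; the closing claim that ``collecting constants via the Lipschitz data $l$'' gives $lL\|\Exi-\Ax\|^2$ is a gap, since nothing in your argument introduces $l$. So your route proves a bound of the shape $O(L)\|\Exi-\Ax\|^2$ — which suffices for the big-$O$ use in Claim \ref{convergx1} — but it does not deliver the specific inequality $\|\Sf(\Cx)\|\le\tfrac{4\|D\hat{\Ff}(\Ax)\|}{1-2\kappatwo}\hat{\gamma}_{\mu}(\Ff,\Ax)\|\Exi-\Ax\|^2+lL\|\Exi-\Ax\|^2$ as written; for that, the $\|U_3^{\ast}\|$ factor from (\ref{U13}) is the intended mechanism.
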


    \begin{proof}

By (\ref{step31}) and (\ref{w24}),
    we  have
    \begin{align*}
      \|\hat{\Cx}-\hat{\Fxi}\|&=\|W_2^{\ast}(\Bx_1-\Exi_1)+W_4^{\ast}(\hat{\Bx}-\hat{\Exi})\|  \leq L\|\Exi-\Ax\|^2+\|\hat{\Bx}-\hat{\Exi}\| \\
       & \leq L\|\Exi-\Ax\|^2+\frac{1}{1-\kappatwo}\hat{\gamma}_{\mu}(\Ff,\Ax)\|\Exi-\Ax\|^2.
    \end{align*}
By (\ref{step32}) and (\ref{U13}),  we have
    \begin{align*}
      \|\Sf(\Cx)\|&=\|U_1^{\ast}\hat{\Ff}(\Bx)+U_3^{\ast}\Ff_n(\Bx)\| \leq\|\hat{\Ff}(\Bx)\|+L\|\Exi-\Ax\|\|\Ff_n(\Bx)\| \\
      & \leq \frac{4\|D\hat{\Ff}(\Ax)\|}{1-2\kappatwo}\hat{\gamma}_{\mu}(\Ff,\Ax)\|\Exi-\Ax\|^2+lL\|\Exi-\Ax\|^2.
    \end{align*}
    \end{proof}

     Let $\Delta_k$ and $\Lambda_k$ be differential functionals calculated by (\ref{computedelta}) and (\ref{solveaijnormalized}) incrementally from $\Lambda_1=d_1$ until  
      $\Delta_{\mu}(g_n) =\bigO(1)$.  It should be noted that
         $d_1^k$ is the only differential monomial of the highest order $k$  in  $\Delta_{k}$ and no other $d_1^s$ with $s<k$  in  $\Delta_{k}$.
    \begin{claim}\label{convergx1}
    After running the first six steps in Algorithm \ref{MultiStructure}, we have
    \begin{equation}\label{x1z1}
    \|\Dx_1-\Fxi_1\|=\bigO(\|\Exi-\Ax\|^2),
    \end{equation}
    where $\Dx=(\Dx_1,\hat{\Dx}) \leftarrow (N_2(\Sf_n,\Cx), \hat{\Cx})$, $\Fxi \leftarrow W_{\ddag}^{\ast}  \cdot \xi$. 
    \end{claim}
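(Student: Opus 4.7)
The plan is to derive the quadratic bound by Taylor-expanding the quantities $\Delta_{\mu-1}(\Sf_n)(\Cx)$ and $\Delta_{\mu}(\Sf_n)(\Cx)$ (viewed as functions of the evaluation point) about $\Fxi$ and tracking the leading cancellation with $\Cx_1 - \Fxi_1$. Writing
\[\Dx_1 - \Fxi_1 = (\Cx_1 - \Fxi_1) - \frac{1}{\mu}\Delta_{\mu}(\Sf_n)(\Cx)^{-1}\,\Delta_{\mu-1}(\Sf_n)(\Cx),\]
it suffices to show $\tfrac{1}{\mu}\Delta_{\mu}(\Sf_n)(\Cx)^{-1}\,\Delta_{\mu-1}(\Sf_n)(\Cx) = (\Cx_1-\Fxi_1) + \bigO(\|\Exi-\Ax\|^2)$. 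Condition (B) at $\Fxi$ gives $\Delta_{\mu-1}(\Sf_n)(\Fxi)=0$ while (C) gives $\Delta_{\mu}(\Sf_n)(\Fxi)\neq 0$, so the constant term of the numerator vanishes and the denominator is bounded away from zero.

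The main computation concerns the linear part of the numerator in the Taylor expansion at $\Fxi$. By the chain rule, for any multi-index $\alpha$,
\[\frac{\partial}{\partial X_1}\bigl[d_1^{\alpha_1}\cdots d_n^{\alpha_n}(\Sf_n)(x)\bigr] = (\alpha_1+1)\,d_1^{\alpha_1+1}d_2^{\alpha_2}\cdots d_n^{\alpha_n}(\Sf_n)(x),\]
so differentiating the leading monomial $d_1^{\mu-1}$ of $\Delta_{\mu-1}$ produces $\mu\,d_1^\mu$, the leading monomial of $\Delta_{\mu}$. The remaining monomials of $\Delta_{\mu-1}$ carry at least one factor $d_j$ with $j\geq 2$; at $\Fxi$ these are controlled by the normalized form $\partial \Sf_n(\Fxi)/\partial X_j=0$ and the (B) conditions, and their $X_1$-derivatives combine with the coefficient terms $a_{k,j}(\Fxi)$ solved from (\ref{solveaijnormalized}) to recover precisely $\mu\,\Delta_{\mu}(\Sf_n)(\Fxi)$ at leading order. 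The partials of $\Delta_{\mu-1}(\Sf_n)$ in the directions $X_2,\ldots,X_n$ are only $\bigO(1)$, but Claim \ref{normalisation_2} gives $\|\hat{\Cx}-\hat{\Fxi}\|=\bigO(\|\Exi-\Ax\|^2)$, so their contributions are already $\bigO(\|\Exi-\Ax\|^2)$. The quadratic Taylor remainder is bounded by $\bigO(\|\Cx-\Fxi\|^2)=\bigO(\|\Exi-\Ax\|^2)$ since $\|\Cx_1-\Fxi_1\|=\bigO(\|\Exi-\Ax\|)$. Combined with the simpler expansion $\Delta_{\mu}(\Sf_n)(\Cx) = \Delta_{\mu}(\Sf_n)(\Fxi) + \bigO(\|\Cx-\Fxi\|)$, this yields
\[\tfrac{1}{\mu}\Delta_{\mu}(\Sf_n)(\Cx)^{-1}\Delta_{\mu-1}(\Sf_n)(\Cx) = (\Cx_1-\Fxi_1) + \bigO(\|\Exi-\Ax\|^2),\]
which gives the claim after subtraction.

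The main obstacle is that $\Delta_{\mu-1}$ and $\Delta_{\mu}$ are not literally fixed functionals: their coefficients $a_{k,j}$ are obtained by solving the linear system (\ref{solveaijnormalized}) at the evaluation point, so they acquire an additional implicit dependence on the point beyond the pointwise values of the differentials themselves. To handle this cleanly, I would note that by Claim \ref{normalisation_1} the coefficient matrix $D\hat{\Sf}(\Cx)$ is an $\bigO(\|\Cx-\Fxi\|)$ perturbation of the invertible $D\hat{\Sf}(\Fxi)=\Sigg$, while the right-hand sides in (\ref{solveaijnormalized}) depend smoothly on the evaluation point. Inductively in $k$ one obtains $a_{k,j}(\Cx)=a_{k,j}(\Fxi)+\bigO(\|\Cx-\Fxi\|)$, and the extra perturbations this induces in $\Delta_{\mu-1}(\Sf_n)(\Cx)$ and $\Delta_{\mu}(\Sf_n)(\Cx)$ remain $\bigO(\|\Cx-\Fxi\|)$; when multiplied against $\|\Cx_1-\Fxi_1\|=\bigO(\|\Exi-\Ax\|)$ in the ratio, they contribute $\bigO(\|\Exi-\Ax\|^2)$ — the same order as the Taylor remainder already absorbed — and therefore do not affect the cancellation.
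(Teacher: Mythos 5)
Your central premise---that ``Condition (B) at $\Fxi$ gives $\Delta_{\mu-1}(\Sf_n)(\Fxi)=0$'' and that ``at $\Fxi$ these are controlled by the normalized form $\partial \Sf_n(\Fxi)/\partial X_j=0$''---is not correct, and this is not a cosmetic slip: it is the place where the quadratic order actually has to be established. The normalization in Steps~4--5 is carried out using the SVD of $D\Ff$ at the \emph{approximate} point $\Bx$, so it is $D\Sf(\Cx)$ that has the block form with $\Sigma_{n-1}$ and $\sigma_n$, not $D\Sf(\Fxi)$. At $\Fxi$ the kernel direction is $\bar{\Lambda}_1 = d_1 + a_1 d_2 + \cdots$ with $a_1 = \bigO(\|\Exi-\Ax\|)$ rather than $d_1$, and the entries $\partial\Sf_n(\Fxi)/\partial X_j$ for $j\geq 2$ are likewise $\bigO(\|\Exi-\Ax\|)$, not zero. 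Consequently the algorithm's $\Delta_{\mu-1}$ (built from $\Lambda_1=d_1$ and (\ref{solveaijnormalized})) differs from the true dual-space functional $\bar{\Delta}_{\mu-1}$ at $\Fxi$ by an $\bigO(\|\Exi-\Ax\|)$ amount, and the ``constant term'' in your Taylor expansion is $\bigO(\|\Exi-\Ax\|)$, which is the same order as the linear term you are trying to isolate. Your sketch offers no mechanism by which this term is cancelled, so the argument as written only yields $\bigO(\|\Exi-\Ax\|)$ for $\|\Dx_1-\Fxi_1\|$. Your attempted fix in the last paragraph also inverts the roles: you write $D\hat{\Sf}(\Fxi)=\Sigg$, but it is $D\hat{\Sf}(\Cx)$ that equals $\Sigg$ by Claim~\ref{normalisation_1}.

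The paper's proof is built precisely to sidestep this obstacle. It introduces the auxiliary system $\hat{\Nf}(X)=\hat{\Sf}(X)-\hat{\Sf}(\Cx)$, $\Nf_n(X)=\Sf_n(X)-\Sf_n(\Cx)-\sum_{k=1}^{\mu-1}\Delta_k(\Sf_n)(X_1-\Cx_1)^k$, which by construction has $\Cx$ as a simple multiple zero of multiplicity $\mu$ with Jacobian of exactly normalized form, so the expansion (\ref{keyexpansion}) applies to $\Nf_n$ at $\Cx$. It then applies the \emph{true} dual-space functional $\bar{\Lambda}_{\mu-1}\in\mathcal{D}_{\Sf,\Fxi}$ (not the algorithm's $\Delta_{\mu-1}$, which is anchored at $\Cx$) to get the identity $0=\bar{\Lambda}_{\mu-1}(\Sf_n)=\bar{\Lambda}_{\mu-1}(\Nf_n)+\Delta_{\mu-1}(\Sf_n)$, and evaluates $\bar{\Lambda}_{\mu-1}(\Nf_n)$ term by term using (\ref{gw45}) and (\ref{wz45}) to obtain $\mu\Delta_\mu(\Sf_n)(\Fxi_1-\Cx_1)+\bigO(\|\Exi-\Ax\|^2)$. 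The ideal-membership property of $\bar{\Lambda}_{\mu-1}$ is what absorbs the $\bigO(\|\Exi-\Ax\|)$ discrepancy that your direct Taylor expansion leaves exposed. Without either this annihilator or an explicit accounting of how the misalignment between $d_1$ and $\bar{\Lambda}_1$ at $\Fxi$ cancels in the quotient $\frac{1}{\mu}\Delta_\mu(\Sf_n)^{-1}\Delta_{\mu-1}(\Sf_n)$, the argument does not establish the claimed quadratic bound.
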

 \begin{proof}
 It is  straightforward to check that  $\Cx$ is a simple multiple zero of the system
    \[\left\{\begin{array}{l}
               \hat{\Nf}(X)=\hat{\Sf}(X)-\hat{\Sf}(\Cx), \\
               \\
               \Nf_n(X)=\Sf_n(X)-\Sf_n(\Cx)-\sum_{k=1}^{\mu-1}\Delta_k(\Sf_n)(X_1-\Cx_1)^k.
             \end{array}
    \right.\]
    with multiplicity $\mu$ and $Dg(\Cx)$ is of normalized form (\ref{normalizeform}) (the construction is very similar to Theorem \ref{thm8}). Thus, by (\ref{keyexpansion}), we have
    \begin{align*}
&\Nf_n(X)=-\sum_{1\leq i+j-1\leq\mu-2}T_{i,j-1}\cdot D\hat{\Nf}(\Cx)^{-1}\underbrace{\hat{\Nf}(X)(X_1-\Cx_1)^i (\hat{X}-\hat{\Cx})^{j-1}}_{\mathrm{term}~\#1}\\
\notag&\ \ +\Delta_{\mu}(\Nf_n)(X_1-\Cx_1)^{\mu}+\sum_{i+j=\mu,j>0}C_{i,j}\underbrace{(X_1-\Cx_1)^i (\hat{X}-\hat{\Cx})^j}_{\mathrm{term}~\#2} +\sum_{k\geq\mu+1}\underbrace{\frac{D^k \Nf_n(\Cx)(X-\Cx)^k}{k!}}_{\mathrm{term}~\#3}\\
\notag&\ \ +\sum_{1\leq i+j-1\leq\mu-2}T_{i,j-1}\cdot \left( \sum_{k+i+j-1\geq \mu+1}D\hat{\Nf}(\Cx)^{-1}\underbrace{\frac{D^k\hat{\Nf}(\Cx)(X-\Cx)^k}{k!}(X_1-\Cx_1)^i (\hat{X}-\hat{\Cx})^{j-1}}_{\mathrm{term}~\#4} \right).
\end{align*}

Let $\Sf_n(X)=\Nf_n(X)+\Sf_n(\Cx)+\sum_{k=1}^{\mu-1}\Delta_k(\Sf_n)(X_1-\Cx_1)^k$ and $\{1,\bar{\Lambda}_{1},\ldots,\bar{\Lambda}_{\mu-1}\}$ be a reduced basis of $\mathcal{D}_{\Sf,\Fxi}$, 
    \begin{align*}
      0=\bar{\Lambda}_{\mu-1}(\Sf_n) & =\bar{\Lambda}_{\mu-1}(\Nf_n)+\Delta_{\mu-1}(\Sf_n) \\
       & =\mu\Delta_{\mu}(\Nf_n)(\Fxi_1-\Cx_1)+\Delta_{\mu-1}(\Sf_n)+\bigO(\|\Exi-\Ax\|^2)\\
       & =\mu\Delta_{\mu}(\Sf_n)(\Fxi_1-\Cx_1)+\Delta_{\mu-1}(\Sf_n)+\bigO(\|\Exi-\Ax\|^2)\\
       & =-\mu\Delta_{\mu}(\Sf_n)(N_2(\Cx_1)-\Fxi_1)+\bigO(\|\Exi-\Ax\|^2),
    \end{align*}
    because of the following facts:
    \begin{itemize}
      \item for the $\mathrm{term}~\#1$, $\bar{\Lambda}_{\mu-1}\left(\hat{\Nf}(X)(X_1-\Cx_1)^i (\hat{X}-\hat{\Cx})^{j-1}\right)=\bigO(\|\Exi-\Ax\|^2)$ for $1\leq i+j-1 \leq \mu-2$, since $\bar{\Lambda}_{\mu-1}\in\mathcal{D}_{\Sf,\Fxi}$, $\hat{\Nf}(X)=\hat{\Sf}(X)-\hat{\Sf}(\Cx)$ and $\|\hat{\Sf}(\Cx)\|=\bigO(\|\Exi-\Ax\|^2)$ by (\ref{gw45});
      \item for the $\mathrm{term}~\#2$, $\bar{\Lambda}_{\mu-1}\left((X_1-\Cx_1)^i (\hat{X}-\hat{\Cx})^{j-1}(\hat{X}-\hat{\Cx})\right)=\bigO(\|\Exi-\Ax\|^2)$, since $j>0$, $i+j-1=\mu-1$ and $\|\hat{\Cx}-\hat{\Fxi}\|=\bigO(\|\Exi-\Ax\|^2)$ by (\ref{wz45});
      \item for the $\mathrm{term}~\#3$, $\bar{\Lambda}_{\mu-1}\left((X-\Cx)^k\right)=\bigO(\|\Exi-\Ax\|^2)$ for  $k-(\mu-1)\geq2$;
      \item for the $\mathrm{term}~\#4$, $\bar{\Lambda}_{\mu-1}\left((X-\Cx)^k(X_1-\Cx_1)^i (\hat{X}-\hat{\Cx})^{j-1}\right)=\bigO(\|\Exi-\Ax\|^2)$ for $k+i+j-1-(\mu-1)\geq2$.
    \end{itemize}
  Moreover, we have $\Delta_{\mu}(\Sf_n) =\bigO(1)$.   Therefore, after running Step 6, we have
  \[\|\Dx_1-\Fxi_1\|=\|N_2(\Cx_1)-\Fxi_1\|=\bigO(\|\Exi-\Ax\|^2).\]

    \end{proof}

   %

Now we are ready to prove Theorem \ref{thm4.1}.
    \begin{proof}
      Both  $W_{\dag}$ and $W_{\ddag}$ are  unitary matrices,  according to (\ref{ksizf}) and  Claim  \ref{normalisation_2}, \ref{convergx1}, we have
    \[\|W_{\dag}\cdot W_{\ddag}\cdot \Dx-\Exi\|=\left\|\left(\begin{array}{c}
     \Dx_1-W_{\ddag}^* \cdot W_{\dag}^* \cdot\Exi_1 \\
      \hat{\Dx}-W_{\ddag}^* \cdot W_{\dag}^*\cdot \hat{\Exi}
      \end{array}\right)
    \right\|=\bigO(\|\Exi-\Ax\|^2).\]

    \end{proof}

\begin{remark}
Theorem \ref{thm4.1} can be combined with Theorem \ref{thm8} to provide an algorithm for computing an certified ball which contains a simple singular solution of $f$.  After running the first six steps in Algorithm \ref{MultiStructure}, if  the condition (\ref{separationradius}) is satisfied by $g$ at $x$, then according to Theorem {\ref{thm8}}, it has $\mu$ zeros (counting multiplicities) in the ball of radius $ r=\frac{d}{4\gamma_{\mu}^{\mux}}$
 around $x$, which indicates that the input  polynomial system $f$ has
 $\mu$ zeros (counting multiplicities) in the ball $B(N_f(\simplex), r)$.
\end{remark}

%


The difficulty  of  giving  a quantified quadratic convergence of Step 6   is due to the complicate expression of the polynomial $h_n(X)$.  In order to avoid awkward large expressions,   in what follows, we only show the proof of the  quantitative version of  Claim \ref{convergx1} for simple triple  zeros.  It is clear from  proofs given  below that   there is no significant  obstacle to  {extend} the  quantified quadratic convergence proof  of the Algorithm \ref{MultiStructure} for simple triple  zeros to simple multiple zeros of higher multiplicities.  This can also be observed  by our analysis  in Section \ref{sec3.2}, which generalizes  results in Section \ref{sec3.1} for simple tripe zeros to simple multiple zeros of higher multiplicities.

\begin{definition}\label{defkappa3}
    Let $\Nb=\max\{ \gamma_3(\Ff,\Exi)^3\|\Exi-\Ax\|, L \gamma_3(\Ff,\Exi)^2\|\Exi-\Ax\| \}$. We define the following rational functions:
\begin{align*}
\coefhat(\Nb) =&\frac{(1-2\Nb)^2}{(2(1-2\Nb)^2-1)\cdot(1-\Nb)^3},\\
\coefn(\Nb)= &\frac{(2\Nb-1)^6}{(128\Nb^6-384\Nb^5+480\Nb^4-336\Nb^3+140\Nb^2-32\Nb+1)\cdot (1-\Nb)^3},\\
\coeftemp(\Nb)=&\sqrt{1+\left(\frac{\coefhat\Nb}{1-\coefhat\Nb}\right)^2},\\
\coeA(\Nb)=&\Nb+\frac{\coefhat\Nb}{1-\coefhat\Nb}, \\
\coeB(\Nb)=&\frac{\left(16\coefhat^2\coeftemp^2\Nb^4-(16\coefhat^2\coeftemp^2+16\coefhat\coeftemp)\Nb^3+(16\coefhat\coeftemp+4)\Nb^2-4\Nb+1\right)^2}{(1-2\Nb)^2(1-2\coefhat\coeftemp\Nb)^2}\\
&\ \ \cdot\left[\left(\frac{\coefn}{3}+\frac{17\coefn\coeftemp^2}{3}\right)\Nb+\frac{7\coefn^2}{3}\Nb^2+\frac{4\coefn^2}{3}\Nb^3+\left(\frac{\coefn}{3} +\frac{7\coefn^2\Nb}{3}+\frac{8\coefn^2\Nb^2}{3}\right)\frac{\coefhat\Nb}{1-\coefhat\Nb}\right.\\
&\ \  +\frac{4\coefn^2\Nb}{3}\left(\frac{\coefhat\Nb}{1-\coefhat\Nb}\right)^2 +\frac{\coefn}{3}\cdot\frac{\coefhat\coeftemp^2\Nb}{1-\coefhat\coeftemp\Nb} \\
&\ \ +\frac{8\coefn^3\coeftemp^2\Nb\left(12\coefn^2\coeftemp^3\Nb^3+(6\coefn^2\coeftemp^2-14\coefn\coeftemp^2)\Nb^2+(4\coeftemp-8\coefn\coeftemp)\Nb+3\right)}{3(1-2\coefn\coeftemp\Nb)^3} \\
&\ \ \left. +\frac{2\coefn\coefhat^2\coeftemp^3\Nb\left(16\coefhat^2\coeftemp^2\Nb^3+(4\coefhat^2\coeftemp^2-20\coefhat\coeftemp)\Nb^2+(6-6\coefhat\coeftemp)\Nb+3\right)}{(1-2\coefhat\coeftemp\Nb)^3} \right]\\
      &\ \ +\frac{\coefn^2}{3}\left(\Nb+\frac{\coefhat\Nb}{1-\coefhat\Nb}\right) \left(\frac{2(-4\coefhat^2\coeftemp^2\Nb^2+4\coefhat\coeftemp\Nb)^2}{(1-2\Nb)^4(1-2\coefhat\coeftemp\Nb)^4}+\frac{7(-4\coefhat^2\coeftemp^2\Nb^2+4\coefhat\coeftemp\Nb)}{(1-2\Nb)^2(1-2\coefhat\coeftemp\Nb)^2}+8\right).
      \end{align*}
    \end{definition}

\begin{thm}\label{newtonmainthm}
   Given an approximate zero $\Ax$ of a system $f$ associated to a simple triple  zero  $\Exi$  of multiplicity $3$ and satisfying
 $f(\xi)=0$, $\dim \ker Df(\xi)=1$. %
%
%
    \begin{enumerate}[(1)]
    \item  If  $\Nb < \Nb_3\approx0.0137$,
      where $\Nb_3$ is the smallest positive solution of the equation:
    \[
    \coeA(\Nb)^2+\coeB (\Nb)^2=1,
    \]
 then the output of Algorithm \ref{MultiStructure}  satisfies:
 \[
 \left \|N_f(\app)-\acc \right \| < \left \| \app -\acc  \right \|.
 \]
   %
    \item  If $\Nb < \Nb_3^{\prime}\approx0.0098$,
    where $\Nb_3^{\prime}$ is the smallest positive solution  of the equation:
      \[
    \coeA (\Nb)^2+\coeB (\Nb)^2=\frac{1}{4},
        \]
    then after $k$ times of iteration we have
 \[
 \left \|N_f^k(\app)-\acc \right \| < \left( \frac{1}{2} \right)^{2^k-1}  \left \| \app -\acc  \right \|.
 \]

%
%
%

    \end{enumerate}
%
    \end{thm}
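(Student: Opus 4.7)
The plan is to mirror the two-stage structure of the proof of Theorem~\ref{thmnewtonmul3} while carrying along perturbation terms that come from the fact that $Df(\Ax)$ only satisfies the normalized form (\ref{normalizeform}) approximately. The key guiding observation is that, after Step~1--2 of Algorithm~\ref{MultiStructure}, Claim~\ref{normalisation_1} gives $Df(\Ax)=\mathrm{diag}(\Sigg,\sigma_n)$ (up to the block permutation) with $\sigma_n\leq L\|\Ax-\Exi\|$. Thus every estimate used in the proof of Theorem~\ref{thmnewtonmul3} will pick up an additional Lipschitz contribution of the form $L\|\Ax-\Exi\|$, and that is exactly the reason $\Nb$ is defined as $\max\{\gamma_3^3\|\Ax-\Exi\|, L\gamma_3^2\|\Ax-\Exi\|\}$ in Definition~\ref{defkappa3}: it lets us bound both the usual $\gamma_3$-series and the new $\sigma_n$-induced terms uniformly by one scalar $\Nb$.

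First, I would establish a quantified version of Claim~\ref{N1quadratic} for the triple case. Starting from the proof of Lemma~\ref{newtonz2mul2} and writing the Taylor expansion of $\hat f$ at $\Exi$, the difference $\hat{\Bx}-\hat{\Exi}=\hat{z}-\hat\xi-\Sigp^{-1}\hat f(\Ax)$ can be expanded and the factor $\|\Sigp^{-1}\Sigc\|$ bounded by $\coefhat(\Nb)$ exactly as in Lemma~\ref{normDinverse}, giving a bound of the form $\|\hat{\Bx}-\hat{\Exi}\|\leq(\text{polynomial in }\Nb)\cdot\|\Ax-\Exi\|^2$. Next I would apply Claim~\ref{normalisation_2}, where the second unitary transformation $W_\ddag$ is controlled by (\ref{vnyz})--(\ref{unyz}) (Wedin's theorem), so that the transformed point $\Cx=(\Cx_1,\hat\Cx)$ still satisfies $\|\hat\Cx-\hat\Fxi\|=O(\Nb\|\Ax-\Exi\|)$ with the explicit rational coefficient hidden in $\coefhat$ and $\coeftemp$. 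Combining these two steps produces the factor $\coeA(\Nb)$: the first coordinate of the residual is essentially $W_2^{\ast}(\Bx_1-\Exi_1)$, of size $\leq L\|\Ax-\Exi\|^2$, which feeds the $\Nb$ summand of $\coeA$, while the $\hat\Cx$ component contributes the $\coefhat\Nb/(1-\coefhat\Nb)$ summand.

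The crucial (and most involved) step is the quantitative analysis of $N_2$ in Step~6, which plays the role of Lemma~\ref{newtonz1mult3} but now for the polynomial $g=U^{\ast}f(W_\ddag\cdot X)$ whose lower-order $\Delta_k(g_n)$ need not vanish exactly. Writing
\[
N_2(\Cx_1)-\Fxi_1=\Cx_1-\Fxi_1-\tfrac{1}{3}\Delta_3(g_n)^{-1}\Delta_2(g_n),
\]
I would expand $\Delta_2(g_n)(\Cx)$ and $\Delta_3(g_n)(\Cx)$ by Taylor's formula at $\Fxi$, using that at $\Fxi$ we have $\Delta_2(g_n)(\Fxi)=0$ and $\Delta_3(g_n)(\Fxi)\neq 0$, and then bound the ratio $\|\Delta_3(g_n)(\Cx)^{-1}\Delta_3(g_n)(\Fxi)\|$ by $\coefn(\Nb)$ in the style of Lemma~\ref{normDelta3inverse}. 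The remaining sum breaks into four types of terms (the four categories used in the proof of Claim~\ref{convergx1}), each of which admits a geometric-series estimate controlled by $\Nb$; the corresponding coefficient is precisely the bracketed expression multiplying the prefactor in the definition of $\coeB(\Nb)$. This yields $|N_2(\Cx_1)-\Fxi_1|\leq \coeB(\Nb)\|\Ax-\Exi\|$.

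Putting the two estimates together and using that the final unitary multiplication $W_\dag W_\ddag$ is an isometry, I get
\[
\|N_f(\Ax)-\Exi\|^2\leq \coeA(\Nb)^2\|\Ax-\Exi\|^2+\coeB(\Nb)^2\|\Ax-\Exi\|^2,
\]
which gives (1) under $\coeA(\Nb)^2+\coeB(\Nb)^2<1$ and, iterating, (2) under $\coeA(\Nb)^2+\coeB(\Nb)^2<\tfrac14$ by the standard induction used in the proof of Theorem~\ref{thmnewtonmul3} (monotonicity of $\Nb\mapsto \sqrt{\coeA(\Nb)^2+\coeB(\Nb)^2}/\Nb$ ensures that $\Nb^{(k-1)}<\Nb$ at each iterate). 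The main obstacle is \emph{bookkeeping}: because $g$ differs from $f$ by two successive unitary conjugations \emph{and} the inner Newton update, every term in the expansions of $\Delta_2(g_n)$ and $\Delta_3(g_n)$ carries three sources of error ($\gamma_3$-series, $\sigma_n\leq L\|\Ax-\Exi\|$, and Wedin-type rotation of singular vectors), and isolating which terms contribute the four competing pieces of $\coeB(\Nb)$ requires tracking them through the substitution $\hat\Bx=N_1(\hat\Ff,\hat\Ax)$ without losing the quadratic order in $\|\Ax-\Exi\|$. Once this accounting is done for multiplicity three, the extension to higher multiplicities follows the pattern used in Section~\ref{sec3.2}.
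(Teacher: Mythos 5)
Your high-level plan---splitting the error between the $N_1$ step (contributing $\coeA$) and the $N_2$ step (contributing $\coeB$), controlling the second unitary rotation with Wedin-type bounds, estimating the $\gamma$-ratios as in Claim~\ref{gammadif}, and closing with the usual induction on $\itb\mapsto\sqrt{\coeA(\itb)^2+\coeB(\itb)^2}/\itb$---matches the structure of the paper's argument (Claims~\ref{gammadif}, \ref{preestimate}, \ref{theoremtwelvetemp}, \ref{newone}). The treatment of Step~6, however, has a genuine gap.

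You propose to Taylor-expand $\Delta_2(\Sf_n)(\Cx)$ and $\Delta_3(\Sf_n)(\Cx)$ at $\Fxi$, ``using that at $\Fxi$ we have $\Delta_2(\Sf_n)(\Fxi)=0$''. That premise is false. The SVD in Step~4 normalizes $D\Sf$ at $\Cx$, not at $\Fxi$: the kernel of $D\Sf(\Fxi)$ is spanned by a vector of the form $(1,a_1)^T$ with $\|a_1\|=O(\|\Ax-\Exi\|)$, and so
\[
\Delta_2(\Sf_n)(\Fxi)=\frac{1}{2}\frac{\partial^2 \Sf_n(\Fxi)}{\partial X_1^2}
\]
is only $O(\|\Ax-\Exi\|)$, not zero. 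After dividing by $3\Delta_3(\Sf_n)(\Cx)=O(1)$, this leaves a \emph{linear}-order contribution in $N_2(\Sf_n,\Cx)-\Fxi_1$, which would destroy quadratic convergence. Moreover, even granting $\Delta_2(\Sf_n)(\Fxi)=0$, the first-order $X_1$ coefficient in the Taylor expansion of $\Delta_2(\Sf_n)$ is $\frac{1}{2}\frac{\partial^3 \Sf_n(\Fxi)}{\partial X_1^3}$, which differs from $3\Delta_3(\Sf_n)(\Cx)$ by the mixed-derivative correction $3\,\frac{\partial^2 \Sf_n}{\partial X_1\partial \hat{X}}\,D\hat{\Sf}^{-1}\frac{1}{2}\frac{\partial^2\hat{\Sf}}{\partial X_1^2}$, an $O(1)$ defect; the required cancellation does not happen term by term.

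What produces the exact cancellation in the paper is a different device. One takes the dual functional $\bar{\Lambda}_2\in\mathcal{D}_{\Sf,\Fxi}$, whose coefficients $a_1,a_2$ encode the rotated kernel and the offset of $D\Sf(\Fxi)$ from normalized form, so that $\bar{\Lambda}_2(\Sf_n)=0$ holds \emph{exactly}, and then applies $\bar{\Lambda}_2$ not to $\Delta_2(\Sf_n)$ but to the auxiliary polynomial $\Nf_n$ rewritten via the substitution scheme of (\ref{keyexpansion}) as $\Delta_3(\Nf_n)(X_1-\Cx_1)^3+B$. This yields the identity $N_2(\Sf_n,\Cx)-\Fxi_1=\frac{1}{3}\Delta_3(\Sf_n)^{-1}\bar{\Lambda}_2(B)$ of Claim~\ref{theoremtwelvetemp}; only then does $\bar{\Lambda}_2(B)$ decompose into the families of genuinely higher-order terms you allude to, so that $\coeB(\Nb)$ has something to estimate. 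The naive Taylor route you describe has no mechanism to produce that identity and cannot cancel the two linear-order defects above.
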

 The   proof of Theorem \ref{newtonmainthm} is based on the following facts.

    \begin{claim}\label{gammadif}
When $\Nb \leq \Nb_3$, we have
\begin{align}
\label{gammarelationbba} \hat{\gamma}_3(\Ff,\Ax) &\leq \coefhat \cdot \gamma_3(\Ff,\Exi),\\
\label{gammarelationbbb}  \gamma_{3,n}(\Ff,\Ax) &\leq \coefn \cdot \gamma_3(\Ff,\Exi).
\end{align}

\end{claim}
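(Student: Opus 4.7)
The plan is to factor each $\gamma$-ratio at $\Ax$ through the corresponding one at $\Exi$, bounding the ``change of base'' factor by a Neumann-series argument and the remaining ``derivative-growth'' factor by a direct Taylor expansion at $\Exi$. In both parts the supremum over $k\ge 2$ will turn out to be attained at $k=2$, which is what produces the cubic denominators $(1-\Nb)^3$ in $\coefhat$ and $\coefn$.

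For (\ref{gammarelationbba}) I would write
\[
D\hat{\Ff}(\Ax)^{-1}\frac{D^k\hat{\Ff}(\Ax)}{k!} = \bigl(D\hat{\Ff}(\Ax)^{-1}D\hat{\Ff}(\Exi)\bigr)\cdot\bigl(D\hat{\Ff}(\Exi)^{-1}\frac{D^k\hat{\Ff}(\Ax)}{k!}\bigr).
\]
Setting $B = D\hat{\Ff}(\Exi)^{-1}(D\hat{\Ff}(\Ax)-D\hat{\Ff}(\Exi))$ and expanding by the Taylor series at $\Exi$ exactly as in Lemma \ref{normDinverse} gives $\|B\|\le (1-2\hat{\gamma}_{3}(\Ff,\Exi)\|\Ax-\Exi\|)^{-2}-1\le (1-2\Nb)^{-2}-1$, where the last step uses $\hat{\gamma}_{3}(\Ff,\Exi)\|\Ax-\Exi\|\le\gamma_{3}(\Ff,\Exi)^{3}\|\Ax-\Exi\|=\Nb$ (since $\gamma_{3}\ge 1$). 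For $\Nb\le\Nb_{3}$ this forces $\|B\|<1$ and hence $\|D\hat{\Ff}(\Ax)^{-1}D\hat{\Ff}(\Exi)\|\le (1-2\Nb)^{2}/(2(1-2\Nb)^{2}-1)$. For the second factor the Taylor expansion
\[
\frac{D^k\hat{\Ff}(\Ax)}{k!}=\sum_{j\ge 0}\binom{k+j}{j}\frac{D^{k+j}\hat{\Ff}(\Exi)}{(k+j)!}(\Ax-\Exi)^{j},
\]
combined with the defining estimate $\|D\hat{\Ff}(\Exi)^{-1}D^{m}\hat{\Ff}(\Exi)/m!\|\le\hat{\gamma}_{3}(\Ff,\Exi)^{m-1}$, sums to $\|D\hat{\Ff}(\Exi)^{-1}\tfrac{D^{k}\hat{\Ff}(\Ax)}{k!}\|\le \gamma_{3}(\Ff,\Exi)^{k-1}/(1-\Nb)^{k+1}$. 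Multiplying the two bounds and taking the $(k-1)$-th root, a short monotonicity check (differentiating the logarithm of $[(1-2\Nb)^{2}/((2(1-2\Nb)^{2}-1)(1-\Nb)^{k+1})]^{1/(k-1)}$ in $k$) shows the expression is decreasing in $k\ge 2$ for $\Nb\le\Nb_{3}$, so the supremum occurs at $k=2$ and equals $\coefhat(\Nb)\gamma_{3}(\Ff,\Exi)$; since $\gamma_{3}\ge 1$, the ``$\max(1,\cdot)$'' in the definition of $\hat{\gamma}_{3}$ is absorbed automatically, establishing (\ref{gammarelationbba}).

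For (\ref{gammarelationbbb}) I would carry out the analogous factorisation
\[
\Delta_{3}(\Ff_{n})(\Ax)^{-1}\frac{D^{k}\Ff_{n}(\Ax)}{k!} = \bigl(\Delta_{3}(\Ff_{n})(\Ax)^{-1}\Delta_{3}(\Ff_{n})(\Exi)\bigr)\cdot\bigl(\Delta_{3}(\Ff_{n})(\Exi)^{-1}\frac{D^{k}\Ff_{n}(\Ax)}{k!}\bigr),
\]
whose second factor is again bounded by $\gamma_{3}(\Ff,\Exi)^{k-1}/(1-\Nb)^{k+1}$ via the same Taylor argument. The hard part will be the first factor: unlike $D\hat{\Ff}$, the scalar
\[
\Delta_{3}(\Ff_{n})(\Ax)=\frac{1}{6}\frac{\partial^{3}\Ff_{n}(\Ax)}{\partial X_{1}^{3}}-\frac{\partial^{2}\Ff_{n}(\Ax)}{\partial X_{1}\partial\hat{X}}\,D\hat{\Ff}(\Ax)^{-1}\frac{1}{2}\frac{\partial^{2}\hat{\Ff}(\Ax)}{\partial X_{1}^{2}}
\]
mixes three derivative blocks through the perturbed inverse $D\hat{\Ff}(\Ax)^{-1}$, producing many cross-terms in the difference $\Delta_{3}(\Ff_{n})(\Ax)-\Delta_{3}(\Ff_{n})(\Exi)$. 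Mimicking the proof of Lemma \ref{normDelta3inverse}, I would Taylor-expand each of the three blocks at $\Exi$, substitute the bound on $D\hat{\Ff}(\Ax)^{-1}$ obtained in the previous paragraph, and write $\Delta_{3}(\Ff_{n})(\Ax)^{-1}\Delta_{3}(\Ff_{n})(\Exi)=(1+B')^{-1}$; collecting the arithmetic series $\sum_{k\ge 3}k(k-1)2^{k-2}u^{k-3}$ and its relatives that appear in that expansion, together with the Lipschitz-type estimate absorbed into the $L\gamma_{3}^{2}\|\Ax-\Exi\|$ component of $\Nb$, should simplify the result to
\[
|\Delta_{3}(\Ff_{n})(\Ax)^{-1}\Delta_{3}(\Ff_{n})(\Exi)|\le\frac{(2\Nb-1)^{6}}{128\Nb^{6}-384\Nb^{5}+480\Nb^{4}-336\Nb^{3}+140\Nb^{2}-32\Nb+1}.
\]
Multiplying this by the second-factor bound, taking $(k-1)$-th roots, and again optimising at $k=2$ produces exactly $\coefn(\Nb)$ from Definition \ref{defkappa3}, yielding (\ref{gammarelationbbb}). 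The principal difficulty throughout is the bookkeeping in the last step: unravelling the nested Taylor series for the composite quantity $\Delta_{3}(\Ff_{n})$ into the precise rational form of $\coefn(\Nb)$ is mechanical but genuinely lengthy, and is the only reason this claim is not an immediate corollary of the standard Shub--Smale perturbation estimate for $\gamma(f,x)$ versus $\gamma(f,\xi)$.
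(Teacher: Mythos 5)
Your plan matches the paper's argument essentially step for step. For (\ref{gammarelationbba}) you factor $D\hat{\Ff}(\Ax)^{-1}\tfrac{D^{k}\hat{\Ff}(\Ax)}{k!}$ through $D\hat{\Ff}(\Exi)$, bound the ``change-of-base'' factor $\|D\hat{\Ff}(\Ax)^{-1}D\hat{\Ff}(\Exi)\|$ by Neumann series exactly as in Lemma~\ref{normDinverse} (invoked via Remark~\ref{lemma45hold}), bound the ``derivative-growth'' factor by Taylor expansion at $\Exi$ to get $\gamma_{3}(\Ff,\Exi)^{k-1}/(1-\Nb)^{k+1}$, and then observe the $(k-1)$-th root is maximized at $k=2$; the paper does the same, though it phrases the last step by bounding $\|D\hat{\Ff}(\Ax)^{-1}D\hat{\Ff}(\Exi)\|^{1/(k-1)}\le\|D\hat{\Ff}(\Ax)^{-1}D\hat{\Ff}(\Exi)\|$ and $(1-\Nb)^{-(k+1)/(k-1)}\le(1-\Nb)^{-3}$ directly rather than by a monotonicity check, which amounts to the same thing. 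For (\ref{gammarelationbbb}) the paper likewise writes $\Delta_{3}(\Ff_{n})(\Ax)^{-1}\Delta_{3}(\Ff_{n})(\Exi)=(1+B')^{-1}$, Taylor-expands the two derivative blocks and the auxiliary quantity $a_{2,\Ax}=D\hat{\Ff}(\Ax)^{-1}\tfrac12\partial^{2}\hat{\Ff}(\Ax)/\partial X_{1}^{2}$ (which itself needs the change-of-base bound from the first paragraph) at $\Exi$, collects the arithmetic--geometric series, and arrives at the same sixth-degree rational prefactor you display.

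One small point worth flagging: you invoke ``the Lipschitz-type estimate absorbed into the $L\gamma_{3}^{2}\|\Ax-\Exi\|$ component of $\Nb$'', but the Lipschitz constant $L$ plays no role in this claim. The paper's proof uses only the inequality $\gamma_{3}(\Ff,\Exi)^{3}\|\Ax-\Exi\|\le\Nb$; the $L$-term in the definition of $\Nb$ is present so that the same single parameter $\Nb$ can also control the singular-vector perturbation in Claims~\ref{normalisation_1}--\ref{normalisation_2}, not because it enters the Taylor bookkeeping here. This does not affect the correctness of your argument since you only use $\Nb$ as an upper bound, but it would be misleading to present $L$ as contributing to the derivation of $\coefn$.
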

\begin{proof}
For $k\geq2$, by the Taylor expansion of $D^k\hat{\Ff}(\Ax)$ at $\Exi$, we have
    \begin{align*}
      &\left\|D\hat{\Ff}(\Ax)^{-1}\frac{D^k\hat{\Ff}(\Ax)}{k!}\right\| \\
      \leq& \left\|D\hat{\Ff}(\Ax)^{-1}D\hat{\Ff}(\Exi)\right\|\cdot \left\|D\hat{\Ff}(\Exi)^{-1}\left(\frac{D^k\hat{\Ff}(\Exi)}{k!}+\sum_{i\geq1}\frac{D^{k+i}\hat{\Ff}(\Exi)}{k!i!}(\Exi-\Ax)^i\right)\right\|\\
      \leq& \left\|D\hat{\Ff}(\Ax)^{-1}D\hat{\Ff}(\Exi)\right\|\cdot\left(\gamma_3(\Ff,\Exi)^{k-1}+\sum_{i\geq1}\frac{(k+i)!}{k!i!}\gamma_3(\Ff,\Exi)^{k+i-1}\|\Exi-\Ax\|^i\right)\\
      \leq& \left\|D\hat{\Ff}(\Ax)^{-1}D\hat{\Ff}(\Exi)\right\|\cdot\frac{\gamma_3(\Ff,\Exi)^{k-1}}{(1-\gamma_3(\Ff,\Exi)\|\Exi-\Ax\|)^{k+1}}\\
      \leq& \left\|D\hat{\Ff}(\Ax)^{-1}D\hat{\Ff}(\Exi)\right\|\cdot \left(\frac{1}{1-\Nb}\right)^{k+1} \gamma_3(\Ff,\Exi)^{k-1}.
    \end{align*}
Then by the definition of $\hat{\gamma}_3$ and the above inequalities, we have
    \begin{align}\label{gammarelationaaa}
      \hat{\gamma}_3(\Ff,\Ax) & \leq  \max_{k\geq2} \left\|D\hat{\Ff}(\Ax)^{-1}\frac{D^k\hat{\Ff}(\Ax)}{k!}\right\|^{\frac{1}{k-1}}\\
      \notag&\leq \max_{k\geq2}\left(\left\|D\hat{\Ff}(\Ax)^{-1}D\hat{\Ff}(\Exi)\right\|^{\frac{1}{k-1}}\cdot \left(\frac{1}{1-\Nb}\right)^{\frac{k+1}{k-1}}\gamma_3(\Ff,\Exi)\right)\\
    \notag&\leq \left\|D\hat{\Ff}(\Ax)^{-1}D\hat{\Ff}(\Exi)\right\|\cdot \left(\frac{1}{1-\Nb}\right)^3\gamma_3(\Ff,\Exi).
    \end{align}
  According to Remark \ref{lemma45hold},   for $\Nb \leq \Nb_3 \approx 0.0137$,  we can show that:
\[
\left\|\Sigp^{-1}\Sigc\right\| \leq \frac{(1-2\Nb)^2}{2(1-2\Nb)^2-1},
\]
therefore (\ref{gammarelationbba}) holds by (\ref{gammarelationaaa}) and Definition \ref{defkappa3}.

 Similar to the proof of inequality (\ref{gammarelationaaa}), we have
 \begin{equation}\label{gammarelationaab}
     \gamma_{3,n}(\Ff,\Ax)\leq \left\|\Delta_3(\Ff_n)(\Ax)^{-1}\Delta_3(\Ff_n)(\Exi)\right\|\cdot \left(\frac{1}{1-\Nb}\right)^3\gamma_3(\Ff,\Exi).
     \end{equation}
 To prove (\ref{gammarelationbbb}), we notice that:
\[
\Delta_3(\Ff_n)(\Ax)^{-1}\Delta_3(\Ff_n)(\Exi)=(1+(\Delta_3(\Ff_n)(\Exi)^{-1}\Delta_3(\Ff_n)(\Ax)-1))^{-1}.
\]
By the Taylor expansion of $\frac{\partial^3 \Ff(\Ax)}{\partial X_1^3}$ and $\frac{\partial^2 \Ff(\Ax)}{\partial X_1\partial\hat{X}}$ at $\Exi$, we have:
 \begin{align*}
  &\Delta_3(\Ff_n)(\Ax)=\frac{1}{6}\frac{\partial^3 \Ff_n(\Ax)}{\partial X_1^3}+ a_{2,\Ax} \cdot\frac{\partial^2 \Ff_n(\Ax)}{\partial X_1\partial\hat{X}}\\
      &=\frac{1}{6}\frac{\partial^3 \Ff_n(\Exi)}{\partial X_1^3}+\sum_{k\geq1}\sum_{i=0}^{k}\frac{1}{6\cdot i!(k-i)!}\frac{\partial^{k+3} \Ff_n(\Exi)}{\partial X_1^{3+i}\partial \hat{X}^{k-i}}(\Exi_1-\Ax)^i(\hat{\Exi}-\hat{\Ax})^{k-i}\\
      &\,+a_{2,\Ax} \cdot\frac{\partial^2 \Ff_n(\Exi)}{\partial X_1\partial\hat{X}}+a_{2,\Ax} \cdot\sum_{k\geq1}\sum_{i=0}^{k}\frac{1}{i!(k-i)!}\frac{\partial^{k+2} \Ff_n(\Exi)}{\partial X_1^{1+i}\partial \hat{X}^{1+k-i}}(\Exi_1-\Ax)^i(\hat{\Exi}-\hat{\Ax})^{k-i}\\
      &=\Delta_3(\Ff_n)(\Exi)+\sum_{k\geq1}\sum_{i=0}^{k}\frac{1}{6\cdot i!(k-i)!}\frac{\partial^{k+3} \Ff_n(\Exi)}{\partial X_1^{3+i}\partial \hat{X}^{k-i}}(\Exi_1-\Ax)^i(\hat{\Exi}-\hat{\Ax})^{k-i}\\
      &\, +a_{2,\Ax} \cdot\sum_{k\geq1}\sum_{i=0}^{k}\frac{1}{i!(k-i)!}\frac{\partial^{k+2} \Ff_n(\Exi)}{\partial X_1^{1+i}\partial \hat{X}^{1+k-i}}(\Exi_1-\Ax)^i(\hat{\Exi}-\hat{\Ax})^{k-i},
    \end{align*}
    where
    \[
    a_{2,\Ax} = D\hat{\Ff}(\Ax)^{-1}\frac{1}{2}\frac{\partial^2 \hat{\Ff}(\Ax)}{\partial X_1^2}.
    \]
Therefore, we have
\begin{align}\label{gammarelationdda}
      &\left\|\Delta_3(\Ff_n)(\Exi)^{-1}\Delta_3(\Ff_n)(\Ax)-1\right\| \\
     \notag \leq& \sum_{k\geq1}\sum_{i=0}^{k}\frac{(k+3)!}{6\cdot i!(k-i)!}\gamma_{3,n}(\Ff,\Exi)^{k+2}\|\Exi-\Ax\|^k\\
    \notag  &\ \ +\|a_{2,\Ax}\| \cdot\sum_{k\geq1}\sum_{i=0}^{k}\frac{(k+2)!}{i!(k-i)!}\gamma_{3,n}(\Ff,\Exi)^{k+1}\|\Exi-\Ax\|^k\\
    \notag  \leq& \frac{1}{6}\sum_{k\geq1}(k+3)(k+2)(k+1)2^k \gamma_{3,n}(\Ff,\Exi)^{k+2}\|\Exi-\Ax\|^k\\
    \notag  &\ \ +\|a_{2,\Ax}\| \cdot\sum_{k\geq1}(k+2)(k+1)2^k \gamma_{3,n}(\Ff,\Exi)^{k+1}\|\Exi-\Ax\|^k,
    \end{align}
where
 \begin{align*}
 &\|a_{2,\Ax}\| =\left\| D\hat{\Ff}(\Ax)^{-1}\frac{1}{2}\frac{\partial^2 \hat{\Ff}(\Ax)}{\partial X_1^2}\right \| \\
      \leq& \left\|D\hat{\Ff}(\Ax)^{-1}D\hat{\Ff}(\Exi)\right\|\cdot\\
      &\left\|D\hat{\Ff}(\Exi)^{-1}\left(\frac{1}{2}\frac{\partial^2 \hat{\Ff}(\Exi)}{\partial X_1^2}+\sum_{k\geq1}\sum_{i=0}^{k}\frac{1}{2\cdot i!(k-i)!}\frac{\partial^{k+2} \Ff(\Exi)}{\partial X_1^{2+i}\partial \hat{X}^{k-i}}(\Exi_1-\Ax)^i(\hat{\Exi}-\hat{\Ax})^{k-i}\right)\right\|\\
      \leq& \hat{\gamma}_3(\Ff,\Exi)+\sum_{k\geq1}\sum_{i=0}^{k}\frac{(k+2)!}{2\cdot i!(k-i)!}\hat{\gamma}_3(\Ff,\Exi)^{k+1}\|\Exi-\Ax\|^k\\
      \leq& \hat{\gamma}_3(\Ff,\Exi)+\frac{1}{2}\sum_{k\geq1}(k+2)(k+1)2^k\hat{\gamma}_3(\Ff,\Exi)^{k+1}\|\Exi-\Ax\|^k\\
      \leq &\hat{\gamma}_3(\Ff,\Exi)+\frac{-2\Nb(4\Nb^2-6\Nb+3)}{(2\Nb-1)^3}.
    \end{align*}
By (\ref{gammarelationdda}), we have:
    \begin{align*}
    &\left\|\Delta_3(\Ff_n)(\Exi)^{-1}\Delta_3(\Ff_n)(\Ax)-1\right\| \\
       \leq& \frac{-8\Nb(2\Nb^3-4\Nb^2+3\Nb-1)}{(2\Nb-1)^4}+\frac{-2\Nb(4\Nb^2-6\Nb+3)}{(2\Nb-1)^3} \cdot \frac{-4\Nb(4\Nb^2-6\Nb+3)}{(2\Nb-1)^3} \\
        &\ \ +\sum_{k\geq1}(k+2)(k+1)2^k \gamma_{3,n}(\Ff,\Exi)^{k+2}\|\Exi-\Ax\|^k\\
    \leq &\frac{-4\Nb(16\Nb^5-48\Nb^4+60\Nb^3-44\Nb^2+20\Nb-5)}{(2\Nb-1)^6}    .
    \end{align*}
  When $\Nb \leq \Nb_3$,
  \[\frac{-4\Nb(16\Nb^5-48\Nb^4+60\Nb^3-44\Nb^2+20\Nb-5)}{(2\Nb-1)^6}<1,\] then we have
    \begin{align*}
   \| \Delta_3(\Ff_n)(\Ax)^{-1}\Delta_3(\Ff_n)(\Exi)\|=&\left\|(1+(\Delta_3(\Ff_n)(\Exi)^{-1}\Delta_3(\Ff_n)(\Ax)-1))^{-1}\right\| \\
   \leq &\frac{1}{1-\frac{-4\Nb(16\Nb^5-48\Nb^4+60\Nb^3-44\Nb^2+20\Nb-5)}{(2\Nb-1)^6}}\\
   =&\frac{(2\Nb-1)^6}{(128\Nb^6-384\Nb^5+480\Nb^4-336\Nb^3+140\Nb^2-32\Nb+1)}.
    \end{align*}
    Hence, the inequality  (\ref{gammarelationbbb})  holds. 
\end{proof}

  We notice that   after running  first five steps   in  Algorithm \ref{MultiStructure}, we have
 \[\Fxi \leftarrow W_{\ddag}^{\ast} \cdot \Exi,  \quad  \Sf(X)\leftarrow U^{\ast}\cdot \Ff(W_{\ddag}\cdot X), \quad  \Cx \leftarrow W_{\ddag}^{\ast} \Bx, \quad \Bx \leftarrow (\simplex_1, N_1(\hat{\Ff},\hat{\Ax})). \]
For  unitary matrices $U$ and $W_{\ddag}$, we have $\gamma_3(\Ff,\Exi)=\gamma_3(U^*\cdot\Ff,  W_{\ddag}^{\ast}\cdot\Exi)$ and the following inequalities hold:
\begin{align*}
 \hat{\gamma}_3(\Sf,\Cx) &\leq \coefhat \cdot \gamma_3(\Sf,\Fxi) = \coefhat \cdot \gamma_3(\Ff,\Exi),\\
 \gamma_{3,n}(\Sf,\Cx) &\leq \coefn \cdot \gamma_3(\Sf,\Fxi)=\coefn \cdot \gamma_3(\Ff,\Exi).
\end{align*}

\begin{claim}\label{preestimate}
When $\Nb \leq \Nb_3$, we have
\begin{equation}\label{wzksi}
 \|\Cx-\Fxi\| \leq \coeftemp \cdot \|\Ax-\Exi\|.
 \end{equation}
\end{claim}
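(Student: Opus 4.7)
The plan is to reduce the claim to the estimate already in hand for the Newton refinement of the last $n-1$ variables, and then rescue an extra $|\Ax_1 - \Exi_1|$ contribution via the Pythagorean identity. The key observation is that the step from $\Ax$ to $\Cx$ consists of two sub-steps: first the partial Newton iteration producing $\Bx = (\simplex_1, N_1(\hat{\Ff},\hat{\Ax}))$, and then the unitary change of coordinates $\Cx = W_{\ddag}^{\ast}\Bx$, $\Fxi = W_{\ddag}^{\ast}\Exi$. Because $W_{\ddag}$ is unitary, $\|\Cx - \Fxi\| = \|\Bx - \Exi\|$, so the proof collapses to bounding $\|\Bx - \Exi\|$ in terms of $\|\Ax - \Exi\|$.

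First I would split $\|\Bx - \Exi\|^2 = |\Ax_1 - \Exi_1|^2 + \|N_1(\hat{\Ff},\hat{\Ax}) - \hat{\Exi}\|^2$, noting that the first coordinate of $\Bx$ was untouched by the Newton step. The first term is trivially bounded by $\|\Ax - \Exi\|^2$. For the second term I would invoke Claim \ref{N1quadratic} (which holds more generally by Remark \ref{lemma45hold}), giving
\[
\|N_1(\hat{\Ff},\hat{\Ax}) - \hat{\Exi}\| \leq \frac{\hat{\gamma}_3(\Ff,\Ax)\|\Exi-\Ax\|}{1 - \hat{\gamma}_3(\Ff,\Ax)\|\Exi-\Ax\|}\,\|\Exi-\Ax\|,
\]
assuming the denominator is positive.

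Next I would translate the $\hat{\gamma}_3(\Ff,\Ax)\|\Exi-\Ax\|$ factor into the $\Nb$-scale by applying Claim \ref{gammadif}, together with the elementary bound $\gamma_3(\Ff,\Exi)\|\Exi-\Ax\| \leq \gamma_3(\Ff,\Exi)^3\|\Exi-\Ax\| \leq \Nb$ (valid since $\gamma_3(\Ff,\Exi) \geq 1$). This yields $\hat{\gamma}_3(\Ff,\Ax)\|\Exi-\Ax\| \leq \coefhat\,\Nb$, so the function $t \mapsto t/(1-t)$ being increasing on $[0,1)$ produces
\[
\|N_1(\hat{\Ff},\hat{\Ax}) - \hat{\Exi}\| \leq \frac{\coefhat\,\Nb}{1 - \coefhat\,\Nb}\,\|\Exi-\Ax\|.
\]
Here I need $\coefhat\,\Nb < 1$, which must be verified from the hypothesis $\Nb \leq \Nb_3$ by checking the definition of $\coefhat(\Nb)$ at $\Nb_3 \approx 0.0137$.

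Finally, plugging both estimates into the Pythagorean split gives
\[
\|\Bx-\Exi\|^2 \leq \|\Ax-\Exi\|^2 + \left(\frac{\coefhat\,\Nb}{1-\coefhat\,\Nb}\right)^{\!2}\|\Ax-\Exi\|^2 = \coeftemp^2\,\|\Ax-\Exi\|^2,
\]
which is the desired inequality after taking square roots and using unitary invariance. The main obstacle is really just bookkeeping: confirming that the normalization performed in Step~2 justifies the orthogonal split $\|\Bx-\Exi\|^2 = |\Ax_1-\Exi_1|^2 + \|\hat{\Bx}-\hat{\Exi}\|^2$ in the coordinate system where $\Ax$ and $\Exi$ share their first coordinate as being the untouched one, and checking $\coefhat(\Nb)\,\Nb < 1$ on $[0,\Nb_3]$.
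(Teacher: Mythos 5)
Your proof matches the paper's argument step for step: reduce via unitary invariance $\|\Cx-\Fxi\|=\|\Bx-\Exi\|$, split orthogonally using $\Bx_1=\Ax_1$, bound $\|\hat{\Bx}-\hat{\Exi}\|$ with Claim \ref{N1quadratic} and convert to the $\Nb$-scale with Claim \ref{gammadif}. The only difference is that you spell out the intermediate inequality $\gamma_3(\Ff,\Exi)\|\Exi-\Ax\| \leq \gamma_3(\Ff,\Exi)^3\|\Exi-\Ax\| \leq \Nb$ explicitly, which the paper leaves implicit.
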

\begin{proof}
It is clear that
  \[
   \|\Cx-\Fxi\| =\|W_{\ddag}^{\ast}(\Bx-\Exi )\|= \| \Bx-\Exi \|.
  \]
  By Claim \ref{N1quadratic} and Claim \ref{gammadif}, when $\Nb \leq \Nb_3$, we have:
  \begin{align*}
  \|\hat{\Bx}-\hat{\xidag}\| &\leq\frac{1}{1-\hat{\gamma}_3(\Ff,\Ax)\|\Exi-\Ax\|}\hat{\gamma}_3(\Ff,\Ax)\|\Exi-\Ax\|^2 \\
  & \leq \frac{\coefhat\Nb}{1-\coefhat\Nb}\|\Exi-\Ax\|.
  \end{align*}
  By the fact that $\Bx_1=\Ax_1$, when $\Nb \leq \Nb_3$, we have
  \[
    \|\Cx-\Fxi\| =\| \Bx-\Exi \| \leq \sqrt{1+\left(\frac{\coefhat\Nb}{1-\coefhat\Nb}\right)^2}\|\Exi-\Ax\| =\coeftemp \cdot \|\Ax-\Exi\|.
  \]
\end{proof}

 In   Claim \ref{convergx1}, we have shown that
 \[\|\Dx_1-\Fxi_1\|=\bigO(\|\Exi-\Ax\|^2).\]
    Below,  we give a quantitative version of  Claim \ref{convergx1} for the simple triple  zero case.

 Following the  proof of Claim \ref{convergx1}, we consider the following system:
    \begin{equation*}
    \left\{\begin{array}{l}
               \hat{\Nf}(X)=\hat{\Sf}(X)-\hat{\Sf}(\Cx) \\
               \\
               \Nf_n(X)=\Sf_n(X)-\Sf_n(\Cx)-\sum_{k=1}^{2}\Delta_k(\Sf_n)(X_1-\Cx_1)^k
             \end{array}
    \right.
    \end{equation*}
     $\Cx$ is a simple triple zero of $\Nf(X)$ whose  Jacobian matrix is of  normalized form. The polynomial $\Nf_n(X)$ can be written as:
    \begin{align*}
    \Nf_n(X)&=-\sum_{i+j=1}T_{i,j}\cdot D\hat{\Nf}(\Cx)^{-1}\hat{\Nf}(X)(X_1-\Cx_1)^i (\hat{X}-\hat{\Cx})^j\\
    \notag&\ \ +\Delta_3(\Nf_n)(X_1-\Cx_1)^{3}+\sum_{i+j=3,j>0}C_{i,j}(X_1-\Cx_1)^i (\hat{X}-\hat{\Cx})^j\\
    \notag&\ \ +\sum_{k\geq 4}\frac{D^k \Nf_n(\Cx)(X-\Cx)^k}{k!}\\
    \notag&\ \ +\sum_{i+j=1}T_{i,j}\cdot \sum_{k\geq 3}D\hat{\Nf}(\Cx)^{-1}\frac{D^k\hat{\Nf}(\Cx)(X-\Cx)^k}{k!}(X_1-\Cx_1)^i (\hat{X}-\hat{\Cx})^j\\
    &\triangleq \Delta_3(\Nf_n)(X_1-\Cx_1)^{3}+B.
    \end{align*}
    It is clear that  $D\hat{\Nf}(\Cx)=D\hat{\Sf}(\Cx),\ D^k \Nf(\Cx)=D^k \Sf(\Cx)$ for $k\geq3$,
    \begin{align*}
  C_{2,1}&=\frac{1}{2}\frac{\partial^3 \Sf_n(\Cx)}{\partial X_1^2\partial \hat{X}}-\frac{\partial^2 \Sf_n(\Cx)}{\partial X_1\partial \hat{X}}\cdot D \hat{\Sf}(\Cx)^{-1}\frac{\partial^2 \hat{\Sf}(\Cx)}{\partial X_1\partial \hat{X}}-\frac{1}{2}\frac{\partial^2 \Sf_n(\Cx)}{\partial \hat{X}^2}\cdot D\hat{\Sf}(\Cx)^{-1}\frac{1}{2}\frac{\partial^2 \hat{\Sf}(\Cx)}{\partial X_1^2},\\
  C_{1,2}&=\frac{1}{2}\frac{\partial^3 \Sf_n(\Cx)}{\partial X_1\partial \hat{X}^2}-\frac{\partial^2 \Sf_n(\Cx)}{\partial X_1\partial \hat{X}}\cdot D\hat{\Sf}(\Cx)^{-1}\frac{1}{2}\frac{\partial^2 \hat{\Sf}(\Cx)}{\partial \hat{X}^2}-\frac{1}{2}\frac{\partial^2 \Sf_n(\Cx)}{\partial \hat{X}^2}\cdot D\hat{\Sf}(\Cx)^{-1}\frac{\partial^2 \hat{\Sf}(\Cx)}{\partial X_1\partial \hat{X}},\\
  C_{0,3}&=\frac{1}{6}\frac{\partial^3 \Sf_n(\Cx)}{\partial \hat{X}^3}-\frac{1}{2}\frac{\partial^2 \Sf_n(\Cx)}{\partial \hat{X}^2}\cdot D\hat{\Sf}(\Cx)^{-1}\frac{1}{2}\frac{\partial^2 \hat{\Sf}(\Cx)}{\partial \hat{X}^2},\\
  T_{1,0}&= -\frac{\partial^2 \Sf_n(\Cx)}{\partial X_1\partial \hat{X}},\\
  T_{0,1}&=-\frac{1}{2}\frac{\partial^2 \Sf_n(\Cx)}{\partial \hat{X}^2}.
  \end{align*}
 By the definition of $\hat{\gamma}_3(\Sf,\Cx)$ and $\gamma_{3,n}$, we also have the following facts:
 \begin{align*}
    \|\Delta_3(\Sf_n)^{-1}C_{2,1}\|\leq& 3\gamma_{3,n}(\Sf,\Cx)^2+2\gamma_{3,n}(\Sf,\Cx)\cdot2\hat{\gamma}_3(\Sf,\Cx)+\gamma_{3,n}(\Sf,\Cx)\cdot \hat{\gamma}_3(\Sf,\Cx)\\
    \leq& 8\gamma_3(\Sf,\Cx)^2,\\
    \|\Delta_3(\Sf_n)^{-1}C_{1,2}\|\leq& 3\gamma_{3,n}(\Sf,\Cx)^2+2\gamma_3(\Sf,\Cx)\cdot \hat{\gamma}_3(\Sf,\Cx)+\gamma_{3,n}(\Sf,\Cx)\cdot2\hat{\gamma}_3(\Sf,\Cx)\\
    \leq& 7\gamma_3(\Sf,\Cx)^2,\\
    \|\Delta_3(\Sf_n)^{-1}C_{0,3}\|\leq& \gamma_{3,n}(\Sf,\Cx)^2+\gamma_{3,n}(\Sf,\Cx)\cdot \hat{\gamma}_3(\Sf,\Cx)\\
    \leq& 2\gamma_3(\Sf,\Cx)^2,\\
    \|\Delta_3(\Sf_n)^{-1}T_{1,0}\|\leq& 2\gamma_{3,n}(\Sf,\Cx),\\
    \|\Delta_3(\Sf_n)^{-1}T_{0,1}\|\leq& \gamma_{3,n}(\Sf,\Cx).
  \end{align*}

  Let $\{1,\bar{\Lambda}_{1},\bar{\Lambda}_{2}\}$ be the reduced basis of $\mathcal{D}_{\Sf,\Fxi}$ and $\bar{\Lambda}_1=d_1+a_1 d_2$, then
   \[\bar{\Delta}_2=d_1^2+a_1d_1d_2+a_1^2d_2^2,\]
    \[\bar{\Lambda}_2=\bar{\Delta}_2+a_2d_2,\]
    where $a_1=D\hat{\Sf}(\Fxi)^{-1}\frac{\partial \hat{\Sf}(\Fxi)}{\partial X_1},\ a_2=D\hat{\Sf}(\Fxi)^{-1}\bar{\Delta}_2(\hat{\Sf})(\Fxi)$. 

    By  Taylor expansion of $\frac{\partial \hat{\Sf}(\Fxi)}{\partial X_1}$ at $\Cx$
    \[\frac{\partial \hat{\Sf}(\Fxi)}{\partial X_1}=\sum_{k\geq2}\sum_{i=1}^{k}\frac{1}{(i-1)!(k-i)!}\frac{\partial^k \hat{\Sf}(\Cx)}{\partial X_1^i\partial \hat{X}^{k-i}}(\Fxi_1-\Cx_1)^{i-1}(\hat{\Fxi}-\hat{\Cx})^{k-i},\]
  and 
  $\gamma_3(\Sf,\Fxi)=\gamma_3(\Ff,\Exi)$, we have
      \begin{align*}
      \|a_1\| &= \left\|D\hat{\Sf}(\Fxi)^{-1}\frac{\partial \hat{\Sf}(\Fxi)}{\partial X_1}\right\|\\
      &\leq \left\|D\hat{\Sf}(\Fxi)^{-1}D\hat{\Sf}(\Cx)\right\|\cdot \left\|D\hat{\Sf}(\Cx)^{-1}\frac{\partial \hat{\Sf}(\Fxi)}{\partial X_1}\right\|\\
      &\leq \left\|D\hat{\Sf}(\Fxi)^{-1}D\hat{\Sf}(\Cx)\right\|\cdot \sum_{k\geq2}\sum_{i=1}^{k}\frac{k!}{(i-1)!(k-i)!}\hat{\gamma}_3(\Sf,\Cx)^{k-1}\|\Fxi-\Cx\|^{k-1}\\
      &=\left\|D\hat{\Sf}(\Fxi)^{-1}D\hat{\Sf}(\Cx)\right\|\cdot\frac{4\coefhat\gamma_3(\Ff,\Exi)\coeftemp\|\Exi-\Ax\|(1-\coefhat\gamma_3(\Ff,\Exi)\coeftemp\|\Exi-\Ax\|)}{(1-2\coefhat\gamma_3(\Ff,\Exi)\coeftemp\|\Exi-\Ax\|)^2}\\
      &\leq\frac{1}{(1-2\Nb)^2} \cdot \frac{4\coefhat\coeftemp\Nb(1-\coefhat\coeftemp\Nb)}{(1-2\coefhat\coeftemp\Nb)^2}\\
      &=\frac{4\coefhat\coeftemp\Nb(1-\coefhat\coeftemp\Nb)}{(1-2\Nb)^2 (1-2\coefhat\coeftemp\Nb)^2},
          \end{align*}
          and
      \begin{align*}
      \|a_2\| & =\left\|D\hat{\Sf}(\Fxi)^{-1}\bar{\Delta}_2(\hat{\Sf})(\Fxi)\right\|\\
      &\leq \left\|D\hat{\Sf}(\Fxi)^{-1}\frac{1}{2}\frac{\partial^2\hat{\Sf}(\Fxi)}{\partial X_1^2}\right\|+\|a_1\|\cdot\left\|D\hat{\Sf}(\Fxi)^{-1}\frac{\partial^2\hat{\Sf}(\Fxi)}{\partial X_1\partial \hat{X}}\right\|\\
      &\quad+\|a_1\|^2\cdot\left\|D\hat{\Sf}(\Fxi)^{-1}\frac{1}{2}\frac{\partial^2\hat{\Sf}(\Fxi)}{\partial \hat{X}^2}\right\|\\
      &\leq \gamma_3(\Sf,\Fxi)+2\|a_1\|\gamma_3(\Sf,\Fxi)+\|a_1\|^2\gamma_3(\Sf,\Fxi).
    \end{align*}

    \begin{claim}\label{theoremtwelvetemp} We have
    \[
     N_2(\Sf_n,\Cx)-\Fxi_1=\frac{1}{3}\Delta_{3}(\Sf_n)^{-1}\bar{\Lambda}_{2}(B).
    \]
    \end{claim}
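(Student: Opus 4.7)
\bigskip

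\noindent\textbf{Proof plan for Claim \ref{theoremtwelvetemp}.}

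The plan is to apply the functional $\bar\Lambda_2\in\mathcal{D}_{\Sf,\Fxi}$ to the decomposition
\[
\Sf_n(X)=\Nf_n(X)+\Sf_n(\Cx)+\sum_{k=1}^{2}\Delta_k(\Sf_n)(X_1-\Cx_1)^k,
\]
exploit the annihilation $\bar\Lambda_2(\Sf_n)=0$, and then solve the resulting scalar equation for $\Delta_2(\Sf_n)$. First I would note that constants are killed by $\bar\Lambda_2$, so
\[
0=\bar\Lambda_2(\Nf_n)+\sum_{k=1}^{2}\Delta_k(\Sf_n)\,\bar\Lambda_2\bigl((X_1-\Cx_1)^k\bigr).
\]

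The key computation is to evaluate $\bar\Lambda_2((X_1-\Cx_1)^k)$ at $\Fxi$. Since $(X_1-\Cx_1)^k$ depends only on $X_1$, every differential monomial in $\bar\Lambda_2=\bar\Delta_2+a_2 d_2$ that carries a factor $d_j$ with $j\geq 2$ contributes zero; hence only the pure $d_1^2$ term in $\bar\Delta_2=d_1^2+a_1 d_1\hat d+a_1^2\hat d^{\,2}$ survives. Using $d_1^2(g)=\tfrac12\partial^2 g/\partial X_1^2$, a direct calculation gives
\[
\bar\Lambda_2\bigl((X_1-\Cx_1)^k\bigr)=\binom{k}{2}(\Fxi_1-\Cx_1)^{k-2}\qquad(k\geq 2),
\]
and $\bar\Lambda_2((X_1-\Cx_1))=0$. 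In particular $\bar\Lambda_2((X_1-\Cx_1)^2)=1$ and $\bar\Lambda_2((X_1-\Cx_1)^3)=3(\Fxi_1-\Cx_1)$. Substituting into the displayed equation yields $\Delta_2(\Sf_n)=-\bar\Lambda_2(\Nf_n)$.

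Next, I would use the decomposition $\Nf_n(X)=\Delta_3(\Nf_n)(X_1-\Cx_1)^3+B$ from the paragraph preceding the claim to obtain
\[
\bar\Lambda_2(\Nf_n)=3\Delta_3(\Nf_n)(\Fxi_1-\Cx_1)+\bar\Lambda_2(B).
\]
Because $\Sf_n-\Nf_n$ is a polynomial in $X_1$ of total degree at most $2$ (plus a constant), and $\Delta_3$ is a third-order differential functional that vanishes on every such polynomial, one has $\Delta_3(\Nf_n)=\Delta_3(\Sf_n)$. Therefore
\[
\Delta_2(\Sf_n)=-3\Delta_3(\Sf_n)(\Fxi_1-\Cx_1)-\bar\Lambda_2(B).
\]

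Finally, unfolding the definition $N_2(\Sf_n,\Cx)=\Cx_1-\tfrac{1}{3}\Delta_3(\Sf_n)^{-1}\Delta_2(\Sf_n)$ gives
\[
N_2(\Sf_n,\Cx)-\Fxi_1=(\Cx_1-\Fxi_1)+(\Fxi_1-\Cx_1)+\tfrac{1}{3}\Delta_3(\Sf_n)^{-1}\bar\Lambda_2(B)=\tfrac{1}{3}\Delta_3(\Sf_n)^{-1}\bar\Lambda_2(B),
\]
as claimed. The only delicate point in the argument is the dual-space book-keeping used to justify the formula $\bar\Lambda_2((X_1-\Cx_1)^k)=\binom{k}{2}(\Fxi_1-\Cx_1)^{k-2}$; once this is in hand the two cancellations ($\Cx_1-\Fxi_1$ against $\Fxi_1-\Cx_1$, and $\Delta_3(\Nf_n)$ against $\Delta_3(\Sf_n)$) are automatic and the identity follows.
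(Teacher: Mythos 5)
Your proof is correct and follows the same route as the paper: apply $\bar\Lambda_2\in\mathcal{D}_{\Sf,\Fxi}$ to the decomposition of $\Sf_n$, use $\bar\Lambda_2(\Sf_n)=0$, then split $\Nf_n=\Delta_3(\Nf_n)(X_1-\Cx_1)^3+B$ and note $\Delta_3(\Nf_n)=\Delta_3(\Sf_n)$ to solve for $N_2(\Sf_n,\Cx)-\Fxi_1$. You supply the dual-space book-keeping (the evaluations $\bar\Lambda_2((X_1-\Cx_1)^k)=\binom{k}{2}(\Fxi_1-\Cx_1)^{k-2}$) that the paper leaves implicit, but the argument is otherwise identical.
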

    \begin{proof}
    As $\bar{\Lambda}_2=\bar{\Delta}_2+a_2d_2$, 
    applying  $\bar{\Lambda}_2$ on both sides of the equation:
    \[
    \Sf_n(X)=\Nf_n(X)+\Sf_n(\Cx)+\sum_{k=1}^{2}\Delta_k(\Sf_n)(X_1-\Cx_1)^k,
    \]
    we have:
     \begin{align*}
      0=\bar{\Lambda}_{2}(\Sf_n) & =\bar{\Lambda}_{2}(\Nf_n)+\Delta_{2}(\Sf_n) \\
       & =3\Delta_{3}(\Nf_n)(\Fxi_1-\Cx_1)+\Delta_{2}(\Sf_n)+\bar{\Lambda}_{2}(B)\\
       & =3\Delta_{3}(\Sf_n)(\Fxi_1-\Cx_1)+\Delta_{2}(\Sf_n)+\bar{\Lambda}_{2}(B).
    \end{align*}
    Therefore, we have
\[
     N_2(\Sf_n,\Cx)-\Fxi_1=\Cx_1-\frac{1}{3}\Delta_{3}(\Sf_n)^{-1}\Delta_{2}(\Sf_n)-\Fxi_1=\frac{1}{3}\Delta_{3}(\Sf_n)^{-1}\bar{\Lambda}_{2}(B).
    \]
    \end{proof}


    \begin{claim}\label{newone}
 When $\Nb \leq \Nb_3$, we have
 \begin{equation}\label{x1z1}
 |\Dx_1-\Fxi_1| \leq \coeB(\Nb) \| \Ax-\Exi \|.
 \end{equation}
  \end{claim}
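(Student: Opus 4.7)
The plan is to combine Claim \ref{theoremtwelvetemp}, which already reduces the problem to the identity
\[
N_2(g_n,w)-\zeta_1 = \tfrac{1}{3}\Delta_3(g_n)^{-1}\bar{\Lambda}_2(B),
\]
with a term-by-term majorization of $\bar{\Lambda}_2(B)$ evaluated at $\xi$. First I would split $B$ into the four natural groups: (i) the ``correction'' piece $\sum_{i+j=1}T_{i,j}D\hat{h}(w)^{-1}\hat h(X)(X_1-w_1)^i(\hat X-\hat w)^j$, (ii) the pure cubic homogeneous part $\sum_{i+j=3,\,j>0}C_{i,j}(X_1-w_1)^i(\hat X-\hat w)^j$, (iii) the scalar tail $\sum_{k\geq 4}\tfrac{D^k g_n(w)(X-w)^k}{k!}$, and (iv) the cross tail $\sum_{i+j=1}T_{i,j}\sum_{k\geq 3}D\hat h(w)^{-1}\tfrac{D^k\hat h(w)(X-w)^k}{k!}(X_1-w_1)^i(\hat X-\hat w)^j$.

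Since $\bar{\Lambda}_2 = d_1^2 + a_1 d_1 d_2 + a_1^2 d_2^2 + a_2 d_2$, applying $\bar{\Lambda}_2$ to a monomial $(X_1-w_1)^i(\hat X-\hat w)^j$ at $\xi$ requires first writing $X_1-w_1=(X_1-\zeta_1)+(\zeta_1-w_1)$ and $\hat X-\hat w = (\hat X-\hat\zeta)+(\hat\zeta-\hat w)$ so that the differentials pick out the coefficients of $d_1^2$, $d_1 d_2$, $d_2^2$ and $d_2$. After this unfolding, each contribution is a product of a power of $|\zeta_1-\xi_1|$ or $\|\hat\zeta-\hat\xi\|$ times factors coming from the coefficients $\|\Delta_3(g_n)^{-1}C_{i,j}\|$, $\|\Delta_3(g_n)^{-1}T_{i,j}\|$, $\|a_1\|$, $\|a_2\|$, and the tail series in $\hat\gamma_3(g,w)$ and $\gamma_{3,n}(g,w)$. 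The bounds I need are already available: the coefficient bounds preceding Claim \ref{theoremtwelvetemp}, the $\|a_1\|,\|a_2\|$ bounds just derived, the transference $\hat\gamma_3(g,w)\leq l_1\gamma_3(f,\xi)$ and $\gamma_{3,n}(g,w)\leq l_2\gamma_3(f,\xi)$ from Claim \ref{gammadif}, the distance bound $\|w-\xi\|\leq l_3\|z-\xi\|$ from Claim \ref{preestimate}, and finally the key $\|\hat g(w)\|$-estimate from \eqref{gw45} (used for group (i) via $\hat h(\xi)=-\hat g(w)$).

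Each of the four groups yields a tail series in the ``normalized'' quantity $\gamma_3(f,\xi)\|w-\xi\|\leq l_1\cdot u$ (for $\hat h$-type series) or $\gamma_3(f,\xi)^2\|w-\xi\|$ and $\gamma_3(f,\xi)^3\|w-\xi\|\leq l_3\cdot u$ (for the $f_n$-type series), all of which converge geometrically as soon as $u<u_3$. Summing these four rational tails and collecting the factors of $|\zeta_1-\xi_1|$ versus $\|\hat\zeta-\hat\xi\|$, after using $\|\zeta-\xi\|\leq l_3\|z-\xi\|$, produces exactly the rational function $b_2(u)$ of Definition \ref{defkappa3}; matching the algebraic shape of the various pieces (the $a_1,a_2$ factors, the $l_1\coeftemp$-dressed denominators $1-2l_1\coeftemp u$, and the $T_{i,j}$-weighted pieces involving $\tfrac{l_1 u}{1-l_1 u}$) is precisely what produces each summand of $b_2(u)$.

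The main obstacle is bookkeeping rather than any new analytic idea: there are many subterms, each contributing a specific rational function in $u$ multiplied by at most two of the factors $l_1,l_2,l_3,\|a_1\|,\|a_2\|$, and reassembling them into the explicit closed form $b_2(u)$ displayed in Definition \ref{defkappa3} is tedious. The structural point that makes the estimate work out is that every surviving term in $\bar{\Lambda}_2(B)$ carries at least one factor of $u$ beyond the base $\|z-\xi\|$, so the smallness hypothesis $u<u_3$ both guarantees the geometric tails and absorbs the $1/(1-\hat\gamma_3 u)$ denominators inherited from Claim \ref{N1quadratic}, leaving exactly the inequality $|x_1-\zeta_1|\leq b_2(u)\|z-\xi\|$.
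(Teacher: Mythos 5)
Your plan mirrors the paper's proof almost exactly: reduce via Claim \ref{theoremtwelvetemp} to estimating $\tfrac13\Delta_3(g_n)^{-1}\bar\Lambda_2(B)$, split $B$ into the same four pieces, expand $\bar\Lambda_2 = d_1^2 + a_1 d_1 d_2 + a_1^2 d_2^2 + a_2 d_2$ around $\zeta$, and sum term-by-term majorizations using the $\|a_1\|,\|a_2\|$ bounds, the $\|\Delta_3(g_n)^{-1}C_{i,j}\|$ and $\|\Delta_3(g_n)^{-1}T_{i,j}\|$ bounds, the transference of Claim \ref{gammadif}, and the distance estimates of Claims \ref{normalisation_2} and \ref{preestimate}; this is precisely how the paper reassembles the display $\coeB(u)$.

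The one substantive deviation is in group (i), where you propose to control $\hat h(\zeta)=-\hat g(w)$ by the raw-norm bound on $\|\hat g(w)\|$ from (\ref{gw45}). That estimate carries the non-normalized factors $\|D\hat f(z)\|$ and the Lipschitz constant $l$; pairing it with $\|D\hat h(w)^{-1}\|$ does not collapse into a rational function of $u$ alone, so the final constant would not close up into the stated $\coeB(u)$. What the paper does instead is Taylor-expand $\hat g$ around $w$ evaluated at $\zeta$ and keep the bound normalized:
\[
\left\|D\hat g(w)^{-1}\hat h(\zeta)\right\| \;\leq\; \|\hat\zeta-\hat w\| + \frac{\hat\gamma_3(g,w)\|\zeta-w\|^2}{1-\hat\gamma_3(g,w)\|\zeta-w\|},
\]
which is then bounded purely through (\ref{wz45}), (\ref{wzksi}) and Claim \ref{gammadif}. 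This is a localized repair rather than a change of strategy, but it is what makes the constants match Definition \ref{defkappa3}. Also note that the quantities controlling the geometric tails are $\hat\gamma_3(g,w)\|w-\zeta\|$ and $\gamma_{3,n}(g,w)\|w-\zeta\|$ (with $w,\zeta$ the transformed points), not $\|w-\xi\|$; using $\gamma_3\ge1$ these are indeed bounded by constant multiples of $u$, as you observe.
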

  \begin{proof}
   By Claim \ref{theoremtwelvetemp} and above arguments, we have
     \begin{align*}
      &\left| N_2(\Sf_n,\Cx)-\Fxi_1\right|=\left|\frac{1}{3}\Delta_{3}(\Sf_n)^{-1}\bar{\Lambda}_{2}(B)\right|\\
      =& \left|\frac{1}{3}\Delta_{3}(\Sf_n)^{-1}\bar{\Lambda}_{2}\left(T_{1,0}\cdot D\hat{\Nf}(\Cx)^{-1}\hat{\Nf}(X)(X_1-\Cx_1)+T_{0,1} \cdot D\hat{\Nf}(\Cx)^{-1}\hat{\Nf}(X)(\hat{X}-\hat{\Cx})\right)\right|\\
    \notag&\ \ +\left|\frac{1}{3}\Delta_{3}(\Sf_n)^{-1}\bar{\Lambda}_{2}\left(\sum_{i+j=3,j>0}C_{i,j}(X_1-\Cx_1)^i (\hat{X}-\hat{\Cx})^j\right)\right|\\
    \notag&\ \ +\left|\frac{1}{3}\Delta_{3}(\Sf_n)^{-1}\bar{\Lambda}_{2}\left(\sum_{k\geq 4}\frac{D^k \Nf_n(\Cx)(X-\Cx)^k}{k!}\right)\right|\\
    \notag&\ \ +\left|\frac{1}{3}\Delta_{3}(\Sf_n)^{-1}\bar{\Lambda}_{2}\left(\sum_{i+j=1}T_{i,j}\cdot \sum_{k\geq 3}D\hat{\Nf}(\Cx)^{-1}\frac{D^k\hat{\Nf}(\Cx)(X-\Cx)^k}{k!}(X_1-\Cx_1)^i (\hat{X}-\hat{\Cx})^j\right)\right|\\
      =& \frac{1}{3}\left\|\Delta_{3}(\Sf_n)^{-1}T_{0,1}\right\|\cdot \left\|D\hat{\Nf}(\Cx)^{-1}\hat{\Nf}(\Fxi)\right\|\|a_2\|\\
    \notag&\ \ +\frac{1}{3}\left\|\Delta_{3}(\Sf_n)^{-1}\cdot \left(C_{2,1}+C_{1,2}\left(a_1+a_2(\Fxi_1-\Cx_1)\right)+C_{0,3}\left(a_1^2+2a_2(\hat{\Fxi}-\hat{\Cx})\right)\right)\right\|\|\hat{\Fxi}-\hat{\Cx}\|\\
    \notag&\ \ +\frac{1}{3}\cdot\sum_{i+j=3,j>0}\left\|\Delta_{3}(\Sf_n)^{-1}C_{i,j}\right\||\Fxi_1-\Cx_1|^i \|\hat{\Fxi}-\hat{\Cx}\|^{j-1}\|a_2\|\\
    \notag&\ \ +\frac{1}{3}\cdot\frac{1}{2}\sum_{k\geq 4}\sum_{i=2}^{k}\frac{i(i-1)}{i!(k-i)!}\left\|\Delta_{3}(\Sf_n)^{-1}\frac{\partial^k \Nf_n(\Cx)}{\partial X_1^i \hat{X}^{k-i}}\right\||\Fxi_1-\Cx_1|^{i-2}\|\hat{\Fxi}-\hat{\Cx}\|^{k-i}\\
    \notag&\ \ +\frac{1}{3}\cdot \|a_1\|\sum_{k\geq 4}\sum_{i=1}^{k-1}\frac{i(k-i)}{i!(k-i)!}\left\|\Delta_{3}(\Sf_n)^{-1}\frac{\partial^k \Nf_n(\Cx)}{\partial X_1^i \hat{X}^{k-i}}\right\||\Fxi_1-\Cx_1|^{i-1}\|\hat{\Fxi}-\hat{\Cx}\|^{k-i-1}\\
    \notag&\ \ +\frac{1}{3}\cdot \frac{1}{2}\|a_1\|^2\sum_{k\geq 4}\sum_{i=0}^{k-2}\frac{(k-i)(k-i-1)}{i!(k-i)!}\left\|\Delta_{3}(\Sf_n)^{-1}\frac{\partial^k \Nf_n(\Cx)}{\partial X_1^i \hat{X}^{k-i}}\right\||\Fxi_1-\Cx_1|^{i}\|\hat{\Fxi}-\hat{\Cx}\|^{k-i-2}\\
    \notag&\ \ +\frac{1}{3}\cdot \|a_2\|\sum_{k\geq 4}\sum_{i=0}^{k-1}\frac{(k-i)}{i!(k-i)!}\left\|\Delta_{3}(\Sf_n)^{-1}\frac{\partial^k \Nf_n(\Cx)}{\partial X_1^i \hat{X}^{k-i}}\right\||\Fxi_1-\Cx_1|^{i}\|\hat{\Fxi}-\hat{\Cx}\|^{k-i-1}\\
    \notag&\ \ +\frac{1}{3}\sum_{i+j=1}\left\|\Delta_{3}(\Sf_n)^{-1} T_{i,j}\right\|\cdot\sum_{k\geq 3}\left( \frac{1}{2}\sum_{l=2}^{k}\frac{l(l-1)}{l!(k-l)!}\left\|D\hat{\Nf}(\Cx)^{-1} \frac{\partial^k \hat{\Nf}(\Cx)}{\partial X_1^l \hat{X}^{k-l}}\right\||\Fxi_1-\Cx_1|^{l-2}\|\hat{\Fxi}-\hat{\Cx}\|^{k-l}\right.\\
    \notag&\ \ +\|a_1\|\sum_{l=1}^{k-1}\frac{l(k-l)}{l!(k-l)!}\left\|D\hat{\Nf}(\Cx)^{-1} \frac{\partial^k \hat{\Nf}(\Cx)}{\partial X_1^l \hat{X}^{k-l}}\right\||\Fxi_1-\Cx_1|^{l-1}\|\hat{\Fxi}-\hat{\Cx}\|^{k-l-1}\\
    \notag&\ \ +\frac{1}{2}\|a_1\|^2\sum_{l=0}^{k-2}\frac{(k-l)(k-l-1)}{l!(k-l)!}\left\|D\hat{\Nf}(\Cx)^{-1} \frac{\partial^k \hat{\Nf}(\Cx)}{\partial X_1^l \hat{X}^{k-l}}\right\||\Fxi_1-\Cx_1|^{l}\|\hat{\Fxi}-\hat{\Cx}\|^{k-l-2}\\
    \notag&\ \ +\left.\|a_2\|\sum_{l=0}^{k-1}\frac{(k-l)}{l!(k-l)!}\left\|D\hat{\Nf}(\Cx)^{-1} \frac{\partial^k \hat{\Nf}(\Cx)}{\partial X_1^l \hat{X}^{k-l}}\right\||\Fxi_1-\Cx_1|^{l}\|\hat{\Fxi}-\hat{\Cx}\|^{k-l-1}\right)|\Fxi_1-\Cx_1|^i \|\hat{\Fxi}-\hat{\Cx}\|^j\\
      \leq& \frac{1}{3}\gamma_3(\Sf,\Cx) \left(\|\hat{\Fxi}-\hat{\Cx}\|+ \frac{1}{1-\hat{\gamma}_3(\Sf,\Cx)\|\Fxi-\Cx\|}\hat{\gamma}_3(\Sf,\Cx)\|\Fxi-\Cx\|^2\right)\|a_2\|\\
      &\ \ +\frac{1}{3}\left(8\gamma_3(\Sf,\Cx)^2+7\gamma_3(\Sf,\Cx)^2\|a_1\|+ 2\gamma_3(\Sf,\Cx)^2\|a_1\|^2\right)\|\hat{\Fxi}-\hat{\Cx}\|\\
      &\ \ +\frac{7}{3}\gamma_3(\Sf,\Cx)^2\|a_2\||\Fxi_1-\Cx_1|\|\hat{\Fxi}-\hat{\Cx}\|+\frac{4}{3}\gamma_3(\Sf,\Cx)^2\|a_2\|\|\hat{\Fxi}-\hat{\Cx}\|^2\\
      &\ \ +\frac{17}{3}\gamma_3(\Sf,\Cx)\|\Fxi-\Cx\|^2\|a_2\|\\
      &\ \ +\frac{1}{6}\sum_{k\geq 4}\sum_{i=2}^{k}\frac{i(i-1)k!}{i!(k-i)!}\gamma_3(\Sf,\Cx)^{k-1}|\Fxi_1-\Cx_1|^{i-2}\|\hat{\Fxi}-\hat{\Cx}\|^{k-i}\\
      &\ \ +\frac{1}{3}\|a_1\|\sum_{k\geq 4}\sum_{i=1}^{k-1}\frac{i(k-i)k!}{i!(k-i)!}\gamma_3(\Sf,\Cx)^{k-1}|\Fxi_1-\Cx_1|^{i-1}\|\hat{\Fxi}-\hat{\Cx}\|^{k-i-1}\\
      &\ \ +\frac{1}{6}\|a_1\|^2\sum_{k\geq 4}\sum_{i=0}^{k-2}\frac{(k-i)(k-i-1)k!}{i!(k-i)!}\gamma_3(\Sf,\Cx)^{k-1}|\Fxi_1-\Cx_1|^{i}\|\hat{\Fxi}-\hat{\Cx}\|^{k-i-2}\\
      &\ \ +\frac{1}{3}\|a_2\|\sum_{k\geq 4}\sum_{i=0}^{k-1}\frac{(k-i)k!}{i!(k-i)!}\gamma_3(\Sf,\Cx)^{k-1}|\Fxi_1-\Cx_1|^{i}\|\hat{\Fxi}-\hat{\Cx}\|^{k-i-1}\\
      &\ \ +\gamma_3(\Sf,\Cx)\cdot \sum_{k\geq 3}\left(
      \frac{1}{2}\sum_{l=2}^{k}\frac{l(l-1)k!}{l!(k-l)!}\hat{\gamma}_3(\Sf,\Cx)^{k-1}\|\Fxi-\Cx\|^{k-2}\right.\\
      &\ \ \ \  +\|a_1\|\sum_{l=1}^{k-1}\frac{l(k-l)k!}{l!(k-l)!}\hat{\gamma}_3(\Sf,\Cx)^{k-1}\|\Fxi-\Cx\|^{k-2}\\
      &\ \ \ \ +\frac{1}{2}\|a_1\|^2\sum_{l=0}^{k-2}\frac{(k-l)(k-l-1)k!}{l!(k-l)!}\hat{\gamma}_3(\Sf,\Cx)^{k-1}\|\Fxi-\Cx\|^{k-2}\\
      &\ \ \ \ \left.+\|a_2\|\sum_{l=0}^{k-1}\frac{(k-l)k!}{l!(k-l)!}\hat{\gamma}_3(\Sf,\Cx)^{k-1}\|\Fxi-\Cx\|^{k-1}\right) \|\Fxi-\Cx\|\\
    =& \frac{1}{3}\gamma_3(\Sf,\Cx) \left(\|\hat{\Fxi}-\hat{\Cx}\|+ \frac{1}{1-\hat{\gamma}_3(\Sf,\Cx)\|\Fxi-\Cx\|}\hat{\gamma}_3(\Sf,\Cx)\|\Fxi-\Cx\|^2\right)\|a_2\|\\
      &\ \ +\frac{1}{3}\left(8\gamma_3(\Sf,\Cx)^2+7\gamma_3(\Sf,\Cx)^2\|a_1\|+ 2\gamma_3(\Sf,\Cx)^2\|a_1\|^2\right)\|\hat{\Fxi}-\hat{\Cx}\|\\
      &\ \ +\frac{7}{3}\gamma_3(\Sf,\Cx)^2\|a_2\||\Fxi_1-\Cx_1|\|\hat{\Fxi}-\hat{\Cx}\|+\frac{4}{3}\gamma_3(\Sf,\Cx)^2\|a_2\|\|\hat{\Fxi}-\hat{\Cx}\|^2\\
      &\ \ +\frac{17}{3}\gamma_3(\Sf,\Cx)\|\Fxi-\Cx\|^2\|a_2\|\\
      &\ \ +\frac{1}{6}\sum_{k\geq 4}2^{k-2}k(k-1)\gamma_3(\Sf,\Cx)^{k-1}\|\Fxi-\Cx\|^{k-2}\\
      &\ \ +\frac{1}{3}\|a_1\|\sum_{k\geq 4}2^{k-2}k(k-1)\gamma_3(\Sf,\Cx)^{k-1}\|\Fxi-\Cx\|^{k-2}\\
      &\ \ +\frac{1}{6}\|a_1\|^2\sum_{k\geq 4}2^{k-2}k(k-1)\gamma_3(\Sf,\Cx)^{k-1}\|\Fxi-\Cx\|^{k-2}\\
      &\ \ +\frac{1}{3}\|a_2\|\sum_{k\geq 4}2^{k-1}k\gamma_3(\Sf,\Cx)^{k-1}\|\Fxi-\Cx\|^{k-1}\\
      &\ \ +\frac{1}{2}\gamma_3(\Sf,\Cx)\sum_{k\geq 3}(1+2\|a_1\|+\|a_1\|^2)2^{k-2}k(k-1)\hat{\gamma}_3(\Sf,\Cx)^{k-1}\|\Fxi-\Cx\|^{k-1}\\
      &\ \ +\gamma_3(\Sf,\Cx)\sum_{k\geq 3}\|a_2\|2^{k-1}k\hat{\gamma}_3(\Sf,\Cx)^{k-1}\|\Fxi-\Cx\|^{k}\\
    \leq& \frac{\coefn}{3}(1+2\|a_1\|+\|a_1\|^2)\left(\Nb+\frac{\coefhat\Nb}{1-\coefhat\Nb}\right)\|\Exi-\Ax\|\\
      &\ \ +\frac{\coefn}{3}(1+2\|a_1\|+\|a_1\|^2)\cdot\frac{\coefhat\coeftemp^2\Nb}{1-\coefhat\coeftemp\Nb}\|\Exi-\Ax\|\\
      &\ \ +\frac{\coefn^2}{3}(8+7\|a_1\|+2\|a_1\|^2)\left(\Nb+\frac{\coefhat\Nb}{1-\coefhat\Nb}\right)\|\Exi-\Ax\|\\
      &\ \ +\frac{7\coefn^2}{3}(1+2\|a_1\|+\|a_1\|^2)\left(\Nb^2+\frac{\coefhat\Nb^2}{1-\coefhat\Nb}\right)\|\Exi-\Ax\|\\
      &\ \ +\frac{4\coefn^2}{3}(1+2\|a_1\|+\|a_1\|^2)\Nb\left(\Nb+\frac{\coefhat\Nb}{1-\coefhat\Nb}\right)^2\|\Exi-\Ax\|\\
      &\ \ +\frac{17\coefn}{3}(1+2\|a_1\|+\|a_1\|^2)\coeftemp^2\Nb\|\Exi-\Ax\|\\
      &\ \ +\frac{1}{6}(1+2\|a_1\|+\|a_1\|^2)\cdot\frac{8\coefn^3\coeftemp^2\Nb(12\coefn^2\coeftemp^2\Nb^2-16\coefn\coeftemp\Nb+6)}{(1-2\coefn\coeftemp\Nb)^3}\|\Exi-\Ax\|\\
      &\ \ +\frac{1}{3}(1+2\|a_1\|+\|a_1\|^2)\Nb\cdot\frac{8\coefn^3\coeftemp^3\Nb(4-6\coefn\coeftemp\Nb)}{(1-2\coefn\coeftemp\Nb)^2}\|\Exi-\Ax\|\\
      &\ \ +\frac{\coefn}{2}(1+2\|a_1\|+\|a_1\|^2)\cdot\frac{4\coefhat^2\coeftemp^2\Nb(4\coefhat^2\coeftemp^2\Nb^2-6\coefhat\coeftemp\Nb+3)}{(1-2\coefhat\coeftemp\Nb)^3}\|\Exi-\Ax\|\\
      &\ \ +(1+2\|a_1\|+\|a_1\|^2)\coefn\Nb\cdot\frac{4\coefhat^2\coeftemp^3\Nb(3-4\coefhat\coeftemp\Nb)}{(1-2\coefhat\coeftemp\Nb)^2}\|\Exi-\Ax\|\\
      = & \coeB(\Nb) \| \Ax-\Exi \|.
    \end{align*}
\end{proof}

 When $\Nb < \Nb_3\approx0.0137$,  by Claim \ref{normalisation_2},  \ref{newone} and Definition \ref{defkappa3},  we have:
    \begin{align*}
    \|\hat{\Dx}-\hat{\Fxi}\| &\leq L\|\Exi-\Ax\|^2+\frac{1}{1-\kappatwo}\hat{\gamma}_3(\Ff,\Ax)\|\Exi-\Ax\|^2\\
    & \leq \left(\Nb+\frac{\coefhat\Nb}{1-\coefhat\Nb}\right)\| \Ax-\Exi\| \\
    & = \coeA(\Nb) \left \| \Ax-\Exi \right \|.
    \end{align*}
    Now we can complete the proof for Theorem \ref{newtonmainthm}.
    \begin{proof}
     \begin{enumerate}[(1)]
    \item  When $\Nb < \Nb_3\approx0.0137$,  $\coeA (\Nb)^2+\coeB (\Nb)^2<1$, by  Claim \ref{newone}, we have
    \begin{align*}
 \| N_{\Ff}(\Ax) -\Exi \|=&\|W_{\dag}\cdot W_{\ddag}\cdot( \Dx-\Fxi)\|\\
 =& \| \Dx-\Fxi \| \\
 =&  \sqrt{| \Dx_1-\Fxi_1 |^2 + \| \hat{\Dx}-\hat{\Fxi} \|^2} \\
                       \leq & \sqrt{ \coeA (\Nb)^2+\coeB (\Nb)^2} \left \| \Ax-\Exi \right \| \\
                       <& \left \| \Ax-\Exi \right \|.
    \end{align*}
    \item  When $\Nb < \Nb_3^{\prime}\approx0.0098$,  $\coeA (\Nb)^2+\coeB (\Nb)^2<\frac{1}{4}$,  by Claim \ref{newone}, we have
    \begin{align*}
     \| N_{\Ff}(\Ax) -\Exi \|=&\|W_{\dag}\cdot W_{\ddag}\cdot( \Dx-\Fxi)\|\\
 =& \| \Dx-\Fxi \| \\
    =&  \sqrt{| \Dx_1-\Fxi_1 |^2 + \| \hat{\Dx}-\hat{\Fxi} \|^2} \\
                       \leq&  \sqrt{ \coeA (\Nb)^2+\coeB (\Nb)^2} \left \| \Ax-\Exi \right \| \\
                        < &\frac{1}{2}\left \| \Ax-\Exi \right \|.
    \end{align*}
    Hence, the following inequality is true for $k=1$:
    \[
     \left \| N_{\Ff}^{k}(\Ax)-\Exi \right \| < \left( \frac{1}{2} \right)^{2^k-1}  \left \| \Ax-\Exi \right \|.
    \]
    For $k \geq 2$, assume by induction that
    \[
     \left \| N_{\Ff}^{k-1}(\Ax)-\Exi \right \| < \left( \frac{1}{2} \right)^{2^{k-1}-1}  \left \| \Ax-\Exi \right \|.
    \]
    Let $\Nb^{(k-1)}=\gamma_3(\Ff,\Exi)^3\|N_{\Ff}^{k-1}(\Ax)-\Exi\|$. For $0<\Nb<\Nb_3^{\prime}$, we have $\Nb^{(k-1)}<\Nb$ and $\frac{\sqrt{ \coeA (\Nb)^2+\coeB (\Nb)^2}}{\Nb}$ is increasing. Therefore, by  induction we have
    \begin{align*}
    & \left \| N_{\Ff}^{k}(\Ax)-\Exi \right \| \\
    < &\frac{\sqrt{ \coeA (\Nb^{(k-1)})^2+\coeB (\Nb^{(k-1)})^2}\gamma_3(\Ff,\Exi)^3}{\Nb^{(k-1)}} \left \| N_{\Ff}^{k-1}(\Ax)-\Exi \right \| ^2 \\
    < &\frac{\sqrt{ \coeA (\Nb)^2+\coeB (\Nb)^2}\gamma_3(\Ff,\Exi)^3}{\Nb} \left \| N_{\Ff}^{k-1}(\Ax)-\Exi \right \| ^2 \\
    < & \frac{\sqrt{ \coeA (\Nb)^2+\coeB (\Nb)^2}\gamma_3(\Ff,\Exi)^3}{\Nb} \left( \frac{1}{2} \right)^{2^k-2}  \left \| \Ax-\Exi \right \|^2 \\
    \leq &\left( \frac{1}{2} \right)^{2^k-1}  \left \| \Ax-\Exi \right \|.
    \end{align*}
    \end{enumerate}
    \end{proof}

\begin{remark}
The unitary transformations in Algorithm \ref{MultiStructure} may convert a sparse polynomial system into a dense polynomial system, therefore,  the computations of the modified Newton iterations {may} become more {costly}.  Hence, in our implementation, we  use the chain rule to avoid storing  or {computing with}  dense polynomial systems obtained after performing unitary transformations.
For example, suppose
$g(X)=U^{\ast}\cdot \Ff(W_{\dag}\cdot X)$,      
then we have
{\begin{equation}\label{chain0}
D g(X)=U^{\ast}\cdot D\Ff(W_{\dag}\cdot X)\cdot W_{\dag}.
\end{equation}}
Let $y=W_{\dag}^{\ast}\simplex$,   we have
\begin{equation}\label{chain1}
Dg(y)=U^{\ast} \cdot Df(z)\cdot W_{\dag}.
\end{equation}
Instead of evaluating $Dg(X)$ at $y$, we evaluate $Df(X)$ at $z$ and then perform matrix multiplications,  which avoids storing and computing the dense system $Dg(X)$.

Similarly,  as we have already demonstrated  in \cite[Example 3.1]{LZ:2012},  instead of computing and storing dense differential functionals $\Delta_k$ and $\Lambda_k$, we compute polynomials $L_k(g)$ and $P_k(g)$ as
\begin{equation}\label{chain2}
P_k(g)=\sum_{j=1}^{k-1}\frac{j}{k}\cdot D(L_{k-j}(g))\cdot\mathbf{a}_j\mbox{ and }L_k(g)=P_k(g)+Dg\cdot\mathbf{a}_k,
\end{equation}
where $L_k$ and $P_k$ are differential operators corresponding to $\Delta_k$ and $\Lambda_k$ respectively, $\mathbf{a}_1=(1,0,\ldots,0)^T$ and $\mathbf{a}_k=(0, a_{k,2},\ldots,a_{k,n})^T$.
 The polynomial system $P_k(g)$ and the value $\Delta_{k}(\Sf)$ can be obtained by applying the chain rule (\ref{chain0}) and (\ref{chain1})  recursively to (\ref{chain2}).

\end{remark}

\begin{remark}
The Maple codes of three Algorithms and test results are available via request.

 Although the algorithms and  proofs of quadratic convergence  given in the paper are for polynomial systems with exact simple multiple zeros,  examples are given to demonstrate  that our algorithms are also applicable to  analytic systems and polynomial systems with  a cluster of simple roots.
\end{remark}

\bibliographystyle{plain}
\bibliography{linan,wuxiaoli,zhi}

\end{document}